\title[Global surfaces of section for Reeb flows]{Genus zero global surfaces of section for Reeb flows and a result of Birkhoff}
\author[Hryniewicz]{Umberto L. Hryniewicz}
\author[Salom\~ao]{Pedro A. S. Salom\~ao}
\author[Wysocki]{Krzysztof Wysocki}
\date{\today}
\address{Umberto L. Hryniewicz -- RWTH Aachen, Jakobstrasse 2, Aachen 52064, Germany}
\email{hryniewicz@mathga.rwth-aachen.de}
\address{Pedro A. S. Salom\~ao -- Instituto de Matem\'atica e Estat\'istica, Departamento de Matem\'atica, Universidade de S\~ao Paulo, Rua do Mat\~ao, 1010, Cidade Universit\'aria, S\~ao Paulo SP, Brazil 05508-090}
\email{psalomao@ime.usp.br}
\newcommand{\C}{\mathbb{C}}
\newcommand{\R}{\mathbb{R}}
\newcommand{\Z}{\mathbb{Z}}
\newcommand{\N}{\mathbb{N}}
\newcommand{\D}{\mathbb{D}}
\newcommand{\floor}[1]{\left\lfloor#1\right\rfloor}
\newcommand{\M}{\mathcal M}
\newcommand{\jtil}{\widetilde{J}}
\newcommand{\util}{\widetilde{u}}
\newcommand{\vtil}{\widetilde{v}}
\newcommand{\wtil}{\widetilde{w}}
\newcommand{\CZ}{{\rm CZ}}
\newcommand{\wind}{{\rm wind}}
\newcommand{\J}{\mathcal{J}}
\newcommand{\jbar}{{\bar J}}
\theoremstyle{plain}
\newtheorem{theorem}{\sc Theorem}[section]
\newtheorem{proposition}[theorem]{\sc Proposition}
\newtheorem{lemma}[theorem]{\sc Lemma}
\newtheorem{corollary}[theorem]{\sc Corollary}
\theoremstyle{definition}
\newtheorem{definition}[theorem]{\sc Definition}
\theoremstyle{remark}
\newtheorem{remark}[theorem]{\sc Remark}
\begin{document}

\maketitle

\begin{abstract}
We exhibit sufficient conditions for a finite collection of periodic orbits of a Reeb flow on a closed $3$-manifold to bound a positive global surface of section with genus zero. These conditions turn out to be $C^\infty$-generically necessary. Moreover, they involve linking assumptions on periodic orbits with Conley-Zehnder index ranging in a finite set determined by the ambient contact geometry. As an application, we reprove and generalize a classical result of Birkhoff on the existence of annulus-like global surfaces of section for geodesic flows on positively curved two-spheres.
\end{abstract}

\tableofcontents

\vfill

\newpage

\section{Introduction}

Transverse foliations can be used as tools to study flows in dimension three. 
They allow for a decomposition into discrete two-dimensional systems. 
The best scenario is a transverse foliation whose leaves are global surfaces of section. 
The purpose of this paper is to provide conditions for a finite collection of periodic trajectories of a Reeb flow to bound a positive global surface of section with genus zero. 
Our main result (Theorem~\ref{main1}) explains why it suffices to make linking assumptions on periodic orbits with Conley-Zehnder index on a finite set of values determined by the ambient contact geometry. 
Tools come from pseudo-holomorphic curve theory in symplectic cobordisms developed by Hofer, Wysocki and Zehnder~\cite{93,convex,props1,props2,props3}. 
As an application, we explain why a classical theorem of Birkhoff on the existence of annulus-like global surfaces of section for positively curved geodesic flows on $S^2$ follows as a particular case.

\subsection{Historical remarks}

Let $\phi^t$ be a smooth flow on a smooth closed connected oriented $3$-manifold $M$, generated by a vector field $X$.

\begin{definition}
A {\it global surface of section} for $\phi^t$ is an embedded compact surface $\Sigma\hookrightarrow M$ satisfying:
\begin{itemize}
\item[(i)] $\partial \Sigma$ consists of periodic orbits (if non-empty), $X$ is transverse to $\Sigma \setminus \partial \Sigma$.
\item[(ii)] For every $p\in M$ there exist $t_+>0$, $t_-<0$ such that $\phi^{t_\pm}(p) \in \Sigma$.
\end{itemize}
\end{definition}

\begin{remark}
We always orient a global surface of section by the ambient orientation and the co-orientation induced by the flow. Flow lines intersect the interior of the section with sign $+1$.
\end{remark}

\begin{definition}
A global surface of section is {\it positive} if it orients its boundary along the flow.
\end{definition}

The idea of a global surface of section goes back to Poincar\'e's work on Celestial Mechanics, in particular the {\it planar circular restricted three-body problem} (PCR3BP). 
Poincar\'e studies sub-critical energy levels when almost all mass is concentrated in the primary around which the satellite moves, and finds annulus-like global surfaces of section bounded by the retrograde and the direct orbits. 
These situations arise as perturbations of an integrable system (rotating Kepler problem).

One of the first results for systems which are far from integrable is due to Birkhoff. 
A {\it Birkhoff annulus} over an embedded closed geodesic on a Riemannian two-sphere consists of the unit vectors based at the geodesic pointing towards one of the hemispheres determined by it.

\begin{theorem}[Birkhoff~\cite{birkhoff}]
\label{thm_birkhoff}
Let $c$ be an embedded closed geodesic on a positively curved Riemannian two-sphere. Then a Birkhoff annulus over $c$ is a positive global surface of section for the geodesic flow on the unit sphere bundle.
\end{theorem}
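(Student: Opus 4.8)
We plan to deduce Theorem~\ref{thm_birkhoff} from our main result Theorem~\ref{main1}, rather than to reconstruct Birkhoff's original two-dimensional shortening argument. The first step is to set up the contact-geometric picture. Endow $M=T^1S^2\cong\R P^3$ with the restriction $\lambda$ of the canonical Liouville form on $T^*S^2$ to the unit cotangent bundle; the Reeb flow of $\lambda$ is the geodesic flow of $g$, and closed Reeb orbits are unit-speed closed geodesics. The collection $\mathcal{L}$ to which Theorem~\ref{main1} will be applied consists of the two lifts $c^+,c^-$ of the embedded geodesic $c$ to $M$ corresponding to its two orientations. By inspection $c^+\cup c^-$ is precisely the boundary of a Birkhoff annulus $\mathcal{A}$ over $c$: along each of its two boundary circles $\mathcal{A}$ is tangent to the Reeb vector field, while at an interior point $(q,v)$ of $\mathcal{A}$ the footpoint velocity $v$ points strictly into the chosen open hemisphere, so the underlying geodesic crosses $c$ transversally and hence the Reeb flow is transverse to $\mathcal{A}\setminus\partial\mathcal{A}$. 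Thus $\mathcal{A}$ is an embedded genus-zero (annular) surface, transverse to the flow on its interior, with boundary $\mathcal{L}$; a short local computation at $\partial\mathcal{A}$ confirms that the induced boundary orientation runs along the flow, so $\mathcal{A}$ is a candidate \emph{positive} surface of section. (It is convenient to pull everything back to the double cover $S^3\to M$, where $c^\pm$ lift to a pair of closed orbits and $\mathcal{A}$ to an annulus between them, since this is the setting in which dynamical convexity is usually phrased.)

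The substance of the proof is to verify the hypotheses of Theorem~\ref{main1} for $\mathcal{L}$, and this is exactly where positive curvature enters. Two ingredients are needed. First, one must pin down the self-linking and Conley--Zehnder data of $c^+$ and $c^-$: because $c$ is \emph{embedded}, the Birkhoff annulus $\mathcal{A}$ itself supplies an embedded surface framing these orbits, from which their self-linking numbers are read off, and positive curvature is known to make the lifted contact form on $S^3$ dynamically convex --- every contractible closed orbit has Conley--Zehnder index at least $3$ --- which together pins down the indices of $c^\pm$ and shows that they satisfy the numerical constraints imposed by Theorem~\ref{main1}. Second, and this is the main obstacle, one must establish the linking condition of Theorem~\ref{main1}: every closed Reeb orbit whose Conley--Zehnder index lies in the finite window determined by the ambient contact geometry must link the homology class of $\mathcal{L}$ with the prescribed sign. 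Geometrically this says that every such ``low-index'' closed geodesic meets $c$, and meets it only in intersections of one and the same sign. Ruling out orbits trapped in an open hemisphere is precisely where positive curvature is indispensable: if $K\geq\delta>0$ then the normal exponential map of $c$ develops a focal point within distance $\pi/(2\sqrt{\delta})$, so each hemisphere $H_\pm$ is too thin to trap a closed geodesic of the relevant index, while a Bonnet--Myers/Rauch-type estimate also controls the number and the sign of the crossings. Turning ``$K>0$'' into these precise linking inequalities, uniformly over all orbits in the prescribed index range, is the technical heart of the argument.

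Granting this, Theorem~\ref{main1} produces a positive genus-zero global surface of section $\Sigma$ for the geodesic flow with $\partial\Sigma=\mathcal{L}$, and it remains only to identify $\Sigma$ with the Birkhoff annulus $\mathcal{A}$. This is the routine part: $\Sigma$ and $\mathcal{A}$ are isotopic rel $\mathcal{L}$, both are transverse to the flow on their interiors, and --- given the structure of the complement $M\setminus\mathcal{L}$ furnished by Theorem~\ref{main1} --- this suffices to transfer the return property from $\Sigma$ to $\mathcal{A}$; alternatively one invokes a uniqueness statement for such sections. Finally, if $g$ fails to be nondegenerate one first perturbs it within the ($C^2$-open) class of positively curved metrics, applies the above, and passes to the limit, using that the Birkhoff annulus and its defining data vary continuously with the metric. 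The same scheme, with the linking hypothesis verified by other means, yields the generalization announced in the introduction; as emphasized, the only genuinely delicate step is the second paragraph, namely extracting from positive curvature the exact linking behaviour among low-index closed geodesics that Theorem~\ref{main1} demands.
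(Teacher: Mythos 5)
Your high-level scheme (encode the geodesic flow as a Reeb flow on $T^1S^2\cong\R P^3$, then invoke one of the main theorems) is the same as the paper's, but the concrete argument you propose is different and leaves a genuine gap precisely at the step you yourself flag as the ``technical heart.''

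The paper does not apply Theorem~\ref{main1}; it applies Theorem~\ref{main2}, and argues by \emph{contradiction}. Suppose the Birkhoff annulus $A$ is not a global surface of section. Since positive curvature implies the rotation numbers $\rho^\Theta(\dot\gamma),\rho^\Theta(-\dot\gamma)$ are strictly positive, hypothesis (iii-a) of Theorem~\ref{main2} holds, so what must fail is (iii-b): there is a closed geodesic $\beta$ whose velocity lift has zero algebraic intersection number with the class of $A$. As the geodesic vector field is transverse to ${\rm int}(A)$ and all intersections count with the same sign, zero algebraic intersection forces $\dot\beta$ to avoid $A$ entirely, so $\beta$ lies in the interior of one of the two hemispheres cut out by $\gamma$. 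Taking the component $\Omega$ of $S^2\setminus(\gamma\cup\beta)$ with boundary meeting both curves, one gets a topological annulus whose boundary consists of geodesic arcs with non-negative exterior angles; since $\chi(\Omega)=0$, Gauss--Bonnet yields $\int_\Omega K\le 0$, contradicting $K>0$. This single application of Gauss--Bonnet is the entire geometric input.

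Your proposal instead tries to verify the linking hypothesis \emph{directly} from the curvature bound, and the appeal to Bonnet--Myers/Rauch/focal-point distances does not deliver what is needed: one has to show that every closed geodesic whose lift lies in the prescribed Conley--Zehnder window meets $\gamma$, and it is not true that a width bound on a hemisphere translates into such an index-filtered linking statement in any routine way. The contrapositive route is what makes the proof work: one only has to rule out a single closed geodesic trapped inside a hemisphere, and that is exactly the case Gauss--Bonnet kills. In addition, two more concrete problems: (1) Theorem~\ref{main1} (or~\ref{main2}) only produces \emph{some} genus-zero global section $\Sigma$ bounded by $\dot\gamma\cup(-\dot\gamma)$; it does not assert that the Birkhoff annulus itself is that section, and ``isotopic rel boundary'' does not transport the return property. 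The paper never needs this identification because the contradiction is reached before any section is constructed. (2) Theorem~\ref{main2} is stated without non-degeneracy of $\alpha$ (the degenerate case is handled internally in subsection~\ref{ssec_passing_to_deg_case}), so the final perturb-and-pass-to-limit step over positively curved metrics is unnecessary.
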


There are no dynamical hypotheses to be checked in Birkhoff's theorem, but one gets strong dynamical conclusions. 
Positivity of the curvature and embeddedness of the closed geodesic can be checked by ``looking at'' the geometric data.

The systems studied by Poincar\'e and Birkhoff are particular examples of Reeb flows of contact forms defining the standard contact structure on $\R P^3$ \cite{AFvKP}. 
Motivated by their results, we want to understand the contact topological and dynamical properties that guarantee the existence of a global surface of section bounded by a prescribed collection of closed Reeb orbits. 
We seek for the counterpart of Theorem~\ref{thm_birkhoff} in the realm of Reeb flows on planar contact manifolds. 
Pseudo-holomorphic curves can be used to implement this program. 
The case of disk-like global surfaces of section for a non-degenerate Reeb flow on the tight three-sphere is handled by the statement below.

\begin{theorem}[\cite{duke}]
\label{thm_tight3sphere}
Consider a non-degenerate Reeb flow on the standard contact three-sphere. A periodic orbit $\gamma$ bounds a disk-like global surface of section if and only if the following conditions are satisfied:
\begin{itemize}
\item[(i)] $\gamma$ is unknotted and has self-linking number $-1$.
\item[(ii)] The Conley-Zehnder index of $\gamma$ is larger than or equal to $3$.
\item[(iii)] Every closed Reeb orbit with Conley-Zehnder index $2$ is linked with $\gamma$.
\end{itemize}
\end{theorem}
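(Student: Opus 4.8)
The plan is to prove both implications; necessity is elementary, whereas sufficiency requires the pseudo-holomorphic curve machinery of Hofer--Wysocki--Zehnder.

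\emph{Necessity.} Suppose $\gamma$ bounds a disk-like global surface of section $\Sigma$. Being an embedded disk with $\partial\Sigma=\gamma$, $\Sigma$ exhibits $\gamma$ as unknotted and as the binding of an open book of $S^3$ with disk pages; since the Reeb vector field is positively transverse to $\mathrm{int}\,\Sigma$ and correctly oriented along $\gamma$, this open book is adapted to $\lambda$ and hence supports the standard contact structure, and since the disk-page open book of $S^3$ is unique up to isotopy the binding is the standard transverse unknot with self-linking number $-1$; this gives (i). Reeb trajectories starting near $\gamma$ must return to $\Sigma$, forcing the transverse rotation number of $\gamma$ relative to the disk framing to exceed $1$, which by non-degeneracy is equivalent to $\CZ(\gamma)\ge3$; this gives (ii). Finally any closed orbit $P\ne\gamma$ meets $\mathrm{int}\,\Sigma$ and all crossings carry the same sign, so the linking number of $P$ with $\gamma$ is nonzero; a fortiori (iii) holds.

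\emph{Sufficiency --- a fast plane asymptotic to $\gamma$.} Assume (i)--(iii), fix a $d\lambda$-compatible complex structure on the contact structure and let $\jtil$ be the induced cylindrical almost complex structure on $\R\times S^3$. By (i) one may put $\gamma$ into a normal form and start a Bishop family of small embedded $\jtil$-holomorphic disks filling a thin solid torus around $\gamma$, pushed to maximal extent; the Hofer-energy bound coming from the action of $\gamma$ together with the compactness theorem of Hofer--Wysocki--Zehnder force the family to degenerate, yielding in the limit a finite-energy $\jtil$-holomorphic plane $\util=(a,u)\colon\C\to\R\times S^3$ positively asymptotic to a closed Reeb orbit. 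Conditions (ii) and (iii) enter decisively here: $\CZ(\gamma)\ge3$ makes the relevant plane \emph{fast}, so that its asymptotic winding relative to the disk framing is extremal and its normal Chern number vanishes --- whence $u\colon\C\to S^3$ is an embedding transverse to the Reeb vector field, closing up to an embedded disk $\overline{u}$ with $\partial\overline{u}=\gamma$ --- while (iii) rules out the competing degeneration in which the limit building carries a plane asymptotic to an unlinked orbit of Conley--Zehnder index $2$. Thus the limit is precisely a fast plane asymptotic to $\gamma$, and automatic transversality in dimension four (a consequence of the vanishing normal Chern number) makes the moduli space of such planes smooth and two-dimensional near $\util$.

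\emph{Sufficiency --- propagating to an open book.} Let $\M$ be the connected component through $\util$ of the space of fast $\jtil$-holomorphic planes asymptotic to $\gamma$, taken modulo $\R$-translations and biholomorphisms of $\C$; it is a smooth surface. Siefring's asymptotic and intersection theory shows that distinct elements of $\M$ have disjoint images in $S^3\setminus\gamma$ --- the extremal asymptotic winding forbids both interior intersections, by positivity of intersections, and intersections forming at the puncture --- and that each image is an embedded disk transverse to the Reeb vector field with boundary $\gamma$. A further compactness argument, once more invoking (ii) and (iii) to exclude degenerate limits of sequences in $\M$, shows $\M$ is compact modulo the stated symmetries and that the disks $\overline{u}$, $\util\in\M$, exhaust $S^3\setminus\gamma$; they therefore form an open book decomposition of $S^3$ with disk pages, binding $\gamma$, and every page positively transverse to the Reeb flow. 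Since the pages foliate $S^3\setminus\gamma$ and each has $d\lambda$-area equal to the action of $\gamma$, the first-return time is uniformly bounded and every Reeb trajectory meets the interior of every page in both forward and backward time; hence each page is a disk-like global surface of section bounded by $\gamma$. The main obstacle is this degeneration analysis: showing that the maximally extended Bishop family limits onto a plane asymptotic to $\gamma$ itself, rather than onto another orbit or a nontrivial building, and that the resulting moduli space propagates without premature breaking until it fills all of $S^3\setminus\gamma$ --- precisely where (ii) and (iii) are indispensable, and where the full strength of Hofer--Wysocki--Zehnder compactness and asymptotic analysis, Siefring's intersection theory, and automatic transversality is needed.
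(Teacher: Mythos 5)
The paper does not contain a proof of this theorem; it is imported from~\cite{duke}, and the present paper's own sufficiency machinery (Theorems~\ref{main2} and~\ref{main1}) is set up for $n\geq 2$ binding components, delegating $n=1$ back to~\cite{duke} and~\cite{elliptic}. Your sketch follows the original route of~\cite{duke}: seed a Bishop family of small holomorphic disks near the unknotted orbit $\gamma$, extend, control bubbling using the index bound (ii) and linking bound (iii), and propagate via automatic transversality and Siefring's intersection theory to an open book filling $S^3\setminus\gamma$. The present paper's route for its generalizations is genuinely different: one starts from Wendl's model non-degenerate contact form $\alpha_+$ adapted to the given open book, builds an exact symplectic cobordism from $\alpha_+$ down to the contact form of interest, pushes Wendl's holomorphic pages through the cobordism, and controls the SFT limit by intersection theory and linking; this avoids the filling argument, scales naturally to disconnected bindings, and passes to degenerate contact forms via~\cite[Proposition~4.15]{elliptic}. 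Both are legitimate; yours is the one behind the cited theorem.

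Two steps in your sketch need correction. In necessity (ii), the rotation number of $\gamma$ with respect to the \emph{disk} (Seifert) framing must be strictly positive, not exceed $1$; the threshold $>1$ is what one gets in the \emph{global} trivialization of $\xi_{\rm std}$, which winds once relative to the Seifert framing for an unknot of self-linking number $-1$. As written, a rotation number $>1$ in the disk framing together with non-degeneracy would give $\CZ(\gamma)\geq 5$, which is too strong. More substantively, the justification that pages are global sections because ``each has $d\lambda$-area equal to the action of $\gamma$, the first-return time is uniformly bounded'' is not valid: the area of a page does not bound the first-return time, which can blow up near the binding. The correct argument is dynamical: a trajectory whose $\omega$-limit avoids $\gamma$ hits every page by transversality and compactness, and a trajectory whose $\omega$-limit meets $\gamma$ is eventually controlled by the linearized flow along $\gamma$, where the strict positivity of the Seifert-frame rotation number (equivalently $\CZ(\gamma)\ge 3$) forces winding that crosses every page; this is exactly the argument given in the paper following Proposition~\ref{prop_main_existence_non_deg}.
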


Condition (i) in Theorem~\ref{thm_tight3sphere} is purely contact-topological, it means that $\gamma$ binds an open book decomposition with disk-like pages supporting the contact structure. This is how one should rephrase (i) when looking for generalizations of the above statement. 
Condition (ii) is on the linearized dynamics along $\gamma$, it implies that the flow twists fast enough near $\gamma$. 
The linking assumption in (iii) mixes dynamics and topology. 
It distinguishes the closed Reeb orbits which potentially obstruct the existence of a global surface of section bounded by $\gamma$.

In this paper we study Reeb flows on planar contact $3$-manifolds, and prove a version of Theorem~\ref{thm_tight3sphere} for positive global surfaces of section with genus zero, and arbitrary number of boundary components. 
We provide conditions that are sufficient, and $C^\infty$-generically necessary, for a transverse link $L$ formed by closed Reeb orbits to bound such a global section. 
It is necessary that $L$ binds a planar open book decomposition supporting $\xi$ and, $C^\infty$-generically, that the Conley-Zehnder indices of the components of $L$ relative to a page are positive. 
The counterpart of the linking assumption (iii) in Theorem~\ref{thm_tight3sphere} is made only on the closed trajectories in the complement of $L$ that have Conley-Zehnder index in a finite interval of integers determined by the ambient contact geometry. 
This is clearly a necessary condition. 
Sufficiency of these conditions relies on the analysis of certain families of pseudo-holomorphic curves inspired by~\cite{planar_weinstein,convex}.

\subsection{Main results}

A {\it contact form} on an oriented closed $3$-manifold $M$ is a $1$-form $\alpha$ such that $\alpha\wedge d\alpha$ is nowhere vanishing. In particular $\ker d\alpha$ is transverse to the {\it contact structure} $\xi = \ker\alpha$. Contact structures arising in this way are co-orientable since~$\alpha$ orients $TM/\xi$. The orientation induced by $\alpha\wedge d\alpha$ depends only on $\xi$, and $\xi$ is called {\it positive} if $\alpha\wedge d\alpha>0$. The pair $(M,\xi)$ is a {\it contact manifold}. The {\it Reeb vector field}~$X_\alpha$ associated to $\alpha$ is implicitly defined by
\begin{equation}
d\alpha(X_\alpha,\cdot) = 0, \qquad \alpha(X_\alpha)=1.
\end{equation}
Its flow $\phi^t_\alpha$ is called the {\it Reeb flow} of $\alpha$.

An {\it open book decomposition} of $M$ is a pair $\Theta = (\Pi,L)$ consisting of a link $L\subset M$ and a smooth fibration $\Pi:M\setminus L \to \R/\Z$ with a normal form near~$L$. Namely, a neighborhood $N$ of $L$ is homeomorphic to $L\times\D$ in such a way that $L\simeq L\times\{0\}$ and $\Pi$ is represented as $(p,re^{i2\pi x}) \mapsto x$ on $N\setminus L$. The closure of a fiber is called a page, $L$ is called the binding, and $\Theta$ is said to be {\it planar} if the pages have genus zero. It is important to notice that $\Theta$ and the orientation of $M$ naturally orient the pages, and hence also the binding $L$. One says that $\Theta$ {\it supports} $\xi$ if there is a contact form~$\alpha_0$ such that $\xi = \ker \alpha_0$, $\alpha_0>0$ on $L$, and $d\alpha_0>0$ on the interior of the pages. The contact structure $\xi$ is {\it planar} if it is supported by a planar open book.

\begin{remark}
\label{rmk_bind_supp_ob}
It is possible to show that $L$ binds a supporting open book decomposition if, and only if, $L$ bounds a positive global surface of section for the Reeb flow of some defining contact form.
\end{remark}

When the binding $L$ consists of periodic Reeb orbits we write $\mu_{\CZ}^\Theta$ for the Conley-Zehnder index of a binding orbit computed in a symplectic trivialization of~$\xi$ that does not wind with respect to the normal of a page. This means that if we push a connected component of $L$ in the direction of such a trivialization we obtain a loop with algebraic intersection number zero with the pages. We denote by $\rho^\Theta$ the transverse rotation number computed in the same trivialization. See subsection~\ref{sssec_orbits} for precise definitions.

\begin{definition}
\label{defn_positive_Seifert}
Consider a null-homologous link $L$ consisting of periodic orbits of some flow on a $3$-manifold. An oriented Seifert surface for $L$ is said to be positive (with respect to the flow) if it orients every component of $L$ along the flow.
\end{definition}

Our first result, which is to be seen as preliminary work towards our main result (Theorem~\ref{main1}), reads as follows.

\begin{theorem}
\label{main2}
Let the contact form $\alpha$ define a positive contact structure $\xi$ on the closed, connected and oriented $3$-manifold~$M$. Let $L$ be a null-homologous link consisting of periodic Reeb orbits, and let $b \in H_2(M,L)$. 

Consider the following assertions.
\begin{itemize}
\item[(i)] $L$ bounds a positive global surface of section of genus zero for the Reeb flow of~$\alpha$ representing the class~$b$.
\item[(ii)] $L$ binds a planar open book supporting $\xi$ with pages that are global surfaces of section for the Reeb flow of $\alpha$ and represent the class~$b$.
\item[(iii)] $L$ binds a planar open book $\Theta$ supporting $\xi$ with pages that represent the class~$b$, and the following hold:
\begin{itemize}
\item[(a)] $\mu_{\CZ}^{\Theta}(\gamma)>0$ for every component $\gamma\subset L$.
\item[(b)] All periodic orbits in $M\setminus L$ have non-zero intersection number with~$b$.
\end{itemize}
\end{itemize}
Then (iii) $\Rightarrow$ (ii) $\Rightarrow$ (i). Moreover (i) $\Rightarrow$ (iii) holds $C^\infty$-generically, in fact it holds if every periodic orbit $\gamma\subset L$ satisfying $\rho^{\Theta}(\gamma) = 0$ is hyperbolic.
\end{theorem}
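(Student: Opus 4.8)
The plan is to establish the chain of implications, with the analytic heart being the construction of pseudo-holomorphic curves that fill in the complement of the binding link $L$ and project to the desired global surface of section. I would organize the argument as follows.

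\emph{The easy implication (ii) $\Rightarrow$ (i).} A page of a planar open book that is a global surface of section is automatically a genus zero positive global surface of section representing the class $b$, since pages of an open book are oriented so that the binding is oriented coherently with the page orientation, and the supporting condition $\alpha_0 > 0$ on $L$ together with $d\alpha_0 > 0$ on the interior forces the flow to exit along the boundary; the positivity of the section with respect to the flow is then immediate. This step is essentially a matter of checking definitions.

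\emph{The main implication (iii) $\Rightarrow$ (ii).} This is where pseudo-holomorphic curves enter. Starting from the planar open book $\Theta$ supporting $\xi$ with pages representing $b$, I would first modify $\alpha$ near $L$ (without changing the Reeb dynamics away from a small neighborhood, using a Martinet-type normal form) so that each binding orbit becomes non-degenerate and so that the behavior of $\xi$ near $L$ is adapted to $\Theta$; here hypothesis (iii)(a), $\mu_{\CZ}^\Theta(\gamma) > 0$, guarantees that the binding orbits twist in the correct direction relative to the pages, which is exactly the asymptotic condition needed for a finite-energy foliation to close up along $L$. Then one sets up a symplectization $\R \times M$ with an adapted almost complex structure $\widetilde{J}$ and studies the moduli space of finite-energy planes (or low-genus finite-energy surfaces, matching the genus of the pages) asymptotic to the components of $L$, in the spirit of \cite{planar_weinstein, convex}. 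Compactness (SFT-type, using \cite{props1,props2,props3}) produces a limiting configuration; here hypothesis (iii)(b), that every periodic orbit in $M \setminus L$ has nonzero intersection number with $b$, is used to rule out bubbling off of planes asymptotic to orbits disjoint from $L$, which would otherwise break the foliation. The upshot is a stable finite-energy foliation whose projections to $M$ are the pages of an open book, and each such page, being everywhere transverse to $X_\alpha$ in its interior with boundary on $L$, is a global surface of section.

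\emph{The generic converse (i) $\Rightarrow$ (iii).} Given a genus zero positive global surface of section $\Sigma$ representing $b$, the return map is defined on its interior. To get an open book rather than a single section, I would spin $\Sigma$ under the flow: the union of forward flow-translates of $\Sigma$, suitably normalized, sweeps out $M \setminus L$ and defines a fibration over $\R/\Z$, which after checking the normal form near $L$ is the desired planar open book $\Theta$; the supporting property follows because $d\alpha$ restricts positively to $\Sigma$ (the section is positive and transverse to the flow) and $\alpha > 0$ on $L$. For (iii)(b): any periodic orbit in $M \setminus L$ must hit the interior of $\Sigma$, hence has positive intersection number with $b$. For (iii)(a): the Conley-Zehnder index $\mu_{\CZ}^\Theta(\gamma)$ of a binding orbit is computed in the page framing, and one shows $\mu_{\CZ}^\Theta(\gamma) \geq 0$ always, with equality excluded precisely when $\rho^\Theta(\gamma) = 0$ would force an elliptic-type degeneracy; the hypothesis that every such $\gamma$ is hyperbolic then upgrades $\mu_{\CZ}^\Theta(\gamma) \geq 0$ to $> 0$ (a hyperbolic orbit with vanishing rotation number has even index, hence cannot have index $0$ consistently with the sign forced by the section). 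The $C^\infty$-genericity statement follows since hyperbolicity of the finitely many binding orbits with zero rotation number is a $C^\infty$-generic condition (Kupka-Smale for Reeb flows).

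\emph{Expected main obstacle.} The serious work is entirely in (iii) $\Rightarrow$ (ii): controlling the compactness of the moduli space of finite-energy curves so that the limiting configuration is again a single page (no nodal degeneration, no breaking along orbits in $M \setminus L$, correct asymptotics at $L$). Hypothesis (iii)(b) is tailored to exclude the dangerous breaking, but translating "nonzero intersection number with $b$" into an energy or index inequality that actually rules out the relevant bubbling, and handling the case where binding orbits are degenerate (requiring the preliminary perturbation and a subsequent limiting argument to return to the original $\alpha$), is the technically delicate part. The automatic transversality / Fredholm theory for these planar curves, ensuring the moduli space is a manifold of the expected dimension foliating $M \setminus L$, is where the planarity and the index hypothesis (iii)(a) are essential.
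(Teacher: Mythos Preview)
Your outline identifies the hard implication correctly, but there are two genuine gaps.

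\textbf{(i) $\Rightarrow$ (iii)(a).} The index-parity argument does not work. A positive hyperbolic binding orbit with $\rho^\Theta(\gamma)=0$ has $\mu_{\CZ}^\Theta(\gamma)=0$, which is even, so parity excludes nothing. The argument in the paper is purely dynamical: if $\rho^\Theta(\gamma)<0$ then trajectories near $\gamma$ cross the section with negative sign, contradicting positive transversality; if $\rho^\Theta(\gamma)=0$ and $\gamma$ is hyperbolic, its stable manifold contains infinitely long half-trajectories in $M\setminus L$ that converge to $\gamma$ and hence never hit the section, contradicting the global-section property. Thus $\rho^\Theta(\gamma)>0$, which gives $\mu_{\CZ}^\Theta(\gamma)\geq1$. (The paper also does not construct $\Theta$ by flowing the section; it takes the existence of the supporting planar open book as already established and argues only (a) and (b).)

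\textbf{(iii) $\Rightarrow$ (ii).} The missing ingredient is the \emph{existence} of a seed curve. Modifying $\alpha$ near $L$ via a Martinet tube does not by itself produce finite-energy spheres asymptotic to $L$; the Abbas--Cieliebak--Hofer/Wendl construction you cite only furnishes such curves for a very particular adapted form $\alpha_+$, not for the given $\alpha$ or a small perturbation of it. The paper's mechanism is a cobordism: Wendl provides a non-degenerate $\alpha_+>\alpha$ with $L$ still periodic, $\mu_{\CZ}^\Theta=1$ on $L$, and a $\widetilde J_+$-holomorphic foliation of $M\setminus L$ by pages; one then interpolates from $\widetilde J_+$ at the positive end to $\widetilde J$ at the negative end (keeping $\R\times L$ holomorphic) and pushes Wendl's curves down. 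The SFT-limit is forced, via Siefring's intersection theory together with (iii)(b), to be $\R\times L$ stacked on a single connected genus-zero $\widetilde J$-curve with no negative ends. A second crucial point you omit is that this limit curve lies in a moduli space with exponential weights $\delta_i<0$ chosen so that the $\delta$-weighted Conley--Zehnder index of each $\gamma_i$ equals $1$; this is what makes the weighted Fredholm index equal to $2$, gives automatic transversality, and guarantees the resulting curves foliate $M\setminus L$ as an open book. Without the cobordism input you have no curve to start from, and without the weights you have no index control. The degenerate case is then a further limit over non-degenerate approximations $f_k\alpha\to\alpha$, requiring uniform exponential-decay estimates at the punctures.
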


\begin{remark}
The $C^\infty$-generic condition used to prove (i) $\Rightarrow$ (iii) holds if all components of $L$ are non-degenerate periodic Reeb orbits.
\end{remark}

\begin{remark}
In~\cite{SFS} the reader finds a version of Theorem~\ref{main2} for general flows in dimension three, which is based on a result of Fried~\cite{fried}. 
The basic mechanism of proof comes from linking assumptions on the invariant measures of the flow, as in the theory of asymptotic cycles initiated by Schwartzman~\cite{schwartzman}. 
Proofs in~\cite{SFS} use techniques from Sullivan's work~\cite{sullivan}. 
The paper~\cite{ghys} by Ghys provides a rich introduction to this topic, with many new ideas.
\end{remark}

Now we work towards our main result, which concerns Reeb flows on contact manifolds $(M,\xi)$ for which $\xi$ is a trivial vector bundle over $M$. This provides the closed Reeb orbits with an absolute Conley-Zehnder index, allowing us to formulate a much weaker linking assumption than the one in Theorem~\ref{main2}.

Given a defining contact form $\alpha$ on $(M,\xi)$, we fix a global symplectic trivialization $\tau_{\rm gl}$ of $(\xi=\ker\alpha,d\alpha)$ and denote Conley-Zehnder indices computed in this trivialization by $\mu_{\rm CZ}^{\tau_{\rm gl}}$. Let $L$ be a null-homologous link consisting of periodic Reeb orbits. Let $\Sigma$ be an oriented Seifert surface for $L$ which is positive as in Definition~\ref{defn_positive_Seifert}. For each connected component $\gamma\subset L$ define $$ m(\gamma,\Sigma)\in\Z $$ to be the winding number, measured in $\tau_{\rm gl}$, of a non-vanishing vector field in $T_\gamma\Sigma \cap \xi$. In particular, we have $$ -{\rm sl}(L,\Sigma)=\sum_\gamma m(\gamma,\Sigma), $$ for the self-linking number of $L$ with respect to $\Sigma$. Consider
\begin{equation}
\label{important_numbers}
C_0 = \sum_{\{\gamma \mid m(\gamma,\Sigma)\geq0\}} (m(\gamma,\Sigma)+1),
\end{equation}
and set
\begin{equation}
\label{def_intervalo}
I(L,\Sigma,\tau_{\rm gl}) := [-2C_0+1,2C_0+1] \cap \Z.
\end{equation}
Our main result, formulated below only for disconnected links of periodic orbits since the connected case is handled by Theorem~\ref{thm_L_connected}, reads as follows.

\begin{theorem}
\label{main1}
Let the contact form $\alpha$ define a positive contact structure $\xi$ on the closed, connected and oriented $3$-manifold~$M$. Suppose that there exists a symplectic trivialization $\tau_{\rm gl}$ of $(\xi,d\alpha)$. Let $L$ be a null-homologous disconnected link formed by periodic Reeb orbits, and let $b\in H_2(M,L)$.

Consider the following assertions.
\begin{itemize}
\item[(i)] $L$ bounds a positive global surface of section of genus zero for the Reeb flow of~$\alpha$ representing the class~$b$.
\item[(ii)] $L$ binds a planar open book supporting $\xi$ with pages that are global surfaces of section for the Reeb flow of $\alpha$ and represent the class~$b$.
\item[(iii)] $L$ binds a planar open book supporting $\xi$ with pages that represent the class~$b$, and the following hold:
\begin{itemize}
\item[(a)] $\mu_{\CZ}^{\Theta}(\gamma)>0$ for every component $\gamma\subset L$.
\item[(b)] All periodic Reeb orbits in $M\setminus L$ satisfying $\mu_{\CZ}^{\tau_{\rm gl}} \in I(L,{\rm page},\tau_{\rm gl})$ have non-zero intersection number with~$b$.
\end{itemize}
\end{itemize}
Then (iii) $\Rightarrow$ (ii) $\Rightarrow$ (i). Moreover (i) $\Rightarrow$ (iii) holds $C^\infty$-generically, in fact it holds if every periodic orbit $\gamma\subset L$ satisfying $\rho^{\Theta}(\gamma)=0$ is hyperbolic.
\end{theorem}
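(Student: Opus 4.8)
The plan is the following. The implications (ii) $\Rightarrow$ (i) and (i) $\Rightarrow$ (iii) are immediate from Theorem~\ref{main2}. Assertions (i) and (ii) of the present statement coincide verbatim with assertions (i) and (ii) of Theorem~\ref{main2}, so (ii) $\Rightarrow$ (i) is already contained there. For (i) $\Rightarrow$ (iii): under the stated $C^\infty$-generic (hyperbolicity) hypothesis, Theorem~\ref{main2} yields that $L$ binds a planar open book $\Theta$ supporting $\xi$ with pages representing $b$, that $\mu_{\CZ}^{\Theta}(\gamma)>0$ for every component $\gamma\subset L$, and that \emph{every} periodic orbit of $M\setminus L$ has non-zero intersection number with $b$ --- and the last assertion is strictly stronger than condition (iii)(b) of the present theorem, which only constrains orbits with $\mu_{\CZ}^{\tau_{\rm gl}}\in I(L,{\rm page},\tau_{\rm gl})$. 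Hence the only implication requiring a genuinely new argument is (iii) $\Rightarrow$ (ii), and the entire point is that the linking hypothesis has been weakened from \emph{all} periodic orbits of $M\setminus L$ to those whose Conley--Zehnder index, measured in the global trivialization $\tau_{\rm gl}$, lies in the finite window $I(L,{\rm page},\tau_{\rm gl})$.

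To prove (iii) $\Rightarrow$ (ii) I would adapt the construction of finite-energy foliations from~\cite{convex,93,planar_weinstein}. Working in the symplectization $(\R\times M,\, d(e^{a}\alpha))$ with an almost complex structure $J$ compatible with $\alpha$ and adapted to the open book $\Theta$ on a tubular neighbourhood $L\times\D$ of the binding, one first builds a \emph{local} finite-energy foliation near $\R\times L$: the trivial cylinders $\R\times\gamma$ over the components $\gamma\subset L$, together with a family of rigid finite-energy curves asymptotic to the $\gamma$, foliate a neighbourhood of $\R\times L$. Condition (iii)(a), $\mu_{\CZ}^{\Theta}(\gamma)>0$, is exactly what makes this local model work: it says that near each binding orbit the flow twists in the positive direction relative to a page, so that a page may be isotoped rel $L$ to be transverse to $X_\alpha$ near $L$, and in holomorphic terms it fixes the correct asymptotic winding of the local curves. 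One then grows this foliation inward across the pages by a Bishop-type continuation, using automatic transversality for genus-zero curves and positivity of intersections to keep the leaves embedded and pairwise disjoint, as in~\cite{convex,93}. If the continuation succeeds over all of $\R\times M$, the resulting global foliation $\mathcal F$ projects to a planar open book of $M$ with binding $L$, supporting $\xi$, with pages representing $b$ and transverse to $X_\alpha$; such pages are global surfaces of section, which is assertion (ii).

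The main obstacle is the compactness of this continuation, and this is where the present theorem departs from Theorem~\ref{main2}. A priori, by SFT compactness a sequence of leaves of $\mathcal F$ could converge to a holomorphic building with extra components asymptotic to periodic Reeb orbits of $M\setminus L$, and the continuation would stop. I would exclude this as follows. The $d\alpha$-area of every leaf is fixed by $b$ and the leaves have genus zero, so the combinatorics of any limiting building is bounded; working in $\tau_{\rm gl}$ and keeping track of how the page framing of $\xi$ along $L$ winds relative to $\tau_{\rm gl}$ --- that is, of the integers $m(\gamma,\Sigma)$ --- one shows that any orbit $\gamma'\subset M\setminus L$ arising as the asymptotic limit of a finite-energy plane inside such a building satisfies $\mu_{\CZ}^{\tau_{\rm gl}}(\gamma')\in[-2C_0+1,\,2C_0+1]=I(L,{\rm page},\tau_{\rm gl})$. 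This is precisely the role of $C_0=\sum_{\{\gamma\,\mid\, m(\gamma,\Sigma)\ge 0\}}(m(\gamma,\Sigma)+1)$: it bounds the total winding, hence index, budget available to a finite-energy plane of the allowed area and homotopy type. On the other hand, projecting such a plane to $M$ yields an embedded disk contained in $M\setminus L$ (disjointness from the trivial cylinders over $L$ follows from positivity of intersections) with boundary $\gamma'$, so $[\gamma']=0$ in $H_1(M\setminus L)$ and the intersection number of $\gamma'$ with $b$ vanishes --- contradicting (iii)(b). Thus no breaking occurs, the continuation is global, and (ii) holds.

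Beyond the area and index estimates, I expect the delicate points to be: making the index bookkeeping sharp enough to land exactly inside $I(L,{\rm page},\tau_{\rm gl})$, which requires the fine asymptotic analysis of~\cite{props1,props2} and careful control of the winding of asymptotic eigenfunctions relative to $\tau_{\rm gl}$; verifying that the degenerate buildings have the controlled form used above, in particular that one may always peel off finite-energy planes asymptotic to a single orbit of $M\setminus L$ rather than a cascade of cylinders, which is handled iteratively together with the intersection theory of punctured curves; and ruling out nodal degenerations of the leaves in the interior of $\R\times M$, handled as in~\cite{convex,props3} via homotopy invariance of the Fredholm index and positivity of intersections.
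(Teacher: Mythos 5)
Your decomposition of the problem is correct, and the observation that (ii) $\Rightarrow$ (i) and (i) $\Rightarrow$ (iii) are inherited verbatim from Theorem~\ref{main2} matches the paper. However, for (iii) $\Rightarrow$ (ii) your route diverges from the paper's. The paper does \emph{not} grow a Bishop family outward from a local model near $\R\times L$; instead it takes Wendl's already global finite-energy foliation for a non-degenerate model form $\alpha_+ > \alpha$ supported by $\Theta$, views its leaves in a cobordism $\jbar \in \J_{\Omega,L}(\jtil_-,\jtil_+)$, translates them down in the $\R$-direction, and analyzes the SFT limit (Proposition~\ref{prop_main_compactness_global}). The final reduction is then clean: Proposition~\ref{prop_main_existence_non_deg_global} produces a global surface of section for perturbed forms keeping any chosen orbit $\gamma \subset M \setminus L$ fixed, which forces ${\rm int}(\gamma,b) \neq 0$ for \emph{all} such $\gamma$ and puts one back under the hypotheses of Theorem~\ref{main2}.

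The genuine gap in your proposal is the exclusion of bad breaking. You assert that any orbit $\gamma'$ of $M\setminus L$ showing up in a limit building ``arises as the asymptotic limit of a finite-energy plane,'' and then invoke the capping disk to contradict (iii)(b). Neither step survives scrutiny. A degenerating sequence of genus-zero curves with $n\geq 2$ positive punctures need not bubble off a plane: the bottom story can consist of components with several punctures each, and the top stories can all be cylinders. So there may be no plane at all to peel off, and no component whose single asymptotic limit is a nullhomologous loop in $M\setminus L$. Moreover, even granting a plane, the claim that its asymptotic limit must have $\mu_{\CZ}^{\tau_{\rm gl}}$ inside $I(L,{\rm page},\tau_{\rm gl})$ is asserted as a ``winding budget'' fact but not derived; the Fredholm index of a plane asymptotic to $\gamma'$ is $\mu_{\CZ}^{\tau_{\rm gl}}(\gamma')-1$ and a gluing-index argument alone does not pin it inside the window because multiply covered or low-index components elsewhere in the building can absorb index.

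What the paper actually does in place of this is the ``cascade'' mechanism of Claims~V--VII (inside the proof of Proposition~\ref{prop_main_compactness_global}): one shows, via $\wind_\pi \geq 0$ and the Fredholm inequality of Theorem~\ref{thm_transversality} together with the identity $\sum_i (m_i + 1) = 2$ from Lemma~\ref{lemsl}, that any component with a positive (resp.\ negative) puncture at an orbit of $M\setminus L$ whose index lies below (resp.\ above) $I_0$ forces another such puncture elsewhere, of the same character, one level up or down. Iterating produces infinitely many distinct components of $\sqcup_m S_m$, which is absurd. This cascade argument, not a plane-bubbling argument, is what makes the finite index window $I_0 \subset I(L,{\rm page},\tau_{\rm gl})$ sufficient. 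Your sketch omits precisely this mechanism, and without it the reduction to the linking hypothesis (iii)(b) does not go through.
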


\begin{remark}
In Theorem~\ref{main1} the set of periodic orbits that have to ``link'' with $L$ for the implication (iii) $\Rightarrow$ (ii) to hold is drastically smaller than the one in Theorem~\ref{main2}. Linking with all periodic orbits can not be directly obtained from the above hypotheses.
In the proof of (iii) $\Rightarrow$ (ii) one only needs to check linking with the periodic orbits with Conley-Zehnder index in $I(L,{\rm page},\tau_{\rm gl})$ and period less than some positive constant depending on $\alpha$ and the open book. Generically this is a finite set of periodic orbits.
\end{remark}

\begin{remark}
The class of closed contact $3$-manifolds which are planar and whose contact structure admit a global trivialization contains the tight $3$-sphere, the standard lens spaces $L(p,p-1)$, and is closed under contact connected sums. 
As a consequence, one can build a large family of examples that can carry contact forms satisfying the assumptions of Theorem~\ref{main1}, some of which are particularly relevant in Celestial Mechanics like, for instance, $(\R P^3 \# \R P^3,\xi_{\rm std} \# \xi_{\rm std})$.
Persistence of planarity under contact connected sums can be proved with van Koert's book sums~\cite{vK_thesis}, see~\cite[subsection~2.2]{vK_no_GSS} for more details. Persistence of triviality of the contact structure as a symplectic vector bundle can be checked by hand. 
\end{remark}

We compute $I(L,{\rm page},\tau_{\rm gl})$ in two examples. Consider $\R^4$ with coordinates $(q_1,p_1,q_2,p_2)$ and its standard symplectic form $\omega_0 = \sum_j dq_j \wedge dp_j$. The primitive $\frac{1}{2} \sum_j q_jdp_j-p_jdq_j$ induces a contact form $\lambda_0$ on $S^3 = \{ q_1^2+p_1^2+q_2^2+p_2^2 = 1 \}$, and $\xi_{\rm std} = \ker \lambda_0$ is called the {\it standard} contact structure on~$S^3$. The Reeb flow of $\lambda_0$ is $\pi$-periodic, its orbits are the fibers of the Hopf fibration $S^3 \to S^2$.  Since $\lambda_0$ is antipodal symmetric, it descends to a contact form on $\R P^3 = S^3/\{\pm 1\}$ which we again denote by $\lambda_0$. It defines a contact structure which we again denote by $\xi_{\rm std}$ and call standard. The Hopf fibration on $S^3$ is antipodal symmetric, hence it induces a fibration of $\R P^3$ by circles which we again call Hopf fibers. Any link transversely isotopic to a pair of Hopf fibers will be called a Hopf link, both in $(S^3,\xi_{\rm std})$ or in $(\R P^3,\xi_{\rm std})$.

Hopf links in $(S^3,\xi_{\rm std})$ or in $(\R P^3,\xi_{\rm std})$ bind supporting open book decompositions with annulus-like pages. We can use an ambient contact isotopy to put a Hopf link in a normal form, for instance it is interesting to see $\R P^3$ as the unit tangent bundle of the round metric on $S^2$ and realize a Hopf link $L$ as velocity vectors $\gamma_1 = \dot c$, $\gamma_2 = -\dot c$ of a unit speed great circle $c$ in $S^2$ traversed in both directions. In this form a Birkhoff annulus is a page of the aforementioned open book decomposition~$\Theta$. Let $\Sigma$ be a page of $\Theta$ and let $\tau_{\rm gl}$ be a trivialization of $\xi_{\rm std}$. The winding number measured in $\tau_{\rm gl}$ of a non-vanishing vector field in $T_{\gamma_i}\Sigma \cap \xi_{\rm std}$ vanishes for $i=1,2$ and thus $m(\gamma_i,\Sigma)=0$. Using \eqref{important_numbers} and \eqref{def_intervalo} we compute $C_0=2$ and $I(L,\Sigma,\tau_{\rm gl})=\{-3,\ldots, 5\}$. This interval is not sharp, and we expect it to be further shortened  to $\{0,1,2\}$ in the non-degenerate case. The same computation goes through for Hopf links in $(S^3,\xi_{\rm std})$.

A version of Theorem~\ref{main1} was already available when $L$ is connected, generalizing Theorem~\ref{thm_tight3sphere} to possibly degenerate contact forms. When $\xi$ is supported by a planar open book decomposition with connected binding then $(M,\xi)=(S^3,\xi_{\rm std})$ and the binding is unknotted with self-linking number~$-1$. There exists a global trivialization $\tau_{\rm gl}$ of $\xi_{\rm std}$ which is symplectic with respect to the standard symplectic structure of $\R^4$, and is unique up to homotopy. One can then use $\tau_{\rm gl}$ to define Conley-Zehnder indices $\mu_{\CZ}^{\tau_{\rm gl}}$ of periodic Reeb orbits. Note also that global surfaces of section for Reeb flows with connected boundary must be positive (Stokes' theorem), and if they have genus zero (disks) then their boundaries are unknotted with self-linking number $-1$. Finally, on the three-sphere any Seifert surface for a periodic Reeb orbit singles out a homotopy class of symplectic trivializations of the contact structure along the orbit in an obvious manner. This homotopy class does not depend on the Seifert surface. We write $\mu_{\CZ}^{\rm Seifert}$ for the associated Conley-Zehnder index.

\begin{theorem}
\label{thm_L_connected}
Let $\gamma$ be a periodic orbit of a Reeb flow on the tight three-sphere. Consider the following assertions.
\begin{itemize}
\item[(i)] $\gamma$ bounds a disk-like global surface of section for the Reeb flow.
\item[(ii)] $\gamma$ binds a planar supporting open book with pages that are disk-like global surfaces of section for the Reeb flow.
\item[(iii)] $\gamma$ is unknotted, has self-linking number $-1$, and the following hold:
\begin{itemize}
\item[(a)] $\mu_{\CZ}^{\rm Seifert}(\gamma) = \mu_{\CZ}^{\tau_{\rm gl}}(\gamma) - 2 > 0$.
\item[(b)] All periodic Reeb orbits $\gamma'$ in the complement of $\gamma$ satisfying either $\mu_{\CZ}^{\tau_{\rm gl}}(\gamma')=2$, or $\mu_{\CZ}^{\tau_{\rm gl}}(\gamma')=1$ and $\gamma'$ is degenerate, are linked with $\gamma$.
\end{itemize}
\end{itemize}
Then (iii) $\Rightarrow$ (ii) $\Rightarrow$ (i). Moreover (i) $\Rightarrow$ (iii) holds $C^\infty$-generically, in fact it holds if $\gamma$ is hyperbolic when $\rho^{\rm \tau_{\rm gl}}(\gamma)=1$.
\end{theorem}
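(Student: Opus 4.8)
The plan is to prove the two soft implications (ii)$\Rightarrow$(i) and (i)$\Rightarrow$(iii) directly, and to put the real work into (iii)$\Rightarrow$(ii), which is a pseudo-holomorphic curve argument. The implication (ii)$\Rightarrow$(i) is immediate: a page of the open book in (ii) is by hypothesis a disk-like global surface of section bounded by $\gamma$. For (i)$\Rightarrow$(iii), suppose $\gamma$ bounds a disk-like global surface of section $\Sigma$. Then $\gamma=\partial\Sigma$ is unknotted and, as recalled in the introduction, has self-linking number $-1$; in particular $\gamma$ binds the standard supporting open book of $(S^3,\xi_{\rm std})$ with disk pages. From $m(\gamma,\Sigma)=-{\rm sl}(\gamma,\Sigma)=1$ and the change-of-trivialization formula for Conley--Zehnder indices one gets $\mu_{\CZ}^{\rm Seifert}(\gamma)=\mu_{\CZ}^{\tau_{\rm gl}}(\gamma)-2$. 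Condition (iii)(b) holds without any genericity: any periodic Reeb orbit $\gamma'\subset S^3\setminus\gamma$ crosses $\Sigma$ because $\Sigma$ is a global surface of section, each crossing is positive because the Reeb flow is positively transverse to ${\rm int}\,\Sigma$, and since $\Sigma$ is a Seifert surface for $\gamma$ the resulting positive algebraic intersection number equals ${\rm lk}(\gamma,\gamma')$, so $\gamma'$ is linked with $\gamma$. Finally, for (iii)(a) a standard argument as in~\cite{duke}, using positivity of $\Sigma$ and the relation between the Conley--Zehnder index and the asymptotic winding of holomorphic curves, gives $\mu_{\CZ}^{\rm Seifert}(\gamma)\ge 1$ unless $\gamma$ is degenerate with $\rho^{\rm Seifert}(\gamma)=0$; but $\rho^{\rm Seifert}(\gamma)=0$ is the same as $\rho^{\tau_{\rm gl}}(\gamma)=1$, which by hypothesis forces $\gamma$ to be hyperbolic, hence non-degenerate, so this case does not occur. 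Since $C^\infty$-generically every periodic orbit is non-degenerate, this settles the generic statement.

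For (iii)$\Rightarrow$(ii) I would follow the Bishop-family construction of Hofer--Wysocki--Zehnder. Write $\xi_{\rm std}=\ker\alpha$ and work in the symplectization $\R\times S^3$ with a generic $d\alpha$-compatible almost complex structure adapted to $\gamma$. Because $\gamma$ is unknotted with self-linking $-1$ and, by (iii)(a), $\mu_{\CZ}^{\rm Seifert}(\gamma)>0$ --- so the Reeb flow twists fast enough near $\gamma$ --- a Bishop family of finite-energy pseudo-holomorphic disks emanates from $\gamma$, with boundaries winding once around a small solid-torus neighborhood of $\gamma$. Using the compactness and automatic-transversality results of~\cite{93,convex,props1,props2,props3} one continues this family and shows it sweeps out $S^3\setminus\gamma$; the pages of the open book in (ii) are then the $S^3$-projections of the resulting finite-energy planes asymptotic to $\gamma$, and they are global surfaces of section because the Reeb vector field is transverse to them. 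Two failure modes must be excluded. Boundary bubbling is ruled out exactly as in~\cite{93}. Interior bubbling would produce a finite-energy plane (or cylinder) asymptotic to a Reeb orbit $P\subset S^3\setminus\gamma$; an index-and-action bookkeeping carried out in the trivialization $\tau_{\rm gl}$ --- which is especially sharp here because $m(\gamma,\Sigma)=1$, so that the Conley--Zehnder window that is an interval in Theorem~\ref{main1} collapses to $\{2\}$ together with $1$ in the degenerate case --- forces $\mu_{\CZ}^{\tau_{\rm gl}}(P)=2$, or $\mu_{\CZ}^{\tau_{\rm gl}}(P)=1$ with $P$ degenerate, exactly as for disk-like sections in~\cite{duke}. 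But the breaking plane projects to a surface in $S^3\setminus\gamma$ with ideal boundary $P$, so ${\rm lk}(\gamma,P)=0$ and $P$ is \emph{not} linked with $\gamma$, contradicting (iii)(b). Hence no bubbling occurs and the family closes up to the desired open book. This is the connected-binding analogue of Theorem~\ref{main1}, and in the non-degenerate case it reduces to Theorem~\ref{thm_tight3sphere}.

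I expect the main obstacle to be the degenerate case of (iii)$\Rightarrow$(ii). When $\gamma$, or an orbit $P$ that could bubble off, is degenerate, the asymptotic operator governing the corresponding end of a curve has $0$ in its spectrum, and the clean exponential-decay and asymptotic-winding estimates of the non-degenerate theory must be replaced by their refinements from~\cite{props1,props2,props3}: one has to control the ends of the Bishop family through the leading eigenfunction of the asymptotic operator and its winding number. Maintaining this control is precisely what keeps the index/linking bookkeeping above valid and pins the possible bubbling orbits to $\mu_{\CZ}^{\tau_{\rm gl}}(P)=2$, or $\mu_{\CZ}^{\tau_{\rm gl}}(P)=1$ with $P$ degenerate, thereby identifying the sharp obstruction set in (iii)(b). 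A secondary delicate point --- relevant to (i)$\Rightarrow$(iii) rather than (iii)$\Rightarrow$(ii) --- is the borderline twisting regime $\rho^{\rm Seifert}(\gamma)=0$, where the hyperbolicity assumption on $\gamma$ is exactly what is needed to guarantee $\mu_{\CZ}^{\rm Seifert}(\gamma)>0$.
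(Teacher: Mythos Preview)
Your argument for (i)$\Rightarrow$(iii)(a) has a genuine gap. You claim that positivity of $\Sigma$ forces $\mu_{\CZ}^{\rm Seifert}(\gamma)\ge 1$ \emph{unless $\gamma$ is degenerate with $\rho^{\rm Seifert}(\gamma)=0$}, and then conclude that the hyperbolicity hypothesis (which gives non-degeneracy) rules this case out. But a non-degenerate positive hyperbolic orbit with $\rho^{\rm Seifert}(\gamma)=0$ has $\mu_{\CZ}^{\rm Seifert}(\gamma)=0$, not $\ge 1$, so non-degeneracy alone does not finish the argument. The paper's proof proceeds differently: from $\mu_{\CZ}^{\rm Seifert}(\gamma)\le 0$ one gets $\rho^{\rm Seifert}(\gamma)\le 0$; the strict inequality $\rho^{\rm Seifert}(\gamma)<0$ is excluded because nearby trajectories would then hit $\Sigma$ negatively; so $\rho^{\rm Seifert}(\gamma)=0$, and the hypothesis makes $\gamma$ hyperbolic. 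The contradiction then comes not from non-degeneracy but from dynamics: the \emph{stable manifold} of $\gamma$ contains half-trajectories in $S^3\setminus\gamma$ that converge to $\gamma$ and therefore never return to $\Sigma$, contradicting the global-section property. This stable-manifold step is the crux, and it is absent from your outline. (Also, your appeal to ``asymptotic winding of holomorphic curves'' in this direction is misplaced; (i)$\Rightarrow$(iii) is a purely dynamical argument with no holomorphic input.)

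For (iii)$\Rightarrow$(ii), the paper does not reprove anything: it cites Theorem~\ref{thm_tight3sphere} in the non-degenerate case and \cite[Theorem~1.7]{elliptic} in general. Your sketch is in the right spirit (a family of finite-energy planes asymptotic to $\gamma$, with bubbling excluded via the index/linking constraint), but note a terminological slip: the relevant objects are finite-energy \emph{planes} in the symplectization asymptotic to $\gamma$ at the puncture, not Bishop disks with boundary on a totally real surface. There is no boundary condition here and hence no boundary bubbling to exclude; the compactness issue is entirely about SFT-type breaking at the puncture, and that is where the linking hypothesis (iii)(b) enters.
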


\begin{proof}
When all periodic Reeb orbits are non-degenerate the equivalence of (i), (ii) and (iii) is given by Theorem~\ref{thm_tight3sphere}. If non-degeneracy is dropped then the implications (iii) $\Rightarrow$ (ii) $\Rightarrow$ (i) are contained in~\cite[Theorem~1.7]{elliptic}.


The proof that (i) $\Rightarrow$ (iii) holds under the above stated $C^\infty$-generic condition goes as follows. Assume that $\gamma$ bounds a disk-like global surface of section. Since its self-linking number is necessarily equal to $-1$, we compute $\mu_{\CZ}^{\rm \tau_{\rm gl}}(\gamma)-2 = \mu_{\CZ}^{\rm Seifert}(\gamma)$. Condition (iii-b) is clearly satisfied, and we need to show that (iii-a) also holds. Suppose, by contradiction, that $\mu_{\CZ}^{\rm \tau_{\rm gl}}(\gamma)\leq2$. Then the rotation number of the transverse linearized dynamics along $\gamma$ is non-positive when computed in a Seifert framing. It must vanish since otherwise nearby trajectories would intersect the global surface of section negatively. Hence $\mu_{\CZ}^{\rm \tau_{\rm gl}}(\gamma)\in\{1,2\}$ and $\gamma$ is degenerate if $\mu_{\CZ}^{\rm \tau_{\rm gl}}(\gamma)=1$. By assumption $\gamma$ is hyperbolic, hence non-degenerate, and we get $\mu_{\CZ}^{\rm \tau_{\rm gl}}(\gamma)=2$. Its stable manifold contains infinitely long trajectories that never touch the global surface of section, and this is a contradiction.
\end{proof}

\subsection{Applications}

We derive two applications. The first is Birkhoff's theorem, thus showing that Theorem~\ref{main2} is a generalization of Theorem~\ref{thm_birkhoff}.

\begin{proof}[Proof of Theorem~\ref{thm_birkhoff}]
Suppose that the Birkhoff annulus $A$ associated to an embedded closed geodesic $\gamma$ on a positively curved two-sphere is not a global surface of section. The contact structure on the unit sphere bundle induced by the Hilbert form associated to the metric is diffeomorphic to the standard contact structure on $\R P^3$, and $A$ is one page of a supporting open book decomposition. By positivity of the curvature, the rotation numbers of the binding orbits $\dot\gamma$ and $-\dot\gamma$ with respect to the pages are strictly positive. In view of implications (iii) $\Rightarrow$ (ii) $\Rightarrow$ (i) in Theorem~\ref{main2}, we conclude that there exists a closed geodesic $\beta$, say parametrized by unit speed, such that $\dot\beta$ has zero algebraic intersection number with~$A$ in the unit sphere bundle. Since the geodesic vector field is transverse to the interior of $A$, we conclude that $\beta$ does not touch $\gamma$ on the base, hence it is contained in the interior of one of the hemispheres determined by~$\gamma$. The complement of $\gamma \cup \beta$ has a distinguished connected component $\Omega$ containing boundary points in both $\gamma$ and $\beta$. Topologically, the closure of $\Omega$ is an annulus, its boundary has two components: $\gamma$ is one of them, the other consists of finitely many geodesic arcs in $\beta$. The sum of the external angles (there may be none in case $\beta$ is embedded) in the boundary of $\Omega$ is non-negative. Since the Euler characteristic of the annulus vanishes, we conclude from the Gauss-Bonnet theorem that the integral of the Gaussian curvature over $\Omega$ is non-positive, contradicting the assumption that the curvature is everywhere strictly positive.
\end{proof}

Our second application reads as follows.

\begin{proposition}
\label{thm_annulus_dyn_convex}
Let $\alpha$ be a contact form on $(S^3,\xi_{\rm std})$. 
Consider a pair of periodic Reeb orbits $\gamma_0,\gamma_1$ forming a Hopf link, with Conley-Zehnder indices $\geq 1$ in a global frame. 
Assume that $\gamma_0$ bounds a disk-like global surface of section, and that periodic orbits $\gamma$ in the complement of $\gamma_1$ with Conley-Zehnder index in $\{-3,\dots,5\}$ satisfy ${\rm link}(\gamma,\gamma_1)\geq0$. 
Then $\gamma_0\cup\gamma_1$ bounds an annulus-like global surface of section.
\end{proposition}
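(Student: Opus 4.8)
The plan is to apply Theorem~\ref{main1} to the link $L=\gamma_0\cup\gamma_1$, taking $b$ to be the homology class of an annulus page of a supporting open book bound by $L$. First I would pin down the open book. Because $\gamma_0$ bounds a disk-like global surface of section $D$, which is automatically positive, every periodic Reeb orbit disjoint from $\gamma_0$ meets $D$ at least once and only with sign $+1$; applied to $\gamma_1$ this gives ${\rm link}(\gamma_0,\gamma_1)=\gamma_1\cdot D\ge 1$, and since $\gamma_0\cup\gamma_1$ is a Hopf link this forces ${\rm link}(\gamma_0,\gamma_1)=1$. Thus $L$, oriented by the Reeb flow, is the positive Hopf link; after an ambient contact isotopy putting it in normal form it binds a supporting open book $\Theta$ with annulus-like pages, oriented so that the boundary orientation of a page $\Sigma$ agrees with the Reeb orientation on each of $\gamma_0$ and $\gamma_1$, i.e. $\partial\Sigma=\gamma_0+\gamma_1$ as oriented curves. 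Set $b=[\Sigma]\in H_2(S^3,L)$; note $L$ is disconnected and null-homologous, so Theorem~\ref{main1} applies.

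Next I would read off the numerical data attached to $(L,\Sigma,\tau_{\rm gl})$. As in the examples following Theorem~\ref{main1} one has $m(\gamma_i,\Sigma)=0$ for a Hopf link, so \eqref{important_numbers} and \eqref{def_intervalo} give $C_0=2$ and $I(L,\Sigma,\tau_{\rm gl})=\{-3,\dots,5\}$; note this is exactly the index window appearing in the hypothesis. Moreover $m(\gamma_i,\Sigma)=0$ means that the page trivialization of $\xi$ along $\gamma_i$ is homotopic to $\tau_{\rm gl}$, so $\mu_{\CZ}^{\Theta}(\gamma_i)=\mu_{\CZ}^{\tau_{\rm gl}}(\gamma_i)\ge 1>0$ for $i=0,1$, which is condition (iii)(a) of Theorem~\ref{main1}.

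It then remains to check condition (iii)(b): every periodic Reeb orbit $\gamma\subset S^3\setminus L$ with $\mu_{\CZ}^{\tau_{\rm gl}}(\gamma)\in I(L,\Sigma,\tau_{\rm gl})=\{-3,\dots,5\}$ must have non-zero intersection number with $b$. Since $\Sigma$ is a Seifert surface for the flow-oriented link $L$ with $\partial\Sigma=\gamma_0+\gamma_1$, that intersection number is ${\rm link}(\gamma,\gamma_0)+{\rm link}(\gamma,\gamma_1)$. As above ${\rm link}(\gamma,\gamma_0)=\gamma\cdot D\ge 1$, because $\gamma$ is disjoint from $\gamma_0$ and $D$ is a positive global surface of section, while ${\rm link}(\gamma,\gamma_1)\ge 0$ holds by hypothesis since $\gamma$ lies in the complement of $\gamma_1$ and has Conley-Zehnder index in $\{-3,\dots,5\}$. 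Hence $\gamma\cdot b\ge 1\ne 0$. With (iii)(a) and (iii)(b) established, the chain (iii)$\Rightarrow$(ii)$\Rightarrow$(i) of Theorem~\ref{main1} produces a positive global surface of section of genus zero bounded by $L=\gamma_0\cup\gamma_1$ and representing $b$; being of genus zero with exactly two boundary components, it is an annulus-like global surface of section.

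The proof has no analytic content of its own: everything hard sits inside Theorem~\ref{main1}. The only points needing care are the orientation bookkeeping — that $\Sigma$ is a genuine Seifert surface for the flow-oriented $L$, so the intersection number with $b$ splits as ${\rm link}(\gamma,\gamma_0)+{\rm link}(\gamma,\gamma_1)$, and that positivity of the disk-like section $D$ forces ${\rm link}(\gamma,\gamma_0)\ge 1$ with this sign — together with the bookkeeping of indices under the change of trivialization, which reduces to the computation $m(\gamma_i,\Sigma)=0$, $C_0=2$, $I(L,\Sigma,\tau_{\rm gl})=\{-3,\dots,5\}$ recorded in the examples after Theorem~\ref{main1}.
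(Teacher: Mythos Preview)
Your proof is correct and follows essentially the same approach as the paper's: both verify hypotheses (iii)(a) and (iii)(b) of Theorem~\ref{main1} for the annulus open book bound by the Hopf link, using that $m(\gamma_i,\Sigma)=0$ (hence $\mu_{\CZ}^\Theta=\mu_{\CZ}^{\tau_{\rm gl}}\ge1$ and $I=\{-3,\dots,5\}$) and that the intersection number of a periodic orbit with a page splits as the sum of linking numbers with $\gamma_0$ and $\gamma_1$, the first being $\ge1$ from the disk-like global surface of section and the second $\ge0$ by hypothesis. The paper packages the latter computation via the closed $2$-cycle $A\#\overline{D_0}\#\overline{D_1}$, while you phrase it directly as a Seifert-surface linking identity; these are the same argument.
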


\begin{proof}
The link $\gamma_0 \cup \gamma_1$ binds a supporting open book decomposition $\Theta$ with annulus-like pages. 
Let $A$ denote a page of $\Theta$ oriented in such a way that the boundary orientation on $\gamma_0\cup\gamma_1$ coincides with the flow orientation.
Let $D_0$ be a disk-like global surface of section bounded by $\gamma_0$, and let $D_1$ be any embedded disk spanned by $\gamma_1$. 
Both $D_j$ are oriented so that the boundary orientation on $\gamma_j$ coincides with the flow orientation.
Then $S = A \# \overline{D_0} \# \overline{D_1}$ defines a $2$-cycle.
If $\gamma$ is any closed Reeb orbit in $S^3 \setminus (\gamma_0\cup\gamma_1)$ with Conley-Zehnder index in $\{-3,\dots,5\}$ then 
$$ 
0 = {\rm int}(\gamma,S) = {\rm int}(\gamma,A) - \sum_{j=0,1} {\rm int}(\gamma,D_j) \leq {\rm int}(\gamma,A) - 1 \Rightarrow {\rm int}(\gamma,A) \geq 1. 
$$
Another feature of $\Theta$ is that a section of $TA|_{\gamma_j} \cap \xi_{\rm std}$ does not wind with respect to a global trivialization of $\xi_{\rm std}$. 
Hence, since $\mu_{\CZ}^\Theta(\gamma_j) \geq 1$ we are done checking (iii) in Theorem~\ref{main1}. 
A direct application of this theorem completes the proof.
\end{proof}

Proposition~\ref{thm_annulus_dyn_convex} can be applied to the PCR3BP. A satellite is subjected to the gravitational fields of two primaries which move on circular trajectories about their center of mass. The satellite moves on the same plane of the primaries, in a rotating system which fixes the primaries, and is determined by the Hamiltonian

\begin{equation}\label{eq_cpr3bp}
H_\mu(q,p) = \frac{1}{2}|p|^2 + \left<q-\mu,ip\right> - \frac{\mu}{|q-1|} - \frac{1-\mu}{|q|}.
\end{equation}Here  $q\in \C \setminus\{0,1\}$ is the position of the satellite and $p\in \C$ is the momentum. The primaries have masses $\mu>0$ and $1-\mu>0$ and rest at $1\in \C$ and $0\in \C$, respectively. For energies $-c$ below the first Lagrange value $l_1(\mu)$ the energy surface $H^{-1}_\mu(-c)$ contains a component $\mathcal{B}_{\mu,c}$ which projects onto a punctured topological disk about the primary at $0\in \C$. Levi-Civita coordinates regularize collisions with the primary: $$ q=2v^2 \ \ \  \mbox{ and }  \ \ \ p=-\frac{u}{\bar v}. $$ The regularized Hamiltonian reads as
\begin{equation}\label{Kmc}
\begin{aligned}
K_{\mu,c}(v,u) & := |v|^2(H_\mu(p,q)+c)\\ & = \frac{1}{2}|u|^2+2|v|^2\left<u,iv\right>-\mu \Im (uv) -\frac{1-\mu}{2}-\mu \frac{|v|^2}{|2v^2-1|}+c|v|^2.
\end{aligned}
\end{equation}
The Jacobi constant $c$ becomes a parameter and the regularized dynamics on $\mathcal{B}_{\mu,c}$ is the dynamics on a $\Z_2$-symmetric sphere-like component $\mathcal{S}_{\mu,c}\subset K_{\mu,c}^{-1}(0)$, where $\Z_2$-symmetry refers to antipodal symmetry. The Hamiltonian flow on $\mathcal{S}_{\mu,c}/\Z_2$ is orbit equivalent to the Reeb flow of a contact form $\lambda_{\mu,c}$ on $(\R P^3,\xi_{\rm std})$.

\begin{definition}
A $T$-periodic trajectory $(q(t),p(t)), t\in \R / T\Z,$ in $\mathcal{B}_{\mu,c}$ is called retrograde if the curve $t\mapsto q(t)\in \C \setminus \{0\}$ is an embedded closed curve that winds around the origin precisely once in the clockwise direction.
\end{definition}

Retrograde orbits are shown to exist by Birkhoff's shooting method for every $0<\mu<1$ and every $-c$ below 
$l_1(\mu)$. 
A nice discussion about Birkhoff's shooting method can be found in~\cite[subsection~8.3.2]{FvK}. 
As a closed Reeb orbit $\gamma_{\mu,c}^r$ of $\lambda_{\mu,c}$, the retrograde orbit is a Hopf fiber, i.e., a $2$-unknot with rational self-linking number $-1/2$. 
If $\lambda_{\mu,c}$ is dynamically convex then $\gamma_{\mu,c}^r$ is the boundary of a rational disk-like global surface of section and a fixed point of the first return map gives a closed Reeb orbit $\gamma_{\mu,c}^d$ which is called direct and shares the same properties of $\gamma_{\mu,c}^r$, see \cite{elliptic}. 
A result of Frauenfelder and Kang~\cite{FK} applies to similar regimes of the PCR3BP and gives disk-like global surfaces of section on $\mathcal{S}_{\mu,c}$ that respect additional symmetries of the problem. 
The link $L=\gamma_{\mu,c}^r \cup \gamma_{\mu,c}^d$ is a Hopf link, hence it binds an open book decomposition supporting $\xi_{\rm std}$.

\begin{theorem}\label{thm_cpr3bp}
If the Reeb flow of $\lambda_{\mu,c}$ on $(\R P^3,\xi_{\rm std})$ is dynamically convex then the link $\gamma_{\mu,c}^r \cup \gamma_{\mu,c}^d$ formed by the retrograde and  the direct orbits is the binding of an open book decomposition whose pages are annulus-like global surfaces of section.
\end{theorem}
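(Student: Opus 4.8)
The plan is to verify the hypotheses of Proposition~\ref{thm_annulus_dyn_convex} for the Reeb flow of $\lambda_{\mu,c}$ on $(\R P^3,\xi_{\rm std})$ and conclude. Set $\gamma_0 = \gamma_{\mu,c}^r$ (retrograde) and $\gamma_1 = \gamma_{\mu,c}^d$ (direct). The first step is to record that, since $\lambda_{\mu,c}$ is dynamically convex, the results cited in the excerpt (in particular~\cite{elliptic}) give that $\gamma_0$ bounds a rational disk-like global surface of section and that the first-return map produces the direct orbit $\gamma_1$ with the same contact-topological type; in particular $L = \gamma_0 \cup \gamma_1$ is a Hopf link and binds a supporting open book with annulus-like pages. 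One must be slightly careful here because $\R P^3$ is not simply connected and the relevant disks are rational (self-linking $-1/2$); the honest way to match Proposition~\ref{thm_annulus_dyn_convex} is to pass to the double cover $S^3 \to \R P^3$, where $\lambda_{\mu,c}$ pulls back to a dynamically convex contact form $\alpha$ on $(S^3,\xi_{\rm std})$ and $L$ lifts to an honest Hopf link $\widetilde\gamma_0 \cup \widetilde\gamma_1$, with $\widetilde\gamma_0$ bounding a genuine disk-like global surface of section.

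The second and central step is the linking/index bookkeeping. Dynamical convexity means every closed Reeb orbit has Conley-Zehnder index $\geq 3$ in the global trivialization $\tau_{\rm gl}$ of $\xi_{\rm std}$ (the one coming from $\R^4$). In particular $\mu_{\CZ}^{\tau_{\rm gl}}(\widetilde\gamma_i) \geq 3 \geq 1$, so the index hypothesis on the binding components in Proposition~\ref{thm_annulus_dyn_convex} holds. Moreover, every periodic orbit has index $\geq 3 > 5$ is false---rather, we need that there are no periodic orbits with index in $\{-3,\dots,5\}$ \emph{other than} possible issues; but dynamical convexity only forces index $\geq 3$, so orbits with index $3,4,5$ can exist. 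For these we must check ${\rm link}(\gamma,\widetilde\gamma_1) \geq 0$. Here one invokes that $\widetilde\gamma_1$ is a Hopf fiber: any closed Reeb orbit $\gamma$ in the complement of a Hopf fiber that is itself a closed orbit of a Reeb flow on $(S^3,\xi_{\rm std})$ links the Hopf fiber non-negatively. This is exactly the kind of statement that follows from the existence of the disk-like global surface of section bounded by $\widetilde\gamma_0$ together with the open book structure: the annulus-like pages are transverse to the flow in their interior, so every orbit in the complement crosses a page and hence links the binding; more robustly, one uses that $\widetilde\gamma_1$ itself bounds a page of the open book bound by $\widetilde\gamma_0$ alone (a disk-like global surface of section), and positivity of intersections of Reeb orbits with that section gives ${\rm link}(\gamma,\widetilde\gamma_1) = {\rm int}(\gamma, D_1) \geq 0$ once we know $\gamma$ meets the disk $D_1$ transversely and positively.

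The third step is to assemble: with ${\rm link}(\gamma,\widetilde\gamma_1) \geq 0$ verified for all periodic orbits with index in $\{-3,\dots,5\}$, Proposition~\ref{thm_annulus_dyn_convex} applies to $\alpha$ on $(S^3,\xi_{\rm std})$ and yields an annulus-like global surface of section bounded by $\widetilde\gamma_0 \cup \widetilde\gamma_1$. Finally one descends: the construction in Proposition~\ref{thm_annulus_dyn_convex} (which runs through Theorem~\ref{main1}) can be carried out $\Z_2$-equivariantly because all the input data---the contact form, the open book, the Hopf link---are antipodal symmetric, and the holomorphic curve machinery behind Theorem~\ref{main1} respects this symmetry by uniqueness. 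The resulting equivariant family of pages descends to an open book on $\R P^3$ whose pages are annulus-like global surfaces of section bounded by $\gamma_{\mu,c}^r \cup \gamma_{\mu,c}^d$. I expect the main obstacle to be the non-negative linking condition ${\rm link}(\gamma,\gamma_1) \geq 0$ for orbits of index up to $5$: dynamical convexity alone does not obviously control these, and the cleanest argument is the topological one using that $\gamma_1$ bounds the disk-like global surface of section coming from $\gamma_0$'s return map---so that any Reeb orbit hits that disk with only positive intersections---rather than any index estimate. The equivariant descent is routine but should be stated carefully.
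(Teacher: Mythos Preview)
Your overall architecture matches the paper's: lift to $S^3$, use dynamical convexity and Theorem~\ref{thm_L_connected} to get disk-like global surfaces of section for the lifted Hopf fibers, feed this into Proposition~\ref{thm_annulus_dyn_convex}, then descend to $\R P^3$. Two points deserve comment.

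First, your verification of the linking hypothesis is muddled. You write that ``$\widetilde\gamma_1$ itself bounds a page of the open book bound by $\widetilde\gamma_0$ alone'', which is not what is needed (and not literally true). The clean argument, and the one the paper uses, is that dynamical convexity and Theorem~\ref{thm_L_connected} apply to \emph{each} lifted Hopf fiber separately: in particular $\widetilde\gamma_1$ itself bounds a disk-like global surface of section $D_1$. Then for any closed Reeb orbit $\gamma$ in $S^3\setminus\widetilde\gamma_1$ one has ${\rm link}(\gamma,\widetilde\gamma_1)={\rm int}(\gamma,D_1)\geq 1>0$, independently of any index restriction. This is the input Proposition~\ref{thm_annulus_dyn_convex} needs.

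Second, and more seriously, your descent step diverges from the paper and is not justified as stated. You propose to argue that the holomorphic-curve construction behind Theorem~\ref{main1} is $\Z_2$-equivariant ``by uniqueness'' and hence the annulus pages on $S^3$ descend. Nothing in the paper establishes such equivariance, and the uniqueness you would need is not available without further work. The paper avoids this entirely: once the annulus-like global surface of section exists on $S^3$ in the class of the pages of a $\Z_2$-symmetric open book $\tilde\Theta$, every closed Reeb orbit in $S^3\setminus\tilde h$ has positive intersection number with those pages. Any closed Reeb orbit in $\R P^3\setminus h$ lifts (after possibly doubling) to such an orbit, hence has positive intersection number with the pages of the quotient open book $\Theta=\tilde\Theta/\{\pm1\}$. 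This verifies hypothesis (iii) of Theorem~\ref{main1} directly on $\R P^3$, and a second application of Theorem~\ref{main1} there yields the desired open book by global surfaces of section. So the paper's descent is a re-application of the main theorem, not an equivariance argument; your proposed route would need substantial additional input.
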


\begin{proof}
The link $h = \gamma_{\mu,c}^r\cup \gamma_{\mu,c}^d$ lifts to a Hopf link $\tilde h \subset (S^3,\xi_{\rm std})$. There exists a supporting open book decomposition $\tilde\Theta$ with annulus-like pages and binding $\tilde h$. We can arrange $\tilde\Theta$ so that it is $\Z_2$-symmetric: the binding already consists of antipodal symmetric knots, and the pages can be arranged so that the image of a page by the antipodal map is the same page. By Theorem~\ref{thm_L_connected} and by the assumed dynamical convexity, both components of $\tilde h$ bound disk-like global surfaces of section. It follows from Proposition~\ref{thm_annulus_dyn_convex} that $\tilde h$ bounds an annulus-like global surface of section $\tilde A$ representing the same class in $H_2(S^3,\tilde h)$ as the pages of $\tilde\Theta$. Now denote by $\Theta = \tilde\Theta/\{\pm1\}$ the induced open book decomposition in $\R P^3$. Up to taking a double cover, any periodic trajectory $\gamma$ in $\R P^3 \setminus h$ lifts to a periodic trajectory in $S^3\setminus \tilde h$, which then must have positive intersection number with the pages of $\tilde\Theta$. Hence also $\gamma$ has to have positive intersection number with the pages of $\Theta$. A direct application of Theorem~\ref{main1} concludes the argument.
\end{proof}

We observe that annulus-like global surfaces of section bounded by the retrograde and the direct orbits are proven in the following situations:
\begin{itemize}
\item[(i)] For any fixed $c>\frac{3}{2}$ and any $\mu$ sufficiently small. This goes back to celebrated work of Poincar\'e \cite{Po}, and is proved by Birkhoff in~\cite[sections~9~and~10]{birkhoff_3_bodies}. Here the system is a perturbation of the rotating Kepler problem where annulus-like global surfaces of section are easily constructed.
 \item[(ii)] For every $c$ sufficiently large and any $0<\mu <1$. This case is proved by Conley~\cite{conley}. The flow of $\lambda_{\mu,c}$ is a perturbation of the degenerate flow associated with the standard contact form $\lambda_0$ on $(\R P^3,\xi_{\rm std})$. A finer account of higher order terms is necessary in order to check that the link $\gamma_{\mu,c}^r \cup \gamma_{\mu,c}^d$ bounds an annulus-like global surface of section satisfying a boundary twist condition.
\end{itemize}
Theorem~\ref{thm_cpr3bp} gives annulus-like global surfaces of section bounded by $\gamma_{\mu,c}^r\cup \gamma_{\mu,c}^d$ in the following new case:
\begin{itemize}
\item[(iii)] For any fixed $c>\frac{3}{2}$ and any $1-\mu$ sufficiently small. 
\end{itemize}

\medskip 

Genus zero global surfaces of section for Reeb flows were used in~\cite{HLS} to characterize standard lens spaces from a dynamical viewpoint. 
We expect that every dynamically convex contact form on a lens space of order $p > 1$ carries a periodic orbit that spans a rational disk-like global surface of section, and whose $p$-th iterate has Conley-Zehnder index equal to $3$ in a trivialization that extends to a capping disk; in particular such an orbit must be elliptic. 
In~\cite{elliptic} this was confirmed when $p=2$.
Schneider~\cite{schneider} handles the standard contact structure on $L(p,1)$, for all $p > 1$.

Genus zero global surfaces of section were also used by Cristofaro-Gardiner, Hutchings and Pomerleano in~\cite{CGHP} to study non-degenerate contact forms such that the first Chern class of the contact structure is torsion.
Under these assumptions, they show that if there are finitely many periodic orbits then the Reeb flow admits a (rational) genus zero global surface of section, thus forcing the number of periodic orbits to be equal to two. 
In particular, there are either two or infinitely many periodic orbits.

Transverse foliations that are more general than open book decompositions can be used to study Reeb flows in dimension~three. 
This line of research was initiated by Hofer, Wysocki and Zehnder in their seminal paper~\cite{fols} where finite-energy foliations (FEF) in the symplectization of the standard contact $3$-sphere were constructed. 
The leaves of a FEF are finite-energy curves, and the whole foliation is invariant under the natural $\R$-action on the symplectization. 
Finitely many leaves are $\R$-invariant trivial cylinders and project onto periodic orbits, the so-called binding orbits. 
The remaining leaves project to form a smooth foliation of the complement of the binding orbits in $S^3$, whose leaves are transverse to the Reeb flow. 
The construction of a FEF can be done for every non-degenerate contact form defining the standard contact structure on $S^3$. 
Stability of FEFs under contact connected sums is proved in~\cite{FS}. 
In~\cite{PS1,PS2,lemos} one finds existence results for FEFs which apply to explicit systems coming from Celestial Mechanics and to many other explicit, real analytic and possibly degenerate, contact forms.

In~\cite[section~6]{icm2018} a definition of more general transverse foliations was given. 
Recently Colin, Dehornoy and Rechtman~\cite{CDR} introduced the so-called broken book decompositions, and showed that a non-degenerate Reeb flow on any closed $3$-manifold is transverse to the leaves of some broken book. As an application, they show that non-degenerate Reeb flows on closed $3$-manifolds carry two or infinitely many periodic orbits. 

\subsection{Outline of proofs}

We end this introduction with a rough outline of the structure of the arguments. 
In both Theorem~\ref{main2} and Theorem~\ref{main1} the implications (ii) $\Rightarrow$ (i), and (i) $\Rightarrow$ (iii) under the stated $C^\infty$-generic condition, have rather simple proofs when compared to the proof of (iii) $\Rightarrow$ (ii). 
Most of our work is concerned with the latter implication, which is done in section~\ref{sec_main2_proof} for Theorem~\ref{main2}, and some additional arguments in section~\ref{sec_main1_proof} for Theorem~\ref{main1}.

Let $\alpha$, $L$, $\Theta$ and $b$ be as in the statements.
In particular,~$L$ is the binding of $\Theta$ and consists of periodic Reeb orbits of $\alpha$, the open book~$\Theta$ is planar and supports $\xi=\ker\alpha$, and~$b$ is the class of the pages of $\Theta$. 
Denote by $n$ the number of connected components of $L$. There is no loss of generality to assume that $n\geq2$ since the case $n=1$ is contained in Theorem~\ref{thm_L_connected}.

By a result of Wendl~\cite{wendl} there exists a non-degenerate contact form $\alpha_+ = f\alpha$, where $f\in C^\infty(M,(1,+\infty))$, such that~$L$ consists of periodic Reeb orbits of $\alpha_+$ with $\mu_{\CZ}^{\Theta} = 1$, and a $d\alpha_+$-compatible complex structure $J_+:\xi\to\xi$ such that there is a foliation of $M\setminus L$ by projections of finite-energy curves in $\R\times M$ with no negative punctures, asymptotic to $L$ at its (positive) punctures. 
These curves are pseudo-holomorphic with respect to a suitably defined $\R$-invariant almost complex structure $\jtil_+$ on $\R\times M$ induced by $(\alpha_+,J_+)$.
These projected curves form the pages of an open book decomposition, represent the class $b$ and are global surfaces of section for the Reeb flow of the special contact form $\alpha_+$.

The next step is to consider an almost complex structure $\jbar$ on $\R\times M$ interpolating from $\jtil_+$ near the positive end, to a suitable $\R$-invariant almost complex structure $\jtil$ induced by $(\alpha,J)$ near the negative end, where $J:\xi \to \xi$ is some compatible complex structure. 
We can require that $\R\times L$ is a $\jbar$-complex surface since $L$ is a common set of periodic orbits for both $\alpha_+$ and $\alpha$. See~\ref{sssec_finite_energy_curves} and~\ref{sssec_gen_fin_energy_curves} for detailed definitions. 
It should be mentioned that the components of~$L$ satisfy $\mu_{\CZ}^{\Theta} = 1$ as periodic Reeb orbits of $\alpha_+$, and $\mu_{\CZ}^{\Theta} \geq 1$ as periodic Reeb orbits of $\alpha$. 
Hence the Fredholm index of the holomorphic cylinders given by the components of $\R\times L$ can be arbitrarily negative. 

The curves in the symplectization of $\alpha_+$ provided by~\cite{wendl} can be translated up and be seen as $\jbar$-curves. 
Since $\Theta$ is planar, their Fredholm index is equal to $2$.
The bulk of the proof of Theorem~\ref{main2} is to show we can deform these $\jbar$-curves indefinitely down in $\R\times M$, and obtain as SFT-limit a holomorphic building with two levels as in Figure~\ref{figure}. 
Here a non-degeneracy assumption on $\alpha$ is used.
The upper level consists precisely of the (possibly non-regular) $\jbar$-holomorphic cylinders in $\R\times L$. 
The bottom level consists of a genus zero connected curve $C$ with no negative punctures, and $n$ positive punctures where it is asymptotic to~$L$. 
Moreover, $C$ does not intersect $\R\times L$, and has an embedded projection to $M \setminus L$ transverse to the Reeb vector field of $\alpha$. 
Key to our arguments is the intersection theory from~\cite{siefring}, the properties of~$\alpha_+$, and the assumptions on~$\alpha$. 
Finally, it is absolutely crucial that the approach of $C$ to its asymptotic limits in $L$ is the `right one', more precisely, it can be shown $C$ is an element of a certain moduli space with exponential weights with respect to which the corresponding weighted Conley-Zehnder indices of the components of $L$ are again equal to $1$. 
Hence, the weighted Fredholm index is $2$, and automatic transversality holds for this weighted Fredholm problem.
The linking assumptions provide compactness of the connected component of the moduli space containing $C$.
The arguments so far are contained in subsections~\ref{ssec_Seifert_and_curves} and~\ref{ssec_existence_compactness_curves}.
Hence $M\setminus L$ can be again foliated by projections of finite-energy curves as an open book, this time by surfaces transverse to the Reeb flow of $\alpha$.
The assumption $\mu_{\CZ}^\Theta \geq 1$ made on the components of $L$ as Reeb orbits of $\alpha$ now plays a role to show that the pages are global surfaces of section, see subsection~\ref{ssec_approximating_sequences}.

The argument for (iii) $\Rightarrow$ (ii) in Theorem~\ref{main1} in the non-degenerate case is analogous. 
It depends on some estimates, based on the algebraic invariants from~\cite{props2}, to ensure that only linking assumptions with periodic orbits in $M\setminus L$ with Conley-Zehnder index in the interval $I(L,{\rm page},\tau_{\rm gl})$ are needed, see section~\ref{sec_main1_proof}.

The passage to the degenerate case is technical and delicate, but also somewhat standard in the literature. 
An important ingredient is~\cite[Proposition~4.15]{elliptic}, stated here as Proposition~\ref{prop_unif_asymptotic_analysis}. 
It summarizes some of the analysis which was originally done in~\cite[section~8]{convex}, and guarantees that if a sequence of curves with common asymptotic limits has exponential decay at the punctures uniformly bounded away from zero, then the limiting curves have the same bounds on the decay at the corresponding punctures. Details are found in subsection~\ref{ssec_passing_to_deg_case}.

\begin{figure}
\includegraphics[width=120\unitlength]{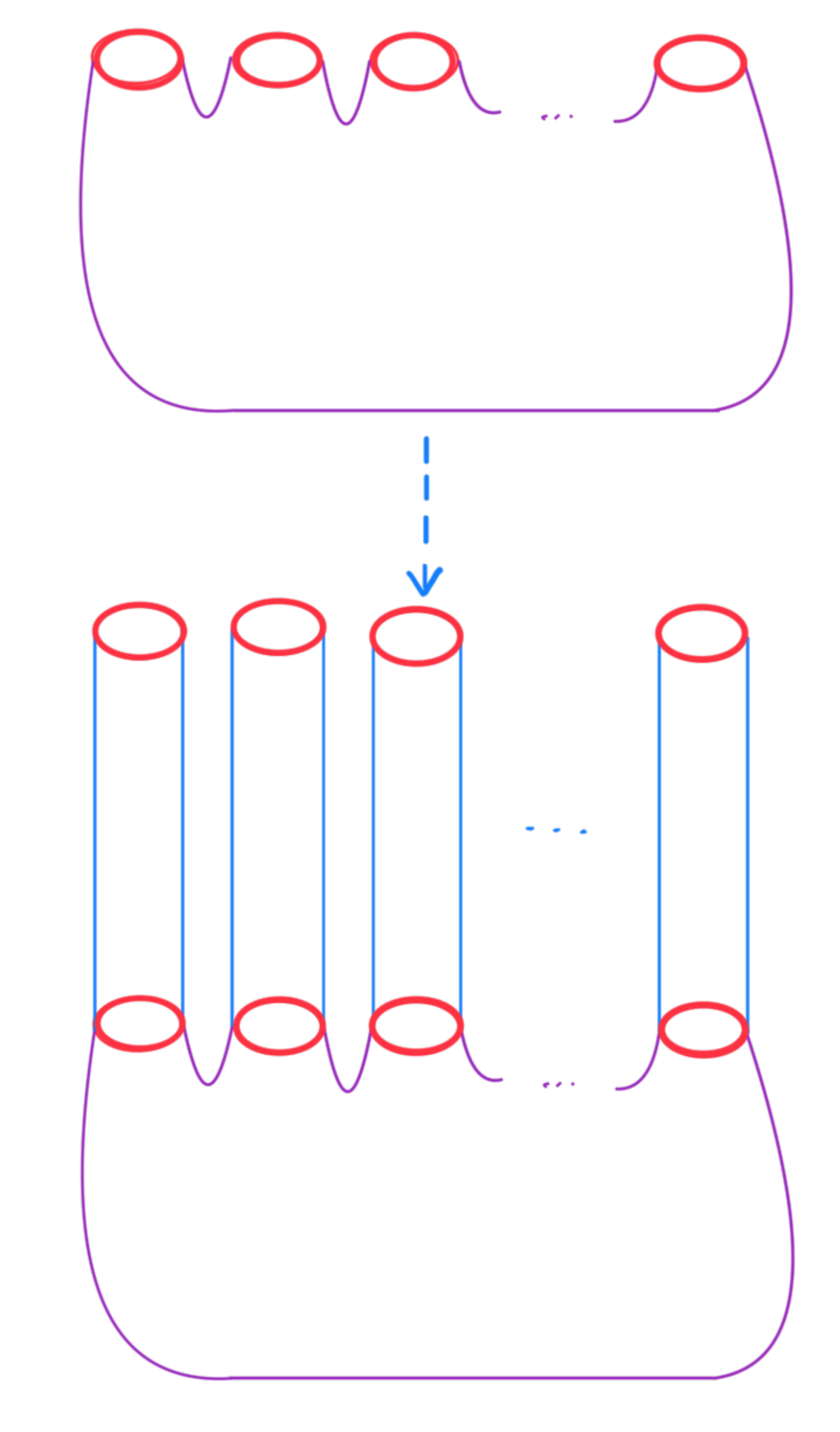}\label{figure}
\caption{Pushing curves down a cobordism, and controlling the SFT-limit using linking assumptions and intersection theory: the curve in the bottom level does not intersect $\R\times L$, has an embedded projection to $M$, approaches its asymptotic limits according to special eigenvectors of the asymptotic operators, and belongs to a moduli space with suitable exponential weights.}
\end{figure}

\medskip

\noindent \textbf{Acknowledgements.} Our friend and teacher Kris Wysocki passed away untimely in~2016. 
We started our collaboration in the fall of 2011, at the Institute for Advanced Study (IAS/Princeton) during the thematic year on Symplectic Dynamics. 
We thank the IAS for the opportunity to participate in such a wonderful and fruitful program.
P. S. was partially supported by the FAPESP grant 2017/26620-6, the CNPq grant 306106/2016-7 and the Humboldt Foundation. 
P. S. is grateful to Ruhr-Universit\"at Bochum for the hospitality. 
P. S. acknowledges the support of NYU-ECNU Institute of Mathematical Sciences at NYU Shanghai. 
U. H. thanks the IAS/Princeton for its support during the academic year 2018-19 through a von Neumann Fellowship.

\section{Preliminaries}

The purpose of this section is to review the basic facts about pseudo-holomorphic curves and periodic Reeb orbits needed in the proofs, and establish notation. Fix a smooth, closed, connected, oriented $3$-manifold $M$ with a co-orientable and positive contact structure~$\xi$. Throughout this section $\alpha$, $\alpha_+$, $\alpha_-$ etc denote contact forms that define $\xi$ and are all positive multiples of each other.

\subsection{Periodic orbits and Conley-Zehnder indices}\label{sssec_orbits}

A periodic $\alpha$-Reeb orbit is an equivalence class of pairs $\gamma=(x,T)$ where $x:\R\to M$ is a trajectory of the flow of the Reeb vector field $X_\alpha$, and $T>0$ is a period of $x$. Here we declare two pairs $\gamma=(x,T)$ and $\gamma'=(x',T')$ to be equivalent if $x(\R)=x'(\R)$ and $T=T'$. The set of equivalence classes is denoted $\mathcal{P}(\alpha)$.

\begin{remark}\label{rem_marked_point_on_orbits}
A special point (marker) is chosen on $x(\R)$ for every periodic trajectory $x:\R\to M$ of $X_\alpha$. It will be implicit in the notation $\gamma=(x,T) \in \mathcal{P}(\alpha)$ that $x(0)$ is the marker. 
\end{remark}

Given $n\in\N$ the $n$-th iterate of $\gamma$ is $\gamma^n=(x,nT)$.  We call $\gamma=(x,T)$ simply covered, or prime, if $T$ is the minimal period of $x$. When $\gamma$ is not prime then it is called multiply covered. In any case, $\gamma=\gamma_0^n$ for unique $n\in\N$ and prime orbit~$\gamma_0$, where $n$ is called the covering multiplicity of $\gamma$, and $\gamma_0$ is called the prime orbit underlying~$\gamma$.

Let $\gamma = (x,T) \in \mathcal{P}(\alpha)$. We denote the vector bundle $x(T\cdot)^*\xi \to \R/\Z$ by $\xi_\gamma$. It becomes a symplectic vector bundle with $d\alpha$. Since the symplectic group is connected, $\xi_\gamma$ is trivial as a symplectic vector bundle. Given two $d\alpha$-symplectic trivializations $\sigma_1$ and $\sigma_2$, take a section which is represented as a non-zero constant with respect to $\sigma_2$, represent it with $\sigma_1$ to obtain a non-vanishing loop $t\mapsto v(t)$ in~$\C$, and set $\wind(\sigma_1,\sigma_2)\in\Z$ to be equal to the winding number of $v(t)$. It should be thought of as the winding of $\sigma_2$ with respect to $\sigma_1$. Throughout this paper $\R/\Z$ is given its canonical orientation.

A complex structure $J:\xi\to\xi$ is said to be $d\alpha$-compatible if $d\alpha(\cdot,J\cdot)$ is an inner-product on every fiber. The set of $d\alpha$-compatible complex structures on~$\xi$ is contractible with the $C^\infty$-topology. Choose a symmetric connection $\nabla$ on $TM$ and consider the first order differential operator
\[
\eta \mapsto -J(\nabla_t\eta - T\nabla_\eta X_\alpha)
\]
on sections of $\xi_\gamma$. It is called the asymptotic operator. Here $\nabla_t$ denotes covariant derivative along the loop $t\mapsto x(Tt)$. This operator does not depend on the choice of $\nabla$. A $(d\alpha,J)$-unitary frame identifies $\xi_\gamma \simeq \R/\Z \times \C$ where $\C$ is endowed with its standard symplectic and complex structures. The asymptotic operator gets represented as $-i\partial_t-S(t)$ for some smooth loop $S:\R/\Z \to \mathcal{L}_\R(\C)$ of symmetric matrices. Its spectrum consists of eigenvalues which are all real, form a discrete subset of $\R$ and accumulate only at $\pm\infty$. Eigenvectors are smooth and non-vanishing. Thus, once a $d\alpha$-symplectic frame $\sigma$ is chosen, an eigenvector for the eigenvalue $\lambda$ gets represented as a non-vanishing vector $t\in\R/\Z \mapsto e(t)\in \C$ and, as such, has a winding number $\wind(\lambda,\sigma) \in\Z$. As the notation suggests, it does not depend on the choice of eigenvector. If $\lambda_1\leq\lambda_2$ are eigenvalues then $\wind(\lambda_1,\sigma) \leq \wind(\lambda_2,\sigma)$. Moreover, every integer is equal to $\wind(\lambda,\sigma)$ for some eigenvalue $\lambda$, and for every $k\in\Z$ the algebraic count of eigenvalues satisfying $\wind(\lambda,\sigma)=k$ is equal to two. See~\cite{props2} for more details.

Fix $\delta\in\R$ arbitrarily. Let $\lambda_-<\delta$ be the largest eigenvalue of the asymptotic operator which is strictly less than $\delta$, and let $\lambda_+\geq\delta$ be the smallest eigenvalue which is larger than or equal to~$\delta$. Choose a symplectic trivialization $\sigma$ of $\xi_\gamma$. The $\delta$-weighted Conley-Zehnder index of $\gamma$ with respect to $\sigma$ is defined as
\[
\mu_{\CZ}^{\sigma,\delta}(\gamma) = 2\wind(\lambda_-,\sigma) + p
\]
where $p=1$ if $\wind(\lambda_-,\sigma) < \wind(\lambda_+,\sigma)$, $p=0$ if $\wind(\lambda_-,\sigma)=\wind(\lambda_+,\sigma)$. It turns out to be independent of~$J$. Moreover, we have
\[
\mu_{\CZ}^{\sigma',\delta}(\gamma) = 2\wind(\sigma',\sigma) + \mu_{\CZ}^{\sigma,\delta}(\gamma).
\]
The $0$-weighted Conley-Zehnder index will be referred to as the Conley-Zehnder index and denoted by $\mu_{\CZ}^\sigma(\gamma) \in \Z$.

The linear map $d\phi^T|_{x(0)}:\xi|_{x(0)} \to \xi|_{x(0)}$ is $d\lambda$-symplectic. The orbit $\gamma=(x,T)$ is called non-degenerate if $1$ is not an eigenvalue of this map. It defines an orientation preserving diffeomorphism of the circle of rays $(\xi|_{x(0)}\setminus\{0\})/\R_+$. The frame $\sigma$ singles out an isotopy from the identity to this diffeomorphism. Hence, its rotation number $\rho^\sigma(\gamma) \in \R$ is well-defined. It satisfies
\begin{equation}
\rho^{\sigma}(\gamma^n) = n\rho^{\sigma}(\gamma), \qquad \qquad \rho^{\sigma'}(\gamma) = \wind(\sigma',\sigma) + \rho^\sigma(\gamma).
\end{equation}
The rotation number can be read as $\rho^\sigma(\gamma) = \lim_{k\to\infty} \frac{1}{2k}\mu_{\CZ}^\sigma(\gamma^k)$ in terms of Conley-Zehnder indices. Let $\nu_-<0$ and $\nu_+\geq 0$ be the largest negative and smallest non-negative eigenvalues of the asymptotic operator, respectively. If $\gamma$ is non-degenerate then $\nu_+>0$. If $\gamma$ is elliptic or negative hyperbolic then $\rho^\sigma(\gamma)$ belongs to the interval $(\wind(\nu_-,\sigma),\wind(\nu_+,\sigma))$. If $\gamma$ is positive hyperbolic then $\rho^\sigma(\gamma)$ is equal to $\wind(\nu_-,\sigma) = \wind(\nu_+,\sigma) \in \Z$. In all cases $\mu^\sigma_{\CZ}(\gamma) = 2\floor{\rho^\sigma(\gamma)} + p$ when $\gamma$ is non-degenerate.

The contact form $\alpha$ is said to be non-degenerate up to action $E \in (0,+\infty]$ if every $\gamma=(x,T)$ in $\mathcal{P}(\alpha)$ satisfying $T\leq E$ is non-degenerate. If $E=+\infty$ we simply say that $\alpha$ is non-degenerate.

If $L$ is a transverse link in $(M,\xi)$ then we shall need to consider the set
\begin{equation}
\label{set_F_L}
\mathcal{F}_L = \{ f \in C^\infty(M,(0,+\infty)) \mid df(v) = 0 \ \forall v\in \xi|_L \} \, .
\end{equation}
Note that if the Reeb vector field of $\alpha$ is tangent to $L$ then the contact forms of the form $f\alpha$ with $f\in \mathcal{F}_L$ are precisely those contact forms that co-orient $\xi$ in the same way as $\alpha$ and whose Reeb vector field is tangent to $L$.

\subsection{Pseudo-holomorphic curves}\label{ssec_pseudo_curves}

\subsubsection{Finite-energy maps in symplectizations}
\label{sssec_finite_energy_curves}

Given a $d\alpha$-compatible complex structure $J:\xi\to\xi$, denote by $\jtil$ the almost complex structure on $\R\times M$ defined~by
\begin{equation}
\label{formula_J_tilde}
\jtil \cdot \partial_a = X_\alpha \, , \qquad \qquad \jtil|_\xi=J \, .
\end{equation}
Here $a$ is the $\R$-component, and $X_\alpha,\xi$ are seen as $\R$-invariant objects. The dependence on $\alpha$ is not apparent in the notation $\jtil$, but must not be forgotten. The set of almost complex structures defined as above will be denoted by $\J(\alpha)$.

Let $(S,j)$ be a closed Riemann surface, and let $\Gamma\subset S$ be finite. A smooth map $\util:S\setminus \Gamma \to \R\times M$ is called a finite-energy map if it is $\jtil$-holomorphic, i.e. $\bar\partial_{\jtil}(\util) = \frac{1}{2} ( d\util + \jtil(\util) \circ d\util \circ j ) = 0$, and if its Hofer energy
\begin{equation}\label{energy_hofer_original}
E(\util) = \sup_{\phi \in \Lambda} \int_{S\setminus\Gamma} \util^*d(\phi\alpha)
\end{equation}
satisfies $0<E(\util)<\infty$. Here $\Lambda$ is the set of smooth functions $\phi:\R\to [0,1]$ satisfying $\phi'\geq 0$. It follows that $\util$ is non-constant. Since $\jtil$ is invariant with respect to the $(\R,+)$ action on $\R\times M$ by translations of the first coordinate, the group $(\R,+)$ also acts on a finite-energy map $\util=(a,u)$ as $c \cdot \util = (a+c,u)$.

\subsubsection{Generalized finite-energy maps}
\label{sssec_gen_fin_energy_curves}

Assume that $\alpha_+=g\alpha_-$ for some smooth $g:M\to(1,+\infty)$. 
In this case we write $\alpha_+>\alpha_-$. 
Choose a smooth function $h:\R\times M\to\R$ satisfying
\begin{itemize}
\item $h(a,p)=e^{a+1}$ on $(-\infty,-1]\times M$,
\item $h(a,p)=e^{a-1}g(p)$ if $(a,p) \in [1,+\infty)\times M$,
\item $\partial_ah>0$ everywhere.
\end{itemize}
The $2$-form $\Omega = d(h\alpha_-)$ is a symplectic form on $\R\times M$.
Let $\jtil_\pm \in \J(\alpha_\pm)$. 
We will denote by $\J_\Omega(\jtil_-,\jtil_+)$ the set of almost complex structures $\jbar$ on $\R\times M$ satisfying
\begin{itemize}
\item $\jbar$ coincides with $\jtil_-$ on $(-\infty,-1]\times M$,
\item $\jbar$ coincides with $\jtil_+$ on $[1,+\infty)\times M$,
\item $\jbar$ is $\Omega$-compatible on $[-1,1]\times M$.
\end{itemize}
The set $\J_\Omega(\jtil_-,\jtil_+)$ is non-empty and contractible with the $C^\infty$-topology; weak or strong coincide here.

Let $(S,j)$ be a closed Riemann surface and $\Gamma\subset S$ be finite. 
A smooth map $\util:S\setminus \Gamma \to \R\times M$ is $\bar J$-holomorphic if $\bar\partial_{\jbar}(\util) = \frac{1}{2} \left( d\util + \jbar(\util) \circ d\util \circ j \right) = 0$, and its Hofer energy is defined as
\begin{equation}
\label{energy_non_invariant}
\begin{aligned}
E(\util) = \sup_{\phi\in\Lambda} \int_{\util^{-1}((-\infty,-1]\times M)} \util^*d(\phi\alpha_-) &+ \int_{\util^{-1}([-1,1]\times M)} \util^*\Omega \\
&+ \sup_{\phi\in\Lambda} \int_{\util^{-1}([1,+\infty)\times M)} \util^*d(\phi\alpha_+).
\end{aligned}
\end{equation}
The set $\Lambda$ was introduced in~\ref{sssec_finite_energy_curves}. 
If $0<E(\util)<\infty$ then $\util$ is called a generalized finite-energy map.

\subsubsection{Punctures and asymptotic behavior}
\label{sssec_punctures}

Denote the projection onto $\xi$ along the Reeb direction of $\alpha$ by
\begin{equation*}
\pi_\alpha : TM\to\xi.
\end{equation*}
Choose $\jtil\in\J(\alpha)$ and let
\begin{equation}\label{reference_fin_energy_curve}
\util =(a,u) : (S\setminus\Gamma,j)\to (\R\times M,\jtil)
\end{equation}
be a finite-energy map as in~\ref{sssec_finite_energy_curves}. Points in $\Gamma$ will be called punctures. If $z_* \in\Gamma$ then $z_*$ is a positive puncture if $a(z) \to +\infty$ as $z\to z_*$, it is a negative puncture if $a(z) \to -\infty$ as $z\to z_*$, or it is a removable puncture if $\limsup_{z\to z_*}|a(z)|<\infty$. It turns out that every puncture is positive, negative or removable (see~\cite{93}), and $\util$ can be smoothly continued across removable punctures. Positive or negative punctures will be called non-removable.

Let $\psi$ be a holomorphic diffeomorphism between $(\D,i,0)$ and $(V,j,z_*)$, where $V\subset S$ is a neighborhood of $z_* \in \Gamma$. We assign positive holomorphic polar coordinates $(s,t)\in[0,+\infty)\times \R/\Z$ to the point $\psi(e^{-2\pi(s+it)})\in V\setminus\{z_*\}$. Analogously, we assign negative holomorphic polar coordinates $(s,t) \in (-\infty,0]\times \R/\Z$ to the point $\psi(e^{2\pi(s+it)})\in V\setminus\{z_*\}$. In any case $(s,t)$ are called holomorphic polar coordinates at~$z_*$.

Let $z_*\in\Gamma$ be non-removable. We associate a sign $\epsilon=+1$ or $\epsilon=-1$ to $z_*$ when $z_*$ is a positive or a negative puncture, respectively. Choose $(s,t)$ holomorphic polar coordinates at $z_*$, which are positive when $\epsilon=+1$ or negative when $\epsilon=-1$. Write $\util(s,t)=(a(s,t),u(s,t))$ with respect to these coordinates.

\begin{theorem}[Hofer~\cite{93}]\label{thm_93_existence_asymptotic_limits}
For every sequence $s_n$ satisfying $\epsilon s_n \to +\infty$ one finds a subsequence $s_{n_j}$, $t_0\in\R$, and $(x,T) \in \mathcal{P}(\alpha)$ such that $\lim_{j\to+\infty} u(s_{n_j},t) = x(Tt+t_0)$ in $C^\infty(\R/\Z,M)$.
\end{theorem}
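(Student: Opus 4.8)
The plan is to exploit the finiteness of the Hofer energy together with a bubbling-off analysis near the puncture. First I would work in the cylindrical coordinates $(s,t)$ at $z_*$ (say positive, the negative case being symmetric) and write $\util(s,t)=(a(s,t),u(s,t))$. The key quantity to control is the mass of $d\alpha$-energy $\int_{[s,\infty)\times\R/\Z} u^*d\alpha$ escaping to the end; since this is bounded by $E(\util)<\infty$, it tends to $0$ as $s\to+\infty$. From this one extracts that the gradient $|d\util(s,t)|$ (with respect to the cylindrical metric) stays bounded as $s\to+\infty$: if it did not, a standard rescaling/bubbling argument would produce a non-constant finite-energy plane with zero $d\alpha$-energy, hence a plane whose image lies in a single Reeb trajectory, which is impossible for a non-constant $\jtil$-holomorphic map with finite energy (this uses the structure equations for $\jtil$ and the fact that $\alpha\wedge d\alpha$ is a volume form). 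This gradient bound is the technical heart and the step I expect to be the main obstacle, since it requires the careful bubbling analysis of~\cite{93}.

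Granting the uniform gradient bound, I would then show that the ``center of mass'' loops $t\mapsto u(s,t)$ converge, along a subsequence $s_n\to+\infty$, in $C^\infty(\R/\Z,M)$. Ellipticity of $\bar\partial_{\jtil}$ upgrades the $C^0$/gradient bounds to $C^\infty_{\rm loc}$ bounds on $\util$ on the half-cylinder, so Arzel\`a--Ascoli gives a subsequence $s_{n_j}$ along which $u(s_{n_j},\cdot)$ converges in $C^\infty(\R/\Z,M)$ to some loop $x_0:\R/\Z\to M$. Because the $d\alpha$-energy over $[s_{n_j},\infty)\times\R/\Z$ vanishes in the limit, the limiting loop satisfies $\pi_\alpha\,\partial_t x_0=0$ pointwise, i.e. $\partial_t x_0$ is everywhere a multiple of $X_\alpha$. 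Writing $T:=\int_0^1 (u(s_{n_j},\cdot))^*\alpha$, which converges to $\int_0^1 x_0^*\alpha$ and is positive (here one checks $T>0$ using that $z_*$ is a genuine, non-removable, positive puncture, so that $a\to+\infty$ and the $\alpha$-period cannot be zero — a constant limit would force $a$ bounded), one concludes that $x_0(t)=x(Tt+t_0)$ for some Reeb trajectory $x$ with period $T$ and some phase $t_0\in\R$, giving the desired $(x,T)\in\mathcal P(\alpha)$.

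The remaining point is to make sure the parametrization matches the asserted form $\lim_j u(s_{n_j},t)=x(Tt+t_0)$ in $C^\infty(\R/\Z,M)$: this is immediate once $x_0$ has been identified with a reparametrized Reeb orbit, since the convergence of $u(s_{n_j},\cdot)$ was obtained in $C^\infty(\R/\Z,M)$ and the limit is exactly $t\mapsto x(Tt+t_0)$. I would also record that $T$ does not depend on the choice of sequence or subsequence, since $T=\lim_{s\to+\infty}\int_0^1 (u(s,\cdot))^*\alpha$ exists by monotonicity of $s\mapsto\int_{[s,\infty)\times\R/\Z}u^*d\alpha$ together with $E(\util)<\infty$; this is not strictly needed for the statement but is the natural byproduct. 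In summary, the skeleton is: (1) energy over the end $\to 0$; (2) uniform gradient bound via bubbling-off (the hard step); (3) elliptic bootstrapping and Arzel\`a--Ascoli to get a $C^\infty$ subsequential limit loop; (4) identify the limit loop with a reparametrized periodic Reeb orbit using the vanishing $d\alpha$-energy and positivity of the $\alpha$-period.
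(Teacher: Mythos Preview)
The paper does not prove this statement; it is quoted as a background result from Hofer~\cite{93} and used without argument. Your sketch is a faithful outline of Hofer's original proof: vanishing of the $d\alpha$-energy over the end, uniform gradient bounds via a bubbling-off contradiction, elliptic bootstrapping plus Arzel\`a--Ascoli to extract a $C^\infty$-convergent subsequence of loops, and identification of the limit with a reparametrized closed Reeb orbit using the vanishing $d\alpha$-energy and positivity of the $\alpha$-period. The one step that deserves a bit more care is your assertion that a non-constant finite-energy plane with zero $d\alpha$-energy cannot exist: this is correct, but the clean argument is that such a plane factors through $\R\times\ell$ for a single Reeb leaf~$\ell$, lifts to a non-constant entire holomorphic function $\C\to\C$, and one then checks directly that the Hofer energy of such a map is infinite.
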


\begin{definition}\label{definition_asymptotic_limit}
The following terminology is useful.
\begin{itemize}
\item[(a)] An asymptotic limit of $\util$ at $z_*$ is some $\gamma=(x,T) \in \mathcal{P}(\alpha)$ for which there exists a sequence $s_n\to\epsilon\infty$ and some $t_0\in\R$ such that $u(s_n,t) \to x(Tt+t_0)$ in $C^0(\R/\Z,M)$ as $n\to+\infty$.
\item[(b)] The map $\util$ is weakly asymptotic to $\gamma\in \mathcal{P}(\alpha)$ if $\gamma$ is the only asymptotic limit of $\util$ at $z_*$.
\item[(c)] The map $\util$ is asymptotic to $\gamma=(x,T) \in \mathcal{P}(\alpha)$ at $z_*$ if there exists some $t_0\in\R$ such that $u(s,t) \to x(Tt+t_0)$ in $C^0(\R/\Z,M)$ as $s \to \epsilon\infty$.
\end{itemize}
\end{definition}

\begin{remark}
Theorem~\ref{thm_93_existence_asymptotic_limits} implies that asymptotic limits always exist at non-removable punctures. In order to handle degenerate contact forms we need to consider the distinction between cases (b) and (c). The main result of~\cite{sie3} states that asymptotic limits might not be unique, but it does not address the question of whether (c) follows from (b). 
These definitions do not depend on the choice of holomorphic polar coordinates.

\end{remark}

\begin{theorem}[Hofer, Wysocki and Zehnder~\cite{props1}]\label{thm_partial_asymptotics}
Assume that there is an asymptotic limit $\gamma=(x,T)$ of $\util$ at $z_*$ which is non-degenerate. Then $\util$ is asymptotic to $\gamma$ at $z_*$. Moreover, there exists $t_0\in\R$ such that $u(s,t) \to x(Tt+t_0)$ in $C^\infty$ as $\epsilon s\to+\infty$.
\end{theorem}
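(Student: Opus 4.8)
The plan is the following. Using finiteness of the Hofer energy together with Theorem~\ref{thm_93_existence_asymptotic_limits}, one first shows that on a punctured neighborhood of $z_*$ the map $\util$ is \emph{uniformly} close to the trivial cylinder over $\gamma$; then one linearizes the Cauchy--Riemann equation in a tubular neighborhood of $\gamma$ and reads off exponential decay from the fact that, $\gamma$ being non-degenerate, its asymptotic operator is invertible and hence has a spectral gap at $0$.

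First I would fix, say, positive holomorphic polar coordinates $(s,t)\in[0,+\infty)\times\R/\Z$ at $z_*$ (the case $\epsilon=-1$ being entirely symmetric) and write $\util(s,t)=(a(s,t),u(s,t))$. From $E(\util)<\infty$ one gets that the $d\alpha$-energy $\int_{[s,\infty)\times\R/\Z}u^*d\alpha$ tends to $0$ and that the ``mass'' $m(s):=\int_{\{s\}\times\R/\Z}u(s,\cdot)^*\alpha$ tends to the period $T\neq0$. A bubbling-off analysis then shows that along \emph{every} sequence $s_n\to+\infty$ the translated maps $\util(s_n+\cdot,\cdot)$, recentered in the $\R$-direction, converge in $C^\infty_{\rm loc}$ to a trivial cylinder over a periodic Reeb orbit of period $T$; here one uses that $\gamma$ itself is already an asymptotic limit, together with non-degeneracy of $\gamma$, to rule out the formation of bubbles and of ``slow drift'' and conclude that this cylinder is always the one over $\gamma$. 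In particular $u(s,\cdot)$ is uniformly $C^1$-close, for $s$ large, to a reparametrization $t\mapsto x(Tt+\vartheta(s))$ of the orbit.

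With uniform closeness established, one uses a tubular neighborhood of $\gamma(\R)$ in $M$, together with the $\R$-translation on the symplectization, to write for $s$ large $u(s,t)=\exp_{x(Tt+\vartheta(s))}\zeta(s,t)$ and $a(s,t)=Ts+a_0(s,t)$, with $\zeta$ a section of $\xi$ along the orbit and $\zeta,a_0$ small in $C^\infty_{\rm loc}$. Substituting into $\bar\partial_{\jtil}(\util)=0$ and projecting to $\xi$ via $\pi_\alpha$ produces an equation $\partial_s\zeta+J\partial_t\zeta+S(s,t)\zeta=N(s,t)$, in which $-J\partial_t-S_\infty(t)$ represents the asymptotic operator of $\gamma$ (with $S(s,\cdot)\to S_\infty$ as $s\to+\infty$) and $N$ is a nonlinear remainder obeying a quadratic estimate $|N(s,t)|\leq C|\zeta(s,t)|\bigl(|\zeta|+|\partial_s\zeta|+|\partial_t\zeta|\bigr)$; the $a_0$-component is then controlled by $\zeta$ through the remaining component of the equation.

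The analytic core is exponential decay. Since $\gamma$ is non-degenerate, $0$ is not an eigenvalue of the asymptotic operator; let $\delta>0$ be its distance to the spectrum. Considering $g(s)=\tfrac12\int_{\R/\Z}|\zeta(s,t)|^2\,dt$, one combines self-adjointness of the asymptotic operator, the spectral gap, and the quadratic bound on $N$ to obtain a differential inequality of the form $g''(s)\geq(\delta-o(1))^2g(s)$ for $s$ large; together with $g(s)\to0$ this forces $g(s)\leq Ce^{-2ds}$ for every $d<\delta$. Interior elliptic estimates for the perturbed Cauchy--Riemann operator on the unit cylinders $[s-1,s+1]\times\R/\Z$ then bootstrap this to exponential decay of $\zeta$ and of all its derivatives. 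In particular $\vartheta(s)$ and $a_0(s,\cdot)$ converge, so $\vartheta(s)\to t_0$ for some $t_0\in\R$ and $u(s,t)\to x(Tt+t_0)$ in $C^\infty(\R/\Z,M)$ as $s\to+\infty$, which is the claim. The step I expect to be the main obstacle is promoting the merely subsequential $C^\infty_{\rm loc}$-statement of Theorem~\ref{thm_93_existence_asymptotic_limits} to genuine uniform closeness of $\util$ to the cylinder over $\gamma$ on a whole end: this is the delicate bubbling-off and monotonicity part, and it is precisely where non-degeneracy of $\gamma$ is needed, since it prevents the loop $u(s,\cdot)$ from slowly sliding among distinct parametrizations of the orbit or drifting to a different orbit. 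Once the problem has been reduced to the linear regime, the quadratic control of $N$ and the elliptic bootstrapping are technical but routine.
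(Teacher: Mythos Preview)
The paper does not give its own proof of this statement: Theorem~\ref{thm_partial_asymptotics} is simply quoted from~\cite{props1} as a known result, so there is no argument in the paper to compare against. Your sketch is the standard Hofer--Wysocki--Zehnder approach from that reference---uniform localization via bubbling-off analysis, linearization in a Martinet tube, the $g''\geq(\delta-o(1))^2 g$ convexity estimate coming from the spectral gap, and elliptic bootstrapping---and is essentially correct as an outline of the original proof.
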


A more precise description is available. Suppose that $\util$ is weakly asymptotic to some orbit $\gamma = (x,T=nT_0) = \gamma_0^n$ at $z_*$, where $\gamma_0=(x,T_0)$ is simply covered. In particular, $n$ is the covering multiplicity of $\gamma$. On $\R/\Z\times \C$ with coordinates $(\theta,z=x_1+ix_2)$ we denote by $\lambda_0$ the contact form $d\theta+x_1dx_2$.

\begin{definition}\label{def_Martinet_tube}
A Martinet tube for $(\gamma_0,\alpha)$ is a pair $(U,\Psi)$ where $U$ is a neighborhood of $x(\R)$ and $\Psi$ is a diffeomorphism $U \to \R/\Z \times B$ ($B\subset\C$ is an open ball centered at the origin) such that $\Psi(x(T_0t)) = (t,0)$ and $\Psi_*\alpha = f\lambda_0$ where $f|_{\R/\Z\times\{0\}}\equiv T_0$, $df|_{\R/\Z\times\{0\}}\equiv0$.
\end{definition}

Martinet tubes always exist, see~\cite{props1}. Choose $\Psi : U \to \R/\Z \times B$ a Martinet tube for $(\gamma_0,\alpha)$. We find $s_0\geq 0$ such that $u(s,t) \in U$ for all $\epsilon s\geq s_0$. Thus $\Psi\circ u(s,t) = (\theta(s,t),z(s,t))$ is well-defined when $\epsilon s\geq s_0$, and $\theta(s,\cdot)$ has degree $n$.


\begin{theorem}[Hofer, Wysocki and Zehnder~\cite{props1}, Siefring~\cite{siefring_CPAM}]\label{thm_precise_asymptotics}
Suppose that $\gamma$ is non-degenerate. If $z(s,t)$ does not vanish identically then there exists $b>0$ and an eigenvalue $\mu$ of the asymptotic operator at $\gamma$ such that $\epsilon\mu<0$ and the following holds.
\begin{itemize}
\item For some $a_0,t_0\in\R$ and some lift $\tilde\theta:\R\times\R\to \R$ of $\theta(s,t)$ $$ \lim_{\epsilon s\to+\infty} \sup_{t\in\R/\Z} \ e^{b\epsilon s} \left( |D^\beta[a(s,t)-Ts-a_0]| + |D^\beta[\tilde\theta(s,t+t_0)-nt]| \right) = 0 $$ holds for every $\beta=(\beta_1,\beta_2)$.
\item There exists an eigenvector of $\mu$, represented as a non-vanishing vector $v(t)$ in the frame $\{\partial_{x_1},\partial_{x_2}\}$, such that $$ z(s,t+t_0) = e^{\mu s} \left( v(t) + R(s,t) \right) $$ for some $R(s,t)$ satisfying $\sup_{t\in\R/\Z} \ |D^\beta R(s,t)| \to 0$ as $\epsilon s\to+\infty$ for every pair $\beta=(\beta_1,\beta_2)$.
\end{itemize}
\end{theorem}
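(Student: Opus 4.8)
The plan is to reduce the claim to the linearized Cauchy--Riemann operator near $\gamma$ and then to invoke the asymptotic analysis of Hofer--Wysocki--Zehnder, refined by Siefring. First I would write the half-cylinder end of $\util$ in the Martinet tube: setting $\Psi\circ u(s,t)=(\theta(s,t),z(s,t))$, the equation $\bar\partial_{\jtil}\util=0$, after subtracting the trivial cylinder over $\gamma$, yields for the $\xi$-component a perturbed Cauchy--Riemann equation of the schematic form
\[
\partial_s z = \bigl(-i\partial_t - S(s,t)\bigr)z + Q(s,t),
\]
where, in a unitary frame, $S(s,t)$ converges as $\epsilon s\to+\infty$ to the loop $S_\infty(t)$ representing the asymptotic operator $A_\infty=-i\partial_t-S_\infty$ at $\gamma$, and $Q$ collects terms that are quadratic in $(a-Ts,\tilde\theta-nt,z)$ and their first derivatives; this quadratic vanishing is forced by $df|_{\R/\Z\times\{0\}}\equiv 0$ in the Martinet tube, and in particular the $\R$- and Reeb-directions decouple from the $z$-equation at leading order. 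Theorem~\ref{thm_partial_asymptotics} already gives $(a-Ts-a_0,\tilde\theta-nt,z)(s,\cdot)\to 0$ in $C^\infty(\R/\Z)$ for suitable $a_0,t_0$, and elliptic bootstrapping on long cylinders then upgrades this to uniform $C^\infty_{\rm loc}$ control.

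The analytic heart is to promote this convergence to precise exponential decay. I would regard $s\mapsto z(s,\cdot)$ as a trajectory of the non-autonomous flow $z'=A(s)z+Q$ on a scale of Sobolev spaces over $\R/\Z$, with $A(s)\to A_\infty$, where $A_\infty$ is self-adjoint with real, discrete spectrum accumulating only at $\pm\infty$. Non-degeneracy of $\gamma$ means $0\notin\mathrm{spec}(A_\infty)$, providing a spectral gap about $0$. A Carleman-type / Poincar\'e inequality along the cylinder then forces the dichotomy: either $z$ vanishes identically for $\epsilon s$ large, or $z$ decays exponentially; a unique continuation argument propagates the first alternative to $z\equiv 0$, which is excluded by hypothesis. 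Given exponential decay, one isolates the slowest mode by projecting onto the finite-dimensional sum of eigenspaces for eigenvalues in a suitable window, comparing the non-autonomous flow with the autonomous flow of $A_\infty$ via variation of constants, and running a center-stable-type argument; this produces an eigenvalue $\mu$ with $\epsilon\mu<0$ and an eigenvector represented by a non-vanishing loop $v(t)$ such that $z(s,t+t_0)=e^{\mu s}(v(t)+R(s,t))$ with $R\to 0$ in $C^\infty$. Finally, with $z$ decaying like $e^{\mu s}$, the remainder $Q$ is $O(e^{2\mu s})$, so the equations for $a-Ts-a_0$ and $\tilde\theta-nt$, which are driven only by $Q$, give those quantities and all their derivatives decay of the same order; choosing $b\in(0,|\mu|)$ yields the two displayed estimates simultaneously.

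The main obstacle is precisely this last nonlinear step: showing that the leading behavior of $z$ is governed by a single eigenvalue and eigenvector of $A_\infty$, rather than merely being bounded by some exponential. This is the substantial content of~\cite{props1} and its sharpening in~\cite{siefring_CPAM}, combining exponential a priori estimates, the variation-of-constants comparison with the autonomous flow of $A_\infty$, and a center-stable manifold argument extracting the dominant eigenmode, together with the unique continuation input that rules out the identically-vanishing alternative. Since this machinery is carried out in the cited references, in the body of the paper I would invoke it directly; above I have only laid out the structure of the reduction and the role played by each hypothesis.
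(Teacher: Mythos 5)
The paper does not prove this theorem: it is quoted verbatim with attribution to \cite{props1} and \cite{siefring_CPAM}, and those references are invoked whenever the precise asymptotic formula is needed. You correctly recognize this — your proposal ends by deferring to the cited machinery — and the sketch you give of the underlying reduction (Martinet tube normalization exploiting $df|_{\R/\Z\times\{0\}}\equiv 0$, perturbed Cauchy--Riemann equation for the transverse component $z$, spectral gap from non-degeneracy, exponential decay via Carleman-type estimates and unique continuation, extraction of the dominant eigenmode, and feedback into the $a$- and $\tilde\theta$-equations) is a faithful summary of the structure of the arguments in \cite{props1} and \cite{siefring_CPAM}. Since the paper supplies no proof for you to match against, there is nothing to reconcile beyond confirming that your reading of the hypotheses and the division of labor between the lemma and the cited references is accurate, which it is.
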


We will say that $\util$ has non-trivial asymptotic formula at the puncture $z_*$ if $z(s,t)$ does not vanish identically in the above statement. Otherwise we say that $\util$ has trivial asymptotic behavior at $z_*$. Under the assumptions of the above theorem, the similarity principle implies that $\util$ has non-trivial asymptotic formula at $z_*$ when $\util$ does not map the corresponding component of $S\setminus\Gamma$ into $\R\times\gamma$.

The statements of theorems~\ref{thm_93_existence_asymptotic_limits},~\ref{thm_partial_asymptotics} and~\ref{thm_precise_asymptotics} hold for the generalized finite-energy maps explained in~\ref{sssec_gen_fin_energy_curves}. The reason is because all proofs are based on the analysis of a given end of such a map, and generalized finite-energy maps are holomorphic with respect to $\R$-invariant almost complex structures near its ends. In all cases, both when the almost complex structure is $\R$-invariant as in~\ref{sssec_finite_energy_curves} or when it is not $\R$-invariant as in~\ref{sssec_gen_fin_energy_curves}, we will refer to the eigenvalue appearing in the formula for $z(s,t)$ in Theorem~\ref{thm_precise_asymptotics} as the asymptotic eigenvalue at the puncture~$z_*$, assuming the curve $\util$ has non-trivial asymptotic formula at $z_*$. In this case, if $\gamma$ is the asymptotic limit of $\util$ at $z_*$ then we denote by
\begin{equation}\label{defintion_asymptotic_winding_number}
\wind_\infty(\util,z_*,\tau) \in \Z
\end{equation}
the winding number with respect to a symplectic trivialization $\tau$ of $\xi_\gamma$ of an eigenvector for the asymptotic eigenvalue at $z_*$. It will be referred to as asymptotic winding number.

\begin{remark}\label{rmk_convention_asymptotic_evalues}
It is convenient to agree on the following. If $z(s,t)$ vanishes identically, and consequently there is no asymptotic eigenvalue, then we declare the asymptotic eigenvalue and asymptotic winding number to be $-\infty$ at a positive puncture, and $+\infty$ at a negative puncture.
\end{remark}

In order to deal with degenerate situations we need to recall a definition from~\cite{fast}. Using the notation above we consider a finite-energy curve $\util=(a,u)$ as in~\eqref{reference_fin_energy_curve}. Let $z_*$ be a non-removable puncture of $\util$ of sign $\epsilon$, choose holomorphic polar coordinates $(s,t)$ at $z_*$ of sign $\epsilon$, and choose any exponential map $\exp$ on $M$. Let $S(z_*)$ be the component of the domain around the puncture $z_*$.

\begin{definition}[\cite{fast}]\label{def_non_deg_punctures}
The puncture $z_*$ is a non-degenerate puncture of $\util$ if:
\begin{itemize}
\item $\util$ is asymptotic to some $(x,T) \in \mathcal{P}(\alpha)$ at $z_*$. 
\item $|a(s,t)-Ts-a_0| \to 0$ as $s\to\epsilon\infty$ uniformly in $t$, for some $a_0\in\R$.
\item If $\int_{S(z_*)} u^*d\alpha>0$ then $\pi_\alpha \circ du(s,t)\neq0$ if $\epsilon s$ is large enough.
\item Let $t_0\in\R$ satisfy $u(s,t)\to x(Tt+t_0)$ in $C^0$ as $s\to\epsilon\infty$, and $\zeta(s,t)$ be defined by $\exp_{x(Tt+t_0)}\zeta(s,t) = u(s,t)$. There exists $b>0$ such that $e^{\epsilon bs}|\zeta(s,t)| \to 0$ uniformly in $t$ as $s\to\epsilon\infty$.
\end{itemize}
\end{definition}

If $\util$ is a generalized finite-energy map then $\util$ is holomorphic with respect to an $\R$-invariant almost complex structure near the non-removable punctures. Hence Definition~\ref{def_non_deg_punctures} readily extends to this case.

\begin{theorem}[Corollary~6.6 from~\cite{fast}]
\label{thm_asymptotic_formula_deg_case}
Suppose that $z_*$ is a non-degenerate puncture of $\util$ in the sense of Definition~\ref{def_non_deg_punctures}, and let $\util$ be asymptotic to $\gamma=(x,T)$ at $z_*$. Let $\gamma_0$ be the underlying prime orbit of $\gamma$. Then all the conclusions from Theorem~\ref{thm_precise_asymptotics} hold true in coordinates given by any Martinet tube for $(\gamma_0,\alpha)$.
\end{theorem}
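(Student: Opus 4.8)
The plan is to reduce the statement to the asymptotic analysis of a single cylindrical end and then invoke the representation argument of Hofer--Wysocki--Zehnder, the only new input being that the uniform exponential decay built into Definition~\ref{def_non_deg_punctures} keeps that argument valid when $\gamma$ is degenerate. Throughout one should keep in mind that $\util$ is holomorphic with respect to an $\R$-invariant almost complex structure near $z_*$ (automatically so, even for generalized finite-energy maps), so near the puncture we are in the setting of~\ref{sssec_finite_energy_curves} with some $\jtil\in\J(\alpha)$.

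First I would fix an arbitrary Martinet tube $(U,\Psi)$ for $(\gamma_0,\alpha)$ and set up coordinates at the end. Since $\util$ is asymptotic to $\gamma$, for $\epsilon s$ large the curve $u$ takes values in $U$, so $\Psi\circ u(s,t)=(\theta(s,t),z(s,t))$ is well defined, $\theta(s,\cdot)$ has degree $n$, and one may choose a lift $\tilde\theta$ with $\tilde\theta(s,\cdot)-nt$ periodic. By the hypotheses of Definition~\ref{def_non_deg_punctures}, $a(s,t)-Ts-a_0\to0$ and $\tilde\theta(s,t+t_0)-nt-\theta_0\to0$ in $C^0$ for suitable constants $a_0,\theta_0$ and shift $t_0$, while the fourth bullet gives $e^{\epsilon bs}|z(s,t+t_0)|\to0$ uniformly for some $b>0$. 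Writing the equation $\bar\partial_{\jtil}\util=0$ in these coordinates produces a first-order elliptic system for the triple $(a-Ts-a_0,\ \tilde\theta-nt-\theta_0,\ z)$. Interior elliptic estimates on the cylinders $[s-1,s+1]\times\R/\Z$, with constants uniform in $s$, promote the $C^0$ decay of $z$ to decay of $z$ in all derivatives, still at an exponential rate.

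If $\int_{S(z_*)}u^*d\alpha=0$ then (as $u^*d\alpha\geq0$ pointwise) $u^*d\alpha\equiv0$ on $S(z_*)$, so $u$ parametrizes a cover of $\gamma$, $z\equiv0$ near $z_*$, and the conclusions of Theorem~\ref{thm_precise_asymptotics} hold vacuously. Otherwise, by the third bullet of Definition~\ref{def_non_deg_punctures}, $z\not\equiv0$ near $z_*$. Choosing a $(d\alpha,J)$-unitary frame along $\gamma$, the transverse component satisfies a perturbed Cauchy--Riemann equation $\partial_sz+J_\infty(t)\big(\partial_tz-S_\infty(t)z\big)=\rho(s,t)$, where $-J_\infty\partial_t-S_\infty$ represents the asymptotic operator $A_\gamma$ and, using the $C^\infty$-exponential decay obtained above to bound the quadratic and mixed terms, the remainder $\rho$ decays exponentially together with all derivatives. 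The Hofer--Wysocki--Zehnder asymptotic analysis (as in~\cite{props1}; see also~\cite{siefring_CPAM} and~\cite{fast}) now applies using \emph{only} the exponential decay of $z$: there exist $b'>0$, an eigenvalue $\mu$ of $A_\gamma$ with $\epsilon\mu<0$, and an eigenvector $v$, represented as a non-vanishing loop $v(t)$ in $\{\partial_{x_1},\partial_{x_2}\}$, such that $z(s,t+t_0)=e^{\mu s}(v(t)+R(s,t))$ with $\sup_t|D^\beta R(s,t)|\to0$ as $\epsilon s\to+\infty$ for every $\beta$. The uniform rate $b$ from Definition~\ref{def_non_deg_punctures} forces $\epsilon\mu\le -b<0$, so $\mu\ne0$ even if $0\in\sigma(A_\gamma)$. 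Feeding this back into the remaining two equations of the system gives $e^{b'\epsilon s}\big(|D^\beta[a(s,t)-Ts-a_0]|+|D^\beta[\tilde\theta(s,t+t_0)-nt]|\big)\to0$, which is the first bullet of Theorem~\ref{thm_precise_asymptotics}. Since the Martinet tube was arbitrary, this proves the theorem.

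I expect the main obstacle to be precisely this last step: running the HWZ representation argument without non-degeneracy of $\gamma$. In the classical proof, non-degeneracy is what rules out the spectral component along $\ker A_\gamma$; here that role is played entirely by the a priori rate $b>0$ in Definition~\ref{def_non_deg_punctures}, which both supplies the exponential-decay input the argument needs and excludes $\mu=0$. A secondary, more routine technical point is organizing the elliptic bootstrapping of the second step so that all constants are uniform in the translation parameter $s$; this is standard once one works on the fixed-size cylinders $[s-1,s+1]\times\R/\Z$.
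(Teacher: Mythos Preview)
The paper does not give its own proof of this statement: Theorem~\ref{thm_asymptotic_formula_deg_case} is quoted verbatim as Corollary~6.6 of~\cite{fast}, with no argument supplied here. So there is nothing in the present paper to compare your proposal against.

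That said, your sketch is an accurate outline of how the result is obtained in~\cite{fast}. The essential point you identify is exactly right: the classical HWZ argument from~\cite{props1} uses non-degeneracy of $\gamma$ only to ensure that $0$ is not in the spectrum of $A_\gamma$, which in turn guarantees the a priori exponential decay that drives the representation formula. Definition~\ref{def_non_deg_punctures} supplies that exponential decay directly, via the rate $b>0$ in the fourth bullet, so the rest of the HWZ machinery goes through unchanged and the asymptotic eigenvalue satisfies $\epsilon\mu\le -b<0$, avoiding the kernel even when $\gamma$ is degenerate. Your handling of the zero $d\alpha$-area case and the bootstrapping on translated cylinders $[s-1,s+1]\times\R/\Z$ are also the standard moves. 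One small refinement: the passage from $C^0$ exponential decay of $\zeta$ (which is what the definition literally gives, via an auxiliary exponential map) to exponential decay of $z$ and all its derivatives in Martinet-tube coordinates requires a short argument comparing the two coordinate systems near $\gamma_0$; this is routine but should be mentioned if you write the proof out in full.
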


\begin{remark}\label{rem_deg_asymptotics}
In the above statement $\gamma$ might be degenerate. In view of Theorem~\ref{thm_asymptotic_formula_deg_case} we can talk about asymptotic eigenvectors and define $\wind_\infty(\util,z_*,\tau)$ exactly as before, provided the hypotheses of Theorem~\ref{thm_asymptotic_formula_deg_case} are satisfied.
\end{remark}

\subsubsection{Energy in terms of asymptotic limits}\label{sssec_energy_rmk}

As in~\ref{sssec_gen_fin_energy_curves} consider $\Omega=d(h\alpha_-)$, $J_\pm\in \J(\alpha_\pm)$, $\jbar \in \J_\Omega(\jtil_-,\jtil_+)$ and a finite-energy map $\util:(S\setminus \Gamma,j) \to (\R\times M,\jbar)$. Here $\Gamma$ consists of non-removable punctures. Assume that $\util$ is weakly asymptotic to some periodic Reeb orbit at every puncture. If the asymptotic limits of $\util$ at the positive punctures $z_1,\dots,z_n$ are $(x_1,T_1),\dots,(x_n,T_n)\in\mathcal{P}(\alpha_+)$, respectively, then $E(\util) = \sum_{k=1}^n T_n$. This follows from Stokes theorem. The same conclusion holds in the $\R$-invariant case. In the literature the reader will find other definitions of the energy which yield the same finite-energy curves, all of which can be estimated by the energy used here up to a positive factor independent of the curves.

\subsubsection{Classical algebraic invariants}
\label{sssec_classical_alg_inv}

Fix a $d\alpha$-compatible complex structure $J$ on $\xi$, and a finite-energy map $\util=(a,u)$ as in~\ref{sssec_finite_energy_curves}. We make no non-degeneracy assumption on $\alpha$, but we do assume that every puncture is non-removable and non-degenerate as in Definition~\ref{def_non_deg_punctures}. Then $\pi_\alpha \circ du(z):(T_zS,j) \to (\xi|_{u(z)},J)$ is a holomorphic linear map for all $z\in S\setminus \Gamma$. Seen as a section of the appropriate bundle, $\pi_\alpha\circ du$ satisfies a Cauchy-Riemann type equation. It follows from the similarity principle that either $\pi_\alpha\circ du$ vanishes identically, or all its zeros are isolated and contribute positively to the algebraic count of zeros. Moreover, $\pi_\alpha \circ du$ vanishes identically if, and only if, $u(S\setminus\Gamma)$ is contained on trivial cylinders over periodic Reeb orbits, and if $\pi_\alpha\circ du$ does not vanish identically on a given component of $S\setminus\Gamma$ then $\pi_\alpha\circ du$ has finitely many zeros there; see Definition~\ref{def_non_deg_punctures}.

Assume that $\pi_\alpha\circ du$ does not vanish identically on all connected components of its domain. In~\cite{props2} the integer-valued invariant
\begin{equation}
\wind_\pi(\util) \geq 0
\end{equation}
was defined as the algebraic count of zeros of $\pi_\alpha\circ du$. We can find a $d\alpha$-symplectic trivialization of $u^*\xi$ such that it converges at the end of $S\setminus\Gamma$ corresponding to $z\in\Gamma$ to a $d\alpha$-symplectization $\tau_z$ along the asymptotic limit, for each $z$. The invariant $\wind_\infty(\util)\in\Z$ is defined as
\begin{equation}
\wind_\infty(\util) = \sum_{z\in\Gamma_+} \wind(\util,z,\tau_z) - \sum_{z\in\Gamma_-} \wind(\util,z,\tau_z)
\end{equation}
where we split $\Gamma = \Gamma^+ \cup \Gamma^-$ into positive and negative punctures, assuming that $\pi_\alpha\circ du$ does not vanish identically on every component of $S\setminus\Gamma$.

\begin{remark}\label{rmk_wind_pi_wind_infty}
The identity $\wind_\infty(\util) = \wind_\pi(\util) + \chi(S) - \#\Gamma$ follows from basic degree theory, see details in~\cite{props2}.
\end{remark}

\subsubsection{Moduli spaces}
\label{sssec_moduli_spaces}

Fix a symplectic form $\Omega$ as in~\ref{sssec_gen_fin_energy_curves}. Suppose that the following data is given: an integer $g\geq 0$, $\jtil_+ \in \J(\alpha_+)$, $\jtil_- \in \J(\alpha_-)$ and $\bar J \in \J_\Omega(\jtil_-,\jtil_+)$, $\gamma_1^+,\dots,\gamma_{m_+}^+ \in \mathcal{P}(\alpha_+)$ and $\gamma_1^-,\dots,\gamma_{m_-}^- \in \mathcal{P}(\alpha_-)$, $\delta_1^+,\dots,\delta_{m_+}^+ \leq 0$ and $\delta_1^-,\dots,\delta_{m_-}^- \geq 0$ such that $\delta_i^\pm$ is not in the spectrum of the asymptotic operator associated with $(J_\pm,\alpha_\pm,\gamma_i^\pm)$.
Denote by $T_k^+$ the action (period) of $\gamma_k^+$.
Denote $\delta=(\delta_1^+,\dots,\delta_{m_+}^+;\delta_1^-,\dots,\delta_{m_+}^-)$. The moduli space
\begin{equation}\label{moduli_space_notation}
\M_{\jbar,g,\delta}(\gamma_1^+,\dots,\gamma_{m_+}^+;\gamma_1^-,\dots,\gamma_{m_-}^-)
\end{equation}
is defined as the set of equivalence classes of tuples $(\util,S,j,\Gamma^+,\Gamma^-)$ consisting of a closed connected genus $g$ Riemann surface $(S,j)$, two disjoint finite and ordered subsets $\Gamma^\pm=\{z_1^\pm,\dots,z_{m_\pm}^\pm\}$ 
of $S$, and a finite-energy $\jbar$-holomorphic map $\util : (S\setminus (\Gamma^+\cup\Gamma^-),j) \to (\R\times M,\bar J)$ which has non-degenerate punctures as in Definition~\ref{def_non_deg_punctures}, a positive puncture at $z^+_i$ where it is asymptotic to $\gamma_i^+$, and a negative puncture at $z^-_i$ where it is asymptotic to~$\gamma_i^-$. Moreover, the asymptotic eigenvalue (Remark~\ref{rem_deg_asymptotics}) of $\util$ at $z_i^+$ is smaller than $\delta_i^+$, and the asymptotic eigenvalue of $\util$ at $z_i^-$ is larger than $\delta_i^-$. We declare $(\util_0,S_0,j_0,\Gamma^+_0,\Gamma^-_0)$ and  $(\util_1,S_1,j_1,\Gamma^+_1,\Gamma^-_1)$ equivalent if there is a biholomorphism $\phi:(S_0,j_0) \to (S_1,j_1)$ that defines order preserving bijections $\phi:\Gamma^\pm_0 \to \Gamma^\pm_1$ and satisfies $\util_1 \circ \phi = \util_0$. An element of~\eqref{moduli_space_notation} is called a pseudo-holomorphic curve. It will be called embedded, immersed or somewhere injective provided that it can be represented as finite-energy map $\util$ that is an embedding, immersion or a somewhere injective map, respectively. The assumption that punctures are non-degenerate and Theorem~\ref{thm_asymptotic_formula_deg_case} were used to guarantee that we can talk about asymptotic eigenvalues; see Remark~\ref{rmk_convention_asymptotic_evalues}.


Let $\tau=(\tau_1^+,\dots,\tau_{m_+}^+;\tau_1^-,\dots,\tau_{m_-}^-)$ be $d\alpha_\pm$-symplectic trivializations of $\xi_{\gamma_i^\pm}$. The virtual dimension of $\M_{\jbar,g,\delta}(\gamma_1^+,\dots,\gamma_{m_+}^+;\gamma_1^-,\dots,\gamma_{m_-}^-)$ at $[\util,S,j,\Gamma^+,\Gamma^-]$ is
\begin{equation}\label{virtual_dimension}
-(2-2g-m_+-m_-) + c_1^\tau(\util^*T(\R\times M)) + \sum_{i=1}^{m_+} \mu_{\CZ}^{\tau_i^+,\delta_i^+}(\gamma_i^+) - \sum_{i=1}^{m_-} \mu_{\CZ}^{\tau_i^-,\delta_i^-}(\gamma_i^-)
\end{equation}
where $c_1^\tau(\util^*T(\R\times M))$ is a relative first Chern number given as follows. First, $\Omega$-symplecti\-cally trivialize $\util^*T(\R\times M)$  in such a way that at the end $z_i^\pm \in \Gamma^\pm$ this trivialization splits as trivializations $\{\partial_a,X_{\alpha_\pm}\} \oplus \sigma_i^\pm$, with some $d\alpha_\pm$-symplectic trivialization $\sigma_i^\pm$ of $\xi_{\gamma_i^\pm}$. Then set
\begin{equation}\label{relative_Chern_number_formula}
c_1^\tau(\util^*T(\R\times M)) = 2 \left( \sum_{i=1}^{m_+} \wind(\sigma_i^+,\tau_i^+) - \sum_{i=1}^{m_-} \wind(\sigma_i^-,\tau_i^-) \right).
\end{equation}

\begin{remark}
This discussion applies to the case $\alpha_-=\alpha_+=\alpha$ and $\jbar=\jtil\in\J(\alpha)$ to define moduli spaces $\M_{\jtil,g,\delta}(\gamma_1^+,\dots,\gamma_{m_+}^+;\gamma_1^-,\dots,\gamma_{m_-}^-)$, where $\gamma^\pm_i \in \mathcal{P}(\alpha)$.
\end{remark}

Let $C$ be a curve in $\M_{\jtil,g,\delta}(\gamma_1^+,\dots,\gamma_{m_+}^+;\gamma_1^-,\dots,\gamma_{m_-}^-)$ be represented by the tuple $(\util=(a,u),S,j,\Gamma^+,\Gamma^-)$. Using the complex structure we can compactify $S\setminus (\Gamma^+\cup\Gamma^-)$ by adding at its ends the circles made of rays issuing from the origin of the tangent spaces of the corresponding punctures. We obtain a compact oriented surface with boundary~$\overline S$. Its fundamental class ($\Z$ coefficients) will be denoted by $[\overline S,\partial\overline S] \in H_2(\overline S,\partial\overline S)$. Using Theorem~\ref{thm_asymptotic_formula_deg_case} we can smoothly extend $u$ to a map $\overline u:\overline S \to M$. We call $\overline u_*[\overline S,\partial\overline S] \in H_2(M,\gamma_1^+\cup\dots\cup\gamma_1^-\cup \dots)$ the homology class induced by $C$. It does not depend on the choice of representative.


\subsubsection{Intersection numbers}\label{sssec_intersection_numbers}

We review here basic facts of the intersection theory for punctured pseudo-holomorphic curves in dimension four, as developed by Siefring in~\cite{siefring}. 
Fix non-degenerate contact forms $\alpha_-,\alpha_+$ that define $\xi$ and satisfy $\alpha_+>\alpha_-$. Later we will describe situations where the non-degeneracy assumption can be relaxed; see Remark~\ref{rmk_non_deg_relaxed}.

Let $\jtil_\pm\in\J(\alpha_\pm)$ and $\bar J \in \J_\Omega(\jtil_-,\jtil_+)$ be as in~\ref{sssec_gen_fin_energy_curves}. Choose a collection $\tau$ of symplectic trivializations of $\xi_{\gamma_0}$ for every simply covered periodic orbit $\gamma_0$ of $X_{\alpha_+}$ and of $X_{\alpha_-}$. Let $\util=(a,u):S\setminus\Gamma \to \R\times M$ be a finite-energy $\jbar$-holomorphic map, where $(S,j)$ is a closed Riemann surface and $\Gamma\subset S$ is a finite set of non-removable punctures. Decompose $\Gamma = \Gamma^+ \sqcup \Gamma^-$ into positive and negative punctures. At each $w \in\Gamma_\pm$ the asymptotic limit of $\util$ is denoted by $\gamma_w^{n_w}$ where $\gamma_w=(x_w,T_w)\in \mathcal{P}(\alpha_\pm)$ is prime and $n_w$ is its covering multiplicity. Choose holomorphic polar coordinates $(s,t)$ at $w$ which are positive or negative according to whether $w$ is a positive or a negative puncture. We can write the components $\util(s,t)=(a(s,t),u(s,t))$ as functions of $(s,t)$ near the punctures. Choose Martinet tubes $(U_w,\Psi_w)$ around $\gamma_w$, with respect to $\alpha_+$ or $\alpha_-$ accordingly, with coordinates $(\theta,z=(x_1+ix_2))$ satisfying the requirements of Definition~\ref{def_Martinet_tube} and aligned with $\tau$ in the following sense: $\partial_{x_1}$ gets represented by $\tau$ as a loop with winding number zero.

Let $w$ be a positive puncture. By the asymptotic behavior described in Theorem~\ref{thm_precise_asymptotics}, if $R\gg1$ then $(s,t) \mapsto (a(s,t),t)$ defines a smooth proper orientation-preserving embedding of $(R,+\infty) \times \R/\Z$ onto an end of $[0,+\infty)\times\R/\Z$. Hence we can use $(a,t)$ as new polar coordinates near $w$, with respect to which the map $\util$ is written as $$ (a,t) \mapsto (a,u(a,t)). $$ The analogous conclusion holds near a negative puncture. If $|a|$ is large enough we can again use Theorem~\ref{thm_precise_asymptotics} to conclude that the loop $t\mapsto u(a,t)$ lies in $N_w$. Hence
$$
\Psi_w \circ u(a,t) = (\theta(a,t),z(a,t))
$$
are well-defined functions of $(a,t)$ with $|a|\gg1$. Fix a smooth cut-off function $\beta:\R\to[0,1]$ that vanishes near $(-\infty,0]$ and is identically equal to $1$ near $[1,+\infty)$. Taking $\varepsilon>0$ small and $r>0$ large, define $\util^{\tau,w,\varepsilon,r}$ by
\begin{equation}
\util^{\tau,w,\varepsilon,r}(a,t) = (a,\theta(a,t),z(a,t) + \varepsilon\beta(a-r)) \qquad a\geq r
\end{equation}
if $w$ is a positive puncture, or by
\begin{equation}
\util^{\tau,w,\varepsilon,r}(a,t) = (a,\theta(a,t),z(a,t) + \varepsilon\beta(-a-r)) \qquad a\leq-r
\end{equation}
if $w$ is a negative puncture. Repeating this construction at all $w\in\Gamma$ using  common parameters $\varepsilon,r>0$ such that $\min\{\varepsilon^{-1},r\}$ is large enough, we obtain a small neighborhood~$V$ of $\Gamma$ and a smooth map defined on $V\setminus \Gamma$, which can be smoothly extended to a map
\begin{equation*}
\util^{\tau,\varepsilon,r} : S\setminus\Gamma \to \R\times M
\end{equation*}
by setting it to be equal to $\util$ on $S\setminus V$. The dependence on $\tau$ is hidden in the choice of the Martinet tubes.

If $\util_0$ and $\util_1$ are finite-energy $\bar J$-holomorphic curves, then Siefring~\cite{siefring} defines
\begin{equation}
i^{\tau}(\util_0,\util_1) = {\rm int}(\util_0,\util^{\tau,\varepsilon,r}_1)
\end{equation}
where ${\rm int}$ stands for the oriented intersection number. This is well-defined since for $\min\{\varepsilon^{-1},r\}$ large enough the maps $\util_0$ and $\util^{\tau,\varepsilon,r}_1$ do not intersect each other on the ends of their domains. Moreover, it depends only on the data $(\tau,\util_0,\util_1)$ as one easily checks. It is also symmetric: $i^{\tau}(\util_0,\util_1) = i^\tau(\util_1,\util_0)$.

It is interesting to compute the difference $i^{\tau'}(\util_0,\util_1) - i^\tau(\util_0,\util_1)$. This can be understood in terms of braided knots. Suppose that for $j\in\{0,1\}$ we have positive punctures $z_j$ of $\util_j$ with asymptotic limit $\gamma^{n_j}$, where $\gamma$ is a simply covered Reeb orbit and $n_j\in\N$. Assume for simplicity that $\util_0,\util_1$ are somewhere injective. By Theorem~\ref{thm_precise_asymptotics}, setting the $\R$-coordinate $a$ in $\R\times M$ equal to a large constant near the punctures $z_j$, the maps $\util_0$, $\util_1^{\tau,\varepsilon,r}$ and $\util_1^{\tau',\varepsilon,r}$ single out (up to small perturbation) braided knots $k_0(a)$, $k_1^\tau(a)$ and $k_1^{\tau'}(a)$, respectively, on a tubular neighborhood of~$\gamma$. Consider $r$ so large that the corresponding ends of $\util_0$ and $\util_1^{\tau,\varepsilon,r}$, and of $\util_0$ and $\util_1^{\tau',\varepsilon,r}$ do not intersect for all $a\geq r$. If $a\geq r$ then all the $n_1$ strands $k_1^{\tau'}(a)$ will wind around all the $n_0$ strands of $k_0(a)$ $\wind(\tau,\tau')$ more times than all the strands of $k_1^{\tau}(a)$ wind around all the $n_0$ strands of $k_0(a)$. It follows that the contribution to the difference $i^{\tau'}(\util_0,\util_1)-i^\tau(\util_0,\util_1)$ coming from $(z_0,z_1)$ is equal to $n_0n_1\wind(\tau,\tau')$. Negative punctures are treated similarly. The argument when curves are not somewhere injective is a straightforward modification. We arrive at~\cite[Proposition~4.1 item (3)]{siefring}
\begin{equation}\label{intersection_difference}
i^{\tau'}(\util_0,\util_1) - i^\tau(\util_0,\util_1) = {\sum}^+_{(z_0,z_1)} n_0n_1\wind(\tau,\tau') - {\sum}^-_{(z_0,z_1)} n_0n_1\wind(\tau,\tau')
\end{equation}
where $\Sigma^+$ $(\Sigma^-$) indicates sum over all pairs of positive (negative) punctures $(z_0,z_1)$ where $\util_0,\util_1$ are asymptotic to covers of a common simply covered Reeb orbit. As in~\cite{siefring} consider
\begin{equation}\label{complementary_bilinear_form}
\begin{aligned}
\Omega^\tau(\util_0,\util_1) &= {\sum}^+_{(z_0,z_1)}  \ n_0n_1 \max \left\{ \frac{\floor{\rho^\tau(\gamma^{n_0})}}{n_0} , \frac{\floor{\rho^\tau(\gamma^{n_1})}}{n_1} \right\} \\
& + {\sum}^-_{(z_0,z_1)}  \ n_0n_1 \max \left\{ \frac{\floor{-\rho^\tau(\gamma^{n_0})}}{n_0} , \frac{\floor{-\rho^\tau(\gamma^{n_1})}}{n_1} \right\}.
\end{aligned}
\end{equation}
Using~\eqref{intersection_difference} we get $\Omega^\tau(\util_0,\util_1) - \Omega^{\tau'}(\util_0,\util_1) = i^{\tau'}(\util_0,\util_1) - i^{\tau}(\util_0,\util_1)$. Siefring defines the generalized intersection number
\begin{equation}\label{formula_generalized_intersection_number}
\util_0 * \util_1 = i^{\tau}(\util_0,\util_1) + \Omega^\tau(\util_0,\util_1)
\end{equation}
which is independent of the choice of $\tau$ in view of the above calculations. The generalized intersection number for curves is defined by the generalized intersection number of maps representing them.

Of course, the case $\alpha_-=\alpha_+$ and $\jbar = \jtil\in\J(\alpha)$ is a special case of the above discussion.

As explained in~\cite{siefring}, one of the many motivations to consider this intersection number is that it is preserved under smooth homotopies of asymptotically cylindrical maps. We have not defined here this class of maps, see~\cite[subsection~2.3]{siefring}, but we state a special case of~\cite[Proposition~4.3 item (1)]{siefring}.

\begin{proposition}\label{prop_constancy_int_number}
The number $\util * \vtil$ does not change when $\util$ or $\vtil$ vary on smooth families of finite-energy curves with fixed asymptotic limits.
\end{proposition}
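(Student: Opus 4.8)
The plan is to exploit the decomposition $\util * \vtil = i^\tau(\util,\vtil) + \Omega^\tau(\util,\vtil)$ from~\eqref{formula_generalized_intersection_number}. The term $\Omega^\tau(\util,\vtil)$ defined in~\eqref{complementary_bilinear_form} is manifestly a function only of the asymptotic limits at the punctures of $\util$ and $\vtil$ and of the chosen trivializations $\tau$: it is a sum over pairs of punctures whose asymptotic limits are iterates of a common prime orbit, with summands built from the covering multiplicities and from the rotation numbers $\rho^\tau$ of that orbit. Along a smooth family of finite-energy curves with fixed asymptotic limits all of this data is constant, so $\Omega^\tau$ is constant along the family. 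Hence it suffices to prove that $i^\tau(\util,\vtil)$ is constant, and by the symmetry $i^\tau(\util,\vtil) = i^\tau(\vtil,\util)$ (applied twice if necessary) it is enough to treat the case in which $\vtil$ is fixed and $\util = \util_\sigma$, $\sigma\in[0,1]$, varies smoothly with fixed asymptotic limits.

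Next recall that $i^\tau(\util_\sigma,\vtil) = {\rm int}(\util_\sigma,\vtil^{\tau,\varepsilon,r})$, where $\vtil^{\tau,\varepsilon,r}$ is the fixed perturbation of $\vtil$ constructed in~\ref{sssec_intersection_numbers} by pushing the ends of $\vtil$ off themselves inside Martinet tubes using $\tau$. The only mechanism that could prevent $\sigma\mapsto{\rm int}(\util_\sigma,\vtil^{\tau,\varepsilon,r})$ from being locally constant is intersection points escaping to $\pm\infty$ along the ends. I would rule this out using uniform asymptotics: since the asymptotic limits of the $\util_\sigma$ are fixed and the parameter interval $[0,1]$ is compact, Theorem~\ref{thm_precise_asymptotics} (together with continuity in $\sigma$ of the exponential decay data) provides, at each puncture, a Martinet-tube description of $\util_\sigma$ that is uniform in $\sigma$. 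Comparing with the corresponding end of $\vtil^{\tau,\varepsilon,r}$ one splits into two cases: either the two punctures in question have asymptotic limits that are iterates of a common prime orbit — in which case the translate by $\varepsilon\beta$ built into $\vtil^{\tau,\varepsilon,r}$ separates the ends in the $z$-coordinate once $\varepsilon^{-1}$ is large, by the winding comparison recorded around~\eqref{intersection_difference} — or the limits lie on distinct prime orbits, in which case the ends are separated simply because the orbits are at positive distance in $M$. Either way, one can choose $\varepsilon,r$ with $\min\{\varepsilon^{-1},r\}$ large enough that the ends of $\util_\sigma$ and of $\vtil^{\tau,\varepsilon,r}$ are disjoint for every $\sigma\in[0,1]$ simultaneously; hence all intersection points of $\util_\sigma$ with $\vtil^{\tau,\varepsilon,r}$ lie in one fixed compact subset $K\subset\R\times M$.

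Once the intersections are confined to $K$, the conclusion is the standard homotopy invariance of the oriented intersection number. Concretely, I would perturb the family $\{\util_\sigma\}$ slightly, leaving it unchanged outside the region where it is already disjoint from $\vtil^{\tau,\varepsilon,r}$, to a family transverse to $\vtil^{\tau,\varepsilon,r}$ for all but finitely many $\sigma$, at which exceptional parameters only a birth/death of a cancelling pair of intersection points occurs; the signed count ${\rm int}(\util_\sigma,\vtil^{\tau,\varepsilon,r})$ is therefore independent of $\sigma$. Equivalently, $\bigcup_\sigma\{\sigma\}\times\util_\sigma$ and $[0,1]\times\vtil^{\tau,\varepsilon,r}$ realize a compact oriented cobordism in $[0,1]\times\R\times M$ between the intersection $0$-cycles at $\sigma=0$ and $\sigma=1$. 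The main obstacle throughout is precisely the non-compactness of the domains, i.e.\ the possibility of intersection points running off to infinity under the homotopy; it is resolved entirely by the uniform asymptotic description of the ends from Theorem~\ref{thm_precise_asymptotics}, which is exactly why the hypothesis that the asymptotic limits stay fixed is essential. A detailed treatment in a more general homotopy framework is in~\cite[Proposition~4.3]{siefring}.
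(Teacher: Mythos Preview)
The paper does not actually prove this proposition: it is stated without proof as a special case of~\cite[Proposition~4.3 item (1)]{siefring}, the same reference you invoke at the end. Your sketch is correct and is essentially Siefring's argument---decompose $\util*\vtil$ into $i^\tau+\Omega^\tau$, observe that $\Omega^\tau$ depends only on the asymptotic data, and then confine all intersections of $\util_\sigma$ with $\vtil^{\tau,\varepsilon,r}$ to a fixed compact set so that standard homotopy invariance of the oriented intersection number applies. The one place where your outline is somewhat optimistic is the claim that the asymptotic description from Theorem~\ref{thm_precise_asymptotics} is automatically \emph{uniform in $\sigma$}; this is true but requires an argument (it is part of what Siefring sets up carefully in the asymptotically-cylindrical framework), so it is appropriate that you defer to~\cite{siefring} for the details.
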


Crucial to our analysis is the adjunction inequality. Choose an $\Omega$-symplectic trivialization of $\util^*T(\R\times M)$ that splits as $\{\partial_a,X_{\alpha_\pm}\} \oplus \sigma_w$ at each puncture $w\in\Gamma_\pm$, where $\sigma_w$ is a $d\alpha_\pm$-symplectic trivialization of $\xi_{\gamma_w^{n_w}}$. Denote
\[
\begin{aligned}
& \mu_{\CZ}(\util) = \sum_{w\in\Gamma_+} \mu_{\CZ}^{\sigma_w}(\gamma_w^{n_w}) - \sum_{w\in\Gamma_-} \mu_{\CZ}^{\sigma_w}(\gamma_w^{n_w}) \\
& \bar\sigma(\util) = \sum_{w\in\Gamma} \gcd(n_w,\lfloor\rho^{\Phi_w}(\gamma_w^{n_w})\rfloor)
\end{aligned}
\]
where, for each $w\in\Gamma$, $\Phi_w$ denotes a trivialization of $\xi$ along the underlying primitive orbit  $\gamma_w$.
Let $\Gamma_{\rm odd}$ denote the set of punctures where the asymptotic limit has odd Conley-Zehnder index. The following is a special case of~\cite[Theorem~2.3]{siefring}.

\begin{theorem}[Siefring]\label{thm_adjunction_ineq}
If $\util$ is somewhere injective then $$ \util*\util-\frac{1}{2}\mu_{\CZ}(\util)+\frac{1}{2}\#\Gamma_{\rm odd}+\chi(S)-\bar\sigma(\util)\geq0 $$ and equality implies that $\util$ is an embedding.
\end{theorem}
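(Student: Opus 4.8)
We follow Siefring's proof of the adjunction inequality \cite{siefring}, which in this non-degenerate setting is a punctured analogue of the classical adjunction formula for closed $\jbar$-holomorphic curves in a four-manifold. Fix the collection $\tau$ of trivializations and recall $\util*\util=i^{\tau}(\util,\util)+\Omega^{\tau}(\util,\util)$. A standard normal-bundle argument, together with local positivity of self-intersections of $\jbar$-holomorphic maps in dimension four, computes $i^{\tau}(\util,\util)$ away from the ends as $2\delta(\util)$, where $\delta(\util)\ge 0$ is the count of double points and branch points of $\util$ (here we use that $\util$ is somewhere injective), and $\delta(\util)=0$ exactly when $\util$ is injective and immersed. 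The remaining contributions to $i^{\tau}(\util,\util)$ are localized near the punctures and come from the push-off $\util^{\tau,\varepsilon,r}$.

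Near a puncture $w\in\Gamma$ with asymptotic limit $\gamma_w^{n_w}$, Theorem~\ref{thm_precise_asymptotics} describes $\Psi_w\circ u$ to leading order through an eigenfunction of the asymptotic operator, of winding $\wind_\infty(\util,w,\tau_w)$ with respect to $\tau_w$; applying this at $w$ and at any other puncture lying over the same underlying orbit $\gamma_w$, the asymptotic braiding of the strands of $\util$ against those of its push-off is governed by these windings and the covering multiplicities. One thereby extracts from the ends of $i^{\tau}(\util,\util)+\Omega^{\tau}(\util,\util)$ a non-negative ``hidden intersection at infinity'' count $2\delta_\infty(\util)\ge0$ — the forced linking of distinct ends — plus a complementary normal contribution. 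The sharp winding estimate bounding $\wind_\infty(\util,w,\tau_w)$ by $\lfloor\rho^{\tau_w}(\gamma_w^{n_w})\rfloor$ at positive punctures (and the symmetric lower bound at negative punctures) turns this normal contribution into Conley-Zehnder data, while the $\Omega^{\tau}$ term built into $\util*\util$ and the $\gcd$-terms in $\bar\sigma(\util)$ are precisely what is needed to make the bookkeeping insensitive to the covering multiplicities of the asymptotic orbits.

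To organize these normal contributions, split $\util^{*}T(\R\times M)$ along $\util$ into the tangent line ${\rm im}\,d\util$ — which extends over the punctures by Theorem~\ref{thm_precise_asymptotics} — and a normal line bundle; computing the relative first Chern number together with the winding of $d\util$ at the ends yields an identity for the relative normal Chern number in terms of $\mu_{\CZ}(\util)$, $\chi(S)$, $\#\Gamma_{\rm odd}$ and $\bar\sigma(\util)$. Feeding this into the decomposition above, the dependence on $\tau$ in the intermediate expressions cancels, consistently with $\util*\util$ being $\tau$-independent, and one is left with
\[
\util*\util-\tfrac12\mu_{\CZ}(\util)+\tfrac12\#\Gamma_{\rm odd}+\chi(S)-\bar\sigma(\util)=2\delta(\util)+2\delta_\infty(\util)\ \ge\ 0 .
\]
Equality forces $\delta(\util)=0$, so $\util$ is injective and immersed, and $\delta_\infty(\util)=0$, so distinct ends do not link at infinity and there is no asymptotic self-tangency; combined with the asymptotic control of Theorem~\ref{thm_precise_asymptotics} this shows that $\util$ is an embedding.

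The main obstacle is the asymptotic step: one must prove that the contribution of each end to $i^{\tau}(\util,\util)+\Omega^{\tau}(\util,\util)$ genuinely splits off a non-negative hidden-intersection count, and identify the complementary normal term in terms of the Conley-Zehnder index, correctly treating the combinatorics of multiply covered asymptotic orbits (the source of the $\Omega^{\tau}$ correction and of $\bar\sigma(\util)$). This is the technical heart of \cite{siefring}; the relative asymptotic expansions of Theorem~\ref{thm_precise_asymptotics}, and of Theorem~\ref{thm_asymptotic_formula_deg_case} should one wish to allow degenerate asymptotic limits as in Remark~\ref{rmk_non_deg_relaxed}, are exactly what make it go through.
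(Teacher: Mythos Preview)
The paper does not prove this statement; it is quoted verbatim as a special case of \cite[Theorem~2.3]{siefring} and used as a black box (for instance in the proof of Proposition~\ref{prop_embeddedness}). So there is no ``paper's own proof'' to compare against.

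Your sketch is a reasonable outline of Siefring's argument in \cite{siefring}: decompose $\util*\util$ into the interior self-intersection number $2\delta(\util)$ and asymptotic contributions, extract a non-negative hidden intersection count $\delta_\infty(\util)$ at infinity, and identify the remainder with the index-theoretic quantity $\tfrac12\mu_{\CZ}(\util)-\tfrac12\#\Gamma_{\rm odd}-\chi(S)+\bar\sigma(\util)$ via a normal-bundle/relative Chern number computation. You are also right that the delicate step is the asymptotic bookkeeping for multiply covered ends, and that the relative asymptotic formulas from \cite{siefring_CPAM} are the key input. One point to be careful about in the equality case: $\delta(\util)=\delta_\infty(\util)=0$ gives injectivity and immersion, but ruling out accumulation of self-intersections at the punctures (i.e.\ that the embedding is proper with pairwise disjoint ends) requires the full strength of the relative asymptotic analysis, not just Theorem~\ref{thm_precise_asymptotics} for a single end; this is where \cite{siefring_CPAM} enters and your last paragraph glosses over it slightly.
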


\begin{remark}\label{rmk_non_deg_relaxed}
Assume that there exists a constant $E>0$ such that every periodic Reeb orbit $(x,T) \in \mathcal{P}(\alpha_+) \cup \mathcal{P}(\alpha_-)$ with $T\leq E$ is non-degenerate. Then all definitions discussed here make sense and all results hold when applied to curves with energy at most equal to $E$.
\end{remark}

\subsubsection{Contact forms with common closed Reeb orbits}\label{sssec_sharing_orbits}

Let $L\subset (M,\xi)$ be a transverse oriented link with components $\gamma_1,\dots,\gamma_n$. Assume that the Reeb vector fields of two defining contact forms $\alpha_+>\alpha_-$ are positively tangent to $L$. Thus $\gamma_i$ may be viewed as prime closed $\alpha_\pm$-Reeb orbits. Let $\jtil_\pm\in\J(\alpha_\pm)$ and let $\Omega$ be a symplectic form as considered in~\ref{sssec_gen_fin_energy_curves}. We will denote by $\J_{\Omega,L}(\jtil_-,\jtil_+) \subset \J_\Omega(\jtil_-,\jtil_+)$ the subset consisting of those $\bar J$ which leave the tangent space of $\R\times L$ invariant. One can check that this set is non-empty and contractible when equipped with the $C^\infty$-topology (strong equals weak here).

\subsubsection{SFT Compactness}\label{ssec_SFT_comp}

In preparation for the proofs we need to review the language necessary to use the SFT Compactness Theorem~\cite{sftcomp}. Fix contact forms $\alpha_\pm$ that define $\xi$ and satisfy $\alpha_+ > \alpha_-$. Let $\jtil_\pm\in\J(\alpha_\pm)$, let $\Omega$ be a symplectic form as considered in~\ref{sssec_gen_fin_energy_curves}, and let $\jbar \in \J_\Omega(\jtil_-,\jtil_+)$.

\subsubsection*{Nodal curves}

A nodal holomorphic curve in $(\R\times M,\jbar)$ without marked points, also called a holomorphic building of height $1$ without marked points, is the equivalence class of a tuple $$ (\util,S,j,\Gamma^+,\Gamma^-,D) $$ consisting of a (possibly disconnected) closed Riemann surface $(S,j)$, disjoint finite ordered sets $\Gamma^+,\Gamma^-\subset S$ of distinct points, a finite unordered set $D$ of unordered pairs of points in $S$, called nodal pairs, such that all nodal points together with points in $\Gamma^+\cup\Gamma^-$ make a set of distinct points of $S$, and a finite-energy $\jbar$-holomorphic map
\[
\util:S\setminus(\Gamma^+\cup\Gamma^-) \to \R\times M
\]
having positive punctures at $\Gamma^+$, negative punctures at $\Gamma^-$, satisfying $\util(z)=\util(w)$ for all $\{z,w\}\in D$. Points of $\Gamma^+\cup\Gamma^-$ will be called punctures. By nodal points we mean points belonging to nodal pairs; we may abuse the notation and still write $D$ to denote the set of nodal points. Two such tuples $$ (\util,S,j,\Gamma^+,\Gamma^-,D) \sim (\util',S',j',\Gamma'^+,\Gamma'^-,D') $$ are declared equivalent if there is a biholomorphim $\phi:(S,j)\to(S',j')$ that determines order preserving bijections $\Gamma^\pm\to\Gamma'^\pm$ and a bijection $D\to D'$ respecting pairs, and satisfies $\util'\circ\phi=\util$. We may refer to $\phi$ simply as an isomorphism.

The nodal curve is called connected if the space obtained from $S$ by identifying points in each nodal pair is connected. 
It is called stable if for every connected component $S_*\subset S$ such that $\util|_{S_*\setminus(\Gamma_+\cup\Gamma_-)}$ is constant the inequality $2g_*+\mu_*\geq 3$ holds, where $g_*$ is the genus of $S_*$ and $\mu_*$ is the total number of punctures and nodal points in $S_*$. When there are no nodes the curve is said to be smooth.

\begin{remark}\label{rmk_blown_up_circles}
If $(S,j)$ is a Riemann surface and $z\in S$ then the conformal structure endows the circle $(T_zS\setminus0)/\R_+$, called the blown up circle at $z$, with a metric which makes it isometric to $\R/2\pi\Z$ with its usual metric.
\end{remark}

The arithmetic genus of a connected nodal curve $(\util,S,j,\Gamma^+,\Gamma^-,D)$ is, by definition, $$ g=\frac{1}{2}d-b+\sum_{i=1}^bg_i+1 $$ where $d$ is the number of nodal points, $b$ is the number of connected components of $S$ and $\sum_ig_i$ is the sum of their genera.

\begin{remark}
Assume that $(\util,S,j,\Gamma^+,\Gamma^-,D)$ is a connected nodal curve. If we choose a collection $r$ of orientation reversing isometries of blown up circles at nodal pairs, and use it to glue these circles obtaining a connected closed surface $S^{D,r}$ then the arithmetic genus is equal to the genus of $S^{D,r}$.
\end{remark}

The above notion of nodal curve in $(\R\times M,\jbar)$ can be adapted in a straightforward manner to that of a nodal curve in $(\R\times M,\jtil_\pm)$. The only difference is in the concept of stability: one further requires the existence of at least one component $S_* \subset S$ such that $\util|_{S_*}$ is not an unbranched cover of a cylinder over some closed Reeb orbit (trivial cylinder).

Let $(\util,S,j,\Gamma^+,\Gamma^-,D)$ represent a nodal curve, and write $\util=(a,u)$ in components. If we denote by $\bar S$ the smooth surface obtained from $S$ by removing all punctures and nodal points and adding the corresponding blown up circles, and assume that $\alpha_+,\alpha_-$ are non-degenerate up to action $E(\util)$, then we can use Theorem~\ref{thm_partial_asymptotics} to continuously extend the map $u$ to a map
\begin{equation}\label{map_bar}
\bar u:\bar S \to M.
\end{equation}
It maps blown up circles at punctures to the corresponding asymptotic closed Reeb orbits, and blown up circles at nodal points to the corresponding point in~$M$. Blown up circles are oriented as the boundary of $\bar S$.

\subsubsection*{Holomorphic buildings}

Nodal curves are, by definition, buildings of height~$1$. We pass now to the description of a general holomorphic building ${\bf u}$ of height $k_-|1|k_+$ in $(\R\times M,\jbar)$ without marked points, see~\cite[section~7]{sftcomp}, where $k_\pm\geq0$. It is the equivalence class of the data consisting of an ordered collection of nodal holomorphic curves
\begin{equation*}
\{\util_m\} = \{(\util_m=(a_m,u_m),S_m,j_m,\Gamma^+_m,\Gamma^-_m,D_m)\} \qquad m\in\{-k_-,\dots,k_+\}
\end{equation*}
as above, such that
\begin{itemize}
\item for each $m<0$, $\util_m$ defines a stable nodal curve in $(\R\times M,\jtil_-)$,
\item $\util_0$ defines a stable nodal curve in $(\R\times M,\jbar)$,
\item for each $m>0$, $\util_m$ defines a stable nodal curve in $(\R\times M,\jtil_+)$,
\end{itemize}
and a collection $\{\Phi_m\}$, $m\in\{-k_-,\dots,k_+-1\}$, of orientation reversing isometries
\[
\Phi_m: \bigcup_{z\in\Gamma_m^+} (T_zS_m\setminus0)/\R_+ \to \bigcup_{z\in\Gamma_{m+1}^-} (T_zS_{m+1}\setminus0)/\R_+
\]
covering order preserving bijections $\Gamma_m^+ \to \Gamma_{m+1}^-$,
such that the following is true:
\begin{equation}\label{mathcing_conditions_buildings}
\text{$\bar u_{m+1}\circ \Phi_m$ coincides with $\bar u_m$ on $(T_zS_m\setminus0)/\R_+$, for every $z\in\Gamma_m^+$.}
\end{equation}
Here each $\bar u_m$ is as in~\eqref{map_bar}, which is well-defined under the assumption that $\alpha_+,\alpha_-$ are non-degenerate up to action $E$ for some constant satisfying $\max_m E(\util_m)\leq E$. The $\util_m$ define nodal holomorphic curves called the levels, or stores, of the building. Two such collections $\{\{\util_m\},\{\Phi_m\}\}$ and $\{\{\util'_m\},\{\Phi'_m\}\}$ of data are equivalent if they have the same number of levels, and there are isomorphisms between the corresponding levels (in the sense explained before), whose linearizations intertwine the corresponding orientation reversing isometries $\{\Phi_m\}$ and $\{\Phi'_m\}$ at corresponding blown up circles. Moreover, synchronized reordering of the intermediate punctures also define equivalent buildings.

Let $r$ be an arbitrary collection of orientation reversing isometries between the blown up circles at points in the nodal pairs $\cup_mD_m$. Denote
\begin{equation*}
S^{{\bf u},r} = \left( \sqcup_m \bar S_m \right) / \sim
\end{equation*}
where $\Phi_m$ identifies blown up circles at points in $\Gamma_m^+$ with blown up circles at corresponding points in $\Gamma_{m+1}^-$, and $r$ identifies blown up circles at nodal pairs. The circles in the interior of $S^{{\bf u},r}$ obtained by these identifications of blown up circles will be called special circles.

\begin{remark}
\label{rmk_subsets}
One may see $S_m\setminus (\Gamma^+_m\cup\Gamma^-_m\cup D_m)$ as an open subset of $S^{{\bf u},r}$.
\end{remark}

Condition~\eqref{mathcing_conditions_buildings} and Theorem~\ref{thm_partial_asymptotics} allow us to define a continuous map
\begin{equation*}
F_{\bf u}: S^{{\bf u},r} \to [-\infty,+\infty]\times M
\end{equation*}
as the unique continuous map that agrees with $\util_0$ on $S_0\setminus (\Gamma^+_0\cup\Gamma^-_0\cup D_0)$, agrees with $(+\infty,u_m)$ on $S_m\setminus (\Gamma^+_m\cup\Gamma^-_m\cup D_m)$ for all $m>0$, and with $(-\infty,u_m)$ on $S_m\setminus (\Gamma^+_m\cup\Gamma^-_m\cup D_m)$ for all $m<0$. Here we abuse notation and write $D_m$ for sets of nodal points.

Similarly one defines holomorphic buildings in $(\R\times M,\jtil_\pm)$. The difference is that there is no distinction between upper and lower levels. All levels are nodal curves in the same symplectization. Here we simply consider $F_{\bf u}$ as a map $S^{{\bf u},r} \to M$ built from the $M$-components of the levels in a similar manner.

\begin{lemma}
\label{lemma_constant_components}
Let $\bf u$ be a stable holomorphic building without marked points 
such that $S^{{\bf u},r}$ is a connected genus zero surface for some (and hence any) choice of~$r$. Suppose that there exists a level $(\util_m,S_m,j_m,\Gamma^+_m,\Gamma^-_m,D_m)$ of ${\bf u}$ and a connected component $Y \subset S_m$ such that $\util_m$ is constant on $Y \setminus (\Gamma^+_m\cup\Gamma^-_m) = Y$. Then there exist connected components $Y'\neq Y''$ of $S_m$ such that 
\begin{itemize}
\item $\util_m$ is non-constant on $Y' \setminus (\Gamma^+_m\cup\Gamma^-_m)$,
\item $\util_m$ is non-constant on $Y'' \setminus (\Gamma^+_m\cup\Gamma^-_m)$,
\item $\util_m(Y) \subset \util_m(Y' \setminus (\Gamma^+_m\cup\Gamma^-_m)) \cap \util_m(Y'' \setminus (\Gamma^+_m\cup\Gamma^-_m))$.
\end{itemize}
\end{lemma}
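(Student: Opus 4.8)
The plan is to argue by a connectivity-versus-genus count, using the stability condition together with the genus-zero hypothesis on $S^{{\bf u},r}$. First I would fix a choice of $r$ and note that $S^{{\bf u},r}$ is a connected closed oriented surface of genus zero, hence a sphere, which in particular means that $S^{{\bf u},r}$ contains no embedded loop whose complement is connected: every special circle separates $S^{{\bf u},r}$. Let $Y \subset S_m$ be the component on which $\util_m$ is constant. Since the building is stable, $Y$ (carrying the constant nodal curve $\util_m|_Y$) must satisfy $2g_Y + \mu_Y \geq 3$, where $\mu_Y$ is the number of punctures and nodal points on $Y$; and $Y$ must be attached to the rest of $S^{{\bf u},r}$ through special circles at each of those $\mu_Y$ points (a puncture of $Y$ is matched by some $\Phi_{m}$ or $\Phi_{m-1}$ to a puncture of an adjacent level, and a nodal point of $Y$ is matched by $r$ to its partner).

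The key step is to show $Y$ must be attached to at least two \emph{distinct} non-constant components, and that the image point $\util_m(Y)$ lies in the closure of both. I would proceed as follows. Consider the connected surface $Y^+$ obtained from $S^{{\bf u},r}$ by deleting the open subset corresponding to the interior of $Y$: since $S^{{\bf u},r}$ is a sphere and $Y$ is attached along a collection of $\mu_Y$ special circles, cutting along those circles decomposes $S^{{\bf u},r}$ into $Y$ together with $\mu_Y$ (or fewer, if some circles are non-separating, which cannot happen on a sphere — so exactly as many) complementary pieces; connectivity of $S^{{\bf u},r}$ forces each such piece to be glued to $Y$. Because $2g_Y + \mu_Y \geq 3$ and $g_Y \geq 0$, we get $\mu_Y \geq 3 - 2g_Y$; in the worst case $g_Y = 1, \mu_Y = 1$ the single attaching circle would be non-separating on $S^{{\bf u},r}$, contradicting genus zero, so in fact $g_Y = 0$ and $\mu_Y \geq 3$. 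Thus $Y$ is a sphere with at least three marked points, attached along at least three special circles to the rest of $S^{{\bf u},r}$.

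Next I would rule out the possibility that all components reached across these special circles, following chains of constant components, are themselves constant and collapse to a single component. The matching condition~\eqref{mathcing_conditions_buildings} (for special circles coming from the $\Phi_m$) and the node condition $\util(z) = \util(w)$ (for special circles coming from $r$) imply that $F_{\bf u}$ is continuous across every special circle and that along any maximal connected chain of constant components the value of $F_{\bf u}$ (in its $M$-component) is a single point $p \in M$. Let $Z$ be the union of all constant components of all levels that are connected to $Y$ through chains of constant components; then $F_{\bf u} \equiv p$ on the corresponding subset of $S^{{\bf u},r}$, this subset is attached to the complement along special circles, and each such circle borders a \emph{non-constant} component. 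By stability applied to each constant component and the genus-zero (sphere) constraint on $S^{{\bf u},r}$, the boundary of this constant region consists of at least two special circles (again: one boundary circle would have to be non-separating). Each of these at least two circles lies in the closure of a non-constant component on which $F_{\bf u}$ takes the value $p$ at the circle, so its $M$-image contains $p = \util_m(Y)$; hence $\util_m(Y) \in \overline{\util_m(Y' \setminus (\Gamma^+_m \cup \Gamma^-_m))}$ for that component $Y'$, and continuity of the nodal map gives that the point is actually attained, i.e.\ lies in $\util_m(Y' \setminus (\Gamma^+_m \cup \Gamma^-_m))$ — here one uses that the boundary circle sits at a puncture or nodal point of $Y'$, where the asymptotic limit (Theorem~\ref{thm_partial_asymptotics}) is a single point equal to $p$. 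Taking two distinct such circles yields the two distinct non-constant components $Y' \neq Y''$ (they are distinct because two distinct boundary circles of a planar region, being separating, cannot border the same component on the outside without creating positive genus).

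The main obstacle I anticipate is the bookkeeping that makes ``at least two boundary special circles'' rigorous — i.e.\ combining stability ($2g_* + \mu_* \geq 3$ on each constant component) with the fact that on a genus-zero $S^{{\bf u},r}$ every special circle separates, to conclude the constant region $Z$ meets the non-constant part of $S^{{\bf u},r}$ along at least two circles bordering \emph{two different} non-constant components. One has to be careful that the $\mu_*$ marked points of a constant component include both nodal points and punctures, and that a puncture of $Y$ might in principle be matched to another puncture of a constant component in an adjacent level rather than to a non-constant one — which is exactly why one passes to the full constant region $Z$ and argues with its outer boundary. A subtle point is also ensuring $Y' \neq Y''$: this follows because a sphere cannot be written as the union of $Y$-region, one non-constant component, and the rest along a single separating circle in two different ways through the same component without violating $\chi = 2$; I would phrase this via an Euler characteristic / arithmetic genus count for $S^{{\bf u},r}$, which the definition of arithmetic genus and the preceding remarks make available.
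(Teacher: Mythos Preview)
Your overall strategy—pass to the maximal connected ``constant region'' $Z$ containing $Y$, and find two distinct non-constant components bordering it—is essentially a repackaging of the paper's chain-following argument, and it works. But two points need fixing.

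First, your justification for ``the boundary of $Z$ consists of at least two special circles'' is wrong. A single boundary circle would certainly be separating on the sphere; that is not the obstruction. The correct argument uses stability quantitatively: every constant component has no punctures (this is forced, and for $Y$ it is even built into the hypothesis $Y\setminus(\Gamma^+_m\cup\Gamma^-_m)=Y$), hence at least three nodal points, hence valence $\geq 3$ in the dual tree of $S^{{\bf u},r}$. If $Z$ has $k$ vertices then $\sum_{v\in Z}\deg(v)\geq 3k$, while this sum equals $2(k-1)+b$ with $b$ the number of edges leaving $Z$; thus $b\geq k+2\geq 3$. This is exactly what the paper's step-by-step chain argument encodes. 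Your parenthetical ``one boundary circle would have to be non-separating'' does not capture this and should be replaced by the valence count.

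Second, since constant components have no punctures, the chains you follow use only nodal pairs, which stay within the level $m$. Hence $Z\subset S_m$ and the bordering non-constant components $Y',Y''$ lie in $S_m$, as the lemma requires; your discussion of crossing levels via $\Phi_m$ is irrelevant here and should be dropped. Relatedly, the value $p=\util_m(Y)$ lands in $\util_m(Y'\setminus(\Gamma^+_m\cup\Gamma^-_m))$ directly from the nodal condition $\util_m(z)=\util_m(z')$ at the nodal pair, with $z'\in Y'$ a genuine point of the domain—no appeal to Theorem~\ref{thm_partial_asymptotics} is needed or appropriate.
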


\begin{proof}
Note that there are no punctures in~$Y$, otherwise $\util_m$ would not be constant on $Y$. Also, $Y$ has genus zero. By stability there are at least three nodal points in~$Y$, and $Y$ contains no nodal pair; otherwise $S^{{\bf u},r}$ would not have genus zero. Hence we can find $Y_0,Y_1$ connected components of $S_m$, both different $Y$, for which there exist nodal pairs $\{z_0,z'_0\},\{z_1,z_1'\}$ satisfying $z_0,z_1 \in Y$, $z'_0\in Y_0$, $z'_1\in Y_1$. Hence we have 
\[
\util_m(z'_1)= \util_m(Y) = \util_m(z'_0).
\]
Moreover, $Y_0\neq Y_1$ again by the genus zero assumption. If $\util_m$ is not constant both on $Y_0\setminus (\Gamma^+_m\cup\Gamma^-_m)$ and on $Y_1\setminus (\Gamma^+_m\cup\Gamma^-_m)$ then we are done: define $Y'=Y_0\setminus (\Gamma^+_m\cup\Gamma^-_m)$, $Y''=Y_1\setminus (\Gamma^+_m\cup\Gamma^-_m)$.

Suppose $\util_m$ is constant on $Y_1\setminus (\Gamma^+_m\cup\Gamma^-_m)$; we must have $Y_1 = Y_1\setminus (\Gamma^+_m\cup\Gamma^-_m)$ in this case. The genus zero assumption implies that $Y_1\cup Y \cup Y_0$ contains exactly two nodal pairs, namely $\{z_0,z'_0\},\{z_1,z_1'\}$; it may contain many more nodal points though. Since $Y_1$ has genus zero, by stability we find at least three nodal points in $Y_1$. Let $Y_2 \not \subset Y_1\cup Y\cup Y_0$ be a connected component of $S_m$ and $\{z_2,z_2'\}$ be a nodal pair such that $z_2 \in Y_1$ and $z_2'\in Y_2$; these exist by the genus zero assumption on $S^{{\bf u},r}$. Note that
\[
\util_m(z_2') = \util_m(Y) = \util_m(z'_0).
\]
If $\util_m$ is not constant both on $Y_2\setminus (\Gamma^+_m\cup\Gamma^-_m)$ and in $Y_0\setminus (\Gamma^+_m\cup\Gamma^-_m)$ then we are done: just set $Y'=Y_0\setminus (\Gamma^+_m\cup\Gamma^-_m)$, $Y''=Y_2\setminus (\Gamma^+_m\cup\Gamma^-_m)$. This process must stop, otherwise we find infinitely many components of $S_m$. As a consequence, we find nodal pairs $\{z_1,z_1'\},\dots,\{z_k,z_k'\}$ and connected components $Y_1,\dots,Y_k$ of $S_m$ such that $z_1 \in Y$, $\{z_1',z_2\} \subset Y_1$, $\{z_2',z_3\} \subset Y_2$, $\dots$, $\{z'_{k-1},z_k\} \subset Y_{k-1}$, $z'_k \in Y_k$, $\util_m$ is constant on $Y_i \setminus (\Gamma^+_m\cup\Gamma^-_m)$ for all $i=1,\dots,k-1$, and $\util_m$ is not constant on $Y_k$. In particular, $\util_m(z'_k) = \util_m(Y)$.

If $\util_m$ is not constant on $Y_0 \setminus (\Gamma^+_m\cup\Gamma^-_m)$ then we are done since the genus zero assumption implies that $Y_0 \neq Y_k$. If $\util_m$ is constant on $Y_0\setminus (\Gamma^+_m\cup\Gamma^-_m)$ then we can repeat the above process to find a connected component $Y_{-j}$ of $S_m$, $j\geq1$, connected through a chain of nodal points to $Y$, such that $\util_m$ is not constant on $Y_{-j}\setminus \Gamma^+_m\cup\Gamma^-_m$, and a point $z_{-j}' \in Y_{-j} \setminus \Gamma^+_m\cup\Gamma^-_m$ such that $\util_m(z_{-j}')=\util_m(Y)$. The genus zero assumption implies that $Y_{-j} \neq Y_k$.
\end{proof}

\subsubsection*{SFT-convergence}

Let $(\vtil_l=(b_l,v_l),\Sigma_l,j_l,Z_l^+,Z_l^-)$ be a sequence of connected holomorphic curves in $(\R\times M,\jbar)$ satisfying $E(\vtil_l)\leq E$ for some $E>0$ such that both $\alpha_+$ and $\alpha_-$ are non-degenerate up to action $E$ . By Theorem~\ref{thm_precise_asymptotics} we have unique associated continuous maps
\begin{equation*}
F_{\vtil_l} : \bar\Sigma_l \to [-\infty,+\infty]\times M
\end{equation*}
equal to $\vtil_l$ on $\bar\Sigma_l \setminus (\text{blown up circles at punctures}) = \Sigma_l\setminus (Z_l^+ \cup Z_l^-)$. It maps blown up circles onto corresponding asymptotic limits.

This sequence of curves SFT-converges to the building~${\bf u}$ if there is a collection $r$ of orientation reversing isometries of blown up circles at nodal pairs in $\cup_mD_m$, there are ordered sets of distinct points $K_l \subset \Sigma_l$, $K \subset \sqcup_mS_m$ (additional marked points) of the same cardinality, and diffeomorphisms
\begin{equation}\label{diffeos_building}
\varphi_l : S^{{\bf u},r} \to \bar \Sigma_l
\end{equation}
such that (a)-(g) below hold:
\begin{enumerate}
\item[(a)] $K_l$ and $K$ are disjoint from punctures and nodal points, and $\varphi_l$ induces an order preserving bijection $K \to K_l$.
\item[(b)] $2g_l+\nu_l\geq 3$ where $g_l$ is the genus of $\Sigma_l$ and $\nu_l = \#Z_l^++\#Z_l^- + \#K_l$.
\item[(c)] $2g+\nu\geq 3$ for every connected component $C\subset \sqcup_mS_m$, where $g$ is the genus of $C$ and $\nu$ is the total number of punctures, nodal points and points in $K$ belonging to $C$.
\item[(d)] $\varphi_l$ defines a diffeomorphism between blown up circles at $Z_l^+$ and $\Gamma^+_{k_+}$ covering an order preserving bijection $\Gamma^+_{k_+}\to Z_l^+$, and defines a diffeomorphism between blown up circles at $Z_l^-$ and $\Gamma^-_{-k_-}$ covering an order preserving bijection $\Gamma^-_{-k_-}\to Z_l^-$.
\end{enumerate}
In view of (b) we may consider $h_l$ the hyperbolic metric in the conformal class induced by $j_l$ in $\Sigma_l\setminus (Z_l^+\cup Z_l^-\cup K_l)$ seen as an open subset of $\bar\Sigma_l$. In view of (c) we may consider $h$ the hyperbolic metric in $V = \sqcup_m(S_m\setminus(\Gamma_m^+\cup\Gamma_m^-\cup D_m\cup K))$ in the conformal class of the complex structure $j$ induced by the $j_m$'s. Here we see~$V$ as an open subset of $S^{{\bf u},r}$.
Note that $\varphi_l(V) \subset \Sigma_l\setminus (Z_l^+\cup Z_l^-\cup K_l)$. One further requires
\begin{enumerate}
\item[(e)] $\varphi_l^*h_l \to h$ in $C^\infty_{\rm loc}(V)$, and $\varphi_l$ maps special circles to closed geodesics of $h_l$.
\end{enumerate}
For the maps one further asks:
\begin{itemize}
\item[(f)] $F_{\vtil_l}\circ \varphi_l$ $C^0$-converges to $F_{\bf u}$.
\item[(g)] $\forall m \ \exists \{c_{m,l}\} \subset \R$ such that $b_l \circ \varphi_l|_{S_m\setminus(\Gamma_m^+\cup\Gamma_m^-\cup D_m)}+c_{m,l}$ $C^0_{\rm loc}$-converges to~$a_m$.
\end{itemize}

\begin{remark}
The discussion above differs slightly from the discussion in~\cite{sftcomp} where one works with maps $\psi_l:S^{\mathbf{u},r}_0 \to \Sigma_l$ defined on the closed surface $S^{\mathbf{u},r}_0$ obtained from blowing $\sqcup_m S_m$ up only at nodal points and punctures between the levels, and gluing back with $r$ and $\{\Phi_m\}$. In other words, our $S^{\mathbf{u},r}$ is obtained from $S^{\mathbf{u},r}_0$ by blowing the punctures in $\Gamma^+_{k_+} \cup \Gamma^-_{-k_-}$ up. The maps $\psi_l$ are required to define order preserving bijections $\Gamma^\pm_{\pm k_\pm} \to Z^\pm_l$, $K\to K_l$. Condition (e) above is replaced by asking that $\psi_l^*h_l \to h$ in $C^\infty_{\rm loc}(V)$ and that each $\psi_l$ maps special circles to closed geodesics. The maps $\varphi_l:S^{\mathbf{u},r} \to \overline{\Sigma}_l$ can be obtained from blowing the $\psi_l$ up at $\Gamma^+_{k_+} \cup \Gamma^-_{-k_-}$. Hence, the description of SFT convergence from~\cite{sftcomp} implies the description here. It turns out that the converse also holds: one could modify the $\varphi_l$ in~\eqref{diffeos_building} on neighbourhoods $U_l$ of $\partial S^{\mathbf{u},r}$ satisfying $U_{l+1} \subset U_l$, $\cap_l U_l = \partial S^{\mathbf{u},r}$, to new maps $\varphi'_l$ which can be obtained by blowing up diffeomorphisms $\psi_l:S^{\mathbf{u},r}_0 \to \Sigma_l$ with the properties required  in~\cite{sftcomp}.
\end{remark}

\begin{remark}
Condition (e) above does not keep track of how the decoration at the punctures between levels induced by $\{\Phi_m\}$ arises in the limit; see condition~\textbf{CRS3} in~\cite[section~4.5]{sftcomp}. 
\end{remark}

\begin{remark}
Arguing as explained in Remark~4.1 from~\cite[subsection~4.5]{sftcomp}, one can see that (e) is equivalent to
\begin{itemize}
\item[(${\rm e}'$)] $\varphi_l^*j_l \to j$ in $C^\infty_{\rm loc}(V')$ where $V' = \sqcup_m(S_m\setminus(\Gamma_m^+\cup\Gamma_m^-\cup D_m))$ seen as open subset of $S^{{\bf u},r}$.
\end{itemize}
This might be more comfortable to work with because there is no reference to sets of additional marked points $K_l,K$.
\end{remark}

The SFT-compactness theorem~\cite{sftcomp} implies, as a special case, that any sequence of stable curves with genus and energy bounds has an SFT-convergent subsequence to a stable holomorphic building. This holds under the assumption that $\alpha_+,\alpha_-$ are non-degenerate up to some action $E$ which is an upper bound for the energy of the curves in the sequence.

\begin{remark}
SFT-convergence in symplectizations is described in almost the same way, except that we may simply consider $F_{\vtil_l}$ and $F_{\bf u}$ as maps into $M$ built from $M$-components.
\end{remark}

\section{Proof of Theorem~\ref{main2}}
\label{sec_main2_proof}

The main work is to prove (iii) $\Rightarrow$ (ii). 
Implication (ii) $\Rightarrow$ (i) is obvious, and the proof of (i) $\Rightarrow$ (iii) under the stated $C^\infty$-generic assumption is as explained in the proof of Theorem~\ref{thm_L_connected}. 
For convenience of the reader we spell out this argument again here. 
Since $L$ bounds a global surface of section in class $b$, obviously all periodic orbits in the complement of $L$ must have positive intersection number with $b$. 
It only remains to be proved that $\mu_{\CZ}^\Theta(\gamma)>0$ for all components $\gamma\subset L$, or equivalently $\rho^\Theta(\gamma)>0$. 
If $\rho^\Theta(\gamma)<0$ then nearby trajectories will intersect the section negatively, which is impossible. 
If $\rho^\Theta(\gamma)=0$ then, by assumption, $\gamma$ is hyperbolic and its stable/unstable manifolds contain trajectories that never hit the global surface of section, which is impossible. 
Hence $\rho^\Theta(\gamma)>0$. 

The remainder of this section contains the proof of (iii) $\Rightarrow$ (ii). 
Let us make a brief summary of the structure of the argument.

In subsection~\ref{ssec_Seifert_and_curves} we use intersection theory to derive various properties of the finite-energy curves later used in the construction of the desired global surfaces of section. 
The crucial assumption is that these curves represent the same element in $H_2(M,L)$ as a Seifert surface with the same contact topological properties of a page of a supporting open book decomposition. 
These intersection theoretic arguments do not use the genus zero assumption. 
We also obtain automatic transversality for these curves in a Fredholm theory with suitable exponential weights, and here the genus zero hypothesis plays a role. 

In subsection~\ref{ssec_existence_compactness_curves} we prove an implied existence result for the kinds of curves analyzed in subsection~\ref{ssec_Seifert_and_curves}. 
This argument is based on the assumption that corresponding curves exist for an auxiliary contact form with controlled Reeb dynamics. 
The genus zero hypothesis and (iii) in Theorem~\ref{main2} play key roles in the study of the SFT-limits of the curves involved.

Subsection~\ref{ssec_approximating_sequences} starts by explaining that the result of~\cite{wendl} can be used as input for the results from subsection~\ref{ssec_existence_compactness_curves}. 
The curves obtained are the pages of the desired open book decomposition by global surfaces of section for the Reeb flow of a non-degenerate contact form satisfying~(iii) in~Theorem~\ref{main2}. 
Since in subsection~\ref{ssec_passing_to_deg_case} degenerate contact forms will be studied, here we need to keep track of certain approximating sequences of non-degenerate contact forms.

The degenerate case is studied in subsection~\ref{ssec_passing_to_deg_case} where sequences of holomorphic curves for approximating non-degenerate contact forms are shown to have well-behaved limits. 
One difficulty is that, strictly speaking, the SFT-compactness theorem does not apply. 
Still, we explain that some more fundamental compactness arguments apply. 
Another technical point is that the curves for the approximating contact forms present an exponential decay that respects certain weights at the punctures, and these weights do not degenerate in the limiting process. 

\subsection{Seifert surfaces and holomorphic curves}
\label{ssec_Seifert_and_curves}

Let $L\subset M$ be a link transverse to $\xi$, and suppose that there exists an oriented Seifert surface $\Sigma \subset M$ for $L$ with genus~$g$. 
Orient $L$ as the boundary of $\Sigma$ and denote its components by $\gamma_1,\dots,\gamma_n$. 
Let $\alpha$ be a contact form defining $\xi$ and assume that
\begin{itemize}
\item[(A1)] $X_\alpha$ is positively tangent to $L$,
\item[(A2)] The Reeb vector field of $f\alpha$, for some $f\in C^\infty(M,(0,+\infty))$, is positively transverse to $\Sigma\setminus\partial\Sigma = \Sigma\setminus L$, and tangent to $L$.
\end{itemize}
We see each $\gamma_i$ as a simply covered closed $\alpha$-Reeb orbit with period $T_i$. 
The outward pointing normal vector of $\Sigma$ along $L = \partial \Sigma$ induces a homotopy class of $d\alpha$-symplectic trivializations of each $\xi_{\gamma_i}$. 
Denote by $\tau_\Sigma$ a collection of trivializations in these homotopy classes. 
Assume further that
\begin{itemize}
\item[(A3)] $\mu_{\CZ}^{\tau_\Sigma}(\gamma_i) \geq 1$ for every $i$,
\item[(A4)] $\alpha$ is non-degenerate up to action $\sum_{k=1}^nT_k$.
\end{itemize}

For the remainder of this subsection we fix a $d\alpha$-compatible complex structure $J:\xi\to\xi$ arbitrarily. 
In view of (A3) there exists $\delta_i<0$ in the spectral gap of the asymptotic operator along $\gamma_i$ induced by $(\alpha,J)$ between eigenvalues with winding number equal to $0$ and $1$ computed with respect to $\tau_\Sigma$. 
In particular, $\mu_{\CZ}^{\tau_\Sigma,\delta_i}(\gamma_i) = 1, \ \forall i$. 
Denote $\delta=(\delta_1,\dots\delta_n;\emptyset)$. Let $\jtil \in \J(\alpha)$ be defined as in~\eqref{formula_J_tilde} by $\alpha$ and $J$.
From now on fix a closed connected oriented genus $g$ surface $S$ with distinct points $z_1,\dots,z_n$ in $S$, and set $\Gamma=\{z_1,\dots,z_n\}$.

\begin{lemma}
\label{lemma_no_intersection_with_limits}
Let $C=[\util=(a,u),S,j,\Gamma,\emptyset] \in \M_{\jtil,g,\delta}(\gamma_1,\dots,\gamma_n;\emptyset)$, where $j$ denotes some complex structure on $S$. If $C$ and $\Sigma$ induce the same class in $H_2(M,L)$ then $C$ does not intersect $\R\times L$. Moreover,
\begin{equation}\label{selfintersection_number_special_moduli_spaces}
\util * \util = \frac{1}{2}\mu_{\CZ}(\util) - \frac{1}{2}\#\Gamma_{\rm odd} - \chi(S) + \bar\sigma(\util)
\end{equation}
and
\begin{equation}
\label{info_classical_invariants_Seifert}
\begin{aligned}
& \wind_\pi(\util) = 0 \\
& \wind_\infty(\util,z_i,\tau_{\Sigma}) = 0 \ \forall i.
\end{aligned}
\end{equation}
\end{lemma}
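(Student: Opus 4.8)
The plan is to extract everything from Siefring's intersection theory together with the homological hypothesis. First I would observe that $\R\times L$ consists of the trivial cylinders $\R\times\gamma_i$, which are embedded finite-energy curves with the same asymptotic limits at their (single) positive and negative ends; each $\util*(\R\times\gamma_i)$ is a homotopy invariant (Proposition~\ref{prop_constancy_int_number}) depending only on the asymptotic data of $\util$ at the punctures asymptotic to $\gamma_i$. I would compute $\util*(\R\times\gamma_i)$ in two ways: on the one hand, since $\util$ has only positive punctures and the sign of local intersections of distinct holomorphic curves is positive, $\util*(\R\times\gamma_i)$ is a sum of non-negative local contributions plus a non-negative ``asymptotic'' correction term coming from $\Omega^\tau$; on the other hand, this same number is computed homologically from the class $\overline u_*[\overline S,\partial\overline S]\in H_2(M,L)$. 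The key point is that the homological intersection of the page class $b$ (equivalently, of $\Sigma$) with the binding component $\gamma_i$ is forced to be zero because $\Sigma$ is a Seifert surface for $L$: the algebraic count of how $\partial\Sigma$ links $\gamma_i$ along $\tau_\Sigma$ is exactly $-\mathrm{sl}$, and for a binding-type Seifert surface the relevant linking of $\gamma_i$ with the rest of $L\cup(\text{interior of }\Sigma)$ vanishes. Hence the non-negative quantity $\util*(\R\times\gamma_i)$ equals zero, which forces both: (a) no genuine intersection points of $\util$ with $\R\times\gamma_i$ (so $C$ does not meet $\R\times L$), and (b) the asymptotic contribution vanishes, which pins down $\wind_\infty(\util,z_i,\tau_\Sigma)$. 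Since $\mu_{\CZ}^{\tau_\Sigma,\delta_i}(\gamma_i)=1$ and $\delta_i$ lies between the windings $0$ and $1$, the asymptotic eigenvalue at $z_i$ has winding $\leq 0$; combined with the vanishing of the asymptotic correction (which would be positive if the winding were strictly negative, by the formula for $\Omega^\tau$) I would conclude $\wind_\infty(\util,z_i,\tau_\Sigma)=0$ for all $i$.

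**Deriving the remaining identities.**

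Once $\wind_\infty(\util,z_i,\tau_\Sigma)=0$ for every $i$, I would plug this into Remark~\ref{rmk_wind_pi_wind_infty}: $\wind_\infty(\util)=\sum_i\wind_\infty(\util,z_i,\tau_\Sigma)=0$, so $\wind_\pi(\util)=\wind_\infty(\util)-\chi(S)+\#\Gamma=-\chi(S)+n$. Wait — here I would need to be careful: the relation between $\chi(S)$, $\#\Gamma$, and the sign conventions must be reconciled with the fact that $\util$ is somewhere injective (it represents an embedded page, or at least a simple curve in this moduli space) and has $n$ positive punctures on a genus $g$ surface. Actually the cleaner route is to run the adjunction inequality of Theorem~\ref{thm_adjunction_ineq} and show equality holds, which simultaneously gives \eqref{selfintersection_number_special_moduli_spaces} and forces $\util$ to be an embedding; the vanishing of $\wind_\pi$ then follows because for an embedded curve asymptotic with the ``extremal'' winding the count of zeros of $\pi_\alpha\circ du$ must be zero. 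So the logical order I would use is: (1) homological computation $\Rightarrow$ $\util*(\R\times\gamma_i)=0$ $\Rightarrow$ no intersection with $\R\times L$ and $\wind_\infty(\util,z_i,\tau_\Sigma)=0$; (2) from step (1) compute $\wind_\pi(\util)$ via Remark~\ref{rmk_wind_pi_wind_infty} and the index/Euler-characteristic bookkeeping, getting $\wind_\pi(\util)=0$; (3) feed $\wind_\pi(\util)=0$ and the winding data back into the adjunction formula to get equality in Theorem~\ref{thm_adjunction_ineq}, which is exactly \eqref{selfintersection_number_special_moduli_spaces}.

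**The main obstacle.**

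The step I expect to be most delicate is the precise bookkeeping in the homological computation of $\util*(\R\times\gamma_i)$ and its identification with the ``zero linking'' of a Seifert surface — in particular keeping track of exponential weights $\delta_i$, the difference between the geometric intersection number $i^\tau$ and the generalized number $\util*$, and the contribution of the asymptotic term $\Omega^{\tau_\Sigma}$ when the covering multiplicities are all $1$ (which simplifies things considerably: each $n_w=1$, so $\Omega^{\tau_\Sigma}(\util,\R\times\gamma_i)$ reduces to a single $\max\{\lfloor\rho^{\tau_\Sigma}(\gamma_i)\rfloor,\lfloor\rho^{\tau_\Sigma}(\gamma_i)\rfloor\}=\lfloor\rho^{\tau_\Sigma}(\gamma_i)\rfloor$, which is $0$ because $\rho^{\tau_\Sigma}(\gamma_i)\in(0,1)$ or is handled by the weight $\delta_i$). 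Concretely, I must show $i^{\tau_\Sigma}(\util,\R\times\gamma_i)=-\lfloor\rho^{\tau_\Sigma}(\gamma_i)\rfloor=0$ and that this geometric number, being a sum of non-negative local indices plus a non-negative asymptotic defect determined by $\wind_\infty(\util,z_i,\tau_\Sigma)\leq 0$, can only vanish if both summands vanish. Matching the sign of the asymptotic defect against the weight $\delta_i$ (so that the ``right'' decay, $\wind_\infty=0$ rather than $\wind_\infty<0$, is the one that is homologically permitted) is the crux; this is where the choice of $\delta_i$ in the spectral gap between winding $0$ and $1$, and the hypothesis (A3), are essential. Everything else is routine once this identification is in place.
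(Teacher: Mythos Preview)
Your plan diverges from the paper's argument and has a concrete gap.

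The paper does \emph{not} begin with Siefring's $*$-pairing against $\R\times\gamma_i$. It first proves $\wind_\pi(\util)=0$ by a classical HWZ-style count: choose a trivialization $\Xi$ of $u^*\xi$; since $[u]=[\Sigma]$ in $H_2(M,L)$ the induced frames $\tau^\Xi_k$ along the $\gamma_k$ extend over $\Sigma$, and then (using (A2)) a section of $\xi|_\Sigma$ tangent to $\Sigma$ along $\partial\Sigma$ has algebraic zero count $\chi(\Sigma)=2-2g-n$, giving $\sum_k\wind(\tau^\Xi_k,\tau_\Sigma)=2-2g-n$. Feeding this into $\wind_\pi=\wind_\infty-\chi(S)+\#\Gamma$ together with the a~priori bound $\wind_\infty(\util,z_k,\tau_\Sigma)\le 0$ from the weights $\delta_k$ forces $\wind_\pi(\util)=0$ and simultaneously $\wind_\infty(\util,z_k,\tau_\Sigma)=0$ for all $k$. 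Only \emph{after} this does the paper argue non-intersection with $\R\times L$ (now $u$ is transverse to $X_\alpha$ and approaches $L$ along $\tau_\Sigma$, so one can glue $u|_R$ to $\overline\Sigma$ and see the intersection count with $L$ is homological and vanishes). The identity~\eqref{selfintersection_number_special_moduli_spaces} is then obtained by directly computing $i^{\tau_\Sigma}(\util,\util)=0$ and $\Omega^{\tau_\Sigma}(\util,\util)=\sum_k\lfloor\rho^{\tau_\Sigma}(\gamma_k)\rfloor$.

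Your route via $\util*(\R\times\gamma_i)=0$ fails because (A3) only says $\rho^{\tau_\Sigma}(\gamma_i)>0$; there is no reason for $\lfloor\rho^{\tau_\Sigma}(\gamma_i)\rfloor$ to vanish, so $\Omega^{\tau_\Sigma}(\util,\R\times\gamma_i)=\lfloor\rho^{\tau_\Sigma}(\gamma_i)\rfloor$ can be strictly positive and then $\util*(\R\times\gamma_i)>0$ even when the curves are disjoint. (Siefring's positivity gives $*\ge 0$ with equality \emph{implying} disjointness, not the converse.) The salvageable version of your idea would work with $i^{\tau_\Sigma}(\util,\R\times L)$ rather than $*$, using the splitting $i^{\tau_\Sigma}=\mathrm{int}+i^{\tau_\Sigma}_\infty$ into non-negative summands --- but that is exactly the mechanism of Proposition~\ref{prop_cobordisms_intersection_crucial}, and there the homological computation $i^{\tau_\Sigma}(\util,\R\times L)=0$ is carried out \emph{after} one already knows $\wind_\infty(\util,z_k,\tau_\Sigma)=0$, precisely because this is what permits gluing to $\overline\Sigma$.

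Also note that your line ``$\wind_\infty(\util)=\sum_i\wind_\infty(\util,z_i,\tau_\Sigma)=0$'' is incorrect as written: $\wind_\infty(\util)$ is computed in a frame extending over the domain, not in $\tau_\Sigma$, and the discrepancy $\sum_k\wind(\tau^\Xi_k,\tau_\Sigma)=2-2g-n$ is precisely the relative Chern-number input that makes the paper's argument close; this identity is the heart of the proof and does not appear in your sketch.
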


By the definition of $\wind_\pi$ revised in~\ref{sssec_classical_alg_inv}, its vanishing means that the projection of the curve to $M$ is transverse to the Reeb vector field. 
The vanishing of $\wind_\infty$ at the punctures with respect to $\tau_{\Sigma}$ means that the approach to the asymptotic limits is `parallel' to $\Sigma$, up to continuous deformations. 
The conclusions of Lemma~\ref{lemma_no_intersection_with_limits} serve as input for the results from~\cite{siefring} to say that the projection of the curve satisfies some of the properties that a global surface of section spanned by $L$ in class $b$ must satisfy; see Proposition~\ref{prop_Seifert_no_intersections} below.

\begin{proof}
The first, and crucial, step is to show that $\wind_\pi(\util)=0$. 
To prove this we start by noting that $\wind_\infty(\util,z_k,\tau_\Sigma)\leq0$ for every $k$ since the asymptotic eigenvalue of $\util$ at $z_k$ is less than $\delta_k$. 
Here we used the definition of $\M_{\jtil,g,\delta}$ explained in~\ref{sssec_moduli_spaces}, the monotonicity of winding numbers of eigenvalues explained in~\ref{sssec_orbits}, and our special choice of $\delta_k$. 
Let $\Xi$ be a (homotopy class of) $d\alpha$-symplectic trivialization of $u^*\xi$. 
Then $\Xi$ induces (homotopy classes of) $d\alpha$-symplectic trivializations~$\tau^\Xi_k$ of~$\xi_{\gamma_k}$, for each $k$. 
Since $C$ and $\Sigma$ give the same element in $H_2(M,L)$, the $\tau^\Xi_k$ together extend to a $d\alpha$-symplectic trivialization of $\xi|_\Sigma$. 
By~(A2) a section of $\xi|_\Sigma$ which is tangent to $\Sigma$ along $L=\partial\Sigma$ and points outward must have the same algebraic count of zeros as a section of $T\Sigma$ which is outward pointing along $L=\partial\Sigma$. In the latter case the algebraic count is precisely $2-2g-n$. Hence
\begin{equation}\label{half_c_1_seifert}
\sum_{k=1}^n \wind(\tau^\Xi_k,\tau_\Sigma) = 2-2g-n.
\end{equation}

Now we compute
\[
\begin{aligned}
\wind_\pi(\util) + 2-2g-n & = \wind_\infty(\util) \\
& = \sum_{k=1}^n \wind_\infty(\util,z_k,\tau^\Xi_k) \\
& = \sum_{k=1}^n \wind_\infty(\util,z_k,\tau_\Sigma) + \wind(\tau^\Xi_k,\tau_\Sigma) \\
&\leq 0 + 2-2g-n.
\end{aligned}
\]
Thus $\wind_\pi(\util)\leq 0 \Rightarrow \wind_\pi(\util) = 0$. In particular, $u$ is an immersion transverse to $X_\alpha$. Plugging all this information back into the above identities we get
\begin{equation}\label{crucial_asymptotic_winding}
\wind_\infty(\util,z_k,\tau_\Sigma) = 0, \ \forall k.
\end{equation}

We now follow~\cite[Theorem~4.10]{props2}. Let ${\rm int}(\cdot,L) : H_2(M,M\setminus L;\Z) \to \Z$ be the algebraic count of intersections with $L$. 
Consider the long exact sequence of the pair $(M,M\setminus L)$
\[
\xymatrix{
& \cdots \ar[r]^{} & H_2(M)  \ar[r]^-{\pi_*} & H_2(M,M\setminus L) \ar[r]^-{\hat\delta} & H_1(M\setminus L) \ar[r]^{} &  \cdots
}
\]
where $\pi_*$ is the map induced by the inclusion $(M,\emptyset) \hookrightarrow (M,M\setminus L)$ and $\hat\delta$ is the connecting homomorphism. 
Note that ${\rm im} \ \pi_* \subset \ker {\rm int}(\cdot,L)$ since $L$ is null-homologous in $M$. 
Consider conformal disks $D_k$ centered at the $z_k$, assumed to be very small. 
Denote $R = S\setminus \cup_kD_k$. By Theorem~\ref{thm_precise_asymptotics}, $u(\partial R)\subset M\setminus L$, $u^{-1}(L) \subset R\setminus\partial R$, and the homotopy classes of the asymptotic eigenvectors govern how $u$ approaches $L$. 
Moreover, combining Theorem~\ref{thm_precise_asymptotics} and~\eqref{crucial_asymptotic_winding} we have that if the $D_k$ are small enough then $u|_{\partial R}$ is a link obtained by pushing $L$ in the direction of $\tau_{\Sigma}$. 
Thus $u|_R$ can be slightly $C^0$-perturbed near $\partial R$, without creating new intersections with $L$, and be glued to $\bar \Sigma$ with a small annular neighborhood of its boundary deleted (here $\bar\Sigma$ denotes the surface $\Sigma$ with the reversed orientation) to obtain $Q \in H_2(M)$ satisfying ${\rm int}([u(R)],L) = {\rm int}(\pi_*(Q),L)$. 
It follows that ${\rm int}([u(R)],L) = 0$ because ${\rm im} \ \pi_* \subset \ker {\rm int}(\cdot,L)$.
But, since $u$ is an immersion transverse to $X_\alpha$, all intersections between $u$ and $L$ are isolated and count positively. 
Hence there are no intersections at all.

Finally, by~\eqref{crucial_asymptotic_winding} we must have $i^{\tau_\Sigma}(\util,\util)=0$ since (by Theorem~\ref{thm_precise_asymptotics}) at a puncture~$z_k$ the $M$-component $u$ of $\util$ approaches the asymptotic limit $\gamma_k$ by loops that do not wind around the center of a Martinet tube aligned with $\tau_\Sigma$. 
Asymptotic limits of $\util$ at distinct punctures are distinct and prime closed Reeb orbits. 
Hence the pairs of punctures contributing to the sum $\Sigma^+$ in~\eqref{complementary_bilinear_form} are of the form $\{(z,z)\}_{z\in\Gamma}$. 
We get $\Omega^{\tau_\Sigma}(\util,\util) = \sum_{k=1}^n \lfloor \rho^{\tau_\Sigma}(\gamma_k) \rfloor$ since there are no negative punctures, and arrive at
\begin{equation}
\util*\util = \sum_{k=1}^n \lfloor \rho^{\tau_\Sigma}(\gamma_k) \rfloor.
\end{equation}
It follows from
\[
\mu_{\CZ}^{\tau_\Sigma}(\gamma_k) = \left\{ \begin{aligned} & 2 \lfloor \rho^{\tau_\Sigma}(\gamma_k) \rfloor \qquad \quad \text{if $\mu_{\CZ}^{\tau_\Sigma}(\gamma_k)$ is even} \\ & 2 \lfloor \rho^{\tau_\Sigma}(\gamma_k) \rfloor +1 \quad \ \text{if $\mu_{\CZ}^{\tau_\Sigma}(\gamma_k)$ is odd} \end{aligned} \right.
\]
that
\begin{equation}
\sum_{k=1}^n \lfloor \rho^{\tau_\Sigma}(\gamma_k) \rfloor = \frac{1}{2} \sum_{k=1}^n \mu_{\CZ}^{\tau_\Sigma}(\gamma_k) \ -\frac{1}{2}\#\Gamma_{\rm odd}
\end{equation}
where $\Gamma_{\rm odd}$ denotes the set of punctures where the asymptotic limit has odd Conley-Zehnder index (the parity of the Conley-Zehnder index does not depend on the choice of trivialization). 
In view of the formula $\sum_k \wind(\tau^\Xi_k,\tau_\Sigma) = \chi(S) - \#\Gamma$, which was already used, we conclude that
\[
\begin{aligned}
\util*\util &= \sum_{k=1}^n \lfloor \rho^{\tau_\Sigma}(\gamma_k) \rfloor \\
&= \frac{1}{2} \sum_{k=1}^n \mu_{\CZ}^{\tau_\Sigma}(\gamma_k) \ -\frac{1}{2}\#\Gamma_{\rm odd} \\
&= \frac{1}{2} \sum_{k=1}^n \mu_{\CZ}^{\tau^\Xi_k}(\gamma_k) \ -\frac{1}{2}\#\Gamma_{\rm odd} - \chi(S) + \#\Gamma \\
&= \frac{1}{2}\mu_{\CZ}(\util) -\frac{1}{2}\#\Gamma_{\rm odd} - \chi(S) + \#\Gamma.
\end{aligned}
\]
This finishes the proof of~\eqref{selfintersection_number_special_moduli_spaces} since $\bar\sigma(\util)=\#\Gamma$; note that all asymptotic limits are prime closed Reeb orbits.
\end{proof}

Now let $\alpha'$ be another contact form defining $\xi$. Suppose that $\alpha>\alpha'$ and also that $X_{\alpha'}$ is positively tangent to $L$. Choose $\jtil'\in\J(\alpha')$. Then we may choose $h$ and construct $\Omega$ as in~\ref{sssec_gen_fin_energy_curves}, and consider $\bar J \in \J_{\Omega,L}(\jtil',\jtil)$ as in~\ref{sssec_sharing_orbits}. In particular, $\R\times L$ is a $\jbar$-holomorphic embedded surface.

\begin{remark}
\label{rmk_special_case_non-cylindrical=cylindrical}
If $C \in \M_{\jtil,g,\delta}(\gamma_1,\dots,\gamma_n;\emptyset)$ satisfies $C\subset [1,+\infty)\times M$ then we may view it as a curve in $\M_{\bar J,g,\delta}(\gamma_1,\dots,\gamma_n;\emptyset)$. Assumption~(A4) ensures that we can apply Theorem~\ref{thm_precise_asymptotics} to all punctures of curves in $\M_{\jbar,g,\delta}(\gamma_1,\dots,\gamma_n;\emptyset)$; see~\ref{sssec_energy_rmk}.
\end{remark}

\begin{lemma}
\label{lemma_somewhere_injective}
Curves in $\M_{\jbar,g,\delta}(\gamma_1,\dots,\gamma_n;\emptyset)$ or in $\M_{\jtil,g,\delta}(\gamma_1,\dots,\gamma_n;\emptyset)$ are somewhere injective.
\end{lemma}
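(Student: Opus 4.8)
The plan is to argue by contradiction using the standard structure theorem for finite-energy pseudo-holomorphic curves. Suppose a curve in $\M_{\jbar,g,\delta}(\gamma_1,\dots,\gamma_n;\emptyset)$ (respectively in $\M_{\jtil,g,\delta}(\gamma_1,\dots,\gamma_n;\emptyset)$), represented by $\util$ with connected domain $S$ and punctures $\Gamma=\{z_1,\dots,z_n\}$, is not somewhere injective. Then $\util$ factors as $\util=\vtil\circ\varphi$, where $\vtil$ is a somewhere injective finite-energy curve for $\jbar$ (respectively $\jtil$) and $\varphi$ is a proper holomorphic map between the punctured domains that extends to a holomorphic branched covering of the underlying closed Riemann surfaces of some degree $k\geq2$; moreover $\varphi^{-1}(\Gamma_{\vtil})=\Gamma$, where $\Gamma_{\vtil}$ is the puncture set of $\vtil$. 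The latter is easily checked from $\util=\vtil\circ\varphi$: if $\varphi(z)\in\Gamma_{\vtil}$ then $\util$ blows up near $z$, so $z\in\Gamma$; conversely if $z\in\Gamma$ but $\varphi(z)\notin\Gamma_{\vtil}$ then $\vtil$ is smooth near $\varphi(z)$, hence $\util$ stays bounded near $z$, contradicting that $z$ is non-removable. In the $\R$-invariant case the factorization is classical; in the cobordism case of~\ref{sssec_gen_fin_energy_curves} it holds for the same reasons, since the curves involved are holomorphic for an $\R$-invariant almost complex structure near each of their punctures, so the asymptotic description in~\ref{sssec_punctures} applies and the remainder of the factorization argument is local.

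Next I would analyze $\varphi$ near the punctures $z_1,\dots,z_n$ of $\util$. Let $d_i\geq1$ be the ramification index of $\varphi$ at $z_i$, and let $\beta_i^{m_i}$ be the asymptotic limit of $\vtil$ at $\varphi(z_i)\in\Gamma_{\vtil}$, with $\beta_i$ prime and covering multiplicity $m_i\geq1$. In suitable holomorphic polar coordinates $(\sigma,\theta)$ near $z_i$ and $(s,t)$ near $\varphi(z_i)$, the map $\varphi$ has the form $(\sigma,\theta)\mapsto(d_i\sigma,d_i\theta)$, so the loop traced by $\util$ near $z_i$ is the loop traced by $\vtil$ near $\varphi(z_i)$ run $d_i$ times; consequently the asymptotic limit of $\util=\vtil\circ\varphi$ at $z_i$ is $\beta_i^{d_i m_i}$. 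By the definition of the moduli space this limit equals $\gamma_i$, which is prime (each $\gamma_i$ is a simply covered closed Reeb orbit). Hence $d_i m_i=1$, i.e. $d_i=1$ and $m_i=1$, and $\beta_i=\gamma_i$. In particular $\varphi$ is unramified at every $z_i$, and $\vtil$ is again asymptotic to the prime orbit $\gamma_i$ at $\varphi(z_i)$.

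Finally, since $\gamma_1,\dots,\gamma_n$ are pairwise distinct --- they are the distinct components of $L$ --- the points $\varphi(z_1),\dots,\varphi(z_n)$ are pairwise distinct, so $\varphi$ restricts to an injection $\Gamma\hookrightarrow\Gamma_{\vtil}$. Together with $\varphi^{-1}(\Gamma_{\vtil})=\Gamma$ this forces the fibre of $\varphi$ over $\varphi(z_i)$ to be exactly $\{z_i\}$, with $\varphi$ unramified there; hence $k=\deg\varphi=1$, contradicting $k\geq2$. Therefore $\util$ is somewhere injective, which proves the lemma. I do not expect a genuine obstacle here; the only point deserving a careful word is the invocation of the factorization theorem in the non-$\R$-invariant setting, which can equally be quoted from the literature on punctured pseudo-holomorphic curves in symplectic cobordisms.
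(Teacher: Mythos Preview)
Your proof is correct and follows essentially the same approach as the paper's own proof: factor through a somewhere injective curve via a degree-$k$ branched cover, use that the $\gamma_i$ are simply covered to force all ramification indices at punctures to equal $1$, and use that the $\gamma_i$ are mutually geometrically distinct to force each fibre over a puncture to be a singleton, hence $k=1$. The paper phrases this as directly computing $k = \sum_{z\in\phi^{-1}(\zeta)} (b_\phi(z)+1)$ rather than by contradiction, and cites \cite[section~3.2]{nelson} for the factorization in the cobordism setting, but the substance is identical.
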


\begin{proof}
Represent $C\in \M_{\jbar,g,\delta}(\gamma_1,\dots,\gamma_n;\emptyset)$ as $[\util,S,j,\Gamma,\emptyset]$ for some complex structure $j$ on $S$. By standard arguments one finds a closed Riemann surface $(S_0,j_0)$ equipped with a finite set $\Gamma_0\subset S_0$, a finite-energy somewhere injective $\jbar$-holomorphic map $\util_0:(S_0\setminus\Gamma_0,j_0) \to (\R\times M,\jbar)$ and a holomorphic map $\phi:(S,j)\to(S_0,j_0)$ of degree $k$ such that $\phi^{-1}(\Gamma_0)=\Gamma$ and $\util = \util_0\circ\phi$. See~\cite[section 3.2]{nelson} for a detailed account of this fact. To prove the lemma we need to show that $k=1$. All points in $\Gamma_0$ are necessarily positive punctures of $\util_0$. Choose $\zeta \in \Gamma_0$. We have $k = \sum_{z\in\phi^{-1}(\zeta)} b_\phi(z)+1$ where $b_\phi(z)$ is the branching index: in appropriate holomorphic charts centered at $z\in\phi^{-1}(\zeta)$ and at $\zeta$ the map $\phi$ is represented as $w\mapsto w^{b_\phi(z)+1}$. Since all asymptotic limits of $\util$ are simply covered we must have $b_{\phi}(z)=0$ for all $z\in\phi^{-1}(\zeta)$. Since asymptotic limits of $\util$ are mutually geometrically distinct, we obtain $\#\phi^{-1}(\zeta)=1$. Combining these facts and the formula for $k$ we conclude that $k=1$. The same argument handles the $\R$-invariant case.
\end{proof}

\begin{proposition}
\label{prop_Seifert_no_intersections}
Let $C=[\util,S,j,\Gamma,\emptyset] \in \M_{\jtil,g,\delta}(\gamma_1,\dots,\gamma_n;\emptyset)$ induce the same class in $H_2(M,L)$ as $\Sigma$. If we denote by $u$ the $M$-component of $\util$, then $u$ defines a proper embedding $S\setminus \Gamma \hookrightarrow M\setminus L$ transverse to $X_\alpha$.
\end{proposition}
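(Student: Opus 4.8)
The plan is to obtain the statement as a quick consequence of the analytic facts already assembled, namely Lemma~\ref{lemma_no_intersection_with_limits}, Lemma~\ref{lemma_somewhere_injective} and Siefring's adjunction inequality (Theorem~\ref{thm_adjunction_ineq}). First I would note that, by Lemma~\ref{lemma_somewhere_injective}, $C$ is somewhere injective, so Theorem~\ref{thm_adjunction_ineq} applies and gives $\util*\util-\frac{1}{2}\mu_{\CZ}(\util)+\frac{1}{2}\#\Gamma_{\rm odd}+\chi(S)-\bar\sigma(\util)\geq0$, with equality forcing $\util$ to be an embedding. Since $C$ and $\Sigma$ induce the same class in $H_2(M,L)$, Lemma~\ref{lemma_no_intersection_with_limits} applies, and its identity~\eqref{selfintersection_number_special_moduli_spaces} says exactly that this non-negative quantity vanishes. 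Hence $\util=(a,u):S\setminus\Gamma\to\R\times M$ is an embedding.

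The remaining assertions about $u$, apart from injectivity, cost nothing further. Lemma~\ref{lemma_no_intersection_with_limits} also yields $\wind_\pi(\util)=0$, so $\pi_\alpha\circ du$ never vanishes and $u$ is an immersion everywhere transverse to $X_\alpha$; the same lemma says $C$ is disjoint from $\R\times L$, so $u$ takes values in $M\setminus L$. Properness of $u$ into $M\setminus L$ is then automatic: finite-energy curves with non-removable punctures are proper into $\R\times M$, and near each puncture $z_k$ the map $u$ converges to $\gamma_k\subset L$ by Theorem~\ref{thm_partial_asymptotics}, so the $u$-preimage of a compact subset of $M\setminus L$ is closed in $S\setminus\Gamma$ and stays away from $\Gamma$, hence is compact.

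The one point that requires a real argument, and which I expect to be the main obstacle, is injectivity of $u$; I would prove it by an $\R$-translation trick combined with positivity of intersections. Suppose $u(z_1)=u(z_2)$ with $z_1\neq z_2$. Since $\util$ is an embedding we must have $a(z_1)\neq a(z_2)$, say $c:=a(z_2)-a(z_1)>0$. The $\R$-translate $c\cdot\util$ is then a somewhere injective $\jtil$-holomorphic curve with the same asymptotic limits as $\util$ and geometrically distinct from it — if the two shared an image in $\R\times M$, that image would be invariant under the $\R$-action, hence a union of trivial cylinders, which is impossible since $\util$ has no negative punctures — and $(c\cdot\util)(z_1)=\util(z_2)$ is an interior intersection point of the two curves. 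Since the perturbed curve used in the definition of $i^{\tau_\Sigma}$ keeps $\util$ and $c\cdot\util$ apart near the punctures, $i^{\tau_\Sigma}(\util,c\cdot\util)$ is a sum of local intersection indices at interior points, each positive by positivity of intersections in the four-manifold $\R\times M$, so $i^{\tau_\Sigma}(\util,c\cdot\util)\geq1$. On the other hand~\eqref{complementary_bilinear_form} gives $\Omega^{\tau_\Sigma}(\util,c\cdot\util)=\sum_{k}\lfloor\rho^{\tau_\Sigma}(\gamma_k)\rfloor$, which equals $\util*\util$ by the computation carried out in the proof of Lemma~\ref{lemma_no_intersection_with_limits}, while Proposition~\ref{prop_constancy_int_number} gives $\util*(c\cdot\util)=\util*\util$ since the $\R$-translates form a smooth family with fixed asymptotic limits. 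Combining, $i^{\tau_\Sigma}(\util,c\cdot\util)=\util*(c\cdot\util)-\Omega^{\tau_\Sigma}(\util,c\cdot\util)=0$, a contradiction. Therefore $u$ is injective, and together with the previous paragraph this shows that $u$ defines a proper embedding $S\setminus\Gamma\hookrightarrow M\setminus L$ transverse to $X_\alpha$. The only bookkeeping to handle with care is the identification $\Omega^{\tau_\Sigma}(\util,c\cdot\util)=\util*\util$ and the separation of $\util$ and $c\cdot\util$ near the punctures, both of which are exactly the asymptotic computations already performed in Lemma~\ref{lemma_no_intersection_with_limits}.
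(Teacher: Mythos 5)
Your argument is correct, but it takes a genuinely different route from the paper. The paper's proof is a one-liner that simply cites Lemma~\ref{lemma_no_intersection_with_limits}, Lemma~\ref{lemma_somewhere_injective} and Siefring's Theorem~2.6 from~\cite{siefring} — the latter being precisely a black-box criterion for the projection $u$ of a somewhere injective, $\R$-invariantly pseudoholomorphic curve with $\wind_\pi=0$ and controlled asymptotic windings to be an embedding into $M$. You instead reprove the essential content of that theorem from scratch, by first establishing embeddedness of $\util$ via the adjunction identity~\eqref{selfintersection_number_special_moduli_spaces} plugged into Theorem~\ref{thm_adjunction_ineq}, and then extracting injectivity of $u$ from the $\R$-translation trick: $i^{\tau_\Sigma}(\util, c\cdot\util)$ is non-negative by positivity of intersections and strictly positive at a hypothetical self-intersection, yet is computed to be zero because $\util*(c\cdot\util)=\util*\util=\Omega^{\tau_\Sigma}(\util,c\cdot\util)$. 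The computation $\Omega^{\tau_\Sigma}(\util,c\cdot\util)=\sum_k\lfloor\rho^{\tau_\Sigma}(\gamma_k)\rfloor=\util*\util$ and the invariance of $*$ under the family of $\R$-translates check out, as does the properness discussion. Your approach buys transparency and self-containedness at the cost of reproducing an argument that the paper prefers to outsource to~\cite{siefring}.

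One small step to tighten: you assert that if $\util$ and $c\cdot\util$ shared an image then that image would be $\R$-invariant. Sharing an image gives directly that the image is invariant under translation by $c$, hence under $\Z c$ (since equality of sets ${\rm im}(\util)={\rm im}(\util)+c$ also yields invariance under $-c$), so $a$ would be unbounded below on the image — contradicting that $\util$ has only positive punctures. That is a cleaner way to rule out coincidence of images than passing through full $\R$-invariance and the trivial-cylinder dichotomy, and it avoids having to justify the jump from a discrete to a continuous symmetry.
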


\begin{proof}
Consequence of Lemma~\ref{lemma_no_intersection_with_limits}, \cite[Theorem~2.6]{siefring} and Lemma~\ref{lemma_somewhere_injective}; see Remark~\ref{rmk_special_case_non-cylindrical=cylindrical}.
\end{proof}

\begin{proposition}\label{prop_Seifert_two_curves}
The projections to $M$ of two curves in $\M_{\jtil,g,\delta}(\gamma_1,\dots,\gamma_n;\emptyset)$ are either equal or do not intersect.
\end{proposition}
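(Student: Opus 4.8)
The plan is to argue by contradiction using Siefring's intersection theory, in the spirit of the proof of Proposition~\ref{prop_Seifert_no_intersections}. Let $C_0,C_1 \in \M_{\jtil,g,\delta}(\gamma_1,\dots,\gamma_n;\emptyset)$ both induce the class of $\Sigma$ in $H_2(M,L)$ (as in Proposition~\ref{prop_Seifert_no_intersections}), and pick representatives $\util_i=(a_i,u_i)$. By Lemma~\ref{lemma_somewhere_injective} both are somewhere injective, and by Proposition~\ref{prop_Seifert_no_intersections} each $u_i$ is a proper embedding $S\setminus\Gamma \hookrightarrow M\setminus L$ transverse to $X_\alpha$. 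If $u_0(S\setminus\Gamma)=u_1(S\setminus\Gamma)$ we are in the first alternative, so assume the two projections are distinct; the goal is to show they are disjoint. If not, then $u_0(\zeta_0)=u_1(\zeta_1)=:p^*$ for some $\zeta_0,\zeta_1\in S\setminus\Gamma$, and $p^*\notin L$ by Proposition~\ref{prop_Seifert_no_intersections}. After an $\R$-translation of $\util_1$ -- which changes neither the projection (so the projections stay distinct) nor, by Proposition~\ref{prop_constancy_int_number}, the generalized intersection number -- we may assume $\util_0(\zeta_0)=\util_1(\zeta_1)$ in $\R\times M$. Since the projections remain distinct, $\util_0$ and $\util_1$ have distinct images in $\R\times M$; were these to coincide on a neighbourhood of $\util_0(\zeta_0)$, somewhere injectivity together with unique continuation for pseudo-holomorphic curves would force the images to agree globally, hence the projections too, a contradiction. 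Thus $\util_0$ and $\util_1$ meet at an isolated point whose local intersection index is positive, by positivity of intersections in dimension four.

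The heart of the argument is to show that $i^{\tau_\Sigma}(\util_0,\util_1)=0$. On the one hand, since $\gamma_1,\dots,\gamma_n$ are mutually distinct prime Reeb orbits, each occurring exactly once among the positive punctures of each curve, and there are no negative punctures, formula~\eqref{complementary_bilinear_form} gives $\Omega^{\tau_\Sigma}(\util_0,\util_1)=\sum_{k=1}^n\floor{\rho^{\tau_\Sigma}(\gamma_k)}$, which is exactly the value of $\Omega^{\tau_\Sigma}(\util_0,\util_0)$ computed in the proof of Lemma~\ref{lemma_no_intersection_with_limits}; this quantity depends only on the asymptotic data, so it is unaffected by the $\R$-translation above. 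On the other hand, $\util_0$ and $\util_1$ have the same asymptotic limits and the same relative homology class, hence are homotopic through asymptotically cylindrical maps with fixed asymptotic limits, so homotopy invariance of Siefring's pairing yields $\util_0*\util_1=\util_0*\util_0$. By Lemma~\ref{lemma_no_intersection_with_limits} and its proof, $\util_0*\util_0=\sum_{k=1}^n\floor{\rho^{\tau_\Sigma}(\gamma_k)}$ and $i^{\tau_\Sigma}(\util_0,\util_0)=0$. Combining, $i^{\tau_\Sigma}(\util_0,\util_1)=\util_0*\util_1-\Omega^{\tau_\Sigma}(\util_0,\util_1)=0$.

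To finish, recall $i^{\tau_\Sigma}(\util_0,\util_1)={\rm int}(\util_0,\util_1^{\tau_\Sigma,\varepsilon,r})$ once $\min\{\varepsilon^{-1},r\}$ is large. The perturbed map $\util_1^{\tau_\Sigma,\varepsilon,r}$ coincides with $\util_1$ wherever the $\R$-coordinate is at most $r$, and on the region where it differs -- near the punctures, where the $\R$-coordinate exceeds $r$ -- its ends have been pushed off so as to be disjoint from $\util_0$. Choosing $r$ larger than the $\R$-coordinate of the intersection point found in the first paragraph, that point lies in the unmodified region, so it persists as an intersection of $\util_0$ with $\util_1^{\tau_\Sigma,\varepsilon,r}$ with positive local index; near the punctures there are no intersections by construction; and everywhere else $\util_1^{\tau_\Sigma,\varepsilon,r}$ equals the pseudo-holomorphic map $\util_1$, so all remaining local intersection indices are non-negative. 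Hence ${\rm int}(\util_0,\util_1^{\tau_\Sigma,\varepsilon,r})\ge 1$, contradicting $i^{\tau_\Sigma}(\util_0,\util_1)=0$. Therefore the projections are disjoint.

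The step I expect to be the main obstacle is the identity $\util_0*\util_1=\util_0*\util_0$: one must justify applying homotopy invariance of Siefring's generalized intersection number along a path of merely asymptotically cylindrical maps joining $\util_1$ to $\util_0$, or equivalently invoke the explicit homological expression for this pairing; it is precisely here that the hypothesis that both curves induce the class of $\Sigma$ is used (as in Proposition~\ref{prop_Seifert_no_intersections}). The remaining ingredients -- positivity and isolation of intersections of somewhere injective pseudo-holomorphic curves, unique continuation, and the bookkeeping of the $(\tau_\Sigma,\varepsilon,r)$-perturbation near the punctures -- are standard.
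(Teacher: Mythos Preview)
Your argument is correct, but the route differs from the paper's. The paper's proof is a single line: it feeds the output of Proposition~\ref{prop_Seifert_no_intersections} (embedded projection into $M\setminus L$, transversality to $X_\alpha$, and implicitly $\wind_\infty=0$ in $\tau_\Sigma$ from Lemma~\ref{lemma_no_intersection_with_limits}) directly into \cite[Theorem~2.4]{siefring}, Siefring's black-box positivity result for projections of curves in symplectizations, which immediately gives the dichotomy. You instead unpack the mechanism by hand: translate $\util_1$ to manufacture a four-dimensional intersection, compute $i^{\tau_\Sigma}(\util_0,\util_1)=0$ from homotopy invariance of the $*$-pairing together with the identities already proved in Lemma~\ref{lemma_no_intersection_with_limits}, and then contradict positivity of intersections by pushing the perturbation parameter $r$ past the $\R$-coordinate of the known intersection. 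Your approach is more self-contained --- it uses only the definition and homotopy invariance of Siefring's pairing rather than his Theorem~2.4 --- at the price of length; it is essentially a bare-hands verification of the special case of that theorem needed here. Two small points worth tightening: the identity $\util_0*\util_1=\util_0*\util_0$ from equality of asymptotics and relative homology class needs the full homological characterization of the $*$-pairing from \cite{siefring}, since Proposition~\ref{prop_constancy_int_number} as stated covers only holomorphic families; and in your last paragraph, the claim that for $r$ large there are \emph{no} intersections in the perturbed region (rather than merely compactly supported ones) uses that the compact set carrying all intersections can be chosen uniformly in $r$, so that $\{a>r\}$ eventually misses it --- true, but it deserves a word.
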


\begin{proof}
Consequence of Proposition~\ref{prop_Seifert_no_intersections} and~\cite[Theorem~2.4]{siefring}.
\end{proof}

The point of the above propositions is that later we will use projections of curves in $\M_{\jtil,0,\delta}(\gamma_1,\dots,\gamma_n;\emptyset)$ in class $b$ as pages of an open book decomposition by global surfaces of section.

\begin{proposition}
\label{prop_embeddedness}
Let $C \in \M_{\jtil,g,\delta}(\gamma_1,\dots,\gamma_n;\emptyset)$ be contained in $[1,+\infty)\times M$. If $C'$ can be connected to $C$ by a smooth path of curves in $\M_{\jbar,g,\delta}(\gamma_1,\dots,\gamma_n;\emptyset)$, and $C$ induces the same class in $H_2(M,L)$ as $\Sigma$, then $C'$ is embedded. 
\end{proposition}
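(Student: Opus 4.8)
The goal is to show that embeddedness propagates along a smooth path of curves in the moduli space $\M_{\jbar,g,\delta}(\gamma_1,\dots,\gamma_n;\emptyset)$ starting from a curve $C\subset[1,+\infty)\times M$ that induces the class of $\Sigma$. The key invariant is the generalized self-intersection number $\util*\util$. First I would invoke Proposition~\ref{prop_constancy_int_number} (and the energy identity in~\ref{sssec_energy_rmk}, which guarantees the asymptotic limits — hence the periods, hence the energy — are fixed along the path) to conclude that $\util*\util$ is constant along the path: it equals the value computed in Lemma~\ref{lemma_no_intersection_with_limits} for $C$, namely $\frac{1}{2}\mu_{\CZ}(\util) - \frac{1}{2}\#\Gamma_{\rm odd} - \chi(S) + \bar\sigma(\util)$. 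Note that $\mu_{\CZ}(\util)$, $\#\Gamma_{\rm odd}$, $\chi(S)$ and $\bar\sigma(\util)$ all depend only on the asymptotic data and the domain, which are fixed, so this is exactly the equality case of the adjunction inequality in Theorem~\ref{thm_adjunction_ineq} for \emph{every} curve on the path, provided the curve is somewhere injective.

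Next I would handle somewhere injectivity: by Lemma~\ref{lemma_somewhere_injective} every curve in $\M_{\jbar,g,\delta}(\gamma_1,\dots,\gamma_n;\emptyset)$ is somewhere injective, so in particular $C'$ is. Then Theorem~\ref{thm_adjunction_ineq} applies to $C'$: since $\util'*\util' - \frac{1}{2}\mu_{\CZ}(\util') + \frac{1}{2}\#\Gamma_{\rm odd} + \chi(S) - \bar\sigma(\util') = 0$ — this being the value transported from $C$ — the equality case of the adjunction inequality forces $\util'$ to be an embedding. One subtlety to address is that Lemma~\ref{lemma_no_intersection_with_limits} was stated for curves in the $\R$-invariant moduli space $\M_{\jtil,g,\delta}$, whereas $C$ lives in $[1,+\infty)\times M$; by Remark~\ref{rmk_special_case_non-cylindrical=cylindrical} such a $C$ may be regarded as a curve in $\M_{\jbar,g,\delta}$ with the same generalized self-intersection number, and assumption~(A4) ensures the asymptotic analysis of Theorem~\ref{thm_precise_asymptotics} applies to all curves in $\M_{\jbar,g,\delta}$, so the adjunction formula of Theorem~\ref{thm_adjunction_ineq} is available throughout.

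The main obstacle is conceptual bookkeeping rather than any hard estimate: one must be careful that the quantities $\mu_{\CZ}(\util)$ and $\bar\sigma(\util)$ entering Theorem~\ref{thm_adjunction_ineq} are genuinely determined by the (fixed) ordered list of asymptotic limits $\gamma_1,\dots,\gamma_n$ together with the choice of trivializations in the splitting of $\util^*T(\R\times M)$, and that the latter can be chosen continuously along the path — this is where one uses that the asymptotic limits and their covering multiplicities do not change, so the rotation numbers $\rho(\gamma_k)$ appearing in $\bar\sigma$ are constant. Once that is in place, the argument is: constancy of $\util*\util$ (Proposition~\ref{prop_constancy_int_number}) $\;+\;$ somewhere injectivity (Lemma~\ref{lemma_somewhere_injective}) $\;+\;$ the equality case of adjunction (Theorem~\ref{thm_adjunction_ineq}) $\;\Longrightarrow\;$ $C'$ embedded.
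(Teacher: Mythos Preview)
Your proposal is correct and follows essentially the same approach as the paper: the paper's proof is a one-line invocation of Proposition~\ref{prop_constancy_int_number}, Theorem~\ref{thm_adjunction_ineq}, formula~\eqref{selfintersection_number_special_moduli_spaces} from Lemma~\ref{lemma_no_intersection_with_limits}, and Lemma~\ref{lemma_somewhere_injective}. Your additional remarks about the constancy of $\mu_{\CZ}$, $\#\Gamma_{\rm odd}$, $\chi(S)$, $\bar\sigma$ along the path are valid but slightly redundant, since the asymptotic limits $\gamma_1,\dots,\gamma_n$ are fixed by the very definition of the moduli space rather than by an energy argument.
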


\begin{proof}
Consequence of Proposition~\ref{prop_constancy_int_number}, Theorem~\ref{thm_adjunction_ineq} and formula~\eqref{selfintersection_number_special_moduli_spaces}, again noting that $C'$ is somewhere injective by Lemma~\ref{lemma_somewhere_injective}.
\end{proof}

\begin{lemma}[Automatic transversality]
\label{lemma_aut_transv}
Let $\jbar\in\J_{\Omega,L}(\jtil',\jtil)$ and let $C$ be a curve in $\M_{\jtil,0,\delta}(\gamma_1,\dots,\gamma_n;\emptyset)$ contained in $[1,+\infty)\times M$ and inducing the same class in $H_2(M,L)$ as $\Sigma$. The subset of $\M_{\jbar,0,\delta}(\gamma_1,\dots,\gamma_n;\emptyset)$ consisting of curves that can be connected to $C$ in $\M_{\jbar,0,\delta}(\gamma_1,\dots,\gamma_n;\emptyset)$ is a smooth two-dimensional manifold whose differentiable structure is compatible with the $C^\infty_{\rm loc}$-topology. Moreover, a neighborhood in $\R\times M$ of every curve in this space is smoothly foliated by curves in $\M_{\jbar,0,\delta}(\gamma_1,\dots,\gamma_n;\emptyset)$.
\end{lemma}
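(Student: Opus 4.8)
The plan is to invoke the automatic transversality criterion of Wendl for punctured pseudo-holomorphic curves of genus zero (as in~\cite{wendl} and its antecedents), applied to the weighted Fredholm problem defining $\M_{\jbar,0,\delta}$. First I would set up the linearized Cauchy--Riemann operator $D_{\util}$ along a representative $\util$ of $C$, acting on sections of $\util^*T(\R\times M)$ in the exponentially weighted Sobolev spaces with weights $\delta_i<0$ at the punctures $z_i$, where the $\delta_i$ were chosen (just above Lemma~\ref{lemma_no_intersection_with_limits}) to lie in the spectral gap between winding $0$ and winding $1$ relative to $\tau_\Sigma$; in particular $\mu_{\CZ}^{\tau_\Sigma,\delta_i}(\gamma_i)=1$. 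Since $C$ is contained in $[1,+\infty)\times M$ it may be regarded (Remark~\ref{rmk_special_case_non-cylindrical=cylindrical}) as a curve in $\M_{\jbar,0,\delta}$, and by Lemma~\ref{lemma_somewhere_injective} it is somewhere injective; combined with Proposition~\ref{prop_Seifert_no_intersections} it is in fact an embedded curve with image disjoint from $\R\times L$, so $\util$ is an immersion transverse to $X_\alpha$ with embedded projection.

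The key computation is the index count. Using the relative first Chern number formula~\eqref{relative_Chern_number_formula} together with the fact (from Lemma~\ref{lemma_no_intersection_with_limits}, equation~\eqref{info_classical_invariants_Seifert} and~\eqref{half_c_1_seifert}) that the trivialization $\tau_\Sigma$ is precisely the one in which the curve's own asymptotic trivializations $\tau^\Xi_k$ satisfy $\sum_k\wind(\tau^\Xi_k,\tau_\Sigma)=\chi(S)-\#\Gamma=2-n$, I would compute the virtual dimension~\eqref{virtual_dimension} of $\M_{\jbar,0,\delta}$ at $C$ to be
\[
-(2-n) + c_1^{\tau_\Sigma}(\util^*T(\R\times M)) + \sum_{i=1}^n \mu_{\CZ}^{\tau_\Sigma,\delta_i}(\gamma_i) = -(2-n) + 2(2-n) + n\cdot 1 = 2.
\]
Next I would verify the automatic transversality inequality of Wendl: for a somewhere injective genus-zero curve $\util$ with only positive punctures, the relevant condition reads
\[
\mathrm{ind}(\util) - 2\,\delta_{\rm sing} > 2g(S) - 2 + \#\Gamma_{\rm even}^{0},
\]
or in its sharp form $\mathrm{ind}(\util) > 2\,[\,\text{normal Chern number}\,]$; here $g(S)=0$, and the weighted normal Chern number of $\util$ vanishes because $\wind_\pi(\util)=0$ and $\wind_\infty(\util,z_i,\tau_\Sigma)=0$ for all $i$ (the content of~\eqref{info_classical_invariants_Seifert}), so each puncture contributes an "even, winding zero" term that matches the weighted index exactly. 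With $\mathrm{ind}=2>0$ one gets strict inequality, hence $D_{\util}$ is surjective. Surjectivity of the weighted linearized operator at every curve in the connected component of $C$ then gives, by the implicit function theorem, that this component is a smooth $2$-manifold carrying the $C^\infty_{\rm loc}$-topology. The foliation statement follows because a $2$-dimensional family of embedded curves in a $4$-manifold, all of which are disjoint (Proposition~\ref{prop_Seifert_two_curves} extended to the cobordism via the generalized intersection number, Proposition~\ref{prop_constancy_int_number}, together with Proposition~\ref{prop_embeddedness}) and each of which has surjective normal deformation operator, sweeps out an open subset and foliates a neighborhood of each of its leaves; this is the standard consequence of positivity of intersections plus automatic transversality for the normal bundle, as in~\cite[Chapter~2]{wendl}.

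The main obstacle I expect is bookkeeping the weighted asymptotic data correctly: one must be careful that the exponential weights $\delta_i$ are small and negative enough to stay in the chosen spectral gap so that the asymptotic operators remain invertible on the weighted spaces, that the "constrained" count $\wind_\infty(\util,z_i,\tau_\Sigma)=0$ really does make the weighted normal Chern number vanish (rather than merely be small), and that the version of automatic transversality one quotes is the one allowing exponential weights and possibly multiply-covered asymptotic limits of neighbouring orbits. None of this is conceptually hard given~\eqref{info_classical_invariants_Seifert}, Lemma~\ref{lemma_somewhere_injective}, and Wendl's criterion, but it is where all the hypotheses (A1)--(A4) and the genus-zero assumption must be simultaneously in force, so the verification needs to be done with care.
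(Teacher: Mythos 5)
Your index computation and the identification of the key hypotheses ($\wind_\pi(\util)=0$, $\wind_\infty(\util,z_i,\tau_\Sigma)=0$, and the choice of weights $\delta_i$ lying in the spectral gap) match what the paper uses, and your conclusion is correct. The main difference is in packaging. You propose to invoke Wendl's automatic transversality criterion as a black box, flagging (correctly) the concern that one needs a version accommodating exponential weights at the punctures. The paper instead gives a short direct argument that reproves the relevant special case from scratch: it linearizes the Cauchy--Riemann operator on the rank-$2$ \emph{normal} bundle $N_{\vtil}$ (which is identified with $\xi$ at the ends and in which $\tau_\Sigma$ gives a global trivialization), notes that the similarity principle makes zeros of kernel sections contribute positively while the choice of weights forces the total asymptotic winding to be non-positive, concludes via degree theory that nonzero kernel sections are nowhere vanishing, and then observes that a cokernel would force $\dim\ker\geq 3$ and hence a vanishing linear combination by evaluation into a rank-$2$ fiber. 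This sidesteps the exact form of the criterion you would need to cite, which is the caveat you raise. For the foliation claim, the paper also argues more locally than you do: it observes that the difference of two nearby solutions in the Fredholm chart satisfies a Cauchy--Riemann-type equation with the same asymptotic operators, hence has algebraic count of intersections equal to zero, and positivity of intersections then forces disjointness; your route through Propositions~\ref{prop_Seifert_two_curves} and \ref{prop_constancy_int_number} would reach a similar conclusion but those statements are phrased for the $\R$-invariant or intersection-number setting rather than giving the local normal-bundle picture directly. Both strategies work; the paper's is slightly more self-contained, yours is more reference-driven.
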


\begin{proof}[Sketch of proof]
Fix $C'\in \M_{\jbar,0,\delta}(\gamma_1,\dots,\gamma_n;\emptyset)$ in the connected component of $C$. We can always represent it as $[\vtil,S^2,j,\Gamma,\emptyset]$ using our fixed data $S^2,\Gamma$, where $j$ is some complex structure on $S^2$. It follows from Proposition~\ref{prop_embeddedness} that $\vtil$ is an embedding. Consider a $\jbar$-invariant normal bundle $N_{\vtil}$ and set-up a Fredholm theory for sections of this normal bundle with exponential weights~$\delta$. One can use H\"older spaces with exponential weights as in~\cite{props3}. The normal bundle can be chosen to extend as the contact plane field $\xi$ along the ends of $\vtil$. A crucial fact is that $\tau_\Sigma$ extends as an $\Omega$-symplectic trivialization of $N_{\vtil}$. The Cauchy-Riemann equation gives rise to a non-linear Fredholm map on the space of sections close to zero, whose linearization at zero is a Cauchy-Riemann type linear operator $D_{\vtil}$. The associated asymptotic operators are precisely the asymptotic operators at the asymptotic limits. The weighted Fredholm index is
\[
2-n + \sum_{k=1}^n \mu_{\CZ}^{\tau_\Sigma,\delta_k}(\gamma_k) = 2-n+n = 2.
\]
This formula has important consequences. By the similarity principle, every zero of a section in $\ker D_{\vtil}$  has strictly positive contribution to the algebraic count of zeros. Moreover, since $\tau_\Sigma$ extends as a trivialization of $N_{\vtil}$, and a section in $\ker D_{\vtil}$ has an asymptotic behavior governed by eigenvalues of the asymptotic operators smaller than the $\delta_k$, the total winding number of such a section at the ends is non-positive. Combining both these facts with basic degree theory, one concludes that non-trivial sections in the kernel are nowhere vanishing. But the Fredholm index is two, so if $D_{\vtil}$ has non-trivial cokernel then its kernel will have dimension $\geq 3$, and some non-trivial linear combination of sections in $\ker D_{\vtil}$ will vanish somewhere because $N_{\vtil}$ has rank two. This contradiction shows that there is no cokernel, in other words, we have automatically the transversality needed to apply the implicit function theorem and obtain a chart of the $2$-manifold structure of the moduli space. Moreover, all curves near $C'$ necessarily show up in this chart.

Finally, the difference of two sections near the zero section, belonging to the zero locus of the non-linear Cauchy-Riemann equation again satisfies a Cauchy-Riemann type equation with the same asymptotic operators, and is subject to asymptotic analysis. Hence, the algebraic count of intersections between two such nearby sections is zero. By positivity of intersections, all intersection points contribute positively to this algebraic count. Hence, nearby curves do not intersect, and the claim made in the statement about local foliations follows.
\end{proof}

\begin{proposition}
\label{prop_cobordisms_intersection_crucial}
Let $C \in \M_{\jtil,g,\delta}(\gamma_1,\dots,\gamma_n;\emptyset)$ be contained in $[1,+\infty)\times M$. If $C'$ is in the same connected component of $\M_{\jbar,g,\delta}(\gamma_1,\dots,\gamma_n;\emptyset)$ as $C$, where we see $C$ in $\M_{\jbar,g,\delta}(\gamma_1,\dots,\gamma_n;\emptyset)$, and if $C$ induces the same class in $H_2(M,L)$ as $\Sigma$, then ${\rm int}(C',\R\times L)=0$. Moreover, if $C'=[\vtil,S,j',\Gamma,\emptyset]$ then $\wind_\infty(\vtil,z_k,\tau_\Sigma)=0$ for all $k$.
\end{proposition}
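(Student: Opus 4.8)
The plan is to combine the homotopy invariance of Siefring's generalized intersection number with the properties of curves in $\M_{\jtil,g,\delta}$ established in Lemma~\ref{lemma_no_intersection_with_limits}. Write $C'=[\vtil,S,j',\Gamma,\emptyset]$ and $C=[\util_C,S,j,\Gamma,\emptyset]$. Since $\jbar\in\J_{\Omega,L}(\jtil',\jtil)$, the surface $\R\times L$ is the disjoint union of the trivial $\jbar$-holomorphic cylinders $Z_i:=\R\times\gamma_i$, $i=1,\dots,n$, each somewhere injective because $\gamma_i$ is a prime orbit. As $C$ is contained in $[1,+\infty)\times M$ we view it as an element of $\M_{\jbar,g,\delta}(\gamma_1,\dots,\gamma_n;\emptyset)$ (Remark~\ref{rmk_special_case_non-cylindrical=cylindrical}); every curve in the connected component of $C$ shares the asymptotic limits $\gamma_1,\dots,\gamma_n$ at $z_1,\dots,z_n$, so Proposition~\ref{prop_constancy_int_number} gives $\vtil*Z_i=\util_C*Z_i$ for each $i$. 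I would first evaluate the right-hand side, and then extract ${\rm int}(\vtil,Z_i)$ and $\wind_\infty(\vtil,z_i,\tau_\Sigma)$ from the Siefring decomposition of the left-hand side.

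Recall from~\cite{siefring} that for somewhere injective finite-energy curves $\wtil_0,\wtil_1$ that are not covers of one another, $\wtil_0*\wtil_1={\rm int}(\wtil_0,\wtil_1)+\delta_\infty(\wtil_0,\wtil_1)$, where ${\rm int}\geq0$ by positivity of intersections and $\delta_\infty\geq0$ is an asymptotic defect computed from the behavior at common asymptotic limits. In our case the only relevant pairing is the positive puncture $z_i$ of the curve with the positive end of $Z_i$, both asymptotic to the prime orbit $\gamma_i$, and Siefring's formula for the defect against a trivial cylinder over a prime asymptotic limit reads $\delta_\infty(\cdot,Z_i)=\lfloor\rho^{\tau_\Sigma}(\gamma_i)\rfloor-\wind_\infty(\cdot,z_i,\tau_\Sigma)$ — a $\tau_\Sigma$-independent non-negative integer. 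Applying this to $\util_C$: by Lemma~\ref{lemma_no_intersection_with_limits}, $C$ is disjoint from $\R\times L$, so ${\rm int}(\util_C,Z_i)=0$, and $\wind_\infty(\util_C,z_i,\tau_\Sigma)=0$; hence $\util_C*Z_i=\lfloor\rho^{\tau_\Sigma}(\gamma_i)\rfloor$ and therefore $\vtil*Z_i=\lfloor\rho^{\tau_\Sigma}(\gamma_i)\rfloor$.

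It remains to read this back off for $C'$. The curve $\vtil$ is somewhere injective (Lemma~\ref{lemma_somewhere_injective}) and is not a cover of $Z_i$ — it has no negative punctures, while any branched cover of a trivial cylinder has at least one; consequently $\vtil$ does not map a neighbourhood of $z_i$ into $\R\times\gamma_i$, so its asymptotic formula at $z_i$ is non-trivial, $\wind_\infty(\vtil,z_i,\tau_\Sigma)$ is a genuine integer, and it is $\leq0$ because the asymptotic eigenvalue at $z_i$ is $<\delta_i$, which by the choice of $\delta_i$ lies below every eigenvalue whose winding relative to $\tau_\Sigma$ is $\geq1$. Moreover $\vtil$ and $Z_i$ are disjoint outside a compact set — near $z_i$ the leading asymptotic coefficient of $\vtil$ is non-vanishing, so $\vtil$ stays off $Z_i$ there, and $\vtil$ never reaches the negative end of $Z_i$ — hence ${\rm int}(\vtil,Z_i)$ is defined and $\geq0$. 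Plugging the defect formula into $\vtil*Z_i=\lfloor\rho^{\tau_\Sigma}(\gamma_i)\rfloor$ gives ${\rm int}(\vtil,Z_i)=\wind_\infty(\vtil,z_i,\tau_\Sigma)$; since the left side is $\geq0$ and the right side is $\leq0$, both vanish. Running this for every $i$ yields ${\rm int}(C',\R\times L)=\sum_i{\rm int}(\vtil,Z_i)=0$ and $\wind_\infty(\vtil,z_k,\tau_\Sigma)=0$ for all $k$. I expect the main obstacle to be the bookkeeping behind Siefring's asymptotic defect against the trivial cylinders — making the identity $\delta_\infty(\cdot,Z_i)=\lfloor\rho^{\tau_\Sigma}(\gamma_i)\rfloor-\wind_\infty(\cdot,z_i,\tau_\Sigma)$ precise with the correct signs — and this is exactly where the placement of the weight $\delta_i$ in the spectral gap of the asymptotic operator enters; everything else is Proposition~\ref{prop_constancy_int_number} together with the computation already available from Lemma~\ref{lemma_no_intersection_with_limits}.
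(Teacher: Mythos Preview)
Your proof is correct and follows essentially the same route as the paper. The only difference is packaging: you work with the generalized intersection number $\vtil*Z_i$ and quote Siefring's asymptotic-defect formula $\delta_\infty(\vtil,Z_i)=\lfloor\rho^{\tau_\Sigma}(\gamma_i)\rfloor-\wind_\infty(\vtil,z_i,\tau_\Sigma)$, whereas the paper works with the relative number $i^{\tau_\Sigma}(\vtil,\R\times L)$ and derives the asymptotic contribution $i^{\tau_\Sigma}_\infty(\vtil,\R\times L)=-\sum_k\wind_\infty(\vtil,z_k,\tau_\Sigma)$ by an explicit degree-theory computation in Martinet-tube coordinates; since $\Omega^{\tau_\Sigma}(\cdot,Z_i)=\lfloor\rho^{\tau_\Sigma}(\gamma_i)\rfloor$ depends only on the asymptotics and cancels, the two arguments are algebraically identical, and the paper's explicit computation is precisely the justification of the defect identity you flagged as the main obstacle.
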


\begin{proof}
Represent $C=[\util=(a,u),S,j,\Gamma,\emptyset]$. Following the beginning of the proof of Lemma~\ref{lemma_no_intersection_with_limits} we arrive at~\eqref{crucial_asymptotic_winding}. We will argue to show  that~\eqref{crucial_asymptotic_winding} implies that
\begin{equation*}
i^{\tau_\Sigma}(\util,\R\times L) = 0.
\end{equation*}
In fact, by~\eqref{crucial_asymptotic_winding} the $M$-component $u$ approaches the asymptotic limits by loops that do not wind around the center of Martinet tubes aligned with $\tau_\Sigma$. Thus, if we displace $\util$ near the punctures in the direction of $\tau_\Sigma$ to obtain a map $\util^{\tau_\Sigma,\varepsilon,r}$ as in~\ref{sssec_intersection_numbers}, then there is no additional contribution to the intersection number with $\R\times L$ and we get $$ i^{\tau_\Sigma}(\util,\R\times L) = {\rm int}(\util^{\tau_\Sigma,\varepsilon,r},\R\times L)={\rm int}(\util,\R\times L). $$ Here $r$ is large and $\varepsilon$ is small. Proposition~\ref{prop_Seifert_no_intersections} gives ${\rm int}(\util,\R\times L)=0$, from where we then conclude that $i^{\tau_\Sigma}(\util,\R\times L) = 0$.

Now $i^{\tau_\Sigma}(\vtil,\R\times L) = i^{\tau_\Sigma}(\util,\R\times L)$ for every finite-energy map $\vtil$ representing a curve $C'$ in $\M_{\jbar,g,\delta}(\gamma_1,\dots,\gamma_n;\emptyset)$ in the same component as $C$. This follows from Proposition~\ref{prop_constancy_int_number}. Here we used that $\min a\geq 1$ allows us to view $C$ as an element of $\M_{\jbar,g,\delta}(\gamma_1,\dots,\gamma_n;\emptyset)$, and also used that $\R\times L$ is a $\jbar$-holomorphic surface since $\jbar\in\J_{\Omega,L}$. We get
\begin{equation}\label{zero_taurel_int_number}
i^{\tau_\Sigma}(\vtil,\R\times L)=0.
\end{equation}

We would like now to argue that $i^{\tau_\Sigma}(\vtil,\R\times L) = {\rm int}(\vtil,\R\times L)$ and conclude the proof. Unfortunately, one can not use an argument like the one used in the beginning of the proof of Lemma~\ref{lemma_no_intersection_with_limits} to achieve~\eqref{crucial_asymptotic_winding} because here we deal with almost complex structures that are not $\R$-invariant. We argue differently.

The defining property of the weights $\delta_i$ implies that
\begin{equation}
\label{general_ineq_wind_seifert}
\wind_\infty(\vtil,z_i,\tau_{\Sigma}) \leq 0 \ \forall i.
\end{equation}
For each $k$ consider a Martinet tube $(U_k,\Psi_k)$ around $\gamma_k$ aligned with $\tau_{\Sigma}$. It provides coordinates $(\theta,z)$ near $\gamma_k$. Let $(s,t)$ be positive holomorphic polar coordinates around $z_k$. For $s$ large enough we can write in components $\vtil(s,t)=(b(s,t),\theta(s,t),z(s,t))$. We know that $\vtil=(b,v)$ must have non-trivial asymptotic formula at each $z_k$. Hence we find $R$ such that $v([R,+\infty)\times\R/\Z)\subset U_k$ and $z(s,t)$ does not vanish for $s\geq R$. By Theorem~\ref{thm_precise_asymptotics}, perhaps after taking $R$ larger, $(s,t) \mapsto (b(s,t),t)$ is a diffeomorphism between $[R,+\infty)\times\R/\Z$ and a positive end of $\R\times\R/\Z$. Thus we find $r\gg1$ such that $(b,t) \in [r,+\infty)\times\R/\Z$ give new polar coordinates around $z_k$. This allows us to write $\vtil(b,t)=(b,\theta(b,t),z(b,t))$. In addition $z(b,t)$ does not vanish for $b\geq r$. Theorem~\ref{thm_precise_asymptotics} and the choice of $r$ together imply that $\wind_\infty(\vtil,z_k,\tau_\Sigma)$ is equal to the winding number of $t\mapsto z(r,t)$. With these choices, we take $\varepsilon>0$ small enough, and define as
in~\ref{sssec_intersection_numbers}
\[
\vtil^{k,\tau_{\Sigma},\varepsilon,r}(b,t) = (b,\theta(b,t),z(b,t)+\varepsilon\beta(b-r)) \qquad (b,t) \in [r,+\infty)\times \R/\Z
\]
where $\beta:\R\to[0,1]$ is a smooth function  equal to $0$ near $(-\infty,0]$ and equal to $1$ near $[1,+\infty)$. In view of the choices of $r$ and $\varepsilon$, the oriented intersection number $m_k \in \Z$ between $\vtil^{k,\tau_{\Sigma},\varepsilon,r}$ and the $\jbar$-complex surface $\R\times\R/\Z\times\{0\}$ is well-defined. Repeating this construction for every $k$ we arrive at what Siefring~\cite{siefring} defines as
\[
i^{\tau_\Sigma}_\infty(\vtil,\R\times L) := \sum_{k=1}^n m_k.
\]
Note that $m_k$ is equal to the algebraic count of zeros of $z(b,t)+\varepsilon\beta(b-r)$ on $[r,+\infty)\times\R/\Z$ in the coordinates described above. Standard degree theory allows us to compute $m_k$ as a difference of winding numbers. As explained before, by the choice of $r$ the winding number of $t\mapsto z(r,t)$ is equal to $\wind_\infty(\vtil,z_k,\tau_\Sigma)$ and, by construction, the winding number of $t\mapsto z(b,t)+\varepsilon\beta(b-r)$ vanishes when $b$ is large enough. The crucial identity follows
\begin{equation}
i^{\tau_\Sigma}_\infty(\vtil,\R\times L) = - \sum_{k=1}^n \wind_\infty(\vtil,z_k,\tau_{\Sigma}).
\end{equation}
Combining with~\eqref{general_ineq_wind_seifert} we get
\begin{equation*}
i^{\tau_\Sigma}_\infty(\vtil,\R\times L) \geq 0
\end{equation*}
with equality if, and only if, $\wind_\infty(\vtil,z_k,\tau_{\Sigma})=0$ for every $k$. Now~\cite[Theorem~4.2]{siefring},~\eqref{zero_taurel_int_number} and the above inequality together give
\begin{equation*}
0 = i^{\tau_\Sigma}(\vtil,\R\times L) = {\rm int}(\vtil,\R\times L) + i^{\tau_\Sigma}_\infty(\vtil,\R\times L).
\end{equation*}
Both terms on the right-hand side are non-negative: the first by positivity of intersections, the second by the previous inequality. Hence both terms vanish. The desired conclusion follows since $i^{\tau_\Sigma}_\infty(\vtil,\R\times L)=0$ if, and only if, $\wind_\infty(\vtil,z_k,\tau_{\Sigma})=0$ for all $k$.
\end{proof}

\subsection{Existence and compactness of holomorphic curves}
\label{ssec_existence_compactness_curves}

Let $L$ be a link in~$M$ with components $\gamma_1,\dots,\gamma_n$ and let $\alpha_+$ and $\alpha_-$ be defining contact forms for~$\xi$ such that $\alpha_+>\alpha_-$. Let $\Sigma$ be an oriented genus zero Seifert surface for~$L$, and orient $L$ as the boundary of $\Sigma$. Consider the following list of hypotheses.
\begin{itemize}
\item[(H1)] Both $X_{\alpha_+}$ and $X_{\alpha_-}$ are positively tangent to $L$.
\item[(H2)] $X_{\alpha_+}$ is positively transverse to $\Sigma\setminus L$.
\item[(H3)] Every $\gamma_k$ satisfies $\mu^{\tau_\Sigma}_{\CZ}\geq 1$ with respect to both $\alpha_+$ and $\alpha_-$. 
\item[(H4)] The contact forms $\alpha_+,\alpha_-$ are non-degenerate up to action $$ A = \sum_{k=1}^n \int_{\gamma_k}\alpha_+ $$ and the following hold:
\begin{itemize}
\item[($+$)] If $\gamma =(x,T)$ in $\mathcal{P}(\alpha_+)$ satisfies $T\leq A$ and $x(\R) \subset M\setminus L$ then it also satisfies ${\rm int}(\gamma,\Sigma) \neq 0$.
\item[($-$)] If $\gamma =(x,T)$ in $\mathcal{P}(\alpha_-)$ satisfies $T\leq A$ and $x(\R) \subset M\setminus L$ then it also satisfies ${\rm int}(\gamma,\Sigma) \neq 0$.
\end{itemize}
\item[(H5)] If $I$ is any proper non-empty subset of $\{1,\dots,n\}$ then the link in $M\setminus L$ obtained by pushing $\{\gamma_k\mid k\in I\}$ in the direction of $\Sigma$ defines a non-zero homology class in $H_1(M\setminus L)$.
\end{itemize}

\begin{remark}
In~(H4) we denote by ${\rm int}:H_1(M\setminus L) \otimes H_2(M,L) \to \Z$ the algebraic intersection pairing. Hypothesis (H5) is automatically satisfied when $\Sigma$ is a page of an open book decomposition with binding $L$. This is a consequence of Lemma~\ref{lemma_topological}.
\end{remark}

Under the assumption that (H1)-(H3) hold, choose almost complex structures $\jtil_\pm \in\J(\alpha_\pm)$ and $\jbar\in\J_{\Omega,L}(\jtil_-,\jtil_+)$. Fix $h$ and $\Omega$ as in~\ref{sssec_gen_fin_energy_curves}, which allows us to define the energy~\eqref{energy_non_invariant} of a $\jbar$-holomorphic map. By~(H3) we can choose numbers $\delta^\pm_1,\dots,\delta^\pm_n<0$ such that
\begin{itemize}
\item $\delta^+_k$ is in the spectral gap between eigenvalues of winding number $0$ and $1$ relative to $\tau_\Sigma$ of the asymptotic operator at $\gamma_k$ induced by $(\alpha_+,J_+)$.
\item $\delta^-_k$ is in the spectral gap between eigenvalues of winding number $0$ and $1$ relative to $\tau_\Sigma$ of the asymptotic operator at $\gamma_k$ induced by $(\alpha_-,J_-)$.
\end{itemize}
Set $\delta^\pm = (\delta^\pm_1,\dots,\delta^\pm_n;\emptyset)$. The main goal of this subsection is to prove the statement below.
It contains an `implied existence statement' for the holomorphic curves relevant to our results: they are assumed to exist for the model contact form $\alpha_+$, and the proposition states that they need also to exist for the contact form $\alpha$.

\begin{proposition}
\label{prop_main_compactness}
Assume all hypotheses (H1)-(H5). If there exists some curve in $\M_{\jtil_+,0,\delta^+}(\gamma_1,\dots,\gamma_n;\emptyset)$ that induces the same class as $\Sigma$ in $H_2(M,L)$, then there exists a curve $C_- \in \M_{\jtil_-,0,\delta^-}(\gamma_1,\dots,\gamma_n;\emptyset)$ whose projection to $M$ is in the same class as $\Sigma$ in $H_2(M,L)$. Moreover, if $\mathcal{Y}$ denotes the connected component of
~$C_-$ then $\mathcal{Y}/\R$ is compact.
\end{proposition}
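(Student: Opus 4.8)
The plan is to run a standard stretching argument in the symplectic cobordism $\Omega=d(h\alpha_-)$ and to use the intersection-theoretic machinery of subsection~\ref{ssec_Seifert_and_curves} together with the genus zero hypothesis to pin down the SFT-limit. First I would take the curve $C_+\in\M_{\jtil_+,0,\delta^+}(\gamma_1,\dots,\gamma_n;\emptyset)$ inducing the class of $\Sigma$, translate it far up in $\R\times M$ so that it lies in $[1,+\infty)\times M$, and view it as a $\jbar$-holomorphic curve. One then considers the connected component $\mathcal{Y}$ of this curve inside $\M_{\jbar,0,\delta}(\gamma_1,\dots,\gamma_n;\emptyset)$ (with $\delta=(\delta^+;\delta^-)$ read off the two ends appropriately — more precisely one should stretch: consider a family $\jbar_R$ that inserts a neck of length $R$ between $\jtil_+$ and $\jtil_-$, or equivalently push curves down the fixed cobordism). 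By Lemma~\ref{lemma_aut_transv} the component through $C_+$ is a smooth $2$-manifold, and by Proposition~\ref{prop_embeddedness} and Proposition~\ref{prop_cobordisms_intersection_crucial} every curve in it is embedded, disjoint from $\R\times L$, and approaches its asymptotic limits with $\wind_\infty(\cdot,z_k,\tau_\Sigma)=0$; moreover Lemma~\ref{lemma_aut_transv} says these curves locally foliate $\R\times M$. The energy of every such curve equals $A=\sum_k\int_{\gamma_k}\alpha_+$ by the remark in~\ref{sssec_energy_rmk}, so (H4) guarantees the SFT-compactness theorem applies.

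The next step is to push curves in $\mathcal{Y}$ arbitrarily far down the cobordism and extract an SFT-limit building $\mathbf{u}$. Because the domains have genus zero and $n$ positive punctures and no negative punctures, and because the generalized intersection number $*$ is preserved along the family (Proposition~\ref{prop_constancy_int_number}), the total glued surface $S^{\mathbf{u},r}$ is a connected genus zero surface with $n$ boundary circles; by Lemma~\ref{lemma_constant_components} any constant component would force, via the genus zero condition, an impossible configuration, so no level has a constant component. The $\jbar$-holomorphic surface $\R\times L$ is invariant, so the positivity-of-intersections bound ${\rm int}(\cdot,\R\times L)=0$ forces every level, and in particular every limiting curve, to be disjoint from $\R\times L$ — hence the binding cylinders $\R\times\gamma_k$ can appear in $\mathbf{u}$ only as upper-level trivial cylinders carrying the asymptotic data, exactly as in Figure~\ref{figure}. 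I would then argue that the building has exactly one non-trivial level, the bottom one, which is a single connected genus zero curve $C_-$ with $n$ positive punctures asymptotic to $\gamma_1,\dots,\gamma_n$: connectivity of $S^{\mathbf{u},r}$ together with the ban on constant components and on bubbling off into $\R\times L$ rules out extra levels; here the homological hypothesis (H4)($\pm$) and (H5) are what prevent pieces from escaping — any non-trivial piece in an intermediate symplectization level would have punctures asymptotic to orbits in $M\setminus L$, which by (H4) must intersect $\Sigma$, contradicting the homology class being that of $\Sigma$ (whose algebraic intersection with such orbits is forced by the open-book/Seifert structure), while (H5) prevents the curve from splitting off a sub-collection of the $\gamma_k$'s. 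The decay weights do not degenerate: the $\delta_k^-$ lie in a fixed spectral gap, so the limiting curve $C_-$ still satisfies the weighted asymptotics and hence lies in $\M_{\jtil_-,0,\delta^-}(\gamma_1,\dots,\gamma_n;\emptyset)$, and it induces the same class as $\Sigma$ because this class is locally constant along the family and is read off the (unchanged) homology class $\bar u_*[\overline S,\partial\overline S]$.

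Finally, for compactness of $\mathcal{Y}/\R$ where now $\mathcal{Y}$ denotes the connected component of $C_-$ in $\M_{\jtil_-,0,\delta^-}$: I would run the SFT-compactness theorem once more inside the $\R$-invariant symplectization of $\alpha_-$. Any sequence in $\mathcal{Y}$ has an SFT-convergent subsequence to a building in $(\R\times M,\jtil_-)$; the same genus zero and intersection arguments (Lemma~\ref{lemma_constant_components}, positivity against $\R\times L$, preservation of $*$, the weighted index being $2$ so automatic transversality of Lemma~\ref{lemma_aut_transv} holds throughout) force the building to have a single non-trivial level consisting of one curve in $\M_{\jtil_-,0,\delta^-}(\gamma_1,\dots,\gamma_n;\emptyset)$ in the correct homology class, hence in $\mathcal{Y}$, possibly with some trivial-cylinder levels that disappear modulo the $\R$-action. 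Thus every sequence in $\mathcal{Y}/\R$ subconverges in $\mathcal{Y}/\R$.

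The main obstacle will be controlling the SFT-limit building precisely: showing that no non-trivial curves bubble off into intermediate symplectization levels, that no component is a multiple cover or escapes to an orbit in $M\setminus L$, and that the weighted asymptotic decay survives in the limit. This is exactly where hypotheses (H4), (H5), the genus zero condition through Lemma~\ref{lemma_constant_components}, and the invariance of $\R\times L$ under $\jbar$ must be combined; the bookkeeping of which orbits can appear, and the use of the intersection number with $\R\times L$ and with $\Sigma$ to exclude unwanted configurations, is the delicate part of the argument.
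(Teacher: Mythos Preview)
Your plan is essentially the paper's own argument: the paper proves Proposition~\ref{prop_main_compactness} by combining Lemma~\ref{lemma_implied_existence_step1} (pushing curves in the connected component of $C_+$ down so $\min b_l\to-\infty$, using automatic transversality) with Lemma~\ref{lemma_implied_existence_step2} (SFT-limit analysis showing $k_+=0$, the top level is exactly $\R\times L$, and the bottom level is a single curve in $\M_{\jtil_-,0,\delta^-}$), together with Lemma~\ref{lemma_compactness_first}(b) for compactness of $\mathcal{Y}/\R$; the exclusion of unwanted asymptotic limits and components uses exactly the mechanism you sketch---loops in the limiting building with nonzero intersection with $\Sigma$ pull back via the $\varphi_l$ to loops $\beta_l$ with ${\rm int}(u_+(\beta_l),u_+(S^2\setminus\Gamma))\neq0$, contradicting Proposition~\ref{prop_Seifert_no_intersections}---and (H5) is invoked precisely to rule out proper sub-collections of the $\gamma_k$ at positive punctures. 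One small slip: there are no negative punctures in $\M_{\jbar,0,\delta^+}(\gamma_1,\dots,\gamma_n;\emptyset)$, so the weight vector is just $\delta^+$ at the positive end, not a pair $(\delta^+;\delta^-)$; the $\delta^-$ weights only enter once you land in the $\jtil_-$-symplectization, and membership in $\M_{\jtil_-,0,\delta^-}$ is verified from $\wind_\infty(\util_{-1},z_*,\tau_\Sigma)=0$ rather than by continuity of a weight.
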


Throughout this subsection we always assume (H1)-(H3) and (H5). Different parts of (H4) will be used at different moments in the arguments below. We fix an ordered set $\Gamma \subset S^2$ of distinct points $z_1,\dots,z_n$, and assume the existence of a curve
\begin{equation}
\label{curve_C_+}
C_+ = [\util_+=(a_+,u_+),S^2,j,\Gamma,\emptyset] \in \M_{\jtil_+,0,\delta^+}(\gamma_1,\dots,\gamma_n;\emptyset)
\end{equation}
inducing the same class as $\Sigma$ in $H_2(M,L)$. As explained in Remark~\ref{rmk_special_case_non-cylindrical=cylindrical}, we can translate $C_+$ up and assume that it induces also an element of $\M_{\jbar,0,\delta^+}(\gamma_1,\dots,\gamma_n;\emptyset)$.

\begin{lemma}
\label{lemma_intersection_with_trivial_cylinders_of_limit}
The following assertions hold.
\begin{itemize}
\item[(a)] Let $C_l\in \M_{\jbar,0,\delta^+}(\gamma_1,\dots,\gamma_n;\emptyset)$ be a sequence in the same connected component as $C_+$~\eqref{curve_C_+}. Assume that $C_l$ SFT-converges to the building $$ {\bf u}=\{\{\util_m = (a_m,u_m),S_m,j_m,\Gamma^+_m,\Gamma^-_m,D_m)\},\{\Phi_m\}\} \, . $$ Then for every $m$ and every connected component $Y\subset S_m$ we have that either $\util_m(Y\setminus \Gamma^+_m\cup\Gamma^-_m)\subset \R\times L$ or $\util_m(Y\setminus \Gamma^+_m\cup\Gamma^-_m) \cap \R\times L=\emptyset$.
\item[(b)] Let $C_- \in \M_{\jtil_-,0,\delta^-}(\gamma_1,\dots,\gamma_n;\emptyset)$ induce the same class as $\Sigma$ in $H_2(M,L)$, and let the sequence $C_l$ in $\M_{\jtil_-,0,\delta^-}(\gamma_1,\dots,\gamma_n;\emptyset)$, in the same connected component of $C_-$, SFT-converge to the building $$ {\bf u}=\{\{\util_m = (a_m,u_m),S_m,j_m,\Gamma^+_m,\Gamma^-_m,D_m)\},\{\Phi_m\}\} \, . $$ Then for every $m$ and every connected component $Y\subset S_m$ we have that either $\util_m(Y\setminus \Gamma^+_m\cup\Gamma^-_m)\subset \R\times L$ or $\util_m(Y\setminus \Gamma^+_m\cup\Gamma^-_m) \cap \R\times L=\emptyset$.
\end{itemize}
\end{lemma}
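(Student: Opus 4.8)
The plan is to reduce everything to positivity of intersections with the holomorphic surface $\R\times L$, exploiting that this surface is invariant under the vertical translations appearing in SFT-convergence, so that the $\R$-shifts $c_{m,l}$ produced by the compactness theorem do not disturb intersection behaviour with it. Cases (a) and (b) I would treat in one stroke, the only genuine difference being that in (a) the limit building has one level in the cobordism $(\R\times M,\jbar)$ and the remaining levels in the symplectization of $\alpha_+$. The first point is that for every $l$ the image of $C_l$ is \emph{disjoint} from $\R\times L$: in case (b) this is Lemma~\ref{lemma_no_intersection_with_limits} applied to $C_l$ itself, which induces the same class as $\Sigma$ in $H_2(M,L)$ because the homology class induced by a curve is locally constant along a connected family; in case (a) Proposition~\ref{prop_cobordisms_intersection_crucial} yields ${\rm int}(C_l,\R\times L)=0$ and $\wind_\infty(\vtil_l,z_i,\tau_\Sigma)=0$ for all $i$, and then --- since $\R\times L$ is $\jbar$-holomorphic and $C_l$, being connected with punctures at the pairwise distinct prime orbits $\gamma_1,\dots,\gamma_n$, is not contained in $\R\times L$ --- positivity of intersections turns the vanishing intersection number into the absence of intersection points, while the vanishing asymptotic windings and Theorem~\ref{thm_precise_asymptotics} rule out hidden intersections near the ends.

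Next I would pass to the SFT-limit. By (H4) the forms $\alpha_\pm$ are non-degenerate up to the common energy $A=\sum_k\int_{\gamma_k}\alpha_+$ of the sequence, so after a subsequence $C_l$ SFT-converges to a stable building ${\bf u}$ of arithmetic genus zero; on compact subsets of $S^{{\bf u},r}$ away from the special circles and outer punctures, conditions (e$'$), (f), (g) together with elliptic regularity for the Cauchy--Riemann equation give $C^\infty_{\rm loc}$-convergence of $\vtil_l\circ\varphi_l$, after adding the vertical shifts $c_{m,l}$, to the level map $\util_m$. Suppose for contradiction that some component $Y\subset S_m$ has $\util_m|_Y$ not contained in $\R\times L$ --- hence, $Y$ being connected, not contained in any single $\R\times\gamma_k$ --- but meeting $\R\times L$. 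The similarity principle makes $\util_m^{-1}(\R\times L)\cap(Y\setminus(\Gamma^+_m\cup\Gamma^-_m))$ discrete, with strictly positive local intersection index at each point, and it cannot accumulate at a puncture: there $\util_m$ is asymptotic to a Reeb orbit of action $\le A$, hence non-degenerate by (H4), and that orbit either misses $L$ (so a neighbourhood of the puncture stays off $\R\times L$) or covers some $\gamma_k$, in which case the asymptotic formula of Theorem~\ref{thm_precise_asymptotics} is non-trivial (because $\util_m|_Y\not\subset\R\times\gamma_k$) so the transverse coordinate in a Martinet tube is non-vanishing near the puncture. Thus this preimage set is finite and non-empty; if one of its points is not a node, I would take a small disk $D$ around it containing no puncture, node, or other preimage, and use the $C^\infty_{\rm loc}$-convergence and the $\R$-invariance of $\R\times L$ to conclude, by stability of local intersection numbers, that $C_l$ meets $\R\times L$ for large $l$, a contradiction.

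It remains to exclude the case where every preimage of $\R\times L$ on a non-$(\R\times L)$-component is a node, and this I expect to be the main obstacle, since near a pinching neck the domain degenerates and the naive stability argument no longer applies. The idea would be, for each fixed $k$, to cut out of $\Sigma_l$ the subsurface $\Sigma'_l$ built from the pieces corresponding under $\varphi_l$ to the components of ${\bf u}$ whose image lies in $\R\times\gamma_k$ together with all pinching necks incident to them --- valid because a node cannot lie in two distinct $\R\times\gamma_j$ (its image would lie in $\gamma_k\cap\gamma_j=\emptyset$). Then $\Sigma'_l$ is a holomorphic subsurface with boundary whose $\vtil_l$-image, for large $l$, lies in a tubular neighbourhood of $\R\times\gamma_k$ and, by the disjointness above, misses $\R\times\gamma_k$, so the total winding of $\partial\Sigma'_l$ about $\R\times\gamma_k$ vanishes. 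Each boundary circle lies near a node joining $\Sigma'_l$ to a non-$(\R\times L)$-component $v$, where as oriented boundary of $\Sigma'_l$ it winds $-\kappa$ with $\kappa\ge 0$ the local intersection index of $v$ with $\R\times\gamma_k$ there, or near an outer puncture $z_k$ carried by an $\R\times\gamma_k$-component, where its contribution is $-\wind_\infty(\vtil_l,z_k,\tau_\Sigma)=0$; summing forces every $\kappa$ to vanish, contradicting that the assumed nodal preimage contributes $\kappa\ge 1$.

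The delicate points I anticipate are concentrated in that last step: keeping precise track of orientation conventions so that the boundary windings really add up to the rel-boundary intersection count, controlling the curves near the pinching necks well enough to identify these boundary circles, and checking non-negativity of the nodal intersection indices via positivity of intersections for the (possibly non-somewhere-injective) holomorphic pieces against $\R\times\gamma_k$. The arithmetic genus zero hypothesis would enter only to keep the nodal configuration of ${\bf u}$ tame enough for this cut-and-count scheme to close up, while (H4) and the intersection theory of~\cite{siefring} are what make ``local intersection index'' and ``asymptotic winding'' meaningful here.
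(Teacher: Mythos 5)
Your first step---showing the maps $\vtil_l$ are disjoint from $\R\times L$, treating a preimage point of $\R\times L$ that is \emph{not} a node by $C^\infty_{\rm loc}$-convergence plus positivity and stability of intersections, and noting the similarity principle makes the preimage discrete---matches the paper's argument exactly. The divergence, and the gap, is in how you exclude the case where the offending preimage point $z$ is a node.

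The paper never attempts a global cut-and-count; it localizes at the node. Since $z\in D_m$, one can pick a conformal disk $K\subset Y\setminus(\Gamma^+_m\cup\Gamma^-_m)$ around $z$ with $K\cap D_m=\{z\}$ and $K\cap\util_m^{-1}(\R\times L)=\{z\}$. Positivity of intersections gives a positive algebraic intersection count of $\util_m|_K$ with $\R\times L$, and Lemma~\ref{lemma_appendix_intersection} converts this into ${\rm int}(u_m(\partial K),\Sigma)>0$. The crucial point you miss is that $\partial K$ lies in $Y\setminus(\Gamma^+_m\cup\Gamma^-_m\cup D_m)$---an open subset of $S^{{\bf u},r}$ on which the $C^\infty_{\rm loc}$-convergence \emph{does} apply, even though the node itself and its pinching neck degenerate. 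So $\beta_l=\varphi_l(\partial K)$ is an embedded loop in $S^2\setminus\Gamma$ with $v_l(\beta_l)\to u_m(\partial K)$, whence ${\rm int}(v_l(\beta_l),\Sigma)>0$ for large $l$; the homotopy from $C_l$ to $C_+$ through curves avoiding $\R\times L$ (Proposition~\ref{prop_cobordisms_intersection_crucial} plus positivity of intersections) then gives ${\rm int}(u_+(\beta_l),u_+(S^2\setminus\Gamma))>0$, contradicting Proposition~\ref{prop_Seifert_no_intersections}. Your remark that ``near a pinching neck the domain degenerates and the naive stability argument no longer applies'' is therefore too pessimistic: the stability argument applies perfectly well on the loop $\partial K$, which is uniformly away from the neck.

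Your replacement scheme---cutting out the subsurface $\Sigma'_l$ built from the pieces of $\Sigma_l$ over $\R\times\gamma_k$-components plus incident necks, and computing the winding of $\partial\Sigma'_l$ around $\gamma_k$---has a genuine gap. A node $\{z,z'\}$ with image in $\R\times\gamma_k$ can join two components \emph{neither} of which is contained in $\R\times L$; such a node is not incident to any $\R\times\gamma_k$-component, so it produces no boundary circle of $\Sigma'_l$ and is simply invisible to your count, yet it is exactly a configuration you need to rule out. There is a second unaddressed issue: an $\R\times\gamma_k$-component on an intermediate level meets an adjacent non-$\R\times L$ component across a special circle (not a node), and the winding of $\partial\Sigma'_l$ along the corresponding boundary circle is not the asymptotic winding $\wind_\infty(\vtil_l,z_k,\tau_\Sigma)$ of the outer punctures; you have not argued that these contributions vanish or even that they can be controlled. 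The localized argument of the paper sidesteps both difficulties because it never needs to see the global nodal configuration at all; the arithmetic genus zero hypothesis, which you invoke to ``keep the nodal configuration tame,'' is in fact not used at this point in the proof.
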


\begin{proof}
We only prove (a) since (b) is proved in the same way. Represent the sequence $C_l$ in (a) as $C_l = [\vtil_l = (b_l,v_l),S^2,j_l,\Gamma,\emptyset]$. Fix $m$ and a connected component $Y\subset S_m$ arbitrarily. If the restriction of $\util_m$ to $Y\setminus \Gamma^+_m\cup\Gamma^-_m$ is constant then there is nothing to prove. Assume that it is not constant and that $\util_m(Y\setminus \Gamma^+_m\cup\Gamma^-_m) \not\subset \R\times L$. Our task is to show that $\util_m(Y\setminus \Gamma^+_m\cup\Gamma^-_m) \cap \R\times L = \emptyset$, or equivalently that
\[
X = \{ z\in Y\setminus \Gamma^+_m\cup\Gamma^-_m \mid \util_m(z) \in\R\times L \}
\]
is empty. The similarity principle implies that $X$ is discrete since $\R\times L$ is an embedded $\jbar$-complex surface. By (${\rm e}'$) in~\ref{ssec_SFT_comp}
\[
\varphi_l^*j_l \to j \quad \text{in} \quad C^\infty_{\rm loc}(Y\setminus \Gamma^+_m\cup\Gamma^-_m\cup D_m)
\]
for suitable diffeomorphisms $\varphi_l$ as explained in~\eqref{diffeos_building}. Combining with (f)-(g) from~\ref{ssec_SFT_comp} and elliptic regularity, there is a sequence $c_{m,l} \in \R$ such that
\begin{equation*}
\begin{array}{cc}
(c_{m,l} \cdot \vtil_l) \circ \varphi_l|_{Y\setminus\Gamma^+_{m}\cup\Gamma^-_m\cup D_m} \to \util_m|_{Y\setminus\Gamma^+_{m}\cup\Gamma^-_m\cup D_m} & \text{in $C^\infty_{\rm loc}(Y\setminus\Gamma^+_{m}\cup\Gamma^-_m\cup D_m)$.}
\end{array}
\end{equation*}
Here $c_{m,l}\cdot\vtil_l$ denotes the map $(b_l+c_{m,l},v_l)$. Denoting by $\tau_c$ the $(\R,+)$ action on $\R\times M$, we used that the map $(c_{m,l}\cdot\vtil_l)\circ\varphi_l = \tau_{c_{m,l}} \circ \vtil_l\circ\varphi_l$ is pseudo-holomorphic with respect to $(\varphi_l^*j_l,(\tau_{c_{m,l}})_*\jbar)$ on $Y\setminus\Gamma^+_{m}\cup\Gamma^-_m\cup D_m$. Note that $\R\times L$ is an embedded $(\tau_c)_*\jbar$-holomorphic curve for every $c\in\R$ since $\jbar \in \J_{\Omega,L}(\jtil_-,\jtil_+)$, see~\ref{sssec_sharing_orbits}. Up to choice of a subsequence, we can assume that $(\tau_{c_{m,l}})_*\jbar$ converges in $C^\infty$ as $l\to\infty$ to an almost complex structure which is either equal to $(\tau_c)^*\jbar$ for some $c\in\R$, or $\jtil_-$ or $\jtil_+$. Positivity and stability of intersections implies that if $X$ intersects $Y\setminus \Gamma^+_{m}\cup\Gamma^-_m\cup D_m$ then the image of $c_{m,l}\cdot\vtil_l$ intersects $\R\times L$ when $l$ is large enough, or equivalently the map $\vtil_l$ intersects $\R\times L$. This contradicts Proposition~\ref{prop_cobordisms_intersection_crucial}, and shows that $X\subset D_m$.

Let $z\in X \subset D_m$ and $K\subset Y\setminus \Gamma^+_{m}\cup\Gamma^-_m$ be a conformal disk with $z$ in its interior. If $K$ is small enough then $\{z\} = K \cap D_m$, and in particular we have $K \cap \util_m^{-1}(\R\times L) = \{z\}$. The $M$-component $u_m$ of $\util_m$ maps $K$ into a small tubular neighbourhood of $L$ given {\it a priori}. Positivity of intersections implies that $\util_m|_K$ has positive algebraic intersection count with $\R\times L$. An application of Lemma~\ref{lemma_appendix_intersection} tells us that ${\rm int}(u_m(\partial K),\Sigma)>0$. We see that if $l$ is large enough then $\beta_l = \varphi_l(\partial K)$ is an embedded loop in $S^2\setminus\Gamma$ such that $v_l(\beta_l)$ is uniformly close to $u_m(\partial K)$. We obtain ${\rm int}(v_l(\beta_l),\Sigma)>0$ when $l$ is large enough. We can use Proposition~\ref{prop_cobordisms_intersection_crucial} to get $0<{\rm int}(u_+(\beta_l),\Sigma) = {\rm int}(u_+(\beta_l),u_+(S^2\setminus\Gamma))$, where $u_+$ is the $M$-component of the curve $C_+$. This is in contradiction to Proposition~\ref{prop_Seifert_no_intersections}. Thus $X=\emptyset$ as desired.
\end{proof}

\begin{lemma}\label{lemma_compactness_first}
The following assertions are true.
\begin{itemize}
\item[(a)] Assume that (H$4_+$) holds. Let $C_l = [\vtil_l=(b_l,v_l),S^2,j_l,\Gamma,\emptyset]$ be a sequence in $\M_{\jbar,0,\delta^+}(\gamma_1,\dots,\gamma_n;\emptyset)$ in the same connected component as the curve $C_+$~\eqref{curve_C_+}, such that the sequence $\{\inf b_l(S^2\setminus\Gamma)\}_l$ is bounded. Then some subsequence of $\{C_l\}$ SFT-converges in $\M_{\jbar,0,\delta^+}(\gamma_1,\dots,\gamma_n;\emptyset)$.
\item[(b)] Assume that (H$4_-$) holds. Let $C_- \in \M_{\jtil_-,0,\delta^-}(\gamma_1,\dots,\gamma_n;\emptyset)$ be a curve that induces the same class as $\Sigma$ in $H_2(M,L)$. If $\mathcal{Y}$ denotes the connected component of $\M_{\jtil_-,0,\delta^-}(\gamma_1,\dots,\gamma_n;\emptyset)$ containing $C_-$ then $\mathcal{Y}/\R$ is compact.
\end{itemize}
\end{lemma}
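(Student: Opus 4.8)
The plan is to prove (a) and (b) by the same compactness-and-elimination scheme; I describe (a) in detail and then indicate the changes for (b). Since every $C_l$ has $\gamma_1,\dots,\gamma_n$ as its only asymptotic limits, it has energy exactly $A=\sum_{k=1}^n\int_{\gamma_k}\alpha_+$ by~\ref{sssec_energy_rmk}, so the sequence has uniformly bounded energy; (H$4_+$) moreover makes $\alpha_+,\alpha_-$ non-degenerate up to action $A$. Hence the SFT-compactness theorem applies, and after passing to a subsequence $C_l$ SFT-converges to a stable holomorphic building ${\bf u}=\{\{\util_m\},\{\Phi_m\}\}$. Connectedness and arithmetic genus are preserved in the limit, so $S^{{\bf u},r}$ is connected of genus zero for any (hence every) choice of~$r$; the building has positive punctures asymptotic to $\gamma_1,\dots,\gamma_n$ and no negative punctures, and the assumption that $\{\inf b_l(S^2\setminus\Gamma)\}_l$ is bounded forbids the appearance of a level below the cobordism level, so $k_-=0$ and level~$0$ is non-empty.

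The heart of the matter is to show that ${\bf u}$ has no nodal points and a single level. First I would record the standing facts about the $C_l$: each is somewhere injective (Lemma~\ref{lemma_somewhere_injective}), is embedded with $C_l*C_l$ equal to the constant value~\eqref{selfintersection_number_special_moduli_spaces} (Propositions~\ref{prop_constancy_int_number} and~\ref{prop_embeddedness} together with Theorem~\ref{thm_adjunction_ineq}), and satisfies ${\rm int}(C_l,\R\times L)=0$ and $\wind_\infty(\cdot,z_k,\tau_\Sigma)=0$ (Proposition~\ref{prop_cobordisms_intersection_crucial}). Then I would argue by elimination. By Lemma~\ref{lemma_intersection_with_trivial_cylinders_of_limit}(a), each connected component of each level of ${\bf u}$ either maps into $\R\times L$ — and is then an unbranched cover of a trivial cylinder over some $\gamma_k$ — or is disjoint from $\R\times L$; call the latter \emph{free}. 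A ghost (constant) component is excluded: by Lemma~\ref{lemma_constant_components} it would be met by two distinct non-constant components, and positivity of intersections for $C_l$ with $l$ large, combined with the homotopy invariance of $C_l*C_l$, would push this number strictly above its minimal value~\eqref{selfintersection_number_special_moduli_spaces}, a contradiction. A free component lying on the bottom level has only positive punctures (there are no building negative punctures and $k_-=0$); its compactified projection to $M$ is a surface in $M\setminus L$ bounded by the push-off of the union of its asymptotic orbits. If those orbits all lie among $\{\gamma_1,\dots,\gamma_n\}$ and form a proper subcollection, this contradicts (H5); if one of them lies in $M\setminus L$, then its period is $\le A$, so by (H$4_+$) it is non-degenerate with non-zero intersection number with $\Sigma$, which is incompatible with bounding a surface disjoint from $L$. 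Hence every free bottom-level component is asymptotic to all of $L$, and since there are only $n$ positive punctures while $S^{{\bf u},r}$ is connected of genus zero, this forces a single free component carrying all $n$ punctures, no trivial-cylinder components (a trivial cylinder would have a free negative puncture leading nowhere, or else its level would be unstable), no ghosts, and no additional levels. Thus $C_l$ SFT-converges to a smooth curve $C_\infty$; it induces the class of $\Sigma$ and attains equality in the adjunction inequality, hence is embedded, so the automatic transversality of Lemma~\ref{lemma_aut_transv} holds at $C_\infty$ and the resulting local foliation connects $C_l$ for large $l$ to $C_\infty$ inside $\M_{\jbar,0,\delta^+}(\gamma_1,\dots,\gamma_n;\emptyset)$, placing $C_\infty$ in the connected component of $C_+$. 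This proves (a).

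For (b) I would run the same argument modulo the $\R$-action: given a sequence in $\mathcal Y$, translate each curve and apply SFT compactness in the symplectization of $\alpha_-$, using (H$4_-$) for non-degeneracy up to action $A$ and Lemma~\ref{lemma_intersection_with_trivial_cylinders_of_limit}(b) in place of part~(a); the identical elimination produces a single smooth limit curve $C_\infty\in\M_{\jtil_-,0,\delta^-}(\gamma_1,\dots,\gamma_n;\emptyset)$, and automatic transversality places $[C_\infty]$ in $\mathcal Y/\R$. Hence every sequence in $\mathcal Y/\R$ has a subsequence converging in $\mathcal Y/\R$, so $\mathcal Y/\R$ is compact.

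The main obstacle I expect is Step~2 — excluding nodal degenerations and trivial-cylinder components from the SFT-limit. The genus-zero hypothesis is indispensable there, entering through Lemma~\ref{lemma_constant_components}, as is the sharp intersection-theoretic information of Lemmas~\ref{lemma_no_intersection_with_limits} and~\ref{lemma_intersection_with_trivial_cylinders_of_limit} and Proposition~\ref{prop_cobordisms_intersection_crucial}; keeping the bookkeeping of punctures, nodes and levels consistent and invoking (H4) and (H5) at exactly the right places to rule out the unwanted asymptotic configurations is where the real work lies.
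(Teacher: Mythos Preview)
Your overall scaffold is right---pass to an SFT-limit building, then eliminate unwanted pieces using genus zero, (H5), (H$4_+$) and the intersection facts already established---but the elimination argument as written has real gaps that are not just bookkeeping.

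First, the step ``if one asymptotic limit lies in $M\setminus L$ then its nonzero intersection with $\Sigma$ is incompatible with bounding a surface disjoint from $L$'' does not work. What the projected component gives you is a relation $\sum_j[\text{boundary loop}_j]=0$ in $H_1(M\setminus L)$; a single term having nonzero pairing with $\Sigma$ does not contradict this, since the other terms can cancel it. The paper avoids this by a \emph{loop-transfer} argument: pick a small loop $\beta$ in the domain of the limit near the offending puncture, use the SFT-convergence diffeomorphisms to get loops $\beta_l\subset S^2\setminus\Gamma$ with $v_l(\beta_l)$ $C^0$-close to the asymptotic orbit, then invoke Proposition~\ref{prop_cobordisms_intersection_crucial} (the $C_l$ are homotopic to $C_+$ through curves missing $\R\times L$) to transport the homology class to $u_+(\beta_l)$, and finally contradict Proposition~\ref{prop_Seifert_no_intersections} by getting ${\rm int}(u_+(\beta_l),u_+(S^2\setminus\Gamma))\neq 0$. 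You list these propositions but never deploy this mechanism; it is the engine of the whole proof.

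Second, your use of (H5) presupposes that the boundary loops near the $\gamma_i$-punctures are the push-offs $\gamma_i'$, which is equivalent to $\wind_\infty(\cdot,z,\tau_\Sigma)=0$ at those punctures of the \emph{limit} curve. You only know this for the $C_l$; it must be re-established for the components of the building, and the paper does this via the same loop-transfer trick (a nonzero winding would again produce a loop $\beta_l$ with ${\rm int}(v_l(\beta_l),\Sigma)\neq 0$). Without this, the (H5) step is unjustified.

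Third, your bottom-up organization conflates the building's $n$ positive punctures (which live on the \emph{top} level) with the bottom level's positive punctures; these coincide only after you have shown $k_+=0$. The paper instead works top-down: since the top level's positive asymptotics are exactly the simply covered, mutually distinct $\gamma_1,\dots,\gamma_n$, one can analyse a non-trivial component $Y$ there, use the loop-transfer argument to kill negative punctures on $Y$ and force $\wind_\infty=0$ at its positive punctures, and then (H5) gives $Y\cap\Gamma^+_{k_+}=\Gamma^+_{k_+}$, hence $Y=S_{k_+}$ and $k_+=0$. Only then does the single remaining level get analysed, and Lemma~\ref{lemma_constant_components} is used to rule out ghosts by producing two distinct non-constant components when at most one can exist---not via a self-intersection count as you propose (that argument is not straightforward to make rigorous for buildings).
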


\begin{remark}
The metrizable topology with respect to which we claim that $\mathcal{Y}/\R$ is compact is the one induced by $C^\infty_{\rm loc}$-convergence. More precisely, for every sequence $$ \left\{ C_l = [\vtil_l=(b_l,v_l),S_l,j_l,\{z^+_{1,l},\dots,z^+_{n,l}\},\emptyset] \right\}_{l\in\N} \subset \mathcal{Y} $$ one finds $C = [\vtil=(b,v),S,j,\{\hat z_1,\dots,\hat z_n\},\emptyset] \in \mathcal{Y}$, a subsequence $C_{l_m}$, and sequences $d_m\in\R$, $\varphi_m:S\to S_{l_m}$ such that: $\varphi_m$ is a diffeomorphism, $\varphi_m(\hat z_i) = z^+_{i,{l_m}}$ for all $i,m$ and
\[
\varphi_m^*j_{l_m} \to j, \qquad b_{l_m} \circ \varphi_m + d_m \to b, \qquad v_{l_m} \circ \varphi_m \to v
\]
in $C^\infty_{\rm loc}(S\setminus\{\hat z_1,\dots,\hat z_n\})$.
\end{remark}

\begin{proof}[Proof of Lemma~\ref{lemma_compactness_first}]
We first spell out the proof of (a) in full detail, and then sketch the proof of (b) since  arguments are essentially the same.

Fix representatives $(\vtil_l=(b_l,v_l),S^2,j_l,\Gamma,\emptyset)$ of $C_l$, where $\Gamma$ is the ordered set fixed in the beginning of this subsection. By SFT-compactness we may assume, up to a subsequence, that $\vtil_l$ SFT-converges to a building ${\bf u}=(\{\util_m=(a_m,u_m)\},\{\Phi_m\})$ of height $k_-|1|k_+$. Borrowing notation from the above discussion, the domain of the maps $F_{\vtil_l}$ are $n$-holed two-spheres with interior equal to $S^2\setminus \Gamma$, and we get diffeomorphisms $\varphi_l$ together with finite ordered sets $K_l,K$ such that (a)-(g) hold. Since $\sup_l |\inf_{S^2\setminus\Gamma}b_l|$ is bounded we know that $k_-=0$.

Our first task is to prove that $k_+=0$. Arguing indirectly, suppose that $k_+ \geq 1$. Consider an arbitrary level $m$ different from the top level: $m < k_+$. We claim that the asymptotic limit $\gamma$ at every positive puncture $z_* \in \Gamma^+_m$ of $\util_m$ is contained in~$L$. The argument is indirect. Suppose that such an asymptotic limit $\gamma=(x,T)$ is a closed Reeb orbit in $M\setminus L$. We know that $\gamma$ is a closed $\alpha_+$-Reeb orbit since $k_-=0 \Rightarrow m\geq0$. Hypothesis (H$4_+$) implies that ${\rm int}(\gamma,\Sigma) \neq 0$. Using (e)-(f) we obtain $l$ large and an embedded loop $\beta_l\subset S^2\setminus\Gamma$ such that $v_l(\beta_l)$ is $C^0$-close to the loop $x(T\cdot):\R/\Z\to M\setminus L$, in particular, ${\rm int}(v_l(\beta_l),\Sigma)\neq 0$. By assumption, all $C_l$ are in the same connected component of the curve $C_+$~\eqref{curve_C_+}. It follows from Proposition~\ref{prop_cobordisms_intersection_crucial} that $[{u_+}(\beta_l)] = [{v_l}(\beta_l)]$ in $H_1(M\setminus L)$ since the curves $C_+$ and $C_l$ can be homotoped one to the other through holomorphic curves that do not touch $\R\times L$. Hence ${\rm int}({u_+}(\beta_l),\Sigma)\neq 0$. But, by assumption, $\Sigma$ and $u_+(S^2\setminus \Gamma)$ induce the same element in $H_2(M,L)$. It follows that $u_+$ is not an embedding into $M\setminus L$ because ${\rm int}(u_+(\beta_l),u_+(S^2\setminus\Gamma)) \neq 0$. This contradiction to Proposition~\ref{prop_Seifert_no_intersections} shows that $\gamma \subset L$. Consider the top level which, by the contradiction assumption $k_+\geq1$, is a finite-energy $\jtil_+$-holomorphic nodal curve
\[
\util_{k_+} =(a_{k_+},u_{k_+}) : S_{k_+} \setminus \Gamma^+_{k_+}\cup\Gamma^-_{k_+} \to \R\times M.
\]
Let $Y$ be a connected component of $S_{k_+}$ such that $\util_{k_+}|_Y$ is not a constant map and is not a trivial cylinder over some periodic orbit. Such a component $Y$ exists, as one easily shows by combining Lemma~\ref{lemma_constant_components} with the fact that asymptotic limits at positive punctures of $\util_{k_+}$ are prime and mutually geometrically distinct closed Reeb orbits. Let $u_{k_+}$ denote the $M$-component of $\util_{k_+}$. At each puncture in $\Gamma^+_{k_+}$ the curve $\util_{k_+}$ is asymptotic to a prime closed $\alpha_+$-Reeb orbit (one of the $\gamma_k$) and different punctures yield geometrically different asymptotic limits. It follows that
\[
\int_{Y \setminus (\Gamma^+_{k_+} \cup \Gamma^-_{k_+})} u_{k_+}^*d\alpha_+ > 0
\]
in other words, $\util_{22k_+}|_Y$ is not a (possibly branched) cover of some trivial cylinder. In particular, $\util_{k_+}$ has non-trivial asymptotic formula at all punctures in $Y$. Assume that there exists a negative puncture $z_* \in Y\cap\Gamma^-_{k_+}$. Then, by what we proved before, there exists $i\in\{1,\dots,n\}$ such that the asymptotic limit $\gamma_*$ of $\util_{k_+}$ at $z_*$ is of the form $\gamma_* = \gamma_i^{m_i}$, $m_i\in\N$. (H3) implies that $\mu_{\CZ}^{\tau_\Sigma}(\gamma_i^{m_i}) \geq \mu_{\CZ}^{\tau_\Sigma}(\gamma_i) \geq 1$. Now we use Theorem~\ref{thm_precise_asymptotics} to conclude that $\wind_\infty(\util_{k_+},z_*,\tau_{\Sigma}) \geq 1$ and that if $\beta \subset Y\setminus \Gamma^+_{k_+} \cup \Gamma^-_{k_+}$ is a small loop winding once around $z_*$ then ${\rm int}(u_{k_+}(\beta),\Sigma)\neq 0$. The loop $\beta$ corresponds to an embedded loop in the interior of $S^{{\bf u},r}$. By (f) we obtain for $l$ large, using diffeomorphisms $\varphi_l$ as in~\eqref{diffeos_building}, an embedded loop $\beta_l = \varphi_l(\beta)$ in $S^2\setminus\Gamma$ such that ${\rm int}(v_l(\beta_l),\Sigma) \neq 0$. Arguing as before, combining Proposition~\ref{prop_cobordisms_intersection_crucial} with Proposition~\ref{prop_Seifert_no_intersections}, we arrive at a contradiction. This shows that $\util_{k_+}$ has no negative punctures in $Y$. We have proved that $\util_{k_+}|_Y$ has positive $d\alpha_+$-area and no negative punctures. Obviously, it has at least one positive puncture.

We claim that
\begin{equation}\label{asymptotic_behavior_wtil}
\wind_\infty(\util_{k_+},\zeta,\tau_\Sigma)=0 \qquad \forall \zeta \in Y \cap \Gamma^+_{k_+}.
\end{equation}
If not then, by Theorem~\ref{thm_precise_asymptotics}, $u_{k_+}$ will map a small loop $\beta_\zeta$ winding once around $\zeta$ to a loop satisfying ${\rm int}(u_{k_+}(\beta_\zeta),\Sigma)\neq 0$. By the construction of $S^{{\bf u},r}$, $\beta_\zeta$ may be seen as a loop in $S^{{\bf u},r}$, and in view of (e)-(f) we find embedded loops $\beta_l\subset S^2\setminus\Gamma$ such that ${\rm int}(v_l(\beta_l),\Sigma)\neq 0$. Arguing as in the beginning of this proof, Proposition~\ref{prop_cobordisms_intersection_crucial} implies that $[{u_+}(\beta_l)] = [{v_l}(\beta_l)]$ in $H_1(M\setminus L)$, and by assumption $\Sigma = u_+(S^2\setminus \Gamma)$ in $H_2(M,L)$. We get ${\rm int}(u_+(\beta_l),u_+(S^2\setminus \Gamma)) \neq 0$, in contradiction to Proposition~\ref{prop_Seifert_no_intersections} and the important identities~\eqref{asymptotic_behavior_wtil} are proved. We will now use~\eqref{asymptotic_behavior_wtil} to show that $Y = S_{k_+}$. In fact, suppose not. Lemma~\ref{lemma_intersection_with_trivial_cylinders_of_limit} implies that $\util_{k_+}(Y\setminus \Gamma^+_{k_+}) \cap \R\times L = \emptyset$, or equivalently $$ u_{k_+}(Y\setminus \Gamma^+_{k_+}) \cap L = \emptyset. $$ But~\eqref{asymptotic_behavior_wtil} and Theorem~\ref{thm_precise_asymptotics} tell us that $u_{k_+}$ approaches the asymptotic limits at punctures in $Y\cap\Gamma^+_{k_+}$ along $\Sigma$. Hypothesis (H5) implies that $Y \cap \Gamma^+_{k_+} = \Gamma^+_{k_+}$. Every connected component $Y'\neq Y$ of $S_{k_+}$ contains no positive puncture and, consequently, $\util_{k_+}|_{Y'}$ is constant due to exactness of symplectizations. If such $Y'$ exists then Lemma~\ref{lemma_constant_components} provides two distinct components where $\util_{k_+}$ is non-constant, this is absurd. Hence $Y=S_{k_+}$, as desired. It follows that there are no negative punctures at the top level, contradicting $k_+>0$.

We are done proving that $k_+=0$. Combining with $k_-=0$, we conclude that ${\bf u}$ has only one level $\util_0$, which must be a connected nodal curve. Let us argue that $S_0$ is connected. In fact, let $Y$ be a connected component of $S_0$ such that $\util_0|_Y$ is not constant. Arguing as above one first shows that~\eqref{asymptotic_behavior_wtil} holds, and then uses (H5) to conclude that $Y \cap \Gamma_0^+  = \Gamma_0^+$. Here we heavily relied on the fact that the image of $\util_0|_Y$ is not contained in $\R\times L$: by the similarity principle, if the image of $\util_0|_Y$ would be contained in $\R\times L$ then there would be negative punctures, in contradiction to $k_-=0$. Then $\util_0$ is constant on all connected components of $S_0$ different from $Y$. If there is a connected component $Y'\subset S_0$ different from $Y$ then Lemma~\ref{lemma_constant_components} provides two distinct components where $\util_0$ is non-constant, which is absurd. Hence $S_0=Y$ is connected, and there are no nodes since the total genus is zero.

Summarizing, the limiting holomorphic building has one level $\util_0$, no nodal pairs, no negative punctures, and $\wind_\infty(\util_0,z,\tau_{\Sigma})=0$ for all $z\in\Gamma_0^+$. In other words, it is a curve in $\M_{\jbar,0,\delta^+}(\gamma_1,\dots,\gamma_n;\emptyset)$. The proof of (a) is complete.

The proof of (b) is essentially the same. In fact, consider a sequence $\R + C_l$ in $\mathcal{Y}/\R$. We may represent it by curves $C_l=[\vtil_l=(b_l,v_l),S^2,j_l,\Gamma,\emptyset]$ satisfying $\min_{S^2\setminus\Gamma}b_l=0$ since we are allowed to translate in the $\R$-direction. As in the proof of (a), some subsequence converges to a limiting building ${\bf u}$ in the sense of SFT. This time assumption (H$4_-$) plays the exact same role that (H$4_+$) played in the proof of (a), and all arguments go through to conclude that ${\bf u}$ has one level which is the desired curve in $\M_{\jtil_-,0,\delta^-}(\gamma_1,\dots,\gamma_n;\emptyset)$ representing the desired limit of the subsequence in $\mathcal{Y}/\R$.
\end{proof}

\begin{lemma}\label{lemma_implied_existence_step1}
If (H$4_+$) holds then there exists a sequence
\begin{equation}\label{descending_sequence}
C_l = [\vtil_l=(b_l,v_l),S^2,j_l,\Gamma,\emptyset] \in \M_{\jbar,0,\delta^+}(\gamma_1,\dots,\gamma_n;\emptyset)
\end{equation}
in the same component as $C_+$ such that $\min b_l \to -\infty$ as $l\to\infty$.
\end{lemma}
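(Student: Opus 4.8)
The plan is to work inside the connected component $\mathcal{M}$ of the curve $C_+$ from~\eqref{curve_C_+} in $\M_{\jbar,0,\delta^+}(\gamma_1,\dots,\gamma_n;\emptyset)$ (after translating $C_+$ up, cf.\ Remark~\ref{rmk_special_case_non-cylindrical=cylindrical}, so that it represents an element of this moduli space), and to prove that $\inf\{\min b_C:C\in\mathcal{M}\}=-\infty$; a sequence realizing this infimum is then as required. By Lemma~\ref{lemma_aut_transv}, $\mathcal{M}$ is a smooth two-dimensional manifold whose differentiable structure is compatible with $C^\infty_{\rm loc}$-convergence, and a neighbourhood in $\R\times M$ of the image of every curve in $\mathcal{M}$ is smoothly foliated by curves of $\M_{\jbar,0,\delta^+}(\gamma_1,\dots,\gamma_n;\emptyset)$; since such leaves are $C^\infty_{\rm loc}$-close to a curve of $\mathcal{M}$ and $\mathcal{M}$ is open in the moduli space, they too belong to $\mathcal{M}$. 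Note also that every curve of $\mathcal{M}$ has only positive punctures, where its $\R$-component tends to $+\infty$, so that $\min b_C$ is attained at an interior point of the domain.

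Assume, for a contradiction, that $c^*:=\inf\{\min b_C:C\in\mathcal{M}\}>-\infty$, and pick $C_l=[\vtil_l=(b_l,v_l),S^2,j_l,\Gamma,\emptyset]\in\mathcal{M}$ with $\min b_l\to c^*$. Since $\{\inf b_l\}$ is bounded, Lemma~\ref{lemma_compactness_first}(a) produces a subsequence SFT-converging to a curve $C_\infty=[\vtil_\infty=(b_\infty,v_\infty),S^2,j_\infty,\Gamma,\emptyset]\in\M_{\jbar,0,\delta^+}(\gamma_1,\dots,\gamma_n;\emptyset)$: the limiting building has a single smooth level without nodes, so the convergence is $C^\infty_{\rm loc}$ (after reparametrization and a bounded $\R$-shift, which we normalize away) with uniform exponential control at the punctures, $C_\infty$ induces the same class as $\Sigma$ in $H_2(M,L)$, and $\min b_\infty\le c^*$.

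I would then argue that $C_\infty$ is embedded. The terms $\mu_{\CZ}(\cdot)$, $\#\Gamma_{\rm odd}$, $\chi(S^2)$ and $\bar\sigma(\cdot)$ in the identity~\eqref{selfintersection_number_special_moduli_spaces} depend only on the (fixed) asymptotic data and the domain, while $C_\infty*C_\infty=\lim_l C_l*C_l=C_+*C_+$ by Proposition~\ref{prop_constancy_int_number} applied along smooth paths inside $\mathcal{M}$, together with invariance of the generalized intersection number under $C^\infty_{\rm loc}$-limits with fixed asymptotic limits. As~\eqref{selfintersection_number_special_moduli_spaces} holds for $C_+$ by Lemma~\ref{lemma_no_intersection_with_limits}, equality holds in the adjunction inequality of Theorem~\ref{thm_adjunction_ineq} for $C_\infty$, which is somewhere injective by Lemma~\ref{lemma_somewhere_injective}; hence $C_\infty$ is an embedding. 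Now the proof of Lemma~\ref{lemma_aut_transv} applies to $C_\infty$ and yields a two-dimensional chart of $\M_{\jbar,0,\delta^+}(\gamma_1,\dots,\gamma_n;\emptyset)$ around $C_\infty$, as well as an open neighbourhood $V\subset\R\times M$ of the image of $C_\infty$ foliated by the curves of that chart. For $l$ large $C_l$ lies in this chart, hence is connected to $C_\infty$ within the moduli space, so $C_\infty\in\mathcal{M}$; consequently $\min b_\infty=c^*$ and all leaves of the foliation of $V$ lie in $\mathcal{M}$.

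To conclude, take an interior point $z_0$ of the domain with $b_\infty(z_0)=c^*$ and set $p=\vtil_\infty(z_0)\in\{c^*\}\times M$. The open set $V$ contains a point $q$ whose $\R$-coordinate is strictly smaller than $c^*$; the leaf of the foliation of $V$ through $q$ is a curve $C\in\mathcal{M}$ with $\min b_C<c^*$, contradicting the definition of $c^*$. Hence $c^*=-\infty$, and a sequence $C_l\in\mathcal{M}$ with $\min b_l\to-\infty$ is as required. I expect the main obstacle to be the step showing that the SFT-limit $C_\infty$ re-enters the component $\mathcal{M}$: this forces one to upgrade $C_\infty$ to an embedded curve via the adjunction equality and then invoke automatic transversality, and it hinges on the homology class and the exponential weights $\delta^+$ surviving the limiting process — the latter being precisely why each $\delta^+_k$ is taken in the spectral gap adjacent to winding number $0$ relative to $\tau_\Sigma$.
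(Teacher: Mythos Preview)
Your argument is correct and follows essentially the same route as the paper: assume the infimum of $\min b_C$ over the component is finite, extract a limit via Lemma~\ref{lemma_compactness_first}(a), and then use the local foliation from Lemma~\ref{lemma_aut_transv} to push below, reaching a contradiction. The paper's proof is simply terser at the step where you carefully verify that $C_\infty$ is embedded (via the adjunction equality) and lies in the same component $\mathcal{M}$; the paper leaves this implicit and applies Lemma~\ref{lemma_aut_transv} directly to $C_\infty$. One small remark: in the non-cylindrical $\jbar$ setting there is no genuine $\R$-action, so the ``bounded $\R$-shift'' you normalize away is really just the constant $c_{0,l}$ appearing in the SFT convergence of a single level, which is indeed bounded here.
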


\begin{proof}
Let $\mathcal{Y}$ be the connected component of $\M_{\jbar,0,\delta^+}(\gamma_1,\dots,\gamma_n;\emptyset)$ containing the curve $C_+$. Consider the set $A$ of numbers $a\in\R$ such that all curves in $\mathcal{Y}$ are contained in $[a,+\infty)\times M$. We wish to show that $A=\emptyset$. Assume, by contradiction, that $A\neq\emptyset$ and consider $\underline a = \sup A$. Then $\underline a<+\infty$ since $\mathcal Y\neq\emptyset$, every curve in $\mathcal Y$ is contained in $[\underline a,+\infty)\times M$ and there exists a sequence $$ C_l = [\vtil_l=(b_l,v_l),S^2,j_l,\Gamma,\emptyset] $$ such that $\inf b_l \to \underline a$ as $l\to\infty$. By (a) in Lemma~\ref{lemma_compactness_first} we may assume that $C_l$ SFT-converges to some $C_\infty \in \M_{\jbar,0,\delta^+}(\gamma_1,\dots,\gamma_n;\emptyset)$ represented by a finite-energy map $\vtil_\infty = (b_\infty,\vtil_\infty)$ satisfying $\inf b_\infty = \underline a$. Now Lemma~\ref{lemma_aut_transv} allows us to find curves in~$\mathcal{Y}$ whose $\R$-components reach below $\underline a$, which is absurd.
\end{proof}

\begin{lemma}
\label{lemma_implied_existence_step2}
Assume that (H4) holds. If there exists a sequence $C_l$ as in~\eqref{descending_sequence} in the same component of $C_+$ satisfying $\min b_l\to-\infty$ as $l\to\infty$, then there exists a curve $C_- \in \M_{\jtil_-,0,\delta^-}(\gamma_1,\dots,\gamma_n;\emptyset)$ inducing the same element in $H_2(M,L)$ as~$\Sigma$. Moreover, if $\mathcal{Y}$ denotes the connected component containing $C_-$ then $\mathcal{Y}/\R$ is compact.
\end{lemma}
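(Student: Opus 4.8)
The plan is to apply the SFT-compactness theorem to the sequence $C_l$ and then, following closely the scheme of the proof of Lemma~\ref{lemma_compactness_first}, to use intersection theory together with the genus-zero hypothesis to identify the limiting building completely.

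First I would observe that, since all $C_l$ lie in the connected component of $C_+$, subsection~\ref{sssec_energy_rmk} gives $E(\vtil_l)=A$ for every $l$; by~(H4) both $\alpha_+$ and $\alpha_-$ are non-degenerate up to action $A$, so SFT-compactness applies and, after passing to a subsequence, $C_l$ SFT-converges to a stable holomorphic building $${\bf u}=\left(\{\util_m=(a_m,u_m),S_m,j_m,\Gamma_m^+,\Gamma_m^-,D_m\},\{\Phi_m\}\right)$$ of some height $k_-|1|k_+$, of total genus zero, whose positive punctures are asymptotic to $\gamma_1,\dots,\gamma_n$ viewed as $\alpha_+$-orbits, and which has no negative punctures. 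Since $\min b_l\to-\infty$ while the cobordism level $\util_0$ receives only bounded translations along SFT-convergence, the map $F_{\bf u}$ must attain $\R$-component $-\infty$, so ${\bf u}$ has at least one negative level: $k_-\geq1$.

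The main work is to pin down ${\bf u}$, and here the reasoning runs parallel to the proof of Lemma~\ref{lemma_compactness_first}. Combining (H$4_+$), (H$4_-$), Proposition~\ref{prop_cobordisms_intersection_crucial} and Proposition~\ref{prop_Seifert_no_intersections} one shows that every asymptotic limit of every component of ${\bf u}$, at a puncture which is neither a positive puncture of the top level nor a negative puncture of the bottom level, is a cover of one of the $\gamma_i$; then, using Theorem~\ref{thm_precise_asymptotics} together with (H3) (whence $\mu_{\CZ}^{\tau_\Sigma}(\gamma_i^m)\geq1$ and therefore $\wind_\infty\geq1$ at a negative puncture asymptotic to $\gamma_i^m$) and the same intersection-theoretic contradiction, one shows that no component on which $\util_m$ is non-constant and whose image is not contained in $\R\times L$ can carry a negative puncture, and that $\wind_\infty(\cdot,\cdot,\tau_\Sigma)=0$ at all of its punctures. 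Lemma~\ref{lemma_intersection_with_trivial_cylinders_of_limit} then shows every such component is disjoint from $\R\times L$, and (H5) forces any such component to carry all $n$ asymptotic limits $\gamma_1,\dots,\gamma_n$; feeding this into Lemma~\ref{lemma_constant_components} and using total genus zero rules out constant components and nodes. The upshot I expect is: $\util_0=\bigsqcup_{i=1}^n\R\times\gamma_i$ is a disjoint union of trivial cylinders, the extra stability requirement for symplectization levels forces $k_+=0$, and $k_-=1$ with $\util_{-1}$ a single connected genus-zero curve $C_-$ with $n$ positive punctures asymptotic to $\gamma_1,\dots,\gamma_n$ viewed as $\alpha_-$-orbits, no negative punctures, no nodes, and $\wind_\infty(C_-,z_i,\tau_\Sigma)=0$ for all $i$; since $\delta_i^-$ lies in the spectral gap between winding $0$ and winding $1$ eigenvalues relative to $\tau_\Sigma$, this places the asymptotic eigenvalue at $z_i$ below $\delta_i^-$, so $C_-\in\M_{\jtil_-,0,\delta^-}(\gamma_1,\dots,\gamma_n;\emptyset)$. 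Because SFT-convergence preserves the induced homology class and the trivial cylinders project into $L$, the curve $C_-$ induces the same class as $\Sigma$. The ``moreover'' then follows at once: $C_-$ satisfies the hypotheses of Lemma~\ref{lemma_compactness_first}(b) (it induces the class of $\Sigma$ and (H$4_-$) holds), whence $\mathcal{Y}/\R$ is compact.

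I expect the hard part to be precisely this third step: the careful bookkeeping through the building that excludes spurious non-trivial components, negative punctures on them, nodes, and non-vanishing asymptotic windings, combining the intersection-theoretic outputs of subsection~\ref{ssec_Seifert_and_curves} with Lemma~\ref{lemma_constant_components} and hypothesis (H5). In particular one must rule out the a priori possibility that the limiting ``good'' curve already occurs in the cobordism level $\util_0$, which would contradict the bound $k_-\geq1$ established above.
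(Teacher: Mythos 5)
Your proposal follows essentially the same route as the paper's own proof: after SFT-compactness and the observation that $\min b_l\to-\infty$ forces $k_-\geq1$, the paper organizes exactly the intersection-theoretic bookkeeping you describe into four Claims (asymptotic limits lie in $L$; non-trivial components have no negative punctures and miss $\R\times L$; $k_+=0$ with $\util_0$ equal to the trivial cylinders $\R\times L$; $k_-=1$ with $\util_{-1}$ the desired curve), using the same inputs — Propositions~\ref{prop_Seifert_no_intersections} and~\ref{prop_cobordisms_intersection_crucial}, Theorem~\ref{thm_precise_asymptotics}, (H3), (H5), Lemmas~\ref{lemma_intersection_with_trivial_cylinders_of_limit} and~\ref{lemma_constant_components} — and then invokes Lemma~\ref{lemma_compactness_first}(b) for compactness of $\mathcal Y/\R$. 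Your sketch correctly anticipates every structural step, including the elimination of constant components and nodes via genus zero, the stability argument pinning $k_+=0$, and the placement of the asymptotic eigenvalue below $\delta_i^-$ from $\wind_\infty=0$.
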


\begin{proof}
This proof has many steps in common with the proof of Lemma~\ref{lemma_compactness_first}. Fix representatives
$$
(\vtil_l=(b_l,v_l),S^2,j_l,\Gamma,\emptyset)
$$
of a sequence of curves $C_l$ as in the statement, where $\Gamma \subset S^2$ is independent of~$l$. By SFT-compactness we may assume that, up to a subsequence, $\vtil_l$ SFT-converges to a building ${\bf u}=(\{\util_m=(a_m,u_m)\},\{\Phi_m\})$ of height $k_-|1|k_+$. We borrow all the notation used in the description of holomorphic buildings explained immediately before Lemma~\ref{lemma_compactness_first}. The domain of the maps $F_{\vtil_l}$ are $n$-holed two-spheres with interior equal to $S^2\setminus\Gamma$. For each~$l$ large enough there is a diffeomorphism~$\varphi_l$ between the domain $S^{{\bf u},r}$ of $F_{\bf u}$ and the domain of $F_{\vtil_l}$, and finite ordered sets $K_l,K$ such that (a)-(g) hold. The first important remark is that $k_-\geq 1$ follows from the assumption $\min_lb_l \to -\infty$.

\medskip

\noindent {\it Claim I. All asymptotic limits of all levels $\util_m$ are contained in $L$.}

\medskip

\noindent {\it Proof of Claim I.} We argue indirectly and suppose that the claim does not hold. Then some asymptotic limit of some level is a closed Reeb orbit $\gamma=(x,T)$ contained in $M\setminus L$, of $\alpha_+$ or of $\alpha_-$ depending on the level. Using the diffeomorphisms $\varphi_l$ and (f) we find for $l$ large enough an embedded circle $\beta_l \subset S^2\setminus\Gamma$ such that $v_l(\beta_l)$ is $C^0$-close to the loop $t\in\R/\Z \mapsto x(Tt)$. Fix~$l$ large. Hypothesis (H4) implies that ${\rm int}(v_l(\beta_l),\Sigma)\neq0$. Proposition~\ref{prop_cobordisms_intersection_crucial} and positivity of intersections imply that $C_l$ can be homotoped to $C_+$ through curves that do not touch $\R\times L$. Hence ${\rm int}(u_+(\beta_l),\Sigma)={\rm int}(u_+(\beta_l),u_+(S^2\setminus\Gamma))\neq0$. It follows that $u_+$ is not an embedding into $M\setminus L$, contradicting Proposition~\ref{prop_Seifert_no_intersections}. //

\medskip

\noindent {\it Claim II. If $-k_-\leq m\leq k_+$ and $Y\subset S_m$ is a connected component such that $\wtil = \util_m|_Y$ is non-constant and its image is not contained in $\R\times L$, then $\wtil$ does not intersect $\R\times L$ and has no negative punctures.}

\medskip

\noindent {\it Proof of Claim II.} To see this first note that, under these assumptions, $\wtil$ has a non-trivial asymptotic formula at every puncture. From claim I we already know that if $\wtil$ has a negative puncture $z_*$ then its asymptotic limit at $z_*$ is $\gamma_i^{m_i}$, for some $i\in\{1,\dots,n\}$ and some $m_i\geq 1$. Hypothesis (H3) implies that $\mu_{\CZ}^{\tau_\Sigma}(\gamma_i^{m_i}) \geq \mu_{\CZ}^{\tau_\Sigma}(\gamma_i) \geq 1$. By Theorem~\ref{thm_precise_asymptotics} we get that $\wind_\infty(\wtil,z_*,\tau_{\Sigma}) \geq 1$ and that a small embedded loop $\beta_*$ winding once around $z_*$ is mapped by the $M$-component $w$ of $\wtil$ to a loop satisfying ${\rm int}(w(\beta_*),\Sigma) \neq 0$. Using the diffeomorphisms $\varphi_l$ and (f) we find for $l$ large an embedded loop $\beta_l$ in $S^2\setminus \Gamma$ such that ${\rm int}(v_l(\beta_l),\Sigma) \neq 0$. Now $C_l$ is homotopic to $C_+$~\eqref{curve_C_+} through curves that do not touch $\R\times L$. This follows from Proposition~\ref{prop_cobordisms_intersection_crucial} and positivity of intersections. For~$l$ large enough we find that ${\rm int}(u_+(\beta_l),u_+(S^2\setminus \Gamma)) = {\rm int}(v_l(\beta_l),\Sigma) \neq 0$. This is in contradiction to Proposition~\ref{prop_Seifert_no_intersections}. We have proved that $\wtil$ has no negative punctures, that is, $\Gamma^-_{m} \cap Y = \emptyset$. Lemma~\ref{lemma_intersection_with_trivial_cylinders_of_limit} implies that the image of $\wtil$ does not intersect $\R\times L$. // 

\medskip

\noindent {\it Claim III. $k_+=0$ and the top level $\util_{k_+}=\util_0$ is precisely $\R\times L$.}

\medskip

\noindent {\it Proof of Claim III.} Suppose first, by contradiction, that there exists a connected component $Y$ of $S_{k_+}\setminus\Gamma^+_{k_+}\cup\Gamma^-_{k_+}$ such that $\util_{k_+}|_Y$ is not constant and $\util_{k_+}(Y)$ is not contained in $\R\times L$. We have that $Y \cap \Gamma^+_{k_+} \neq \emptyset$ by the exact nature of symplectizations, and by exactness of the taming symplectic form $\Omega$. The asymptotic limit of $\util_{k_+}$ at $z_* \in Y \cap \Gamma^+_{k_+}$ is a prime closed Reeb orbit given by one of the components of~$L$ and, by our contradiction assumption, $\util_{k_+}$ has a non-trivial asymptotic formula at~$z_*$. If $\wind_\infty(\util_{k_+},z_*,\tau_\Sigma)\neq0$ then, in view of Theorem~\ref{thm_precise_asymptotics}, a small embedded loop $\beta_*\subset Y\setminus\Gamma^+_{k_+}$ winding once around $z_*$ is mapped by the $M$-component $u_{k_+}$ of $\util_{k_+}$ to a loop satisfying ${\rm int}(u_{k_+}(\beta_*),\Sigma) \neq 0$. Arguing as in claim I, we find for $l$ large an embedded loop $\beta_l \subset S^2\setminus\Gamma$ such that ${\rm int}(v_l(\beta_l),\Sigma)\neq0$. Using Proposition~\ref{prop_cobordisms_intersection_crucial}, this leads to ${\rm int}(u_+(\beta_l),u_+(S^2\setminus\Gamma))\neq0$ in contradiction to Proposition~\ref{prop_Seifert_no_intersections}. Thus $\wind_\infty(\util_{k_+},z_*,\tau_\Sigma)=0$ for every $z_* \in Y \cap\Gamma^+_{k_+}$. By claim II, $Y\cap \Gamma^-_{k_+} = \emptyset$ and $\util_{k_+}$ maps $Y\setminus\Gamma^+_{k_+}$ to the complement of $\R\times L$. Hypothesis (H5) implies that $\Gamma^+_{k_+} \subset Y$. It follows that $\util_{k_+}$ is constant on every other connected component of $S_{k_+}$, in particular, $\util_{k_+}$ has no negative punctures in contradiction to $k_->0$. We are done showing that for every connected component $Y\subset S_{k_+}$ the map $\util_{k_+}|_Y$ is either constant or its image is contained in $\R\times L$. In the latter case $\util_{k_+}|_Y$ must be a trivial cylinder over some $\gamma_k$; it can not be a (possibly branched) cover of a trivial cylinder since all asymptotic limits at the positive punctures of the top level are mutually geometrically distinct and simply covered. If there is a component of $S_{k_+}$ where $\util_{k_+}$ is constant then Lemma~\ref{lemma_constant_components} provides an intersection between two geometrically distinct closed Reeb orbit, absurd. Hence all components are trivial cylinders over some orbit in $L$. Then $k_+=0$, for if $k_+>0$ then there would be a contradiction to stability of the building since the top level would be a symplectization level. The desired conclusion follows. // 

\medskip

\noindent {\it Claim IV. $k_-=-1$ and $\util_{-1}$ provides an element of $\M_{\jtil_-,0,\delta^-}(\gamma_1,\dots,\gamma_n;\emptyset)$.}

\medskip

\noindent {\it Proof of Claim IV.} Assume by contradiction that $\util_{-1}$ consists only of constant maps and of (possibly branched) covers of trivial cylinders. 
In the latter case the corresponding component would have to be a trivial cylinder over some component of $L$ since, by claim III, $\util_{-1}$ has $n$ positive punctures and its asymptotic limits are exactly the $n$ distinct components $\gamma_1,\dots,\gamma_n$ of $L$, each simply covered and seen as a closed $\alpha_-$-Reeb orbit. 
If on some component of $S_{-1}$ minus the punctures the map $\util_{-1}$ is constant then Lemma~\ref{lemma_constant_components} will provide an intersection between geometrically  distinct closed Reeb orbits, and this is impossible. As a result,~$\util_{-1}$ would consist precisely of $n$ trivial cylinders, and this is in contradiction to stability of the building. We have showed that on some connected component $Y\subset S_{-1}$ the map $\util_{-1}$ is non-constant on $Y\setminus\Gamma^+_{-1}\cup\Gamma^-_{-1}$ and $\util_{-1}(Y\setminus\Gamma^+_{-1}\cup\Gamma^-_{-1}) \not\subset \R\times L$. We can apply claim II to conclude that there are no negative punctures on $Y$ and that $\util_{-1}(Y\setminus\Gamma^+_{-1}) \cap (\R\times L) = \emptyset$. Arguing as in the proof of claim III we get $\wind_\infty(\util_{-1},z_*,\tau_\Sigma)=0$ for every $z_* \in Y \cap \Gamma^+_{-1}$. Hypothesis (H5) tells us that $\Gamma^+_{-1} \subset Y$, in particular $\util_{-1}$ is constant on all other connected components of its domain. If there are nodes then, by Lemma~\ref{lemma_constant_components}, there are at least two distinct components where $\util_{-1}$ is non-constant, and this is absurd. Hence there are no nodes, $S_{-1}=Y$ and $\util_{-1}$ provides the desired curve in $\M_{\jtil_-,0,\delta^-}(\gamma_1,\dots,\gamma_n;\emptyset)$. //

\medskip

Claims I, II, III and IV show the existence of $C_-$. Compactness of $\mathcal{Y}/\R$ is a direct consequence of (b) in Lemma~\ref{lemma_compactness_first}.
\end{proof}

Proposition~\ref{prop_main_compactness} is a direct consequence of Lemmas~\ref{lemma_implied_existence_step1} and~\ref{lemma_implied_existence_step2}.

\subsection{Approximating sequences of contact forms}
\label{ssec_approximating_sequences}

\begin{proposition}
\label{prop_main_existence_non_deg}
Let $\alpha$ be a contact form on $M$, and let $L=\gamma_1\cup\dots\cup\gamma_n$ be a link consisting of periodic Reeb orbits of $\alpha$. Assume that $L$ binds a planar open book decomposition $\Theta$ that supports~$\xi = \ker\alpha$, and assume with no loss of generality that $\alpha$ and $\Theta$ induce the same orientation on each $\gamma_i$.  
Suppose further that
\begin{itemize}
\item[(a)] For every $i$, $\mu_{\CZ}^\Theta(\gamma_i)\geq1$ as a prime periodic Reeb orbit of $\alpha$.
\end{itemize}
holds. 
Then there exists a constant $$ A > \sum_{i=1}^n \int_{\gamma_i} \alpha $$ depending only on $(\alpha,L,\Theta)$, with the following significance. Fix any $d\alpha$-compatible complex structure $J:\xi\to\xi$. For each $i$, choose any $\delta_i<0$ in the spectral gap of the asymptotic operator induced by $(\alpha,J)$ on sections of $\xi$ along $\gamma_i$, between eigenvalues of winding number $0$ and $1$ with respect to a Seifert framing induced by the pages of~$\Theta$. Denote $\delta=(\delta_1,\dots,\delta_n;\emptyset)$. If 
\begin{itemize}
\item[(b)] Every $\gamma'=(x',T') \in \mathcal{P}(\alpha)$ such that $x'(\R) \subset M\setminus L$ and $T'\leq A$ has non-zero algebraic intersection number with pages of $\Theta$.
\end{itemize}
holds, then for any sequence $f_k\in \mathcal{F}_L$ satisfying
\begin{itemize}
\item $f_k\to 1$ in $C^\infty$, $f_k|_L \equiv 1$ for all $k$,
\item $f_k\alpha$ is non-degenerate for all $k$,
\end{itemize}
and $\jtil_k$ defined as in~\eqref{formula_J_tilde} using $f_k\alpha$ and $J$, there exists $k_0$ such that for every $k\geq k_0$ the link $L$ binds a planar open book decomposition whose pages are global surfaces of section for the Reeb flow of $f_k\alpha$, all pages represent the same class in $H_2(M,L)$ as the pages of $\Theta$ and are projections of curves in $\M_{\jtil_k,0,\delta}(\gamma_1,\dots,\gamma_n;\emptyset)$.
\end{proposition}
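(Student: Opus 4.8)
The plan is to feed Wendl's existence result for adapted finite-energy foliations into Proposition~\ref{prop_main_compactness}, and then upgrade the resulting moduli space into an open book by global surfaces of section.

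First I would fix, once and for all and depending only on $(\alpha,L,\Theta)$, an auxiliary non-degenerate contact form $\alpha_+=f_+\alpha$ defining $\xi$, with $f_+\in\mathcal F_L$ and (rescaling $\alpha_+$ by a large constant, which changes nothing) $\min f_+>1$, a $d\alpha_+$-compatible $J_+$ with $\jtil_+\in\J(\alpha_+)$ as in~\eqref{formula_J_tilde}, and weights $\delta^+=(\delta^+_1,\dots,\delta^+_n;\emptyset)$ with each $\delta^+_i<0$ in the spectral gap of the $(\alpha_+,J_+)$-asymptotic operator along $\gamma_i$ between winding $0$ and $1$ relative to the page framing $\tau_\Sigma$, such that the $\gamma_i$ are periodic $\alpha_+$-Reeb orbits with $\mu_{\CZ}^\Theta=1$ and $M\setminus L$ is foliated by projections of curves in $\M_{\jtil_+,0,\delta^+}(\gamma_1,\dots,\gamma_n;\emptyset)$ which form the pages of $\Theta$, represent the class $b$, and are global surfaces of section for the Reeb flow of $\alpha_+$; the existence of $\alpha_+,J_+$ and the foliation is~\cite{wendl}, and these curves lie in the weighted moduli space because they approach the binding parallel to the pages (vanishing asymptotic winding relative to $\tau_\Sigma$), cf.\ the index computation in Lemma~\ref{lemma_aut_transv}. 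Put $A:=\sum_{i=1}^n\int_{\gamma_i}\alpha_+$; since $f_+>1$ we get $A>\sum_i\int_{\gamma_i}\alpha$.

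Now fix $J$, the weights $\delta=(\delta_1,\dots,\delta_n;\emptyset)$ and a sequence $f_k$ as in the statement, and assume~(b). I would apply Proposition~\ref{prop_main_compactness} with $\alpha_-:=f_k\alpha$, $J_-:=J$ (so that $\jtil_-=\jtil_k$), $\Sigma$ a page of $\Theta$ (so $[\Sigma]=b$) and weights $\delta^-:=\delta$, $\delta^+$ as above, for all large $k$; for this I verify (H1)--(H5). Hypotheses (H1), (H2), (H5) are immediate: $f_+,f_k\in\mathcal F_L$ and the Reeb fields of $f_+\alpha,f_k\alpha$ restrict on $L$ to positive multiples of $X_\alpha|_L$; $X_{\alpha_+}$ is positively transverse to $\Sigma\setminus L$ by Wendl; and (H5) holds because $\Sigma$ is a page, by Lemma~\ref{lemma_topological}. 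In (H4) the non-degeneracy of $\alpha_+$ and $f_k\alpha$ and part (H$4_+$) hold because $\alpha_+$ is non-degenerate and its pages are global surfaces of section. For (H3): with respect to $\alpha_+$ one has $\mu_{\CZ}^{\tau_\Sigma}(\gamma_i)=1$; with respect to $f_k\alpha$, note $\gamma_i$ has $f_k\alpha$-period $\int_{\gamma_i}\alpha$ independent of $k$ (since $f_k|_L\equiv1$) and its asymptotic operator depends $C^\infty$-continuously on $f_k$, so for $k$ large the fixed $\delta_i<0$ still lies in the gap between winding $0$ and $1$; hence $\mu_{\CZ}^{\tau_\Sigma,\delta_i}(\gamma_i)=1$, and since $\delta_i<0$ this gives $\mu_{\CZ}^{\tau_\Sigma}(\gamma_i)\geq1$, as needed.

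The main obstacle is (H$4_-$): for $k$ large, every $\gamma'=(x',T')\in\mathcal P(f_k\alpha)$ with $x'(\R)\subset M\setminus L$ and $T'\leq A$ must satisfy ${\rm int}(\gamma',\Sigma)\neq0$. Here condition~(b) and assumption~(a) are combined via compactness. Arguing by contradiction, a failure along a subsequence gives $f_{k_j}\alpha$-orbits $(x_j,T_j)$ with $x_j(\R)\cap L=\emptyset$, $T_j\leq A$, ${\rm int}(x_j,\Sigma)=0$; periods are bounded below, $f_{k_j}\to1$ in $C^\infty$, so Arzel\`a--Ascoli yields a further subsequence with $x_j\to x$ in $C^\infty(\R/\Z,M)$ and $T_j\to T\leq A$, where $(x,T)\in\mathcal P(\alpha)$. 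If $x(\R)\not\subset L$ then $x(\R)\cap L=\emptyset$ ($L$ is a union of $\alpha$-orbits), so $(x,T)$ is an $\alpha$-orbit in $M\setminus L$ of period $\leq A$; then~(b) gives ${\rm int}(x,\Sigma)\neq0$ (recall $[\Sigma]=b=[{\rm page}]$), and homotopy invariance of the intersection number forces ${\rm int}(x_j,\Sigma)\neq0$ for $j$ large, a contradiction. If instead $x(\R)\subset L$, then $x$ covers some $\gamma_i$, so for $j$ large $x_j$ is confined to an arbitrarily small Martinet tube around $\gamma_i$; since $\gamma_i$ is non-degenerate for $f_{k_j}\alpha$ with $\mu_{\CZ}^{\tau_\Sigma}(\gamma_i)\geq1$, its transverse rotation number relative to $\tau_\Sigma$ is strictly positive, so in a small enough tube the Reeb flow of $f_{k_j}\alpha$ is positively transverse to the pages and the closed loop $x_j$ winds strictly positively around $\gamma_i$, i.e.\ ${\rm int}(x_j,\Sigma)\geq1$, again a contradiction. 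The delicate point is the uniform control of the tube size against the rate at which $x_j$ approaches $\gamma_i$, which is governed by the asymptotic analysis behind Theorem~\ref{thm_precise_asymptotics}. This establishes (H$4_-$) for $k$ large.

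With (H1)--(H5) in force, Proposition~\ref{prop_main_compactness} provides, for each large $k$, a curve $C_-\in\M_{\jtil_k,0,\delta}(\gamma_1,\dots,\gamma_n;\emptyset)$ in class $b$ whose connected component $\mathcal Y$ has $\mathcal Y/\R$ compact. By Lemma~\ref{lemma_aut_transv}, applied in the $\R$-invariant case, $\mathcal Y$ is a smooth two-manifold and a neighbourhood in $\R\times M$ of each of its curves is foliated by curves of $\mathcal Y$; hence $V:=\bigcup_{C\in\mathcal Y}\pi(C)$ is open in $M\setminus L$, and compactness of $\mathcal Y/\R$ makes it closed, so $V=M\setminus L$ by connectedness of $M\setminus L$. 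By Propositions~\ref{prop_Seifert_no_intersections} and~\ref{prop_Seifert_two_curves} these projections are pairwise disjoint proper embeddings transverse to $X_{f_k\alpha}$, and vanishing of $\wind_\infty(\cdot,z_i,\tau_\Sigma)$ gives the open-book normal form near $L$; thus $L$ binds a planar open book whose pages are projections of curves in $\M_{\jtil_k,0,\delta}(\gamma_1,\dots,\gamma_n;\emptyset)$, all in class $b$. That these pages are global surfaces of section follows from transversality to the flow together with assumption~(a): $\mu_{\CZ}^\Theta(\gamma_i)\geq1$ for $\alpha$ gives $\mu_{\CZ}^{\tau_\Sigma}(\gamma_i)\geq1$ for $f_k\alpha$, hence positive transverse rotation along each $\gamma_i$, so no trajectory spirals into $L$ without crossing pages, while away from $L$ transversality plus compactness bounds the return time. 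Taking $k_0$ to be the largest threshold used above (for $f_k<f_+$, for the spectral gap in (H3), and for (H$4_-$)) completes the proof.
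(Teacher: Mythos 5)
Your strategy coincides with the paper's: invoke Wendl's result to produce $\alpha_+$ and the curve $C_+$, set $A=\sum_i\int_{\gamma_i}\alpha_+$, verify (H1)--(H5), apply Proposition~\ref{prop_main_compactness}, then assemble the open book via Lemmas~\ref{lemma_aut_transv},~\ref{lemma_no_intersection_with_limits} and Propositions~\ref{prop_Seifert_no_intersections},~\ref{prop_Seifert_two_curves}, and finish the global-surface-of-section argument using~(a). There are, however, two issues with the way you verify the hypotheses of Proposition~\ref{prop_main_compactness}.

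First, you set $\Sigma$ to be a page of $\Theta$ and assert that (H2), transversality of $X_{\alpha_+}$ to $\Sigma\setminus L$, is ``by Wendl.'' But Wendl's construction only guarantees that projections of the $\jtil_+$-holomorphic curves are $C^1$-close to pages of $\Theta$ and are global surfaces of section for $\alpha_+$; it does not say $X_{\alpha_+}$ is transverse to the actual pages of $\Theta$. The paper sidesteps this by choosing $\Sigma$ to be the closure of the $M$-projection of a fixed curve $C_+\in\M_{\jtil_+,0,\delta^+}$, for which (H2) is built in; this $\Sigma$ still represents the class $b$, which is all that matters elsewhere in the argument.

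Second, and more substantively, in the verification of (H$4_-$) the branch where the limit orbit $x$ covers some $\gamma_i$ has a real gap. You appeal to the positive transverse rotation number of $\gamma_i$ for $f_{k_j}\alpha$ to argue that ${\rm int}(x_j,\Sigma)\geq1$, and you flag the ``uniform control of the tube size against the rate at which $x_j$ approaches $\gamma_i$'' as delicate, citing Theorem~\ref{thm_precise_asymptotics} — but that theorem concerns the asymptotics of pseudoholomorphic half-cylinders, not of nearby periodic orbits of a sequence of Reeb flows, so the citation does not supply the needed control. The paper's argument is shorter and avoids this entirely: since ${\rm int}(\tilde\gamma_k,\Sigma)=0$ and $\tilde\gamma_k\to\gamma_i^N$, rescaling the transverse coordinate and passing to the limit forces the existence of a periodic solution of the linearized $\alpha$-Reeb flow along $\gamma_i^N$ with zero winding in a frame aligned with the normal of $\Sigma$, hence $\rho^\Theta(\gamma_i^N)=0$ and so $\rho^\Theta(\gamma_i)=0$, contradicting (a). You should replace your tube-transversality reasoning by this linearization/rescaling argument.
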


Before embarking on the proof of Proposition~\ref{prop_main_existence_non_deg} we state the following corollary.

\begin{corollary}
\label{prop_(iii)implies(ii)_nondeg}
If $(M,\xi)$, $L$, $\alpha$, $\Theta$ and $b$ are as in Theorem~\ref{main2} then the implication (iii) $\Rightarrow$ (ii) from Theorem~\ref{main2} holds when $\alpha$ is non-degenerate.
\end{corollary}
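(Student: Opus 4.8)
The plan is to obtain Corollary~\ref{prop_(iii)implies(ii)_nondeg} as an immediate specialization of Proposition~\ref{prop_main_existence_non_deg}, applied to the constant sequence $f_k\equiv 1$. So, assume that $\alpha$ is non-degenerate and that assertion (iii) of Theorem~\ref{main2} holds. Then (iii) provides a planar open book decomposition $\Theta$ supporting $\xi$, with binding $L$ consisting of periodic $\alpha$-Reeb orbits and with pages representing the class $b$; after reorienting, if necessary, $\alpha$ and $\Theta$ induce the same orientation on each component of $L$. This is exactly the data required by Proposition~\ref{prop_main_existence_non_deg}, and its hypothesis (a) --- namely $\mu_{\CZ}^\Theta(\gamma_i)\geq1$ for every $i$ --- coincides with (iii-a), since Conley--Zehnder indices are integers and (iii-a) reads $\mu_{\CZ}^\Theta(\gamma)>0$.

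Next I would verify hypothesis (b) of Proposition~\ref{prop_main_existence_non_deg}. Let $A=A(\alpha,L,\Theta)$ be the constant furnished by that proposition. Because the pages of $\Theta$ represent $b$, the algebraic intersection number of a periodic orbit with a page equals its intersection number with $b$, which by (iii-b) is non-zero for \emph{every} periodic Reeb orbit in $M\setminus L$; in particular for those of period at most $A$, irrespective of the value of $A$. Hence hypothesis (b) holds. Now fix an arbitrary $d\alpha$-compatible $J:\xi\to\xi$ and weights $\delta_i<0$ in the spectral gaps prescribed in the proposition, put $\delta=(\delta_1,\dots,\delta_n;\emptyset)$, and apply Proposition~\ref{prop_main_existence_non_deg} to the constant sequence $f_k\equiv1$. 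This sequence is admissible: $1\in\mathcal{F}_L$, it converges to $1$ in $C^\infty$, it equals $1$ on $L$, and $f_k\alpha=\alpha$ is non-degenerate by assumption. The conclusion is that for all large $k$ the link $L$ binds a planar open book whose pages are global surfaces of section for the Reeb flow of $f_k\alpha=\alpha$, all representing the class $b$ and arising as projections of curves in $\M_{\jtil,0,\delta}(\gamma_1,\dots,\gamma_n;\emptyset)$.

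Finally I would note that the resulting open book \emph{supports} $\xi$, as required by assertion (ii); this is the only point that needs a line of justification, and it is not a real obstacle. By Lemma~\ref{lemma_no_intersection_with_limits}, the relations~\eqref{info_classical_invariants_Seifert} give $\wind_\pi=0$ for these curves, so their projections to $M$ are embedded and positively transverse to $X_\alpha$; hence $d\alpha$ is positive on the interior of every page and $\alpha$ is positive on $L$, which together with Remark~\ref{rmk_bind_supp_ob} shows that the open book supports $\xi$. This yields (ii). I expect no genuine difficulty beyond bookkeeping: all the substance --- existence and compactness of the relevant moduli spaces via Proposition~\ref{prop_main_compactness} together with Wendl's theorem~\cite{wendl} --- is packaged inside Proposition~\ref{prop_main_existence_non_deg}, and the only things to watch are the matching of (iii-a), (iii-b) with hypotheses (a), (b) and the observation that the precise value of $A$ plays no role because (iii-b) constrains \emph{all} periodic orbits in the complement of $L$.
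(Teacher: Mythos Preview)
Your proposal is correct and follows exactly the paper's approach: the paper's proof is the single sentence ``Apply Proposition~\ref{prop_main_existence_non_deg} to the constant sequence $f_k\equiv1$ $\forall k$.'' Your additional paragraph verifying that the resulting open book supports $\xi$ is a reasonable elaboration (the paper leaves this implicit), and your observation that the precise value of $A$ is irrelevant because (iii-b) constrains \emph{all} periodic orbits in $M\setminus L$ is exactly the point.
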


\begin{proof}
Apply Proposition~\ref{prop_main_existence_non_deg} to the constant sequence $f_k\equiv1 \ \forall k$.
\end{proof}

Let us start with the proof of Proposition~\ref{prop_main_existence_non_deg}. Since $\Theta$ supports $\xi$, we know that the orientation of $L$ induced by $\alpha$ either simultaneously agrees or simultaneously disagrees with the orientation induced by $\Theta$ on every component $\gamma\subset L$. There is no loss of generality to assume that these orientations agree on all $\gamma$. From now on, we give $L$ this orientation and order its connected components $\gamma_1,\dots,\gamma_n$ in an arbitrary manner.

By the results from~\cite{wendl}, there exists at least one non-degenerate contact form $\alpha_+$ such that $\alpha_+>\alpha$, $X_{\alpha_+}$ is positively tangent to~$L$, $\mu_{\CZ}^{\Theta}(\gamma_i)= 1$ as closed Reeb orbits of $\alpha_+$, and every $\gamma \in \mathcal{P}(\alpha_+)$ contained in $M\setminus L$ has positive algebraic intersection number with the pages of $\Theta$. Moreover, setting $$ \delta^+=(0,\dots,0;\emptyset) $$ and choosing a suitable $d\alpha_+$-compatible complex structure $J_+:\xi\to\xi$, then
\begin{equation}
\M_{\jtil_+,0,\delta^+}(\gamma_1,\dots,\gamma_n;\emptyset) \neq \emptyset \, .
\end{equation}
Here $\jtil_+$ is the $\R$-invariant almost complex structure on $\R\times M$ defined as in~\eqref{formula_J_tilde} using $\alpha_+$ and $J_+$. This moduli space is diffeomorphic to an open cylinder and its elements project to $M$ as a circle family of embedded surfaces in $M\setminus L$ whose closures are $C^1$-close to the pages of $\Theta$, all of which are global surfaces of section for the Reeb flow of $\alpha_+$. Finally we can define
\begin{equation*}
A = \sum_{i=1}^n \int_{\gamma_i}\alpha_+.
\end{equation*}

Let us arbitrarily fix a curve
\begin{equation}
C_+ \in \M_{\jtil_+,0,\delta^+}(\gamma_1,\dots,\gamma_n;\emptyset) \, .
\end{equation}
Let $\Sigma$ be the Seifert surface for $L$ obtained as the closure of the projection of $C_+$ onto $M$. Then $\Sigma$ induces the same element in $H_2(M,L)$ as any page of $\Theta$. We shall denote by $\tau_{\Sigma}$ a collection of symplectic (with respect to $d\alpha_+$ or equivalently to $d\alpha$) trivializations of the bundles $\xi_{\gamma_i}$ aligned with the normal of $\Sigma$. These trivializations are homotopic to trivializations aligned with the normal of any page of $\Theta$.

Fix any $d\alpha$-compatible complex structure $J:\xi\to\xi$. Recall that $J$ is also compatible with $d(g\alpha)$ for every $g\in C^\infty(M,(0,+\infty))$. Consider the asymptotic operators at the $\gamma_i$ induced by the data $(\alpha,J)$. Choose numbers $\delta_i^-<0$ in the spectral gap between eigenvalues of winding number zero and one with respect to~$\tau_\Sigma$, and set $\delta^-=(\delta^-_1,\dots,\delta^-_n;\emptyset)$. It follows from (a) in Proposition~\ref{prop_main_existence_non_deg} that this choice can be made. From now on we denote by $\jtil_k$ the $\R$-invariant almost complex structure defined as in~\eqref{formula_J_tilde} using $J$ and the contact form $f_k\alpha$.

\begin{remark}
Note that the numbers $\delta^-_i<0$ lie in the spectral gap of the asymptotic operator along $\gamma_i$ defined by the data $(f_k\alpha,J)$ between eigenvalues of winding number zero and one with respect to $\tau_\Sigma$, provided $k$ is large enough.
\end{remark}

Lemma~\ref{lemma_topological} shows that if we push a proper collection of components of $L$ in the direction of $\Sigma$ then we obtain a link which is non-zero in $H_1(M\setminus L)$. Summarizing, assumption (a) together with the results from~\cite{wendl} explained above allow us to check that all hypotheses (H1), (H2), (H3) and (H5) from subsection~\ref{ssec_existence_compactness_curves} are satisfied for $\alpha_+$ and $f_k\alpha$ in the place $\alpha_-$, where $\Sigma$ and $L$ are as just described.

We claim that (H4) is also satisfied if $k$ is large enough. We only need to find~$k_0$ such that if $k\geq k_0$ then every periodic orbit $\gamma=(x,T)$ in $M\setminus L$ of the Reeb flow of $f_k\alpha$ satisfying $T \leq \sum_{i=1}^n \int_{\gamma_i} \alpha_+$ must also satisfy ${\rm int}(\gamma,\Sigma)\neq 0$. Arguing indirectly we may assume that, up to the choice of a subsequence, for every $k$ there exists a 
periodic Reeb orbit $\tilde\gamma_k=(\tilde x_k,\tilde T_k)$ of $f_k\alpha$ contained in $M\setminus L$ and satisfying $\tilde T_k \leq \sum_{i=1}^n \int_{\gamma_i} \alpha_+$ and ${\rm int}(\tilde\gamma_k,\Sigma)=0$. By these period bounds we can assume, up to choice of a further subsequence, that $\tilde\gamma_k$ $C^\infty$-converges to a periodic orbit $\tilde\gamma=(\tilde x,\tilde T)$ of the Reeb flow of $\alpha$. Inequality $\tilde T \leq \sum_{i=1}^n \int_{\gamma_i} \alpha_+$ holds. It must be true that $\tilde\gamma \subset M\setminus L$. To see why we argue indirectly and assume that $\tilde\gamma$ is a multiple cover of some $\gamma_i$, say of covering multiplicity $N$. This forces the existence of a periodic solution of the linearization of the Reeb flow of $\alpha$ along the $N$-iterated orbit $\gamma_i^N$ with zero winding number computed with a frame aligned to the normal of $\Sigma$. In particular $N\rho^{\Theta}(\gamma_i) = \rho^\Theta(\gamma_i^N)=0 \Rightarrow \rho^\Theta(\gamma_i)=0$, which is absurd. Now that we proved that $\tilde\gamma\subset M\setminus L$, we must have ${\rm int}(\tilde\gamma,\Sigma)\neq 0$ by assumption. Since $\tilde\gamma_k \to \tilde\gamma$ we find ${\rm int}(\tilde\gamma_k,\Sigma)\neq0$ for large enough $k$. This contradiction finishes the proof that (H4) holds for $\alpha_+$ and $f_k\alpha$ in the place of $\alpha_-$ when $k$ is large enough.

The numbers $\delta^-_i<0$ lie in the spectral gap of the asymptotic operator along~$\gamma_i$ defined by the data $(f_k\alpha,J)$ between eigenvalues of winding number zero and one with respect to $\tau_\Sigma$, provided $k$ is large enough. This is a simple consequence of $f_k\alpha \to \alpha$ in $C^\infty$. From Proposition~\ref{prop_main_compactness} we get $k_0$ such that if $k\geq k_0$ then there is a curve $C_- \in \M_{\jtil_k,0,\delta^-}(\gamma_1,\dots,\gamma_n;\emptyset)$ whose projection to $M$ induces the same element in $H_2(M,L)$ as $\Sigma$. Its Hofer energy is not larger than $E$. From now on we fix $k\geq k_0$ and such a curve $C_-$. Let $\mathcal Y$ be the component of $C_-$ in $\M_{\jtil_k,0,\delta^-}(\gamma_1,\dots,\gamma_n;\emptyset)$. Proposition~\ref{prop_main_compactness} also tells us that $\mathcal{Y}/\R$ is compact.

Every  curve in $\mathcal Y$ projects to $M$ as a Seifert surface for $L$ transverse to $X_\alpha$ in its interior. These Seifert surfaces induce the same element in $H_2(M,L)$ as $\Sigma$. All this follows from Proposition~\ref{prop_Seifert_no_intersections} and Lemma~\ref{lemma_no_intersection_with_limits}. Lemma~\ref{lemma_aut_transv} and Proposition~\ref{prop_Seifert_two_curves} together tell us that the curves in $\mathcal Y$ can be used to construct pieces of foliations of $M\setminus L$ transverse to the Reeb flow. Using the compactness of $\mathcal{Y}/\R$ in combination with Proposition~\ref{prop_Seifert_two_curves}, these local foliations match to yield an open book decomposition of $M$ with binding $L$ whose pages are projections of curves in $\mathcal Y$. Moreover $\mathcal Y/\R$ is diffeomorphic to a circle.

To see why pages are global surfaces of section, consider a trajectory of the Reeb flow of $f_k\alpha$ on $M\setminus L$, where $k$ is large enough. If its $\omega$-limit does not intersect $L$ then it will hit every page in the future infinitely often, by transversality of $X_\alpha$ to the pages. If its $\omega$-limit intersects $L$ then this trajectory spends arbitrarily large amounts of time arbitrarily near $L$, in the far future, and consequently can be well-controlled by the linearized Reeb flow along $L$. The condition $\mu_{\CZ}^\Theta(\gamma_i)\geq 1 \ \forall i$ is equivalent to $\rho^\Theta(\gamma_i)>0 \ \forall i$ and, consequently, forces the linearized flow near $L$ to rotate very much with respect to $\tau_\Sigma$ in large amounts of time. Hence the same will happen to nearby trajectories, forcing them to hit all pages. To analyze past times there is a similar reasoning, where one replaces $\omega$-limit sets by $\omega^*$-limit sets. This concludes the proof of Proposition~\ref{prop_main_existence_non_deg}.

\subsection{Passing to the degenerate case}
\label{ssec_passing_to_deg_case}

\subsubsection{Geometric set-up}

Let us fix a planar supporting open book decomposition $\Theta=(\Pi,L)$ on $(M,\xi)$. Order the components $\gamma_1,\dots,\gamma_n$ of its binding $L$ arbitrarily, and assume that
\begin{equation}
\label{n_at_least_2_deg}
n\geq 2.
\end{equation}
Write $b \in H_2(M,L)$ for the class of a page. As explained in the introduction, the pages get naturally oriented by $\Theta$, and $L$ gets oriented as the boundary of a page. Let $\alpha$ be a contact form such that $\xi=\ker\alpha$ and $X_\alpha$ is positively tangent to each~$\gamma_l$. Note that $\alpha$ may be very degenerate. We see the $\gamma_l$ as prime closed Reeb orbits and denote their periods by~$T_l$. Assume that
\begin{itemize}
\item[(a)] $\mu_{\CZ}^\Theta(\gamma_l)\geq 1 \ \forall l$.
\item[(b)] All periodic orbits of the Reeb flow of $\alpha$ contained in $M\setminus L$ have non-zero algebraic intersection number with $b$.
\end{itemize}
Conley-Zehnder indices in (a) are computed with respect to trivializations of $\xi$ aligned with the normal of a page. Let $J$ be a $d\alpha$-compatible complex structure on~$\xi$. Consider a sequence $f_k$ satisfying
\begin{equation}
\label{properties_f_k}
f_k \in \mathcal{F}_L, \qquad f_k\to 1 \text{ in $C^\infty$}, \qquad f_k|_L \equiv 1 \ \forall k \, .
\end{equation}
Denote $\alpha_k = f_k\alpha$ and consider compatible complex structures $J_k:\xi\to\xi$ satisfying $J_k\to J$ in~$C^\infty$. Let $\jtil_k\in\J(f_k\alpha)$ and $\jtil\in\J(\alpha)$ be induced by $(\alpha_k,J_k)$ and $(\alpha,J)$, respectively. Then $\jtil_k\to\jtil$ in $C^\infty$ (weak equals strong). Note that the tangent space of $\R\times L$ is invariant under $\jtil$ and $\jtil_k$. The $\gamma_l$ are periodic orbits of the Reeb flow of $f_k\alpha$ with prime period $T_l$, for all~$k$. Choose $\delta_l<0$ in the spectral gap of the asymptotic operator of $\gamma_l$ induced by $(\alpha,J)$ between eigenvalues of winding number equal to $0$ and $1$ with respect to frames aligned to the normal of a page. This can be done in view of (a). If $k$ is large enough then the $\delta_l$ lie in the corresponding spectral gaps of asymptotic operators of $\gamma_l$ induced by $(\alpha_k,J_k)$. Denote $\delta=(\delta_1,\dots,\delta_n;\emptyset)$.

\subsubsection{Computing $\wind_\infty$}

%

For each $l$ choose a symplectic trivialization of $\xi_{\gamma_l}$ aligned with the normal of some page of $\Theta$. The collection of homotopy classes of these trivializations will be denoted by $\tau_\Sigma$.

\begin{lemma}\label{lemma_wind_infinity}
If $\util=(a,u)$ represents a curve in $\M_{\jtil,0,\delta}(\gamma_1,\dots,\gamma_n;\emptyset)$ inducing the class $b$, then $\wind_\pi(\util)=0$ and $u$ is an immersion transverse to $X_\alpha$. 
Moreover, if the positive puncture where $\util$ is asymptotic to $\gamma_l$ is denoted by $z_l$, then  $\wind_\infty(\util,z_l,\tau_\Sigma)=0$ holds for all $l$.
\end{lemma}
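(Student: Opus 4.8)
The plan is to re-run, essentially verbatim, the opening part of the proof of Lemma~\ref{lemma_no_intersection_with_limits} --- the part establishing~\eqref{info_classical_invariants_Seifert} --- and to check that every step survives the possible degeneracy of $\alpha$.

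First I would record that $\pi_\alpha\circ du$ does not vanish identically. Otherwise, by the discussion in~\ref{sssec_classical_alg_inv}, $u(S\setminus\Gamma)$ would be contained in trivial cylinders over periodic Reeb orbits; since $S\setminus\Gamma$ is connected, the whole image would lie in a single trivial cylinder $\R\times\gamma$, forcing all asymptotic limits of $\util$ to be iterates of one orbit --- impossible, as $\gamma_1,\dots,\gamma_n$ are $n\ge 2$ geometrically distinct prime orbits. So $\wind_\pi(\util)\ge 0$ is defined. Next I would invoke that curves in $\M_{\jtil,0,\delta}(\gamma_1,\dots,\gamma_n;\emptyset)$ have non-degenerate punctures in the sense of Definition~\ref{def_non_deg_punctures}, so that Theorem~\ref{thm_asymptotic_formula_deg_case} applies at every puncture $z_l$; this is what makes the asymptotic eigenvalues, the numbers $\wind_\infty(\util,z_l,\cdot)$, and the invariant $\wind_\infty(\util)$ from~\ref{sssec_classical_alg_inv} all well-defined even though $\alpha$ may be degenerate, and it yields, via Remark~\ref{rmk_wind_pi_wind_infty}, the identity $\wind_\infty(\util)=\wind_\pi(\util)+\chi(S)-\#\Gamma$.

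Then I would run the winding count. By construction of $\M_{\jtil,0,\delta}$, the asymptotic eigenvalue at $z_l$ is strictly less than $\delta_l$, and $\delta_l$ was chosen in the spectral gap between eigenvalues of winding $0$ and winding $1$ relative to $\tau_\Sigma$ (this gap exists precisely because $\mu_{\CZ}^\Theta(\gamma_l)\ge 1$); monotonicity of the winding of eigenvalues gives $\wind_\infty(\util,z_l,\tau_\Sigma)\le 0$ for all $l$. Fixing a $d\alpha$-symplectic trivialization $\Xi$ of $u^*\xi$, with induced trivializations $\tau_l^\Xi$ of $\xi_{\gamma_l}$, the hypothesis that $\util$ induces the class $b$ of a page $\Sigma$ of $\Theta$ implies (just as in Lemma~\ref{lemma_no_intersection_with_limits}, using a contact form in $\mathcal{F}_L$ adapted to $\Theta$ whose Reeb field is positively transverse to the interior of $\Sigma$ and tangent to $L$) that $\sum_l\wind(\tau_l^\Xi,\tau_\Sigma)=\chi(S)-\#\Gamma$. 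Combining with $\wind_\infty(\util)=\sum_l\wind_\infty(\util,z_l,\tau_l^\Xi)=\sum_l\big(\wind_\infty(\util,z_l,\tau_\Sigma)+\wind(\tau_l^\Xi,\tau_\Sigma)\big)$ I would obtain
\[
\wind_\pi(\util)+\chi(S)-\#\Gamma=\sum_{l=1}^n\wind_\infty(\util,z_l,\tau_\Sigma)+\big(\chi(S)-\#\Gamma\big)\le 0+\big(\chi(S)-\#\Gamma\big),
\]
whence $\wind_\pi(\util)=0$. Then $\pi_\alpha\circ du$ is nowhere zero, hence (being a non-zero complex-linear map between complex lines) an isomorphism at every point, so $u$ is an immersion transverse to $X_\alpha$; and plugging $\wind_\pi(\util)=0$ back in forces $\sum_l\wind_\infty(\util,z_l,\tau_\Sigma)=0$, which with $\wind_\infty(\util,z_l,\tau_\Sigma)\le 0$ gives $\wind_\infty(\util,z_l,\tau_\Sigma)=0$ for all $l$.

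The only potential obstacle, relative to the already-proven non-degenerate version Lemma~\ref{lemma_no_intersection_with_limits}, is the degeneracy of $\alpha$: one must be sure that all the asymptotic-analysis inputs (a genuine asymptotic eigenvalue and eigenvector, exponential decay, hence well-definedness of $\wind_\infty$ and of the identity in Remark~\ref{rmk_wind_pi_wind_infty}) remain available. As noted, this is guaranteed by the non-degenerate-puncture requirement built into the moduli space together with Theorem~\ref{thm_asymptotic_formula_deg_case}, so I do not expect any genuinely new difficulty beyond Lemma~\ref{lemma_no_intersection_with_limits}.
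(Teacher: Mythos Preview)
Your proof is correct and follows essentially the same approach as the paper's: both use the non-degenerate-puncture hypothesis to invoke Theorem~\ref{thm_asymptotic_formula_deg_case}, bound $\wind_\infty(\util,z_l,\tau_\Sigma)\le 0$ via the choice of weights $\delta_l$, and close with the identity $\sum_l\wind(\tau_l^\Xi,\tau_\Sigma)=2-n$ combined with Remark~\ref{rmk_wind_pi_wind_infty}. The only cosmetic difference is that the paper packages the key identity as ${\rm sl}(L,\text{page})=n-2$ rather than re-deriving it from the argument in Lemma~\ref{lemma_no_intersection_with_limits}, and your added remark that $\pi_\alpha\circ du$ does not vanish identically (needed for $\wind_\pi$ to be defined) is a welcome point of completeness.
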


\begin{proof}
We may assume that $\util$ is defined on $\C\setminus\{z_2,\dots,z_n\}$ with standard complex structure $i$, that $\util$ is asymptotic to $\gamma_1$ at $z_1=\infty$ and to $\gamma_l$ at $z_l$, $2\leq l\leq n$. We use here that all punctures are non-degenerate, by our definition of moduli spaces given in~\ref{sssec_moduli_spaces}. In view of Theorem~\ref{thm_asymptotic_formula_deg_case} the numbers $\wind_\infty(\util,z_l,\tau_\Sigma)$ are well-defined. Since each $\delta_i$ lies in the spectral gap between eigenvalues of winding number~$0$ and~$1$ with respect to~$\tau$, we get
\begin{equation}
\label{spectral_info_punctures}
\wind_\infty(\util,z_l,\tau_\Sigma) \leq 0, \ \forall l \, .
\end{equation}
Now let $\tau'$ denote a collection of homotopy classes of symplectic trivializations of the $\xi_{\gamma_l}$ which extend to a trivialization of $u^*\xi$. Since $b$ is the class of a page of the supporting planar open book $\Theta$, the self-linking number ${\rm sl}(L,{\rm page})$ of $L$ with respect to a page of $\Theta$ is equal to $n-2$. Using this crucial information we can compute
\begin{equation}
\begin{aligned}
0 \leq \wind_{\pi}(\util) &= \wind_\infty(\util) - 2 + n \\
&= \left( \sum_{l=1}^n \wind_\infty(\util,z_l,\tau') \right) - 2 + n \\
&= \left( \sum_{l=1}^n \wind_\infty(\util,z_l,\tau_\Sigma) \right) -{\rm sl}(L,{\rm page}) -2+n \\
&= \sum_{l=1}^n \wind_\infty(\util,z_l,\tau_\Sigma) \leq 0.
\end{aligned}
\end{equation}
Hence $\wind_\pi(\util)=0$. This forces $u$ to be an immersion transverse to~$X_\alpha$, in view of the definition of $\wind_\pi$, and also proves that $\wind_\infty(\util,z_l,\tau_\Sigma)=0$ for all $l$.
\end{proof}

\subsubsection{A compactness statement}

Standard SFT-compactness is not valid for degenerate contact forms unless the degeneracy is very mild (e.g., Morse-Bott). Nevertheless, we can get the compactness statement necessary to prove our results. It reads as follows.

\begin{proposition}
\label{prop_abstract_compactness}
Consider a sequence $C_k \in \M_{\jtil_k,0,\delta}(\gamma_1,\dots,\gamma_n;\emptyset)$ satisfying
\begin{itemize}
\item[(H)] $C_k$ is embedded, represents the class $b$, does not intersect $\R\times L$, and any loop in $C_k$ projects to $M\setminus L$ as a loop with algebraic intersection number with $b$ equal to zero.
\end{itemize}
There exist representatives 
$$ 
C_k = [\util_k=(a_k,u_k),\C\cup\{\infty\},i,\{z_{k,1}=\infty,z_{k,2},\dots,z_{k,n}\},\emptyset], 
$$ 
distinct points $z_{\infty,2},\dots,z_{\infty,n}\in\C$,
sequences $k_j\to+\infty$, $c_j\in\R$, and a finite-energy $\jtil$-holomorphic embedding $$ \util=(a,u):\C\setminus \{z_{\infty,2},\dots,z_{\infty,n}\} \to \R\times M $$ satisfying
\[
\text{$z_{k_j,l}\to z_{\infty,l}$ as $j\to+\infty$, $\forall l\in\{2,\dots,n\}$,}
\]
\[
C = [\util,\C\cup\{\infty\},i,\{z_{\infty,1}=\infty,z_{\infty,2},\dots,z_{\infty,n}\},\emptyset] \in \M_{\jtil,0,\delta}(\gamma_1,\dots,\gamma_n;\emptyset),
\]
\[
(a_{k_j}+c_j,u_{k_j}) \to \util \ \ \text{ in } C^\infty_{\rm loc}(\C\setminus\{z_{\infty,2},\dots,z_{\infty,n}\}) \ \ \text{as $k\to+\infty$,}
\]
\[
\util(\C\setminus \{z_{\infty,2},\dots,z_{\infty,n}\}) \cap \R\times L = \emptyset.
\]
\end{proposition}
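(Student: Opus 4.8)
The plan is to extract a $C^\infty_{\rm loc}$-limit of a subsequence by hand --- the SFT-compactness theorem is unavailable because $\alpha$ may be too degenerate --- and then to use hypothesis (H), together with (a) and (b), to exclude every possible degeneration. First I would record that each $C_k$ has Hofer energy equal to $E:=\sum_{l=1}^nT_l$: since $f_k|_L\equiv 1$ by~\eqref{properties_f_k}, the orbits $\gamma_1,\dots,\gamma_n$ have the same periods $T_1,\dots,T_n$ as $\alpha_k$-Reeb orbits and as $\alpha$-Reeb orbits, and the energy identity of~\ref{sssec_energy_rmk} applies. Then I would fix representatives with domain $\C\cup\{\infty\}$, standard complex structure $i$, and $z_{k,1}=\infty$; after normalizing with the remaining automorphisms of the sphere and passing to a subsequence, $z_{k,l}\to z_{\infty,l}\in\C\cup\{\infty\}$ for $l\in\{2,\dots,n\}$, and what remains is to show that the $z_{\infty,l}$ are distinct and finite, that no energy is lost, and to identify the limit.

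The first thing to rule out is bubbling of the maps. If $|d\util_k|$ were unbounded on a compact subset of the domain staying away from the (moving) punctures, the standard rescaling argument of~\cite{93} would produce a nonconstant finite-energy $\jtil$-holomorphic plane $v:\C\to\R\times M$ (using $\jtil_k\to\jtil$); there is no nonconstant finite-energy sphere because $d\alpha$ is exact, which would force $\int v^*d\alpha=0$ and hence $\pi_\alpha\circ dv\equiv 0$, collapsing any such sphere. By Theorem~\ref{thm_93_existence_asymptotic_limits} the plane $v$ is asymptotic to a closed $\alpha$-Reeb orbit $\gamma'$ of period at most $E$. A small loop encircling the bubbling region is a loop on $C_k$, so by (H) it has zero algebraic intersection number with $b$; but a soft-rescaling analysis shows its image converges to $\gamma'$ traversed with the covering multiplicity, whose intersection number with $b$ is nonzero --- by (b) if $\gamma'\subset M\setminus L$, and by (a) together with Theorem~\ref{thm_precise_asymptotics} (positivity $\rho^\Theta(\gamma_l)>0$ forces the asymptotic winding to be at least $1$) if $\gamma'$ covers some $\gamma_l$. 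This contradiction yields a local uniform bound on $|d\util_k|$, so no energy escapes through bubbling.

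Next I would exclude collisions $z_{\infty,l}=z_{\infty,l'}$ and escape $z_{\infty,l}=\infty$, which would amount to a breaking into a nontrivial holomorphic building. The key external input here is Proposition~\ref{prop_unif_asymptotic_analysis}: because each $C_k$ lies in $\M_{\jtil_k,0,\delta}(\gamma_1,\dots,\gamma_n;\emptyset)$, its asymptotic eigenvalue at each puncture is at most the largest eigenvalue below $\delta_l$, uniformly in $k$, and these uniform decay bounds both prevent the soft breaking off of trivial cylinders at the punctures and force the limit to have the same exponential decay --- hence a non-degenerate puncture, in the sense of Definition~\ref{def_non_deg_punctures}, with asymptotic eigenvalue strictly below $\delta_l$. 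A genuine break --- loss of $d\alpha$-area to a lower holomorphic level whose asymptotic limit lies in $M\setminus L$ or covers some $\gamma_l$ --- is excluded by the same loop argument, now invoking (H), (a), (b) and Propositions~\ref{prop_cobordisms_intersection_crucial} and~\ref{prop_Seifert_no_intersections}. Once all degenerations are excluded, the $z_{\infty,l}$ are distinct finite points, elliptic bootstrapping upgrades the convergence to $C^\infty_{\rm loc}$ after pinning down the $a$-component with suitable translations $c_j$, and the limit $\util=(a,u)$ is a finite-energy $\jtil$-holomorphic curve on $\C\setminus\{z_{\infty,2},\dots,z_{\infty,n}\}$ asymptotic to $\gamma_l$ at $z_{\infty,l}$; since the induced homology class is locally constant, $\util$ induces $b$, and by the eigenvalue bounds $\util$ defines an element of $\M_{\jtil,0,\delta}(\gamma_1,\dots,\gamma_n;\emptyset)$.

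To finish, I would check embeddedness and disjointness from $\R\times L$: $\util$ is somewhere injective by Lemma~\ref{lemma_somewhere_injective}, its generalized self-intersection $\util*\util$ equals that of $\util_k$ by Proposition~\ref{prop_constancy_int_number}, and $\util_k$ being embedded realizes equality in the adjunction inequality; since $\mu_{\CZ}$, $\#\Gamma_{\rm odd}$ and $\bar\sigma$ also pass to the limit, $\util$ realizes equality in Theorem~\ref{thm_adjunction_ineq} and is an embedding. Moreover $u$ is an immersion transverse to $X_\alpha$ by Lemma~\ref{lemma_wind_infinity}, so it meets $L$ transversally and positively at isolated points only; since the $u_k$ avoid $L$ by (H), stability of transverse intersections forbids such points, giving $\util(\C\setminus\{z_{\infty,2},\dots,z_{\infty,n}\})\cap\R\times L=\emptyset$. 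The hard part is that there is no a priori compactness for the possibly degenerate $\alpha$, so the bubbling-and-breaking analysis must be carried out directly; the most delicate point is ensuring that the exponential weights $\delta_l$ do not degenerate in the limit, which is precisely the content of Proposition~\ref{prop_unif_asymptotic_analysis}.
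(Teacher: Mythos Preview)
Your overall plan --- extract a $C^\infty_{\rm loc}$-limit by hand and exclude each degeneration --- matches the paper's strategy, but the mechanism you invoke at the two decisive steps is the wrong one.

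\textbf{Ruling out bubbling/breaking towards $L$.} When a plane bubbles off with asymptotic limit $\gamma_l^m$, you claim that $\rho^\Theta(\gamma_l)>0$ forces the asymptotic winding in $\tau_\Sigma$ to be at least~$1$. This is false: the plane has a \emph{positive} puncture, so its asymptotic eigenvalue is negative, and positivity of $\rho^\Theta$ only gives $\wind_\infty\le\lfloor m\rho^\Theta(\gamma_l)\rfloor$, which can be~$0$. In that case the small loop on $C_k$ around the bubble has intersection~$0$ with~$b$, and (H) yields nothing. The paper instead computes the homology class of that loop in $H_1(M\setminus L)$: by Lemma~\ref{lemma_basic_homology_class} it equals $m\gamma'_l$, while contractibility of the loop in $\C\setminus\Gamma_k$ forces it to vanish in $H_1(M\setminus L)$, contradicting Lemma~\ref{lemma_topological} (here $n\ge 2$ is essential). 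Exactly the same homological bookkeeping --- not assumption (a) --- is what excludes puncture collision and escape in Lemma~\ref{lemma_asymptotic_props_limit_curve}: one produces a relation $\sum_{l\in I_z}\gamma'_l\equiv eN\gamma'_{l_*}$ in $H_1(M\setminus L)$ and reads off $\#I_z=1$, $e=+1$, $N=1$ directly from the open-book description of $H_1(M\setminus L)$.

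\textbf{Role of Proposition~\ref{prop_unif_asymptotic_analysis}.} This proposition does not ``prevent the soft breaking off of trivial cylinders''. It enters only \emph{after} the homology argument has pinned down the puncture structure of the limit $\util$; it is then applied in Martinet-tube coordinates (Lemmas~\ref{lemma_crucial_control_of_ends}--\ref{lemma_not_proved_before}) to upgrade weak asymptotics to non-degenerate punctures with asymptotic eigenvalue below $\delta_l$. Two smaller points: your embeddedness argument via Theorem~\ref{thm_adjunction_ineq} and Proposition~\ref{prop_constancy_int_number} is not available because $\alpha$ may be degenerate (cf.\ Remark~\ref{rmk_non_deg_relaxed}); the paper uses Lemma~\ref{lemma_wind_infinity} plus positivity and stability of self-intersections against the embedded $C_k$. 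And ``normalizing with the remaining automorphisms'' is too loose: the explicit normalization of Lemma~\ref{lemma_right_parametrization}, pinning two domain points to $u_k^{-1}(\partial N_1)$, is precisely what later rules out the limit degenerating to a trivial cylinder over~$\gamma_1$.
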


\begin{remark}
Using positivity of intersections one sees that the obtained limiting curve $C\in \M_{\jtil,0,\delta}(\gamma_1,\dots,\gamma_n;\emptyset)$ satisfies condition (H).
\end{remark}

Now we prove Proposition~\ref{prop_abstract_compactness}. Parametrize the $C_k$ by finite-energy $\jtil_k$-holomorphic maps
\begin{equation}
\util_k = (a_k,u_k) : \C\setminus \Gamma_k \to \R\times M
\end{equation}
where domains are equipped with the standard complex structure. Each
\[
\Gamma_k = \{z_{k,2},\dots,z_{k,n}\} \subset \C
\]
is a set of $n-1$ positive non-degenerate punctures, $\util_k$ is asymptotic to $\gamma_i$ at $z_{k,i}$ for all $i\in\{2,\dots,n\}$, and $z_{k,1}:= \infty$ is a positive non-degenerate puncture where $\util_k$ is asymptotic to $\gamma_1$.

For each $i$ choose a small compact tubular neighborhood $N_i$ of $\gamma_i$ such that the $N_i$ are pairwise disjoint, and each $N_i$ does not contain periodic orbits of the Reeb flow of $\alpha$ that are contractible in $N_i$.

\begin{lemma}
\label{lemma_right_parametrization}
After holomorphic reparametrizations we can achieve:
\begin{align}
& u_k^{-1}(M\setminus N_1) \subset \D \label{out_is_mapped_in} \\
& u_k^{-1}(\partial N_1) \cap \D \supset \{1,w_k\} \text{ where } \Re(w_k)\leq 0 \\
& \Gamma_k \subset \D \label{punctures_inside_the_disk} \\
& a_k(2) = 0 \label{real_component_at_2}
\end{align}
\end{lemma}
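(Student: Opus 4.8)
Here is the approach I would take.

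The plan is to exhaust the freedom in the choice of parametrization. That freedom consists of the biholomorphisms of $\C\cup\{\infty\}$ fixing the marked puncture $z_{k,1}=\infty$, i.e. the affine group $z\mapsto az+b$ ($a\in\C\setminus\{0\}$, $b\in\C$), together with the $\R$-translation $(a_k,u_k)\mapsto(a_k+c,u_k)$ on the symplectization. The four real parameters of the affine group will be spent achieving \eqref{out_is_mapped_in}, the inclusion $\{1,w_k\}\subset u_k^{-1}(\partial N_1)\cap\D$ with $\Re w_k\le 0$, and \eqref{punctures_inside_the_disk}; then \eqref{real_component_at_2} comes for free from the remaining $\R$-translation.

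First I would extract from the asymptotics that the ``interesting'' part of $\util_k$ sits over a compact piece of the domain. Since $\infty$ and $z_{k,2},\dots,z_{k,n}$ are non-degenerate punctures of $\util_k$ (Definition~\ref{def_non_deg_punctures}) at which $\util_k$ is asymptotic to $\gamma_1$ and to $\gamma_2,\dots,\gamma_n$ respectively, and the $N_i$ are pairwise disjoint, $u_k$ maps a punctured neighbourhood of $\infty$ into the interior $\mathring N_1$ of $N_1$ and a neighbourhood of each $z_{k,i}$ ($i\ge2$) into $M\setminus N_1$. Hence $K_k:=u_k^{-1}(M\setminus\mathring N_1)$ is a compact subset of $\C$ that contains $\Gamma_k$, with $u_k^{-1}(\partial N_1)\subset K_k$. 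Let $V_k$ be the connected component of the open set $u_k^{-1}(\mathring N_1)$ that contains $\{|z|>R\}$ for $R$ large. Using connectedness of $S$ one then checks: $\partial V_k\subset u_k^{-1}(\partial N_1)$; the set $\partial V_k$ separates $\infty$ from $\Gamma_k$ in $S^2$; and $z_{k,2}$ lies in the interior of the component $Q_k$ of $S^2\setminus V_k$ containing it (a whole neighbourhood of $z_{k,2}$ maps into $M\setminus N_1$, hence stays in $Q_k$).

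With this in hand I would normalize by composing three homeomorphisms of $S^2$, each fixing $0$ and $\infty$, so that all the properties above persist. \emph{(i)} Translate by $z\mapsto z-z_{k,2}$ so that $z_{k,2}=0$ and hence $0$ is interior to $Q_k$. \emph{(ii)} Dilate by $z\mapsto z/\rho_k$ with $\rho_k:=\max_{K_k}|z|>0$ (positive because $K_k$ is not a single point), so that $K_k\subset\D$ and the maximum is attained at some $z_0^{(k)}$ with $|z_0^{(k)}|=1$; since $K_k\subset\D$ with maximal modulus $1$, the connected set $\{|z|>1\}$ lies in $u_k^{-1}(\mathring N_1)$ and therefore in $V_k$, so a maximum-modulus argument gives $z_0^{(k)}\in\partial V_k\subset u_k^{-1}(\partial N_1)$, and $z_0^{(k)}\ne0$ because $0$ is interior to $Q_k$. \emph{(iii)} Rotate by $z\mapsto z\,\overline{z_0^{(k)}}$ so that $z_0^{(k)}$ becomes $1$. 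After (i)--(iii): \eqref{out_is_mapped_in} holds, \eqref{punctures_inside_the_disk} holds and in fact $\Gamma_k\subset\{|z|<1\}$, and $1\in u_k^{-1}(\partial N_1)\cap\D$. Moreover $\partial V_k$ still separates $0$ from $\infty$, so the arc $(-\infty,0]\cup\{\infty\}$ in $S^2$ must meet $\partial V_k$; any such intersection point is of the form $w_k=-t_k$ with $t_k>0$, satisfies $w_k\in u_k^{-1}(\partial N_1)\cap\D$ (as $w_k\in\partial V_k\subset K_k\subset\D$) and $\Re w_k<0$. Finally, since $\Gamma_k\subset\{|z|<1\}$, the point $z=2$ lies in the domain, so replacing $\util_k$ by $(a_k-a_k(2),u_k)$ establishes \eqref{real_component_at_2} without disturbing the rest.

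The argument is soft once the confinement near the binding orbits is in place; the step that needs care is the chain of separation statements for $\partial V_k$ --- that it separates $\infty$ from $\Gamma_k$ and from the origin, and that the modulus-maximizing point of $K_k$ lands on $\partial V_k$ --- since these are precisely what yield one point of $u_k^{-1}(\partial N_1)$ on $\partial\D$ and a second one in the closed left half-disk, the two anchors that prevent the reparametrized sequence from degenerating in the later compactness argument (Proposition~\ref{prop_abstract_compactness}). Note that, because only topological boundaries of connected components are used and no transversality of $u_k$ to $\partial N_1$ is invoked, no genericity of the fixed neighbourhoods $N_i$ is needed.
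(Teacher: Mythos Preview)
Your argument is correct, but it follows a genuinely different route from the paper's. The paper does not translate a puncture to the origin and then argue by separation; instead it takes the \emph{minimal enclosing disk} $D$ of the closure $F$ of $u_k^{-1}(M\setminus N_1)$ and observes that for any line $\ell$ through the center of $D$, the contact set $\partial D\cap F$ cannot lie entirely in one open half-plane determined by $\ell$ (else one could translate $D$ slightly and then shrink it, contradicting minimality). Picking any $w_0\in\partial D\cap F$ and letting $\ell$ be the diameter perpendicular to the radius through $w_0$ then yields a second point $w_1\in\partial D\cap F$ in the opposite closed half-disk; an affine map sending $D$ to $\D$ with $w_0\mapsto 1$ finishes the normalization, with the $\R$-shift giving $a_k(2)=0$ as in your argument. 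The trade-off: the paper's approach is a clean two-line classical convexity argument that places \emph{both} anchor points on $\partial\D$ and never singles out a particular puncture, whereas your approach trades this for a more topological picture (the component $V_k$ and its separating boundary), which yields $w_k$ on the negative real axis inside $\D$ rather than on the circle. Both produce exactly the two anchors needed for the subsequent compactness argument. One small wording fix: since $u_k$ is undefined on $\Gamma_k$, your $K_k$ should be declared as the \emph{closure} in $\C$ of $u_k^{-1}(M\setminus\mathring N_1)$; with that convention your claim $\Gamma_k\subset\{|z|<1\}$ is indeed justified (a full neighbourhood of each finite puncture lies in $K_k$, so no puncture can realize $\max_{K_k}|z|$).
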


\begin{proof}
For fixed $k$ the closure $F$ in $\C$ of the set $u_k^{-1}(M\setminus N_1)$ is compact with non-empty interior. This is true since $\util_k$ is asymptotic to $\gamma_1$ at $\infty$ and is asymptotic to orbits in $N_2\cup\dots\cup N_n \subset M\setminus N_1$ at punctures in $\Gamma_k$. In particular, $\Gamma_k$ is contained in the interior of $F$. Let $r$ be the infimum among radii of closed disks containing $F$. Then $r>0$ and there exists a closed disk $D$ of radius $r$ containing~$F$. Obviously $u_k^{-1}(\partial N_1) \supset \partial D \cap F \neq \emptyset$.

Let $\ell$ be a line such that $\ell \cap D$ is a diameter of $D$. We claim that $\partial D \cap F$ is not contained in one of the connected components of $\C\setminus \ell$. To see this we argue by contradiction and assume that some component of $\C\setminus \ell$ contains $\partial D \cap F$. If $u\neq0$ is perpendicular to $\ell$ and points towards this connected component then we can find $\epsilon>0$ small enough such that $D+\epsilon u$ contains $F$ in its interior. Hence some disk of radius smaller than that of $D+\epsilon u$ would contain $F$, in contradiction with the definition of $r$.

Let $w_0 \in \partial D \cap F$ and let $\ell$ be the line through the center of $D$ that is perpendicular to segment joining $w_0$ to the center of $D$. Let $H_0,H_1$ be the closed half-spaces determined by $\ell$, where $H_0$ contains $w_0$. Above we proved that there exists $w_1 \in \partial D \cap F \cap H_1$. Choose $A\neq0,B$ such that $\psi(z)=Az+B$ satisfies: $\psi(0)$ is the center of $D$, $\psi(\D) = D$ and $\psi(1)=w_0$. Then $\psi$ maps the imaginary axis onto $\ell$, and $\Re(\psi^{-1}(w_1))\leq 0$. Redefining $\util_k$ to be $\util_k \circ \psi$ and translating it by $-a(2)$ in the $\R$-direction of $\R\times M$ we achieve all the desired properties.
\end{proof}

Let us denote by $\Sigma$ an arbitrarily fixed page of $\Theta$, so that $\Sigma$ is a Seifert surface for $L$ representing $b$. Along each $\gamma_l$ we choose a homotopy class of $d\alpha$-symplectic trivialization of $\xi_{\gamma_l}$ aligned with the normal of $\Sigma$. The collection of these homotopy classes is denoted by $\tau_\Sigma$. Note that trivializations representing $\tau_\Sigma$ can be rescaled to give $d(f_k\alpha)$-symplectic trivializations. We continue to write $\tau_\Sigma$ to denote their homotopy classes.

Before proceeding with the analysis we recall the description of $H_1(M\setminus L)$ from appendix~\ref{app_comp_H_1}. Lemma~\ref{lemma_topological} provides an isomorphism $$ H_1(M\setminus L) = \frac{H_1({\rm page})}{{\rm im}({\rm id}-h_*)} \oplus \Z e $$ where $h$ is the monodromy of $\Theta$ and $e$ is a $1$-cycle with algebraic intersection number~$+1$ with $b$. The first factor is seen geometrically in $M\setminus L$ as follows. Consider $\gamma_i'$ the loop obtained by pushing $\gamma_i$ into $M\setminus L$ in the direction of a page. We see $\gamma_i'$ as a $1$-cycle. Since $\Theta$ is planar, the first factor is the free abelian group generated by $\gamma'_1,\dots,\gamma'_n$ modulo the single relation $\gamma_1'+ \dots + \gamma_n' = 0$. This description of $H_1(M\setminus L)$ will be used many times in our analysis.

\begin{lemma}
\label{lemma_basic_homology_class}
Let $c$ be a loop in $M\setminus L$ that is $C^0$-close to an $m$-fold cover of $\gamma_l$ and has algebraic intersection number with $b$ equal to zero. Then $c$ is homologous to $m\gamma_l'$ in $M\setminus L$.
\end{lemma}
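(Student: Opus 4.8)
The statement is essentially a computation in $H_1(M\setminus L)$ using the description recalled just before the lemma. The plan is to exploit the fact that $c$ and $m\gamma_l'$ are ``close'' near $\gamma_l$ in a controlled way and then match the two homology classes by checking they agree after applying a complete set of homomorphisms detecting $H_1(M\setminus L)$.

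First I would record the two constraints that $c$ and $m\gamma_l'$ share. Both are loops contained in a small tubular neighborhood $N_l$ of $\gamma_l$ (after shrinking, using that $c$ is $C^0$-close to the $m$-fold cover of $\gamma_l$), and both have algebraic intersection number zero with $b$. The first point means that, in the decomposition $H_1(M\setminus L) = H_1({\rm page})/{\rm im}({\rm id}-h_*) \oplus \Z e$, the class $[c]$ lies in the image of $H_1(N_l\setminus\gamma_l) \to H_1(M\setminus L)$. Now $N_l\setminus\gamma_l$ is a solid-torus-minus-core, so $H_1(N_l\setminus\gamma_l)\cong\Z^2$, generated by a meridian $\mu_l$ of $\gamma_l$ and a longitude; since our trivialization/page framing is fixed, I would take the longitude to be precisely $\gamma_l'$ (the page-direction push-off). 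Thus $[c] = p\,[\mu_l] + q\,[\gamma_l']$ in $H_1(M\setminus L)$ for some integers $p,q$. The meridian $\mu_l$ of $\gamma_l$ has algebraic intersection number $0$ with $b$ (it bounds a small disk transverse to $\gamma_l$, hence disjoint from a page chosen to meet $\gamma_l$ only along the binding — or more simply, ${\rm int}(\mu_l,b)$ is computed to be $0$ since a page can be pushed off $\mu_l$), while $\gamma_l'$ also has intersection number $0$ with $b$. Hence the intersection-with-$b$ condition on $c$ is automatic and gives no information; instead I must pin down $p$ and $q$ directly.

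The value $q=m$ comes from comparing winding around $\gamma_l$: the homomorphism $H_1(N_l\setminus\gamma_l)\to\Z$ given by the linking number with $\gamma_l$ (equivalently, the degree of the loop's projection to the $\gamma_l$-direction, computed in the page framing) sends $\mu_l\mapsto 0$, $\gamma_l'\mapsto 1$, and sends $c\mapsto m$ because $c$ is $C^0$-close to the $m$-fold cover of $\gamma_l$ and a small enough $C^0$-neighborhood preserves this degree. So $q=m$. To see $p=0$ I would use the meridian detection on the level of $H_1(M\setminus L)$: the meridian $\mu_l$ maps to the generator ``$\gamma_l'$'' of the first summand in the sense that, under the presentation where the first factor is generated by $\gamma_1',\dots,\gamma_n'$ with the single relation $\sum_i \gamma_i'=0$, one has $[\mu_l]=-[\gamma_l']$ — wait, more carefully: a meridian of $\gamma_l$ is homologous in $M\setminus L$ to the class dual to linking with $\gamma_l$, and since $L$ is null-homologous and the pages realize a Seifert surface, the meridian of $\gamma_l$ equals (up to sign) the boundary of a page-complement annulus, which in the given presentation is $\gamma_l'$ itself. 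Rather than wrestle with signs abstractly, the cleanest route is: $c$ is freely homotopic inside $N_l$ to the concatenation of the $m$-fold cover of $\gamma_l'$ with $p$ copies of $\mu_l$, and I can compute $p$ as the signed count of intersections of a spanning surface for $c$ with $\gamma_l$ — but $c$ lies in $N_l\setminus\gamma_l$ together with its obvious spanning annulus, giving $p=0$ directly once the loop is isotoped into standard position as $(m\gamma_l')\cdot(\mu_l)^p$ and one observes the $C^0$-closeness forces $p=0$ (a loop $C^0$-close to an $m$-fold cover of a curve cannot wind meridionally).

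The main obstacle, and the step requiring care, is the last one: making precise the claim that a loop $C^0$-close to the $m$-fold iterate of $\gamma_l$ is freely homotopic \emph{inside the small neighborhood} $N_l\setminus\gamma_l$ to $m\gamma_l'$ with no meridional winding. The clean way is to use the product structure $N_l\cong S^1\times\D^2$ with $\gamma_l = S^1\times\{0\}$ and $\gamma_l'= S^1\times\{{\rm pt}\}$ aligned with $\tau_\Sigma$; a loop whose image lies in $S^1\times\{|z|<\epsilon\}$ and which is $C^0$-$\epsilon$-close to $t\mapsto (mt,0)$ is, for $\epsilon$ small, homotopic within $S^1\times(\D^2\setminus 0)$ to $t\mapsto(mt,\epsilon/2)$ by the straight-line homotopy in the $\D^2$ factor (which misses $0$ since $c$ stays within distance $\epsilon$ of the zero section while the target stays at radius $\epsilon/2$ — this needs the neighborhood chosen so the two never collide, i.e. $c$ within $\epsilon/4$ of the core). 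That homotopy shows $[c]=m[\gamma_l']$ in $H_1(N_l\setminus\gamma_l)$, hence in $H_1(M\setminus L)$, completing the proof. I would phrase this carefully because it is exactly where the hypothesis ``$C^0$-close to an $m$-fold cover'' (as opposed to merely ``freely homotopic to'') is used, and where one must invoke the alignment of the product framing with $\tau_\Sigma$ so that the longitude is genuinely $\gamma_l'$.
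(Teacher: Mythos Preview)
There is a genuine gap, and it stems from a wrong claim early on: you assert that the meridian $\mu_l$ of $\gamma_l$ has algebraic intersection number $0$ with $b$. This is false. Near $\gamma_l$ a page of the open book looks like a half-plane $\{(\theta,w): \arg w = \mathrm{const}\}$, and a small meridian $t\mapsto (\theta_0,\epsilon e^{2\pi i t})$ crosses such a half-plane exactly once; hence ${\rm int}(\mu_l,b)=\pm 1$. In the paper's argument this is precisely the content of the sentence ``any loop $t\mapsto(\theta(t),w(t))$ in $N\setminus L$ has algebraic intersection number with $b$ equal to $\wind(w(t))$.'' So in your decomposition $[c]=p[\mu_l]+q[\gamma_l']$ one gets ${\rm int}(c,b)=p$, and the hypothesis ${\rm int}(c,b)=0$ is exactly what forces $p=0$. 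You discarded the only piece of information that pins down the meridional coefficient.

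Having thrown away that hypothesis, you then try to extract $p=0$ from $C^0$-closeness alone, and this is impossible. The loop $t\mapsto (mt,\epsilon e^{2\pi i k t})$ is $C^0$-close to the $m$-fold cover of $\gamma_l$ for every $k\in\Z$ and every small $\epsilon$, yet has meridional winding $k$. Your straight-line homotopy in the $\D^2$ factor does not fix this: if $w(t)=-\epsilon/8$ lies on the negative real axis while the target point is $\epsilon/2$ on the positive real axis, the straight segment passes through $0$, so the homotopy leaves $\D^2\setminus\{0\}$. The paper's proof avoids all of this by using the intersection hypothesis to read off $\wind(w(t))=0$ directly, after which the loop is homotopic in $N\setminus\gamma_l$ to $\theta\mapsto(m\theta,1)\sim m\gamma_l'$.
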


\begin{proof}
Consider a small tubular neighborhood $N$ of $\gamma_l$ equipped with coordinates $(\theta,w) \in \R/\Z\times\D$ with respect to which $\gamma_l = (x_l,T_l)$ is represented as $x_l(T_l\theta) = (\theta,0)$, and such that $\gamma_l'$ is homologous to $\theta\mapsto (\theta,1)$ in $M\setminus L$.  Since $\gamma_l'$ has zero algebraic intersection number with $b$ we can conclude that any loop $t\mapsto (\theta(t),w(t))$ in $N\setminus L$ has algebraic intersection number with $b$ equal to $\wind(w(t))$. Let $c$ be a loop inside $N$. Represent $c$ as $t \in \R/\Z\mapsto (\theta(t),w(t))$. If $c$ is $C^0$-close enough to the $m$ cover of $\gamma_l$ then it must be true that $t\mapsto \theta(t)$ has degree equal to $m$. Moreover, $\wind(w(t))=0$ by the assumption on the intersection number with $b$. Hence $c$ is homotopic in $N\setminus\gamma_l$ to $\theta\mapsto (m\theta,1)$. The latter is homologous to $m\gamma_l'$ in $M\setminus L$.
\end{proof}

\begin{lemma}
\label{lemma_explicit_homology_class}
For every $k$ one can find $r>0$ small enough such that the loop $t\mapsto u_k(r^{-1}e^{i2\pi t})$ is homologous to $\gamma_1'$ in $M\setminus L$, and for every $l=2,\dots,n$ the loop $t\mapsto u_k(z_{k,l}+r e^{-i2\pi t})$ is homologous to $\gamma'_l$ in $M\setminus L$.
\end{lemma}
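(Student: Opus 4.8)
The plan is to reduce each assertion to Lemma~\ref{lemma_basic_homology_class}. Fix $k$ and work with the parametrization from Lemma~\ref{lemma_right_parametrization}, so that $\util_k=(a_k,u_k)$ is defined on $\C\setminus\Gamma_k$, is asymptotic to $\gamma_1$ at the positive puncture $\infty$, and is asymptotic to $\gamma_l$ at the positive puncture $z_{k,l}$ for $2\le l\le n$. Since each $\gamma_l$ is prime, the covering multiplicity of the asymptotic limit at every puncture is $1$. By the definition of $\M_{\jtil_k,0,\delta}(\gamma_1,\dots,\gamma_n;\emptyset)$ all punctures are non-degenerate in the sense of Definition~\ref{def_non_deg_punctures}, so Theorem~\ref{thm_partial_asymptotics} and Theorem~\ref{thm_asymptotic_formula_deg_case} apply at each of them.

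First I would record the elementary identification of the loops in the statement with circles of large, respectively small, holomorphic polar radius. Using the holomorphic chart $\zeta\mapsto 1/\zeta$ at $\infty$, positive holomorphic polar coordinates $(s,t)$ correspond to the point $e^{2\pi(s+it)}$, so the loop $t\mapsto u_k(r^{-1}e^{i2\pi t})$ equals $t\mapsto u_k(s,t)$ for $s=-\tfrac{1}{2\pi}\log r$; by Theorem~\ref{thm_partial_asymptotics} it therefore $C^0$-converges, as $r\to0^+$, to a parametrization $t\mapsto x_1(T_1t+t_0)$ of $\gamma_1$ traversed once in the positive direction. Likewise, using the chart $\zeta\mapsto z_{k,l}+\zeta$ at $z_{k,l}$, the loop $t\mapsto u_k(z_{k,l}+re^{-i2\pi t})$ is exactly $t\mapsto u_k(s,t)$ with $s=-\tfrac{1}{2\pi}\log r$, hence $C^0$-converges to a positively oriented parametrization of $\gamma_l$ as $r\to0^+$. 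In particular, for $r$ small these loops avoid the finite set $\Gamma_k$ and are as $C^0$-close as we like to a simply covered, positively oriented copy of the relevant binding component.

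Next I would invoke hypothesis (H) of Proposition~\ref{prop_abstract_compactness}: $C_k$ does not meet $\R\times L$, and any loop in $C_k$ projects to $M\setminus L$ as a loop with algebraic intersection number with $b$ equal to zero. Since the circles $\{|z|=r^{-1}\}$ and $\{|z-z_{k,l}|=r\}$ are loops in $\C\setminus\Gamma_k$ (for $r$ small), their $u_k$-images are precisely such projected loops, hence have algebraic intersection number zero with $b$. Applying Lemma~\ref{lemma_basic_homology_class} with $m=1$ to $\gamma_1$, respectively to each $\gamma_l$, then gives that $t\mapsto u_k(r^{-1}e^{i2\pi t})$ is homologous to $\gamma_1'$ and that $t\mapsto u_k(z_{k,l}+re^{-i2\pi t})$ is homologous to $\gamma_l'$ in $M\setminus L$, which is the claim.

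The one delicate point, and the only place that requires care, is purely a matter of conventions: one must verify that the orientation of each loop --- which depends on the sign of the exponent $e^{\pm i2\pi t}$ together with whether the chart is taken at $\infty$ or at a finite puncture --- matches the positive parametrization of $\gamma_l$ furnished by the asymptotic formula, so that Lemma~\ref{lemma_basic_homology_class} is applied with $m=+1$ rather than $m=-1$. This is exactly why the statement uses $e^{i2\pi t}$ at $\infty$ and $e^{-i2\pi t}$ at the punctures $z_{k,l}$. Beyond this bookkeeping there is no genuine obstacle.
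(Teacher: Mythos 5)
Your proposal is correct and follows essentially the same route as the paper: use the non-degenerate puncture asymptotics to show the small circle is $C^0$-close to the (positively oriented, simply covered) binding orbit, invoke hypothesis (H) for vanishing intersection with $b$, and then apply Lemma~\ref{lemma_basic_homology_class} with $m=1$. Your explicit check of the orientation conventions (why $e^{i2\pi t}$ at $\infty$ versus $e^{-i2\pi t}$ at the finite punctures) is a useful elaboration of what the paper leaves to the reader with ``the case $l=1$ is handled analogously.''
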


\begin{proof}
From the fact that $z_{k,l}$ is a non-degenerate positive puncture where $\util_k$ is asymptotic to $\gamma_l$ it follows that for $r>0$ small enough the loop $t\mapsto u_k(z_{k,l}+r e^{-i2\pi t})$ is uniformly close to $\gamma_l$ and, by hypothesis (H), it has algebraic intersection number with $b$ equal to zero. Lemma~\ref{lemma_basic_homology_class} implies that $t\mapsto u_k(z_{k,l}+r e^{-i2\pi t})$ is homologous to $\gamma'_l$ in $M\setminus L$. The case $l=1$ is handled analogously.
\end{proof}

Up to selection of a subsequence we may assume that
\begin{equation}
\label{limit_points_Gamma_k_exist}
\lim_{k\to\infty} z_{k,i} \ \ \ \text{exists for every $i$.}
\end{equation}
Let $\Gamma$ the set of such limits. Hence $\Gamma \subset \D$ by Lemma~\ref{lemma_right_parametrization}.

Choose some reference $\R$-invariant Riemannian metric on $\R\times M$. Domains in $\C$ are equipped with standard euclidean metric. With these choices we can consider norms of differentials $d\util_k$.

\begin{lemma}\label{lemma_bounds_derivatives}
The sequence $d\util_k$ is $C^0_{\rm loc}$-bounded on $\C\setminus \Gamma$.
\end{lemma}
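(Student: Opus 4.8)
The plan is to establish a uniform $C^0_{\rm loc}$ gradient bound by the standard bubbling-off argument: assume it fails, rescale to produce a non-constant finite-energy plane or sphere, and derive a contradiction from the energy and intersection-theoretic control already available for the sequence $C_k$.

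\textbf{Step 1: Set-up of the bubbling argument.} Suppose, for contradiction, that $d\util_k$ is not $C^0_{\rm loc}$-bounded on $\C\setminus\Gamma$. Then there is a compact set $K\subset\C\setminus\Gamma$ and points $\zeta_k\in K$ with $|d\util_k(\zeta_k)|\to+\infty$. After passing to a subsequence we may assume $\zeta_k\to\zeta_\infty\in\C\setminus\Gamma$. Apply the Hofer--Wysocki--Zehnder rescaling lemma (see \cite{93}, or the standard treatment in \cite{sftcomp}): there exist $\zeta_k'\to\zeta_\infty$, $\epsilon_k\to0^+$ and $R_k:=|d\util_k(\zeta_k')|\to+\infty$ with $\epsilon_kR_k\to+\infty$, such that the rescaled maps $v_k(z):=\util_k(\zeta_k'+z/R_k)$, suitably translated in the $\R$-factor so that $a_k$ of the base point is $0$, converge in $C^\infty_{\rm loc}(\C)$ to a non-constant $\jtil$-holomorphic map $v:\C\to\R\times M$ with $|dv(0)|=1$ and $E(v)\leq \liminf_k E(\util_k)<\infty$. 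Here we use $\jtil_k\to\jtil$ in $C^\infty$. The puncture at $\infty$ of $v$ is either removable, in which case $v$ extends to a non-constant $\jtil$-holomorphic sphere, or non-removable, in which case $v$ is asymptotic to a periodic Reeb orbit $\gamma_\ast$ of $\alpha$.

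\textbf{Step 2: Ruling out the sphere.} A non-constant $\jtil$-holomorphic sphere would have strictly positive $d\alpha$-energy $\int v^*d\alpha>0$ while being closed, which contradicts Stokes' theorem ($d\alpha$ is exact on $\R\times M$, or more precisely $\int_{S^2}v^*d(\phi\alpha)=0$ for all $\phi$), so $E(v)=0$, contradicting non-constancy. Hence $\infty$ is a non-removable puncture and $v$ is a finite-energy plane asymptotic to some $\gamma_\ast=(x_\ast,T_\ast)\in\mathcal P(\alpha)$, with $0<E(v)=T_\ast$.

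\textbf{Step 3: Locating $\gamma_\ast$ and deriving the contradiction.} The key point is to control where the bubble $\gamma_\ast$ sits. Because $v$ arises as a $C^\infty_{\rm loc}$ limit of rescalings of the $C_k$, a large circle $|z|=\rho$ in the domain of $v$ maps under the base component $v$ to a loop $C^0$-close to $\gamma_\ast$; pulling this back, for $k$ large there is an embedded loop $\beta_k$ in the domain of $\util_k$ with $u_k(\beta_k)$ $C^0$-close to $x_\ast(T_\ast\cdot)$ and, by hypothesis (H), bounding in $C_k$ — so $[u_k(\beta_k)]=0$ in $H_1(M\setminus L)$ when $\gamma_\ast\subset M\setminus L$. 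Now split into two cases. If $\gamma_\ast\subset M\setminus L$, then hypothesis (b) (every periodic orbit in $M\setminus L$ has non-zero intersection number with $b$) forces ${\rm int}(u_k(\beta_k),\Sigma)\neq0$, contradicting $[u_k(\beta_k)]=0$ in $H_1(M\setminus L)$ together with $[\Sigma]=b$. If instead $\gamma_\ast$ is a cover of some binding component $\gamma_l$, then $v$ is a finite-energy plane asymptotic to $\gamma_l^m$; since $\mu_{\CZ}^\Theta(\gamma_l)\geq1$, i.e.\ $\rho^\Theta(\gamma_l)>0$ by (a), we have $\wind_\infty(v,\infty,\tau_\Sigma)\geq 1$, whence by Remark~\ref{rmk_wind_pi_wind_infty} (applied to the plane $v$, for which $\chi-\#\Gamma=1-1=0$) we get $\wind_\pi(v)=\wind_\infty(v,\infty,\tau_\Sigma)-\big(\text{contribution}\big)$; more robustly, $v$ must intersect the $\jtil$-holomorphic surface $\R\times L$ (a plane asymptotic to $\gamma_l^m$ with this asymptotic winding links $\R\times\gamma_l$), or its base component links $\gamma_l$, producing via positivity of intersections an intersection of $\util_k$ with $\R\times L$ for large $k$ — contradicting hypothesis (H). In either case we reach a contradiction, so no bubbling occurs and $d\util_k$ is $C^0_{\rm loc}$-bounded on $\C\setminus\Gamma$, as claimed.

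The main obstacle is Step 3: making the ``$\gamma_\ast$ is a cover of $\gamma_l$'' case airtight, i.e.\ verifying that the bubble either links $\R\times L$ or has an asymptotic approach incompatible with hypothesis (H). This requires the Siefring-type linking/winding estimate for a finite-energy plane asymptotic to $\gamma_l^m$ with positive transverse rotation number, using that $\wind_\infty$ is at least $\lceil m\rho^\Theta(\gamma_l)\rceil\geq1$ and Theorem~\ref{thm_precise_asymptotics} to conclude the base loop $u_k(\beta_k)$ winds nontrivially around $\gamma_l$ inside a Martinet tube, hence has nonzero intersection with a page — again contradicting (H). The rescaling and removable-singularity steps are entirely standard.
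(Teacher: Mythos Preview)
Your overall bubbling strategy is sound, and the case $\gamma_\ast\subset M\setminus L$ is handled correctly. The genuine gap is in the case where $\gamma_\ast=\gamma_l^m$ covers a binding component.

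The claimed inequality $\wind_\infty(v,\infty,\tau_\Sigma)\geq\lceil m\rho^\Theta(\gamma_l)\rceil\geq 1$ is false. At a \emph{positive} puncture the asymptotic eigenvalue is negative, so one only gets the \emph{upper} bound $\wind_\infty(v,\infty,\tau_\Sigma)\leq\lfloor m\rho^\Theta(\gamma_l)\rfloor$; for $\mu_{\CZ}^{\tau_\Sigma}(\gamma_l)=1$ and $m=1$ this is just $0$. The lower bound you have in mind holds at \emph{negative} punctures (compare Claim~II in the proof of Lemma~\ref{lemma_implied_existence_step2}), not at the positive end of a bubble plane. Moreover, $\alpha$ is allowed to be degenerate in this subsection, so the bubble $v$ need not have a non-degenerate puncture at $\infty$, Theorem~\ref{thm_precise_asymptotics} is unavailable, and $\wind_\infty$ may not even be defined. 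Hence the proposed mechanism ``nonzero winding $\Rightarrow$ nonzero intersection with a page $\Rightarrow$ violation of (H)'' cannot be made to work.

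The paper closes this case by a purely homological argument that needs no asymptotic formula. The small loop $\beta_k$ bounds a disk in $\C\setminus\Gamma_k$ (since the bubble point is not in $\Gamma$), and $u_k$ takes values in $M\setminus L$ by (H), so $[u_k(\beta_k)]=0$ in $H_1(M\setminus L)$. On the other hand $u_k(\beta_k)$ is $C^0$-close to an $m$-fold cover of $\gamma_l$ and has intersection number zero with $b$ (again by (H)), so by Lemma~\ref{lemma_basic_homology_class} it represents $m\gamma'_l$. Thus $m\gamma'_l=0$ in $H_1(M\setminus L)$, which by Lemma~\ref{lemma_topological} together with the standing assumption $n\geq 2$ forces $m=0$, contradicting $m\geq 1$. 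This homological step, relying essentially on $n\geq 2$ and the structure of $H_1(M\setminus L)$, is what replaces your winding estimate; once you insert it, your bubbling outline becomes a valid proof.
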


\begin{proof} 
Consider the set
\begin{equation*}
Z= \{ \zeta\in\C\setminus\Gamma \mid \exists \ k_l\to+\infty, \ \zeta_l\to\zeta \ \text{satisfying} \ |d\util_{k_l}(\zeta_l)|\to+\infty \}.
\end{equation*}
The goal of the lemma is to show that $Z$ is empty. We argue indirectly, and assume that $Z\neq\emptyset$. We must have $Z\subset \D$. If not then we can use~\eqref{out_is_mapped_in} and the analysis from~\cite{93} to conclude that a non-constant finite-energy plane bubbles-off inside $N_1$. All asymptotic limits of this plane must be periodic orbits of the Reeb flow of $\alpha$ inside $N_1$ that are contractible in $N_1$. This is in contradiction to the fact that $N_1$ contains no such periodic orbits.

The sequence $d\util_k$ is $C^0_{\rm loc}$-bounded in $\C\setminus (\Gamma\cup Z)$. Moreover, $2\not\in \Gamma\cup Z$ by~\eqref{punctures_inside_the_disk} and the inclusion $Z\subset \D$ proved above. Thus we get $C^1_{\rm loc}$ bounds for the sequence $\util_k$ on $\C\setminus(\Gamma\cup Z)$. By elliptic boot-strapping arguments there is no loss of generality to assume, up to selection of a subsequence, that $\util_k$ is $C^\infty_{\rm loc}(\C\setminus(\Gamma\cup Z))$-convergent to a smooth $\jtil$-holomorphic map
\begin{equation}\label{limit_map_util}
\util = (a,u) : \C\setminus (\Gamma\cup Z) \to \R\times M
\end{equation}
with finite Hofer energy. 

We can assume also that $\util$ is not constant. In fact, up to choice of a subsequence, we can assume that a small conformal disk $D$ around $\zeta\in Z$ satisfies $\int_D u_k^*d(f_k\alpha) \geq \sigma>0 \ \forall k$, where $\sigma>0$ is any positive constant smaller than the smallest period among periodic orbits of contact forms in $\{\alpha,f_1\alpha,f_2\alpha,f_3\alpha,\dots\}$ fixed {\it a priori}. This inequality is a consequence of the fact that a non-constant finite-energy $\jtil$-holomorphic plane bubbles off from $\zeta$ in the limit, up to a subsequence. Stokes theorem, and the fact that the loop $u_k|_{\partial D}$ $C^\infty$-converges to the loop $u|_{\partial D}$, imply that $\int_{\partial D}u^*\alpha \geq \sigma$. Hence $\zeta$ is a non-removable puncture and $\util$ is not a constant map. In fact, $\zeta$ is a negative puncture. 

Fix any $\zeta\in Z$ and let $\gamma_\zeta = (x_\zeta,T_\zeta)$ be one of the (possibly many) asymptotic limits of $\util$ at $\zeta$. We can find a sequence $r_l \to 0^+$ and $t_0\in\R$ such that $u(\zeta+r_le^{i2\pi t})$ $C^\infty$-converges to the loop $x_\zeta(T_\zeta t+t_0)$ as $l\to+\infty$. If $\gamma_\zeta \subset M\setminus L$ then $x_\zeta(T_\zeta t)$ has non-zero intersection number with the class $b$, by the standing assumption~(b). The same must be true then for $u(\zeta+r_le^{i2\pi{t}})$ if $l\gg1$, hence also for $u_k(\zeta+r_le^{i2\pi t})$ if $k\gg1$. But each $\util_k$ does not intersect $\R\times L$; this is a consequence of hypothesis (H). Hence the loop $u_k(\zeta+r_le^{i2\pi{t}})$ is contractible in $M\setminus L$ and its algebraic intersection number with $b$ vanishes. We used that $\zeta+r_le^{i2\pi{t}}$ is contractible in $\C\setminus\Gamma_k$. This contradiction proves that $\gamma_\zeta = \gamma_{j_*}^m = (x_{j_*},mT_{j_*})$ for some $j_* \in \{1,\dots,n\}$ and some $m\geq1$. This argument shows independently that if $k,l$ are large enough then the loop $u_k(\zeta+r_le^{i2\pi{t}})$ has vanishing algebraic intersection number with $b$. 

It follows from Lemma~\ref{lemma_basic_homology_class} that $t\mapsto u_k(\zeta+r_le^{i2\pi{t}})$ is homologous to $m\gamma'_{j_*}$ in $M\setminus L$ when $k,l\gg1$. Combining with Lemma~\ref{lemma_explicit_homology_class} and hypothesis (H) we obtain the identity
\[
\gamma_1'+ \dots + \gamma_n' \equiv m\gamma'_{j_*} \qquad \text{in $H_1(M\setminus L)$}.
\]
By our standing assumption~\eqref{n_at_least_2_deg} this identity contradicts Lemma~\ref{lemma_topological} since $m\geq1$. We have shown that $Z=\emptyset$.
\end{proof}

Lemma~\ref{lemma_bounds_derivatives} and~\eqref{real_component_at_2} together give $C^1_{\rm loc}$ bounds for $\util_k$ on $\C\setminus\Gamma$. By elliptic boot-strapping arguments there is no loss of generality, up to selection of a subsequence, to assume that $\util_k$ converges in $C^\infty_{\rm loc}(\C\setminus \Gamma)$ to a smooth $\jtil$-holomorphic map
\begin{equation}
\util=(a,u):\C\setminus \Gamma \to \R\times M
\end{equation}
with Hofer energy not larger than $\sum_{l}\int_{\gamma_l}\alpha$.

\begin{lemma}
The map $\util$ is not constant.
\end{lemma}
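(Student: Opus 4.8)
Looking at the statement, I need to prove that the limiting map $\util=(a,u):\C\setminus\Gamma \to \R\times M$, obtained as a $C^\infty_{\rm loc}$-limit of the curves $\util_k$ after the reparametrizations of Lemma~\ref{lemma_right_parametrization}, is not a constant map.

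\textbf{The plan.} The key point is that the normalizations \eqref{out_is_mapped_in}--\eqref{real_component_at_2} were set up precisely to prevent the limit from escaping or collapsing to a constant. I would argue by contradiction: suppose $\util$ is constant. Since $a_k(2)=0$ for all $k$ by \eqref{real_component_at_2}, the limit satisfies $a(2)=0$, so if $\util \equiv (a_0,p_0)$ is constant then $a_0 = 0$ and $u \equiv p_0$ for some $p_0 \in M$. I would then exploit the other normalizations, which force $u_k$ to send the boundary circle $\partial\D$ (or at least the two marked points $1$ and $w_k$ on it, with $u_k(1), u_k(w_k) \in \partial N_1$) near $\partial N_1$, while forcing all the ``action'' near the binding component $\gamma_1$ to happen inside $\D$. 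Concretely, by \eqref{out_is_mapped_in} we have $u_k^{-1}(M\setminus N_1)\subset\D$, so $u_k(\C\setminus\D)\subset N_1$, and $u_k$ is asymptotic to $\gamma_1$ at $\infty$; hence the energy of $\util_k$ restricted to $\C\setminus\D$ is at least the period $T_1$ of $\gamma_1$ (by Stokes and the asymptotic behavior, the $d\alpha_k$-area plus the contact-form integral over $\partial\D$ recovers $T_1$ in the limit $R\to\infty$).

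\textbf{Key steps.} First, I would observe that $2 \notin \Gamma$ (this follows from \eqref{punctures_inside_the_disk} combined with the fact, established in the proof of Lemma~\ref{lemma_bounds_derivatives}, that $Z\subset\D$ and $2\notin\Gamma\cup Z$ — actually $2\notin\D$, so $2$ is a regular point of the convergence). So the convergence $\util_k \to \util$ is genuine $C^\infty$ near $z=2$, and more generally on $\C\setminus(\overline{\D}\cup\Gamma)$ since $\Gamma\subset\D$. Second, I would use Stokes' theorem on the region $\{R_1 \le |z| \le R_2\} \subset \C\setminus\overline{\D}$: the integral $\int_{\C\setminus\D} u_k^* d\alpha_k$ together with $\int_{|z|=R} u_k^*\alpha_k$ for large $R$ is controlled by $T_1$ (the period of the asymptotic orbit $\gamma_1$ at $\infty$), and since $u_k(\C\setminus\D) \subset N_1$ with $u_k$ asymptotic to the generator $\gamma_1$ of the core of $N_1$, there is a definite lower bound: for any fixed large circle $|z| = R$, $\int_{|z|=R} u^*\alpha = T_1 > 0$ in the limit, because the loop $t\mapsto u_k(Re^{i2\pi t})$ is homotopic within $N_1\setminus\gamma_1$ to a loop wrapping once around the core (by an argument like the one in Lemma~\ref{lemma_explicit_homology_class}, or simply because $\util_k$ is asymptotic to the simply covered orbit $\gamma_1$ at $\infty$ and the degree of the angular component is $1$). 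Passing to the limit, $\int_{|z|=R} u^*\alpha = T_1 > 0$, which is impossible if $u$ is constant since then $u^*\alpha \equiv 0$.

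\textbf{Main obstacle.} The delicate point is justifying that no energy is lost on the region $\C\setminus\D$ in the limit, i.e.\ that the loop $t\mapsto u(Re^{i2\pi t})$ really does carry the full period $T_1$, rather than $\util$ degenerating there with bubbling hidden in $Z\cap(\C\setminus\D)$ — but this was already ruled out in Lemma~\ref{lemma_bounds_derivatives}, where it is shown $Z=\emptyset$ and, en route, that $Z\subset\D$ because a bubble in $\C\setminus\D$ would produce a non-constant finite-energy plane inside $N_1$ asymptotic to an orbit contractible in $N_1$, contradicting the choice of $N_1$. So I would invoke $Z=\emptyset$ (Lemma~\ref{lemma_bounds_derivatives}) to get $C^1_{\rm loc}$ bounds on all of $\C\setminus\Gamma$, hence genuine $C^\infty_{\rm loc}(\C\setminus\Gamma)$-convergence $\util_k\to\util$, and then the Stokes/degree argument on large circles $|z|=R$ goes through cleanly: $\int_{|z|=R}u^*\alpha = \lim_k \int_{|z|=R} u_k^*\alpha_k = T_1 > 0$, forcing $\util$ to be non-constant. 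Alternatively, and perhaps more cleanly, one can simply note that $u_k(1) \in \partial N_1$ and $u_k(w_k) \in \partial N_1$ while $u_k$ is asymptotic to $\gamma_1 \subset \mathrm{int}\,N_1$ at $\infty$, so $\inf_{|z|\ge 1} \mathrm{dist}(u_k(z), \gamma_1)$ is bounded below by a positive constant (the distance from $\partial N_1$ to $\gamma_1$) attained on $\partial\D$, yet for $|z|=R$ large $u_k(Re^{i2\pi t})$ is uniformly close to $\gamma_1$; passing to the limit, $u$ takes a value in $\partial N_1$ and also values arbitrarily close to $\gamma_1$, so $u$ is non-constant.

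\begin{proof}
We argue by contradiction, assuming that $\util$ is constant. By~\eqref{real_component_at_2} we have $a_k(2)=0$ for all $k$, and by~\eqref{punctures_inside_the_disk} together with the inclusion $\Gamma\subset\D$ established in~\eqref{limit_points_Gamma_k_exist} we have $2\notin\Gamma$. Hence $a(2)=\lim_k a_k(2)=0$, and the constant value of $\util$ is $(0,p_0)$ for some $p_0\in M$; in particular $u\equiv p_0$.

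By Lemma~\ref{lemma_bounds_derivatives} we have $Z=\emptyset$, so $\util_k$ converges to $\util$ in $C^\infty_{\rm loc}(\C\setminus\Gamma)$. Since $\Gamma\subset\D$, this convergence is genuine and uniform on any compact subset of $\C\setminus\D$; in particular on each circle $|z|=R$ with $R>1$ we have $u_k|_{|z|=R}\to u|_{|z|=R}$ in $C^\infty$.

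By~\eqref{out_is_mapped_in} the map $u_k$ sends $\C\setminus\D$ into $N_1$, and $\util_k$ is asymptotic to the simply covered orbit $\gamma_1$ at $\infty$. Choose coordinates $(\theta,w)\in\R/\Z\times\D$ on $N_1$ in which $\gamma_1$ is represented as $\theta\mapsto(\theta,0)$, as in the proof of Lemma~\ref{lemma_explicit_homology_class}. For $R>1$ write $u_k(Re^{i2\pi t})=(\theta_k(R,t),w_k(R,t))$. Since $\util_k$ is asymptotic to $\gamma_1$ at $\infty$, for each fixed $k$ and all sufficiently large $R$ the degree of $t\mapsto\theta_k(R,t)$ equals $1$; moreover, by Theorem~\ref{thm_precise_asymptotics} the $d\alpha_k$-area of $u_k$ over $\{|z|\ge R\}$ tends to $0$ as $R\to+\infty$, so by Stokes
\[
\int_{|z|=R} u_k^*\alpha_k \;=\; T_1 \;-\; \int_{\{|z|\ge R\}} u_k^*d\alpha_k \;\xrightarrow[R\to+\infty]{}\; T_1.
\]
On the other hand, the loop $t\mapsto u_k(Re^{i2\pi t})$ lies in $N_1$ for all $R>1$, and as $R$ decreases from $+\infty$ to $1$ it sweeps a continuous family of loops in $N_1$; no such loop meets $\gamma_1$ (as $u_k^{-1}(\gamma_1)$ is a discrete subset of $\D$ by positivity of intersections with $\R\times\gamma_1$, or simply empty here by hypothesis (H)), so the degree of $t\mapsto\theta_k(R,t)$ is independent of $R>1$ and hence equals $1$ for all $R>1$. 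Because the boundary loop $t\mapsto u_k(e^{i2\pi t})$ passes through $\partial N_1$ at the points $u_k(1),u_k(w_k)$ by~\eqref{out_is_mapped_in} and the normalization of Lemma~\ref{lemma_right_parametrization}, the value $p_0=\lim_k u_k(1)$ lies in $\partial N_1$, hence $u$ attains a value in $\partial N_1$.

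Now fix $R>1$ large enough that $\int_{|z|=R}u_k^*\alpha_k\ge T_1/2$ for all large $k$; this is possible by the uniform convergence of the periods along the $\alpha_k$-ends, since $\alpha_k\to\alpha$ in $C^\infty$ and the orbits $\gamma_1$ have the fixed period $T_1$ for every $k$. Passing to the limit,
\[
\int_{|z|=R} u^*\alpha \;=\; \lim_{k\to\infty}\int_{|z|=R} u_k^*\alpha_k \;\ge\; \frac{T_1}{2} \;>\; 0.
\]
This is impossible when $u$ is constant, since then $u^*\alpha\equiv 0$. The contradiction shows that $\util$ is not constant.
\end{proof}
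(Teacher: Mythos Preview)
Your approach is essentially the paper's: for $R>1$ the loop $t\mapsto u_k(Re^{i2\pi t})$ lies in $N_1$ (by~\eqref{out_is_mapped_in}) and is homotopic in $N_1$ to $\gamma_1$; since this homotopy class survives the $C^\infty$-limit, $u(Re^{i2\pi t})$ is non-contractible in $N_1$, hence non-constant. The paper's proof is exactly those three lines.

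Your written proof, however, takes a detour that introduces a gap. After correctly establishing that the $\theta$-degree of $u_k|_{\{|z|=R\}}$ equals $1$ for every $R>1$, you could stop: degree is preserved under $C^0$-limits, so the limit loop has $\theta$-degree $1$ and cannot be constant. Instead you pass through a Stokes/period argument whose key step --- ``fix $R>1$ large enough that $\int_{|z|=R}u_k^*\alpha_k\ge T_1/2$ for all large $k$'' --- is not justified. Monotonicity in $R$ of $\int_{|z|=R}u_k^*\alpha_k$ gives an upper bound $T_1$, not a lower bound, and the phrase ``uniform convergence of the periods along the $\alpha_k$-ends'' does not establish uniformity in $k$ of the rate at which $\int_{|z|=R}u_k^*\alpha_k\to T_1$. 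The conclusion is salvageable (e.g.\ via the degree argument you already have, or by estimating $u_k^*\alpha_k$ in Martinet coordinates using degree $1$), but the Stokes paragraph should simply be deleted in favor of the one-line degree/homotopy conclusion. The aside about $u_k(1)\in\partial N_1$ is also unnecessary and would require checking $1\notin\Gamma$, which you have not done (you only know $\Gamma\subset\D$).
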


\begin{proof}
For every $R>1$ and every $k$ the loop $t\mapsto u_k(Re^{i2\pi t})$ lies inside $N_1$ and is homotopic in $N_1$ to $\gamma_1$. This follows from~\eqref{out_is_mapped_in}. Since these loops converge in $C^\infty$ to the loop $t\mapsto u(Re^{i2\pi t})$ we conclude that the latter is a non-contractible loop in~$N_1$, hence non-constant.
\end{proof}

\begin{lemma}
\label{lemma_asymptotic_props_limit_curve}
The following hold:
\begin{itemize}
\item $\#\Gamma=n-1$ and $\Gamma\cup\{\infty\}$ consists of positive punctures of $\util$. 
\item Use~\eqref{limit_points_Gamma_k_exist} to write $\Gamma = \{z_{\infty,2},\dots,z_{\infty,n}\}$ where $z_{\infty,i} = \lim_{k\to\infty} z_{k,i}$. Set $z_{\infty,1}=\infty$. Then $\util$ is weakly asymptotic to $\gamma_i$ at~$z_{\infty,i}$ for every $i=1,\dots,n$. 
\item The image of $\util$ does not intersect $\R\times L$.
\end{itemize}
\end{lemma}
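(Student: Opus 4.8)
The plan is to locate all punctures of $\util$ inside $\Gamma\cup\{\infty\}$ and to identify each one together with its asymptotic orbit by playing three facts against each other: the description of $H_1(M\setminus L)$ for a planar open book (Lemma~\ref{lemma_topological}), hypothesis (b), and the energy bound $E(\util)\le\sum_l T_l$ inherited from the $C^\infty_{\rm loc}$-limit. I would first show that every point of $\Gamma\cup\{\infty\}$ is a non-removable puncture. Given $\zeta\in\Gamma$, set $I_\zeta=\{\,i\in\{2,\dots,n\}\mid z_{k,i}\to\zeta\,\}$ along the chosen subsequence and pick a small loop $\lambda$ around $\zeta$ enclosing, for $k$ large, exactly the punctures $\{z_{k,i}:i\in I_\zeta\}$. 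By Lemma~\ref{lemma_explicit_homology_class} and hypothesis (H) the loop $u_k(\lambda)$ is homologous in $M\setminus L$ to $\pm\sum_{i\in I_\zeta}\gamma_i'$; were $\zeta$ removable, $u$ would extend continuously over $\zeta$ and $u_k(\lambda)$ would be contractible in $M\setminus L$ for $k$ large, forcing $\sum_{i\in I_\zeta}\gamma_i'=0$, which is impossible by Lemma~\ref{lemma_topological} since $\emptyset\neq I_\zeta\subsetneq\{1,\dots,n\}$. At $\infty$ one invokes \eqref{out_is_mapped_in} instead: $u$ maps $\{|z|\ge1\}$ into $N_1$ and every loop $u(|z|=R)$, $R\ge1$, is homotopic to $\gamma_1$ inside $N_1$ and hence non-contractible, so $\infty$ is non-removable too.

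Next I would determine the asymptotic limits. At any puncture of $\util$ a small surrounding loop is $C^0$-close to an iterate of the asymptotic orbit; if that orbit were contained in $M\setminus L$ it would have non-zero intersection number with $b$ by (b), contradicting that the homology class of the loop is a combination of the $\gamma_i'$, all of which have zero intersection with $b$. Hence each asymptotic limit is an iterate $\gamma_j^m$ of some component of $L$. Comparing homology classes via Lemma~\ref{lemma_basic_homology_class}, the identity $\pm\sum_{i\in I_\zeta}\gamma_i'=m\,\gamma_j'$ in $H_1(M\setminus L)$, analysed in the presentation $\Z\langle\gamma_1',\dots,\gamma_n'\rangle/\langle\gamma_1'+\dots+\gamma_n'\rangle$, leaves only two cases at each $\zeta\in\Gamma$: either $I_\zeta=\{j\}$ with $m=1$, or $I_\zeta=\{2,\dots,n\}$ with $j=1$, $m=1$; in particular every asymptotic limit is simply covered. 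Running the same comparison at $\infty$, using that the limiting loop there winds once around $\gamma_1$ in $N_1$ and that $N_1$ contains no $\alpha$-periodic orbit contractible in $N_1$, forces the asymptotic limit of $\util$ at $\infty$ to be $\gamma_1$.

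The core of the proof, and the step I expect to cause the most trouble, is to exclude the collapsed alternative $I_\zeta=\{2,\dots,n\}$ and, at the same time, rule out negative punctures and any loss of $d\alpha$-energy. I would do this by bookkeeping with Stokes' theorem. Around each $\zeta\in\Gamma\cup\{\infty\}$, applying Stokes to $\util_k$ on a punctured disk and letting $k\to\infty$ relates $\sum_{i\in I_\zeta}T_i$, the local $d\alpha$-area $E^{\rm loc}_\zeta$ of $\util$, a nonnegative `lost area' $E^{\rm lost}_\zeta$, and the period of the asymptotic orbit of $\util$ at $\zeta$ counted with the sign of the puncture; summing these relations and comparing with $E(\util)\le\sum_l T_l$ pins down the total $d\alpha$-area. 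In the collapsed case the balance forces all the $d\alpha$-area of $\util$ to concentrate at $\zeta$, so $\util$ is a trivial cylinder over $\gamma_1$ near $\infty$; by unique continuation $u$ would then be a cover of $\gamma_1$ on all of $\C\setminus\Gamma$, contradicting $u(1)\in\partial N_1$, which holds because $1\notin\Gamma$ and $u_k(1)\in\partial N_1$ by the normalisation in Lemma~\ref{lemma_right_parametrization}, while $\gamma_1$ lies in the interior of $N_1$. Negative punctures are then ruled out by the same circle of ideas once every cluster is a singleton: a negative puncture would make the $\R$-component of $\util_k$ develop a valley of unbounded depth near $\zeta$, incompatible with the area/action balance above together with the fact that the trivial cylinders $\R\times\gamma_i$ carry no $d\alpha$-area. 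This yields $\#\Gamma=n-1$, a bijection between $\Gamma$ and $\{2,\dots,n\}$ with $\util$ weakly asymptotic to $\gamma_i$ at $z_{\infty,i}$, and that all punctures are positive with no energy lost.

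For the last assertion, I would note that $\R\times L$ is a $\jtil$-holomorphic embedded surface with $\jtil$-invariant tangent space. If $\util$ met $\R\times L$ at a point $\util(z_0)$ with $z_0\in\C\setminus\Gamma$, positivity of intersections would give a positive local intersection index, stable under $C^\infty_{\rm loc}$-convergence, so the $\util_k$ would intersect $\R\times L$ for $k$ large, contradicting (H); and the image of $\util$ cannot be contained in $\R\times L$, as such a curve would be a branched cover of a single trivial cylinder, incompatible with the asymptotics just established. Near the punctures, the uniform exponential decay of the $\util_k$ (they lie in $\M_{\jtil_k,0,\delta}$ with a fixed weight in a spectral gap) passes to $\util$ by Proposition~\ref{prop_unif_asymptotic_analysis}, so $\wind_\infty(\util,z_{\infty,i},\tau_\Sigma)$ is defined and, arguing exactly as in Lemma~\ref{lemma_wind_infinity}, equals $0$; Theorem~\ref{thm_precise_asymptotics} then shows $u$ stays in $M\setminus L$ near each puncture. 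Hence the image of $\util$ is disjoint from $\R\times L$, which gives the third assertion of the lemma.
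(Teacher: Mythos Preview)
Your overall strategy matches the paper's: use the presentation of $H_1(M\setminus L)$ from Lemma~\ref{lemma_topological} together with Lemma~\ref{lemma_explicit_homology_class} and hypothesis~(H) to show each point of $\Gamma\cup\{\infty\}$ is non-removable, that every asymptotic limit lies in $L$, and to pin down the covering number and sign at each puncture; then derive a contradiction in the ``collapsed'' case from the normalisation of Lemma~\ref{lemma_right_parametrization}; finally use positivity and stability of intersections for the last assertion. Two points deserve correction.

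\textbf{The normalisation argument needs both points $1$ and $w_*$.} You rule out the collapsed alternative $I_\zeta=\{2,\dots,n\}$ by asserting $u(1)\in\partial N_1$ ``because $1\notin\Gamma$''. But $\Gamma\subset\overline\D$ and nothing prevents $\zeta=1$: the punctures $z_{k,i}$ are all $\neq 1$, yet their limit may well be $1$. This is exactly why the paper carries the second normalisation point $w_k\in\partial\D$ with $\Re(w_k)\le 0$ from Lemma~\ref{lemma_right_parametrization}: passing to a subsequence $w_k\to w_*$ with $\Re(w_*)\le 0$, one has $w_*\neq 1$, so at least one of $1,w_*$ lies in $\partial\D\setminus\Gamma$ and is mapped by $u$ into $\partial N_1$, contradicting that $\util$ would be a trivial cylinder over~$\gamma_1$. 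Without $w_*$ your argument has a genuine gap.

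\textbf{The Stokes/energy bookkeeping is unnecessary and the phrasing is confused.} The homology relation~\eqref{hom_identity} already determines the sign of the puncture: $e=+1$ forces $I_\zeta=\{l_*\}$, $N=1$, while $e=-1$ forces $I_\zeta=\{2,\dots,n\}$, $l_*=1$, $N=1$. So negative punctures occur \emph{only} in the collapsed case, and no separate ``valley of unbounded depth'' argument is needed. In that collapsed case the analogous homology relation at $\infty$ (which the paper carries out anyway) gives that $\infty$ is positive with simply covered limit $\gamma_1$; Stokes then yields zero $d\alpha$-area, hence $\util$ is a trivial cylinder over $\gamma_1$. Your sentence ``the balance forces all the $d\alpha$-area of $\util$ to concentrate at $\zeta$, so $\util$ is a trivial cylinder over $\gamma_1$ near $\infty$'' does not parse: a curve cannot be a trivial cylinder only near one end, and ``concentration of $d\alpha$-area'' is not what is happening. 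Also note that Lemma~\ref{lemma_bounds_derivatives} already rules out bubbling, so there is no ``lost area'' to account for.

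For the third assertion you do not need any near-puncture analysis or the forward reference to Proposition~\ref{prop_unif_asymptotic_analysis}. Once you know the asymptotic limits at the $n$ punctures are geometrically distinct, the image of $\util$ is not contained in $\R\times L$; Carleman's similarity principle makes $E=\util^{-1}(\R\times L)$ finite, and positivity plus stability of intersections then transports any intersection to $\util_k$, contradicting~(H). That is the paper's argument, and it is complete as stated.
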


\begin{proof}
Let $z\in\Gamma$. By~\eqref{limit_points_Gamma_k_exist} and the definition of $\Gamma$ $$ I_z=\{l \in \{2,\dots,n\} \mid z_{k,l} \to z \text{ as } k\to+\infty\} $$ is a non-empty set.

We claim that $z$ is not a removable puncture. In fact, assume that $z$ is removable. Thus $\util$ smoothly extends at $z$. Consider first the case $u(z) \in L$. Choose a small open tubular neighborhood $N$ with coordinates $(\theta,w) \in \R/\Z \times \C$ around the component $\gamma_j \subset L$ containing the point $u(z)$, in such a way that $\gamma_j = \R/\Z\times0$ and any loop $t\mapsto(\theta(t),w(t))$ in $N\setminus \gamma_j$ has algebraic intersection number with $b$ equal to $\wind(w(t))$. If $\epsilon$ is small enough and $k$ is large enough then the loop $t\mapsto u_k(z+\epsilon e^{i2\pi t})$ is contained in $N$ since it is $C^0$-close to $u(z)\in\gamma_j$. Thus it can be represented in coordinates as $t\mapsto (\theta_k(t),w_k(t))$, where $t\mapsto \theta_k(t)$ has degree zero. By the hypothesis (H) we know that $\wind(w_k(t))=0$. Consequently $t\mapsto (\theta_k(t),w_k(t))$ is homotopic in $N\setminus\gamma_j$ to a constant loop. Putting this fact together with Lemma~\ref{lemma_explicit_homology_class} and hypothesis (H) we conclude that $\sum_{l\in I_z}\gamma'_l \equiv 0$ in $H_1(M\setminus L)$. This is impossible because $I_z$ is a proper subset of $\{1,\dots,n\}$. We are left with the case $u(z)\not\in L$, but this time we find that for $\epsilon$ small enough and $k$ large enough the loop $t\mapsto u_k(z+\epsilon e^{i2\pi t})$ is contained in a contractible open subset of $M\setminus L$. The previous argument again show that $\sum_{l\in I_z}\gamma'_l \equiv 0$ in $H_1(M\setminus L)$, which is absurd. In all cases we get a contradiction. Thus $z$ must be non-removable. An analogous argument, using~\eqref{punctures_inside_the_disk}, shows that $\infty$ is non-removable.

We can now prove that every asymptotic limit of $\util$ at a given $z\in\Gamma$ is contained in $L$. In fact, it follows from our standing assumption (b) that if some asymptotic limit of $\util$ at some $z\in\Gamma$ is contained in $M\setminus L$ then we find $\epsilon>0$ small such that $u$ maps $z+\epsilon S^1$ to a loop in $M\setminus L$ with non-zero algebraic intersection number with $b$. If $k$ is large enough then the same is true for the loop $u_k(z+\epsilon S^1)$, in contradiction to hypothesis (H). Analogously one shows that every asymptotic limit at $\infty$ is contained in $L$.

Let $e=\pm1$ be the sign of the non-removable puncture $z\in\Gamma$. By the reasoning above, any choice of asymptotic limit of $\util$ at $z$ must be equal to $\gamma_{l_*}^N = (x_{l_*},NT_{l_*})$ for some $l_*\in\{1,\dots,n\}$ and some $N\geq 1$. Let us fix such a choice. We find $\epsilon_m\to0^+$ and $t_0$ such that $t\mapsto u(z+\epsilon_m e^{i2\pi t})$ $C^\infty$-converges to $t\mapsto x_{l_*}(-eNT_{l_*}t+t_0)$. By hypothesis (H) we find $m$ large and $k_m$ such that if $k\geq k_m$ then $t\mapsto u_k(z+\epsilon_m e^{i2\pi t})$ is a loop in $M\setminus L$ which is $C^\infty$-close to $t\mapsto x_{l_*}(-eNT_{l_*}t+t_0)$ and has algebraic intersection number with $b$ equal to zero. Hence $t\mapsto u_k(z+\epsilon_m e^{i2\pi t})$ is homologous to $-eN\gamma'_{l_*}$ in $M\setminus L$ whenever $k\geq k_m$; we used Lemma~\ref{lemma_basic_homology_class}. We can now choose $k\gg k_m$ such that all $\{z_{k,l}\mid l\in I_z\}$ lie in interior of the closed $\epsilon_m$-disk $D_{\epsilon_m}(z)$ centered at $z$. Take $r_k>0$ small enough such that the closed $r_k$-disks $D_{r_k}(z_{k,l})$ centered at the $\{z_{k,l}\mid l\in I_z\}$ lie in interior of $D_{\epsilon_m}(z)$, and such that $u_k$ maps $-\partial D_{r_k}(z_{k,l})$ to a loop homologous to $\gamma_l'$. We strongly used Lemma~\ref{lemma_explicit_homology_class} here. Now consider the smooth domain $S$ obtained by removing from $D_{\epsilon_m}(z)$ the interiors of the disks $D_{r_k}(z_{k,l})$, $l\in I_z$. Since $u_k(S) \subset M\setminus L$ we get a homology relation
\begin{equation}\label{hom_identity}
-eN\gamma'_{l_*} + \sum_{l\in I_z} \gamma'_l \equiv 0 \text{ in $H_1(M\setminus L)$.}
\end{equation}
If $e=-1$ then~\eqref{hom_identity} implies that $I_z=\{2,\dots,n\}$, $l_*=1$, $N=1$ and $\Gamma=\{z\}$. 
It follows that $\util$ is a trivial cylinder over $\gamma_1$, its positive puncture is $\infty$ and its negative puncture is $z$. 
It is now, of course, crucial to observe that $\{u_k(1),u_k(w_k)\} \subset\partial N_1$ where $w_k\in\partial\D$ has negative real part. 
Up to choice of subsequence we may assume $w_k \to w_*\in\partial\D$, $\Re(w_*)\leq0$. 
Since $w_*\neq 1$ and $\Gamma=\{z\}$, there is at least one point in $\partial\D$ mapped by $u$ to $\partial N_1$, contradicting that $\util$ is a trivial cylinder over $\gamma_1$. 
We are done showing $e=+1$. 
Together with~\eqref{hom_identity} this implies that $l_*\in\{2,\dots,n\}$, $I_z=\{l_*\}$ and $N=1$.

We concluded the proof that $\#\Gamma=n-1$, that we can order the points of $\Gamma=\{z_{\infty,2},\dots,z_{\infty,n}\}$ in such a way that $z_{k,l} \to z_{\infty,l}$ as $k\to+\infty$, and that $\util$ is weakly asymptotic to $\gamma_l$ at $z_{\infty,l}$ for every $l\in\{2,\dots,n\}$.

We prove now that $\infty$ is a positive puncture where $\util$ is weakly asymptotic to $\gamma_1$. The argument is entirely analogous. Any asymptotic limit of $\util$ at $\infty$ is of the form $\gamma_{l_*}^N$ for some $l_*=1,\dots,n$ and some $N\geq 1$. By hypothesis (H) we find that $R$ and $k$ large such that the loop $t\mapsto u_k(Re^{i2\pi t})$ represents the class $eN\gamma_{l_*}'$ in $H_1(M\setminus L)$, where $e$ is the sign of the puncture $\infty$. Again using (H) and what was proved above we get an identity
\[
eN\gamma'_{l_*} + \gamma_2' + \dots + \gamma_n' \equiv 0 \text{ in $H_1(M\setminus L)$.}
\]
This forces $l_*=1$, $e=+1$ and $N=1$.

Since $\Gamma\cup\{\infty\}$ consists precisely of $n$ positive punctures $\infty,z_{\infty,2},\dots,z_{\infty,n}$ with geometrically distinct asymptotic limits, the image of $\util$ can not be contained in any component of the embedded $\jtil$-holomorphic surface $\R\times L$. Carleman's similarity principle implies that $E=\{w\in\C\setminus \Gamma \mid \util(w) \in \R\times L\}$ is finite. Positivity of intersections guarantees that the (local) algebraic intersection number between $\util$ and $\R\times L$ at any point of $E$ is positive. If $E$ is non-empty we will find intersections of the image of $\util_k$ with $\R\times L$ provided $k$ is large enough. This is in contradiction with~hypothesis (H).
\end{proof}

\begin{lemma}
\label{lemma_non_deg_punctures_missing}
The curve $[\util,\C\cup\{\infty\},i,\Gamma,\emptyset]$ belongs to $\M_{\jtil,0,\delta}(\gamma_1,\dots,\gamma_n;\emptyset)$.
\end{lemma}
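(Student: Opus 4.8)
The plan is to verify, one by one, the defining conditions of $\M_{\jtil,0,\delta}(\gamma_1,\dots,\gamma_n;\emptyset)$ from~\ref{sssec_moduli_spaces} for the tuple $(\util,\C\cup\{\infty\},i,\{\infty,z_{\infty,2},\dots,z_{\infty,n}\},\emptyset)$. Most of these are already available: by Lemma~\ref{lemma_asymptotic_props_limit_curve} the domain $\C\cup\{\infty\}$ is a closed connected genus zero Riemann surface, $\{\infty,z_{\infty,2},\dots,z_{\infty,n}\}$ consists of $n$ positive punctures, there are no negative punctures, $\util$ is a non-constant finite-energy $\jtil$-holomorphic map, and $\util$ is weakly asymptotic to $\gamma_l$ at $z_{\infty,l}$ (with $z_{\infty,1}=\infty$). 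What remains to prove is that each of these punctures is non-degenerate in the sense of Definition~\ref{def_non_deg_punctures}, and that the asymptotic eigenvalue of $\util$ at $z_{\infty,l}$ is strictly smaller than $\delta_l$ for every $l$.

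The crucial step is to transport the exponential decay of the approximating curves to the limit. For each $k$ the curve $C_k=[\util_k,\dots]$ lies in $\M_{\jtil_k,0,\delta}(\gamma_1,\dots,\gamma_n;\emptyset)$, so by the very definition of that moduli space it has a non-degenerate puncture at $z_{k,l}$ (and at $\infty$) whose asymptotic eigenvalue $\mu_{k,l}$ satisfies $\mu_{k,l}<\delta_l<0$. By Theorem~\ref{thm_asymptotic_formula_deg_case}, in coordinates given by a Martinet tube $\util_k$ obeys the asymptotic formula of Theorem~\ref{thm_precise_asymptotics} at $z_{k,l}$, so its transverse component decays like $e^{\mu_{k,l}s}$; since $|\mu_{k,l}|>|\delta_l|$, this yields exponential decay at $z_{k,l}$ with rate $|\delta_l|>0$, and this bound on the rate is uniform in $k$. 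Likewise at $\infty$ one gets a uniform rate $|\delta_1|>0$.

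Next I would invoke the uniform asymptotic analysis, Proposition~\ref{prop_unif_asymptotic_analysis}. The sequence $(a_{k_j}+c_j,u_{k_j})$ converges in $C^\infty_{{\rm loc}}(\C\setminus\Gamma)$ to $\util$; its members are asymptotic to the fixed orbit $\gamma_l$ at the punctures $z_{k_j,l}\to z_{\infty,l}$ with exponential decay rate bounded below by $|\delta_l|>0$ independently of $j$, and similarly at $\infty$. Proposition~\ref{prop_unif_asymptotic_analysis} then guarantees that $z_{\infty,l}$ (and $\infty$) is a non-degenerate puncture of $\util$ in the sense of Definition~\ref{def_non_deg_punctures}, with exponential decay rate again at least $|\delta_l|$ there. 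Since $\util$ is weakly asymptotic to $\gamma_l$ at $z_{\infty,l}$, the orbit to which it is now asymptotic must be $\gamma_l$ itself (Definition~\ref{definition_asymptotic_limit}). By Theorem~\ref{thm_asymptotic_formula_deg_case} the asymptotic eigenvalue $\mu^\infty_l$ of $\util$ at $z_{\infty,l}$ is then well defined (Remark~\ref{rem_deg_asymptotics}), and the decay rate bound forces $|\mu^\infty_l|\ge|\delta_l|$, i.e. $\mu^\infty_l\le\delta_l$; as $\delta_l$ was chosen inside a spectral gap of the asymptotic operator of $\gamma_l$ induced by $(\alpha,J)$ it is not an eigenvalue, hence $\mu^\infty_l<\delta_l$. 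The last bullet of Definition~\ref{def_non_deg_punctures}, namely $\pi_\alpha\circ du\neq0$ near the punctures, follows from the precise asymptotic formula of Theorem~\ref{thm_asymptotic_formula_deg_case} (note $\int_{\C\setminus\Gamma}u^*d\alpha=\sum_lT_l>0$ by Stokes), since there $\util$ is a genuine, non-trivial perturbation of the trivial cylinder; equivalently $\pi_\alpha\circ du$ does not vanish identically because the geometrically distinct limits $\gamma_1,\dots,\gamma_n$ prevent $\util$ from being a cover of a single trivial cylinder (see~\ref{sssec_classical_alg_inv}). Assembling these facts gives $[\util,\C\cup\{\infty\},i,\{\infty,z_{\infty,2},\dots,z_{\infty,n}\},\emptyset]\in\M_{\jtil,0,\delta}(\gamma_1,\dots,\gamma_n;\emptyset)$.

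I expect the genuine obstacle to be precisely the passage from the uniform decay of the $\util_k$ to that of $\util$ without loss of rate; this is exactly what Proposition~\ref{prop_unif_asymptotic_analysis} is designed to supply (it packages the analysis of~\cite[section~8]{convex}), so modulo that citation the argument is mostly bookkeeping. A small technical point I would take care over is that the punctures $z_{k,l}$ move with $k$: one either reparametrizes near each puncture so as to pin the puncture location before applying Proposition~\ref{prop_unif_asymptotic_analysis}, or uses the version of it valid for sequences of curves whose punctures converge, which is the setting standard in SFT-type compactness.
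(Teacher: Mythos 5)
Your proposal correctly identifies Proposition~\ref{prop_unif_asymptotic_analysis} as the central tool and rightly flags the need to pin down the moving punctures by a reparametrization (the paper does this with M\"obius maps $\psi_k$). However, the claim that ``modulo that citation the argument is mostly bookkeeping'' skips the main work, which is verifying the hypotheses of the proposition. Proposition~\ref{prop_unif_asymptotic_analysis} concerns an abstract Cauchy--Riemann type system $\partial_s X_n + J_0\partial_t X_n + K_nX_n = 0$ whose coefficient matrices $K_n$ must satisfy the convergence condition (ii). To even \emph{write} the system in this form one must first show that, in the reparametrized picture, the ends of the $\util_k'$ are trapped in a fixed Martinet tube $N_m$ for all $s\geq s_0$ and all large $k$ simultaneously (Lemma~\ref{lemma_uniform_ends_behavior}). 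That uniform trapping rests on a dynamical argument (Lemma~\ref{lemma_crucial_control_of_ends}) showing rescaled translates near the puncture converge to a trivial cylinder over $\gamma_m$ and not over some other closed orbit --- an argument which invokes hypothesis (b), hypothesis~(H), and the homological description of $M\setminus L$. After that, showing the $K_n$ satisfy condition (ii) is the content of Lemma~\ref{lemma_asymptotic_data} (and its Corollary~\ref{coro_asymptotic_data}), which in turn needs the uniform first estimates of Lemma~\ref{lemma_first_estimate}. None of these steps appear in your sketch.

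There is a second gap. Even granting the applicability of Proposition~\ref{prop_unif_asymptotic_analysis}, its conclusion controls only the transverse component $z(s,t)$ (or its transform $\zeta$). Definition~\ref{def_non_deg_punctures} also demands uniform convergence of $a(s,t)-Ts$ to a constant and exponential decay of the full $M$-distance $|\zeta(s,t)|$, which involves the angular component $\theta$ as well as $z$. These do not come for free from the proposition; they require a separate argument working with the Cauchy--Riemann equations for $(a,\tilde\theta)$ and a differential-inequality estimate (Lemma~\ref{lemma_not_proved_before}, a sequence version of~\cite[Lemma~6.3]{fast}). Your proposal effectively treats Theorem~\ref{thm_asymptotic_formula_deg_case} as providing these facts for the limit $\util$, but that theorem \emph{presupposes} non-degeneracy of the puncture --- precisely what is to be proved.
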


Most of the remainder of the proof of Proposition~\ref{prop_abstract_compactness} consists of establishing Lemma~\ref{lemma_non_deg_punctures_missing}. The main difficulty is to prove that the punctures are non-degenerate in the sense of Definition~\ref{def_non_deg_punctures}. We collect an immediate consequence.

\begin{lemma}
\label{lemma_deg_util_is_an_embedding}
The map $\util$ is an embedding.
\end{lemma}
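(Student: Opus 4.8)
The plan is to deduce embeddedness exactly as in the proof of Proposition~\ref{prop_embeddedness}, namely from the self-intersection formula~\eqref{selfintersection_number_special_moduli_spaces} combined with the adjunction inequality, the only new issue being that the argument must be carried out over the possibly degenerate contact form $\alpha$. By Lemma~\ref{lemma_non_deg_punctures_missing} the map $\util$ represents a curve $C\in\M_{\jtil,0,\delta}(\gamma_1,\dots,\gamma_n;\emptyset)$; in particular every puncture of $\util$ is non-degenerate in the sense of Definition~\ref{def_non_deg_punctures}, so Theorem~\ref{thm_asymptotic_formula_deg_case} supplies the full asymptotic expansion of $\util$ at each puncture. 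Consequently (cf.\ Remark~\ref{rmk_non_deg_relaxed}) the intersection theory of~\cite{siefring} --- the generalized self-intersection number $\util*\util$, the invariants $\wind_\pi(\util)$ and $\wind_\infty(\util,\cdot,\cdot)$, and the adjunction inequality Theorem~\ref{thm_adjunction_ineq} --- is available for $\util$ even though the asymptotic orbits $\gamma_l$ may be degenerate.

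First I would record that $\util$ induces the class $b$ in $H_2(M,L)$: this follows from the $C^\infty_{\rm loc}$-convergence $(a_{k_j}+c_j,u_{k_j})\to\util$, the convergence of punctures $z_{k_j,l}\to z_{\infty,l}$, and the asymptotic control just mentioned, by capping off the ends of the $u_{k_j}$ and of $u$ via Theorem~\ref{thm_asymptotic_formula_deg_case} and using hypothesis (H) --- the same kind of $H_1(M\setminus L)$ bookkeeping used throughout the proof of Lemma~\ref{lemma_asymptotic_props_limit_curve}. With this in hand, Lemma~\ref{lemma_wind_infinity} applies and yields $\wind_\pi(\util)=0$ together with $\wind_\infty(\util,z_{\infty,l},\tau_\Sigma)=0$ for all $l$, where $\tau_\Sigma$ is a collection of trivializations aligned with the normal of a page of $\Theta$. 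Moreover $\util$ is somewhere injective by Lemma~\ref{lemma_somewhere_injective} applied in the $\R$-invariant case.

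Now I would run the computation from the end of the proof of Lemma~\ref{lemma_no_intersection_with_limits}. Since $\util$ is asymptotic at $n$ distinct punctures to the distinct prime orbits $\gamma_1,\dots,\gamma_n$, each approached with zero asymptotic winding relative to a Martinet tube aligned with $\tau_\Sigma$, one obtains $i^{\tau_\Sigma}(\util,\util)=0$ and $\Omega^{\tau_\Sigma}(\util,\util)=\sum_{l=1}^n\lfloor\rho^{\tau_\Sigma}(\gamma_l)\rfloor$, hence
\[
\util*\util=\sum_{l=1}^n\lfloor\rho^{\tau_\Sigma}(\gamma_l)\rfloor=\tfrac12\mu_{\CZ}(\util)-\tfrac12\#\Gamma_{\rm odd}-\chi(S)+\bar\sigma(\util),
\]
where one uses that $\util$ induces the page class to get $\sum_l\wind(\tau^\Xi_l,\tau_\Sigma)=\chi(S)-\#\Gamma$, and $\bar\sigma(\util)=\#\Gamma=n$ because all asymptotic limits are prime. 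This is precisely~\eqref{selfintersection_number_special_moduli_spaces}. Since $\util$ is somewhere injective, Theorem~\ref{thm_adjunction_ineq} gives $\util*\util-\tfrac12\mu_{\CZ}(\util)+\tfrac12\#\Gamma_{\rm odd}+\chi(S)-\bar\sigma(\util)\ge0$ with equality if and only if $\util$ is an embedding; the displayed identity shows that equality holds, so $\util$ is an embedding. The only real obstacle is the bookkeeping: verifying that Lemmas~\ref{lemma_somewhere_injective} and~\ref{lemma_wind_infinity}, the self-intersection computation, and Theorem~\ref{thm_adjunction_ineq}, all originally formulated under non-degeneracy, remain valid here --- which they do, because their sole analytic input is the asymptotic formula of Theorem~\ref{thm_precise_asymptotics}, provided at every puncture of $\util$ by Theorem~\ref{thm_asymptotic_formula_deg_case} once Lemma~\ref{lemma_non_deg_punctures_missing} is known.
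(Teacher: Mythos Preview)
Your approach differs substantially from the paper's, which is much shorter and avoids the adjunction inequality entirely. The paper argues: by Lemma~\ref{lemma_wind_infinity} the limiting map $\util$ is an immersion; hence any self-intersection is isolated and counts positively; by stability of intersections such a self-intersection would persist to the nearby maps $\util_k$ for large $k$, contradicting hypothesis~(H), which says each $C_k$ is embedded. That is the entire proof --- it exploits the approximating sequence directly rather than computing $\util*\util$.

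Your route through the adjunction inequality is conceptually reasonable but runs into an obstacle you dismiss too quickly. The intersection theory of~\cite{siefring}, including Theorem~\ref{thm_adjunction_ineq}, is set up in the paper only for contact forms non-degenerate up to the relevant action; Remark~\ref{rmk_non_deg_relaxed}, which you cite, says exactly this and does \emph{not} cover a degenerate $\alpha$ with curves having non-degenerate punctures in the sense of Definition~\ref{def_non_deg_punctures}. Indeed the paper states explicitly (just before Lemma~\ref{lemma_curves_in_X_are_embedded}) that ``since $\alpha$ is possibly degenerate, we do not have at our disposal the statements from~\cite{siefring}'' and that explicit arguments must be supplied instead. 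So what you call bookkeeping is precisely the point the authors regard as unavailable and deliberately work around. Your intuition that the asymptotic formula of Theorem~\ref{thm_asymptotic_formula_deg_case} should suffice as input to Siefring's machinery is plausible, but establishing it would require revisiting the proofs in~\cite{siefring}; the paper's approach sidesteps this entirely by using the embedded approximating curves $\util_k$.
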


\begin{proof}
[Proof of Lemma~\ref{lemma_deg_util_is_an_embedding} assuming Lemma~\ref{lemma_non_deg_punctures_missing}]
Lemma~\ref{lemma_wind_infinity} implies that $\util$ is an immersion. Positivity and stability of intersections of pseudo-holomorphic immersions shows that a self-intersection point of $\util$ would force self-intersection points of the $\util_k$ for $k$ large enough. This contradicts hypothesis (H).
\end{proof}

Lemmas~\ref{lemma_asymptotic_props_limit_curve},~\ref{lemma_non_deg_punctures_missing} and~\ref{lemma_deg_util_is_an_embedding} together complete the proof of Proposition~\ref{prop_abstract_compactness}. From now we are concerned with the proof of Lemma~\ref{lemma_non_deg_punctures_missing}.

Fix $m\in\{1,\dots,n\}$ and choose positive holomorphic polar coordinates $(s,t)\in[0,+\infty)\times\R/\Z$ centered at $z_{\infty,m}$. Choose $\psi_k$ a sequence of M\"obius transformations satisfying
\[
\psi_k(\infty) = \infty, \ \psi_k(z_{\infty,m}) = z_{k,m} \qquad \psi_k\to id \ \text{in $C^\infty_{\rm loc}$.}
\]
This sequence exists since $z_{k,m} \to z_{\infty,m}$ as $k\to\infty$. Define
\begin{equation}
\util'_k = \util_k \circ \psi_k = (a'_k,u'_k)
\end{equation}
which is a finite-energy $\jtil_k$-holomorphic map with positive punctures at 
$$ 
z'_{k,1} := \infty, \ z'_{k,2} := \psi_k^{-1}(z_{k,2}),\dots, \ z'_{k,n} := \psi_k^{-1}(z_{k,n}). 
$$ 
Note that $z'_{k,m}=z_{\infty,m}$ for every $k$ and that, by Lemma~\ref{lemma_asymptotic_props_limit_curve}, $\util_k'$ is weakly asymptotic to $\gamma_l$ at $z'_{k,l}$ for every $l$. We write 
$$ 
\util_k'(s,t) = (a'_k(s,t),u'_k(s,t)) \qquad \util(s,t) = (a(s,t),u(s,t)) 
$$ 
where $(s,t)$ are the holomorphic polar coordinates at $z'_{k,m}=z_{\infty,m}$ fixed above.

\begin{lemma}\label{lemma_crucial_control_of_ends}
For all sequences $s_k\to+\infty$, $t_k\in\R/\Z$ there exists a subsequence of 
$$
(s,t) \mapsto (a_k'(s+s_k,t+t_k)-a_k'(s_k,t_k),u_k'(s+s_k,t+t_k))
$$ 
that converges in $C^\infty_{\rm loc}$ to a trivial cylinder over $\gamma_m$.
\end{lemma}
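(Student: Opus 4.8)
The plan is to prove Lemma~\ref{lemma_crucial_control_of_ends} by a soft rescaling/bubbling-off analysis at the puncture $z_{\infty,m}$, exploiting uniform energy bounds, the fact that the $C_k$ do not intersect $\R\times L$, and the crucial homological control from hypothesis (H) together with the description of $H_1(M\setminus L)$ from Lemma~\ref{lemma_topological}. First I would observe that the energies $E(\util_k') = E(\util_k) = \sum_l \int_{\gamma_l}\alpha_k$ are uniformly bounded, since $f_k\to 1$ in $C^\infty$ and the $\gamma_l$ are common orbits. Fix sequences $s_k\to+\infty$, $t_k\in\R/\Z$ and set $\vtil_k(s,t) = (a_k'(s+s_k,t+t_k)-a_k'(s_k,t_k), u_k'(s+s_k,t+t_k))$, a sequence of $\jtil_k$-holomorphic maps on larger and larger cylinders, normalized so that $a$-component vanishes at the origin. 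The first dichotomy is whether $|d\vtil_k|$ stays $C^0_{\rm loc}$-bounded. If it does, elliptic bootstrapping gives a $C^\infty_{\rm loc}$-convergent subsequence to a finite-energy $\jtil$-holomorphic map $\vtil:\R\times\R/\Z\to\R\times M$; if it does not, a standard bubbling-off argument à la~\cite{93} produces a non-constant finite-energy plane or sphere, whose existence must be excluded.

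The key step is ruling out bubbling and pinning down the limit. A bubbled-off finite-energy plane would have an asymptotic limit which is a periodic Reeb orbit of $\alpha$; this orbit either lies in $M\setminus L$, in which case it has non-zero intersection with $b$ by standing assumption~(b), or it is a multiple cover of some $\gamma_l$. In the first case one copies the argument from the proof of Lemma~\ref{lemma_bounds_derivatives}: a small loop around the bubble point is mapped by $u_k$ to a loop in $M\setminus L$ with non-zero intersection with $b$, but it is contractible in the domain of $\util_k$ and $\util_k$ avoids $\R\times L$, contradiction. In the second case one uses Lemma~\ref{lemma_basic_homology_class} and the homological identity $\gamma_1' + \dots + \gamma_n' = 0$ in $H_1(M\setminus L)$ (Lemma~\ref{lemma_topological}) exactly as in Lemma~\ref{lemma_bounds_derivatives} to derive a contradiction, using $n\geq 2$. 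Hence no bubbling occurs and $\vtil_k\to\vtil$ in $C^\infty_{\rm loc}$ on the cylinder. The limit $\vtil$ has finite Hofer energy and, since the $a$-component vanishes at the origin, is either a trivial cylinder or a genuine finite-energy cylinder with non-removable punctures at $\pm\infty$. A trivial-cylinder conclusion is what we want, so I must show the punctures of $\vtil$, if non-removable, can only be over $\gamma_m$, and that $\vtil$ is in fact trivial.

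To identify the asymptotic limits of $\vtil$ at $\pm\infty$, I would again use the homological argument: any asymptotic limit is either in $M\setminus L$ (excluded by (b) and hypothesis (H) passed to the limit, as in Lemma~\ref{lemma_asymptotic_props_limit_curve}) or a cover $\gamma_j^N$. Here the point is that $(s,t)$ are polar coordinates at the puncture $z_{\infty,m}$ of $\util$, and $u_k'$ is weakly asymptotic to $\gamma_m$ at $z_{k,m}'=z_{\infty,m}$; so for $s$ bounded and $k$ large the loops $t\mapsto u_k'(s+s_k,t+t_k)$ lie in a fixed tubular neighbourhood $N_m$ of $\gamma_m$ and are homologous in $M\setminus L$ to multiples of $\gamma_m'$ of the correct sign, with intersection number zero with $b$ by (H). Passing to the limit, every asymptotic limit of $\vtil$ is a cover of $\gamma_m$, so $\vtil$ is a (possibly branched) cover of the trivial cylinder over $\gamma_m$; but the $a$-component normalization at the origin and the fact that $\vtil$ has only two ends, both over $\gamma_m$ with the same covering degree (forced by the weak asymptotics and the fact that $\wind_\pi$ vanishes for the limiting curve $\util$ by Lemma~\ref{lemma_wind_infinity}, hence $u$ does not wrap), forces $\vtil$ to be the trivial cylinder over $\gamma_m$, exactly covered once. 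The main obstacle I anticipate is the last identification: ensuring that the rescaled limit is precisely the unbranched trivial cylinder rather than a multiply covered one or a cylinder between different iterates of $\gamma_m$; this requires carefully tracking the covering multiplicity across the limit using the weak asymptotics of $\util_k'$ at $z_{k,m}'$ and the primality of $\gamma_m$, combined with the homological rigidity furnished by Lemma~\ref{lemma_topological} and hypothesis (H).
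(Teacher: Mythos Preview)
Your approach differs substantially from the paper's, and as written it has a genuine gap together with a circular appeal.

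The paper's proof rests on a single estimate you never use: since Lemma~\ref{lemma_asymptotic_props_limit_curve} already tells us that the limiting curve $\util$ is \emph{weakly} asymptotic to $\gamma_m$ at $z_{\infty,m}$, one has $\int_{\{s_\epsilon\}\times\R/\Z} u^*\alpha \to T_m$ as $s_\epsilon\to\infty$. Combining this with $u_k'\to u$ in $C^\infty_{\rm loc}$ and Stokes' theorem yields, for the rescaled maps $\vtil_k=(d_k,v_k)$,
\[
\lim_{k\to\infty}\int_{[\hat s,\infty)\times\R/\Z} v_k^*d\alpha_k = 0
\qquad\text{and}\qquad
\lim_{k\to\infty}\int_{\{\hat s\}\times\R/\Z} v_k^*\alpha_k = T_m
\]
for every $\hat s$. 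The first identity kills bubbling outright (a bubble concentrates positive $d\alpha$-area on a compact set) and forces the limit $\vtil$ to satisfy $\int v^*d\alpha=0$; the second gives $\int_{\{\hat s\}} v^*\alpha=T_m$, so $\vtil$ is a trivial cylinder over some orbit $\hat\gamma$ of period $T_m$. Only then does the paper identify $\hat\gamma=\gamma_m$, first excluding orbits in $M\setminus L$ via the intersection assumption, then excluding other components of $L$ by a second application of the same soft argument (if $\hat\gamma\neq\gamma_m$ there would be $(s_k',t_k')$ with $u_k'(s_k',t_k')\in\partial N_m$, and the corresponding rescaled limit would be a trivial cylinder through $\partial N_m$).

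Your route bypasses this energy estimate entirely and tries to rule out bubbling and pin down the limit purely via homological bookkeeping. Two concrete problems arise. First, your appeal to Lemma~\ref{lemma_wind_infinity} is circular: that lemma is stated for curves in $\M_{\jtil,0,\delta}(\gamma_1,\dots,\gamma_n;\emptyset)$, which requires non-degenerate punctures; the limit $\util$ is only shown to lie in this moduli space in Lemma~\ref{lemma_non_deg_punctures_missing}, whose proof \emph{uses} Lemma~\ref{lemma_crucial_control_of_ends}. Second, your claim that ``for $s$ bounded and $k$ large the loops $t\mapsto u_k'(s+s_k,t+t_k)$ lie in $N_m$'' is unjustified: each $u_k'$ is asymptotic to $\gamma_m$, but there is no uniform (in $k$) control on when it enters $N_m$. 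This claim is essentially Lemma~\ref{lemma_uniform_ends_behavior}, which in the paper is a \emph{corollary} of the present lemma. Without the $d\alpha$-area estimate you also lack a clean reason why the limit $\vtil$ has zero $d\alpha$-area; even after identifying both asymptotic limits as $\gamma_m$ via homology (which can be done, but is delicate and requires separately handling the cases where $\vtil$ is contained in, or disjoint from, $\R\times L$), you still need this to conclude triviality.

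Your homological strategy is not hopeless---the arguments from Lemmas~\ref{lemma_basic_homology_class} and~\ref{lemma_explicit_homology_class} do constrain the rescaled limits strongly---but the paper's action estimate is the missing idea that makes the proof short and avoids the circularities above.
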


\begin{proof}
Denote $\vtil_k(s,t) = (a_k'(s+s_k,t+t_k))-a_k'(s_k,t_k),u_k'(s+s_k,t+t_k))$ which is defined on $[-s_k,+\infty)\times\R/\Z$. Write $\vtil_k=(d_k,v_k)$ for the components of $\vtil_k$. We claim that
\begin{equation}\label{contact_area_asymptotic_tail}
\lim_{k\to\infty} \int_{[\hat s,+\infty)\times\R/\Z} v_k^*d\alpha_k = 0
\end{equation}
and that
\begin{equation}\label{contact_action_asymptotic_tail}
\lim_{k\to\infty} \int_{\{\hat s\}\times\R/\Z}v_k^*\alpha_k = T_m
\end{equation}
hold for every $\hat s\in\R$. In fact, fix $\epsilon>0$ be arbitrarily. Recall that $\util$ is weakly asymptotic to $\gamma_m$ at $z_{\infty,m}$ (Lemma~\ref{lemma_asymptotic_props_limit_curve}). Hence, there exists $s_\epsilon>0$ such that
\[
\int_{\{s_\epsilon\}\times \R/\Z} (u')^*\alpha \ \in \ [T_m-\epsilon/2,T_m].
\]
Here the upper bound $T_m$ follows from Stokes theorem since $(u')^*d\alpha_k$ is a non-negative multiple of $ds\wedge dt$. Now use $u_k' \to u$ in $C^\infty_{\rm loc}$ to find $k_\epsilon\geq 1$ satisfying
\begin{equation}\label{key_action_control}
k\geq k_\epsilon \Rightarrow \int_{\{s_\epsilon\}\times\R/\Z} (u'_k)^*\alpha_k \ \in \ [T_m-\epsilon,T_m].
\end{equation}
The upper bound $T_m$ is proven as before. Putting together Stokes theorem,~\eqref{key_action_control} and the fact that $\util_k'$ is asymptotic to $\gamma_m$ at $z'_{k,m}=z_{\infty,m}$ we get
\[
k\geq k_\epsilon \Rightarrow \int_{[s_\epsilon,+\infty)\times\R/\Z} (u'_k)^*d\alpha_k \leq \epsilon.
\]
Since
\[
\int_{[\hat s,+\infty)\times\R/\Z} v_k^*d\alpha_k = \int_{[\hat s+s_k,+\infty)\times\R/\Z} (u'_k)^*d\alpha_k
\]
we get
\[
\limsup_{k\to\infty} \int_{[\hat s,+\infty)\times\R/\Z} v_k^*d\alpha_k \leq \epsilon
\]
because $\hat s+s_k \geq s_\epsilon$ when $k$ is large enough. This proves~\eqref{contact_area_asymptotic_tail} since $\epsilon$ can be taken arbitrarily small. Again using~\eqref{key_action_control} together with Stokes theorem we can estimate with $k\geq k_\epsilon$ and $\hat s+s_k \geq s_\epsilon$
\[
\begin{aligned}
\int_{\{\hat s\}\times\R/\Z} v_k^*\alpha_k &= \int_{\{\hat s+s_k\}\times\R/\Z} (u'_k)^*\alpha_k \\
&= \int_{\{s_\epsilon\}\times\R/\Z} (u'_k)^*\alpha_k + \int_{[s_\epsilon,\hat s+s_k]\times\R/\Z} (u'_k)^*d\alpha_k \\
&\geq \int_{\{s_\epsilon\}\times\R/\Z} (u'_k)^*\alpha_k \geq T_m-\epsilon
\end{aligned}
\]
and
\[
\begin{aligned}
\int_{\{\hat s\}\times\R/\Z} v_k^*\alpha_k &= \int_{\{\hat s+s_k\}\times\R/\Z} (u'_k)^*\alpha_k \\
&= T_m - \int_{[\hat s+s_k,+\infty)\times\R/\Z} (u'_k)^*d\alpha_k \leq T_m.
\end{aligned}
\]
Again the non-negativity of $(u'_k)^*d\alpha_k$ with respect to $ds\wedge dt$ was used. Now~\eqref{contact_action_asymptotic_tail} follows since $\epsilon$ can be taken arbitrarily small.

The sequence $d\vtil_k$ is $C^0_{\rm loc}$-bounded, for if not then we would find a bounded bubbling-off sequence of points, hence a compact set $F\subset\R/\times\R/\Z$ satisfying $\liminf_k \int_Fv_k^*d\alpha_k > 0$, contradicting~\eqref{contact_area_asymptotic_tail}. Since $d_k(0,0) = 0$ we obtain $C^1_{\rm loc}$-bounds for $\vtil_k$. Elliptic estimates provide $C^\infty_{\rm loc}$-bounds. Up to a further subsequence we get a smooth finite-energy $\jtil$-holomorphic map $\vtil:\R\times\R/\Z \to \R\times M$ as a $C^\infty_{\rm loc}$-limit of the $\vtil_k$. Write $\vtil=(d,v)$ for the components of $\vtil$. The map $\vtil$ is not constant since~\eqref{contact_action_asymptotic_tail} implies that
\[
\int_{\{\hat s\}\times\R/\Z} v^*\alpha = T_m, \ \ \ \forall \hat s\in\R.
\]
Moreover,
\begin{equation*}
\int_{\R\times\R/\Z} v^*d\alpha = 0
\end{equation*}
holds in view of~\eqref{contact_area_asymptotic_tail}. It follows that $\vtil$ is a cylinder over some periodic orbit~$\hat\gamma=(\hat x,\hat T=T_m)$ with the same period as $\gamma_m$. In particular, there is $t_0$ such that $\vtil(s,t) = (T_ms,\hat x(T_mt+t_0))$.

Suppose, by contradiction, that $\gamma_m$ and $\hat\gamma$ are geometrically distinct. By the standing assumption (b) in the beginning of this subsection we know that the loop $t\mapsto v(0,t) = \hat x(T_mt+t_0)$ has algebraic intersection number with $b$ different from zero if it is contained in $M\setminus L$. Since the sequence of loops $u'_k(s_k,\cdot)$ converges in $C^\infty$ to the loop $v(0,\cdot)$, then for $k\gg1$ the loop $u'_k(s_k,\cdot)$ has non-zero algebraic intersection number with $b$. This is a contradiction to hypothesis~(H). We have established that $\vtil$ is a trivial cylinder over some cover of some component of $L$. Next we claim that this component must be $\gamma_m$. If not we find $s_k'\geq s_k$ and $t'_k\in\R/\Z$ such that $u'_k(s'_k,t'_k) \in \partial N_m$. Here we used that each $\util_k'$ is asymptotic to $\gamma_m$ at $z'_{k,m}=z_{\infty,m}$. Then, by the arguments showed above, a subsequence of the sequence of maps $\vtil'_k(s,t) = (a'_k(s+s'_k,t+t_k')-a(s'_k,t'_k),u'_k(s+s'_k,t+t'_k))$ converges to a trivial cylinder over some periodic orbit contained in $L$ which also touches $\partial N_m$ because $\vtil'_k(0,0) \in 0\times\partial N_m$. This contradiction shows that $\hat\gamma$ is a cover of $\gamma_m$. Thus $\hat\gamma=\gamma_m$ since they have the same period.
\end{proof}

\begin{lemma}\label{lemma_uniform_ends_behavior}
There exist $s_0\geq 0$ and $k_0\geq1$ such that $u_k'(s,t) \in N_m$ for every $k\geq k_0$ and every $s\geq s_0$.
\end{lemma}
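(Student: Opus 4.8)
The plan is to reduce the claim to a distance estimate and then argue by contradiction via a bubbling-off/compactness argument in which Lemma~\ref{lemma_crucial_control_of_ends} does essentially all the work. First I would fix $\epsilon_0>0$ small enough that the closed $\epsilon_0$-tubular neighbourhood of $\gamma_m$ is contained in the interior of $N_m$, so that it suffices to find $s_0\geq 0$ and $k_0\geq 1$ with $\dist(u'_k(s,t),\gamma_m)<\epsilon_0$ for all $k\geq k_0$, all $s\geq s_0$ and all $t\in\R/\Z$. (As part of choosing $k_0$ one also checks that for $k$ large the punctured coordinate disc around $z_{\infty,m}$ in which the polar coordinates $(s,t)$ live contains no other puncture of $\util'_k$, since $z'_{k,l}\to z_{\infty,l}\neq z_{\infty,m}$ for $l\neq m$; hence $u'_k(s,t)$ is defined for all such $(s,t)$.)

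Next I would negate the conclusion: if no pair $(s_0,k_0)$ works, then applying the negation with $s_0=k_0=j$ and letting $j\to\infty$ produces sequences $k_j\to+\infty$, $s_j\to+\infty$ and $t_j\in\R/\Z$ with $\dist(u'_{k_j}(s_j,t_j),\gamma_m)\geq\epsilon_0$ for every $j$; the point of this diagonal choice is precisely that it yields a single threshold rather than one depending on $k$. I would then feed $\{s_j\}$ and $\{t_j\}$ into Lemma~\ref{lemma_crucial_control_of_ends}, applied to the subsequence $\{\util'_{k_j}\}_j$ (which inherits all the hypotheses needed for that lemma). Because $s_j\to+\infty$, the shifted maps
\[
(s,t)\mapsto\bigl(a'_{k_j}(s+s_j,t+t_j)-a'_{k_j}(s_j,t_j),\,u'_{k_j}(s+s_j,t+t_j)\bigr)
\]
are eventually defined on every fixed compact subset of $\R\times\R/\Z$, and along a subsequence they converge in $C^\infty_{\rm loc}$ to a trivial cylinder over $\gamma_m=(x_m,T_m)$, i.e.\ to a map whose $M$-component has the form $(s,t)\mapsto x_m(T_mt+c)$ for some constant $c$. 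Evaluating the $M$-components at $(s,t)=(0,0)$ gives $u'_{k_j}(s_j,t_j)\to x_m(c)\in\gamma_m$ along that subsequence, hence $\dist(u'_{k_j}(s_j,t_j),\gamma_m)\to 0$, contradicting the previous sentence. This contradiction produces the desired $s_0$ and $k_0$.

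I do not expect a serious obstacle here: the substance is entirely contained in Lemma~\ref{lemma_crucial_control_of_ends}, and what remains is bookkeeping. The two points deserving a little care are the diagonal choice of parameters (needed to obtain a uniform $s_0$, not merely a $k$-dependent one) and the verification that the shifted maps exhaust $\R\times\R/\Z$ in the limit, together with the harmless domain issue mentioned in the first paragraph, so that Lemma~\ref{lemma_crucial_control_of_ends} is genuinely applicable.
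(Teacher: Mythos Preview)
Your proposal is correct and follows essentially the same approach as the paper: both argue by contradiction, extract sequences $k_j\to\infty$, $s_j\to\infty$ with $u'_{k_j}(s_j,t_j)$ bounded away from $\gamma_m$, apply Lemma~\ref{lemma_crucial_control_of_ends} to the shifted cylinders, and obtain a contradiction by evaluating the $M$-component of the limit at $(0,0)$. The paper phrases the separation as $u'_k(s_k,t_k)\in\partial N_m$ rather than via a distance $\epsilon_0$, but this is cosmetic; your more explicit diagonal extraction and domain check are, if anything, slightly more careful than the paper's terse version.
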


\begin{proof}
We argue by contradiction. If the statement is not true then, up to selection of a subsequence, we may assume that there exist sequences $s_k\to+\infty$, $t_k\in\R/\Z$ such that $u_k'(s_k,t_k) \in \partial N_m$. By the previous lemma we know that the sequence of cylinders $$ \vtil_k(s,t) = (a_k'(s+s_k,t+t_k))-a_k'(s_k,t_k),u_k'(s+s_k,t+t_k)) $$ defined on $[-s_k,+\infty)\times\R/\Z$ converges in $C^\infty_{\rm loc}$ to a trivial cylinder over $\gamma_m$. From $\vtil_k(0,0) \in 0\times\partial N_m$ we get that $\gamma_m$ intersects $\partial N_m$, absurd.
\end{proof}

Since the $N_l$ are tubular neighborhoods of the $\gamma_l$ which are allowed to be taken arbitrarily small, there is no loss of generality to assume that $N_m$ is contained in the domain of coordinates
\[
(\theta,z=x+iy) \in \R/\Z\times \C
\]
given by a Martinet tube for $(\gamma_m,\alpha)$; see Definition~\ref{def_Martinet_tube}. The contact forms $\alpha_k,\alpha$ are given on $N_m$ as
\[
\alpha_k = f_kh(d\theta+xdy) \qquad \alpha = h(d\theta+xdy)
\]
for some $h=h(\theta,z)$ satisfying $h(\theta,0)=T_m$ and $dh(\theta,0)=0$ for all $\theta$. In view of~\eqref{properties_f_k} the $f_kh$ satisfy these same properties. In coordinates we write
\[
X_\alpha = (X_\alpha^1,Y) \qquad X_{\alpha_k} = (X_{\alpha_k}^1,Y_k)
\]
and define matrix-valued functions
\begin{equation}
D_k(\theta,z) = \int_0^1 D_2Y_k(\theta,\tau z)d\tau \qquad D(\theta,z) = \int_0^1 D_2Y(\theta,\tau z)d\tau
\end{equation}

By Lemma~\ref{lemma_uniform_ends_behavior} the components
\[
w_k(s,t) = (\theta_k(s,t),z_k(s,t) = x_k(s,t)+iy_k(s,t))
\]
of $u'_k(s,t)$ are well-defined functions of $(s,t) \in [s_0,+\infty)\times\R/\Z$. So are the components
\[
w(s,t) = (\theta(s,t),z(s,t)=x(s,t)+iy(s,t))
\]
of $u(s,t)$. We already know that
\[
w_k(s,t) \to w(s,t) \ \text{in $C^\infty_{\rm loc}$.}
\]

In the frame $\{\partial_x,-x\partial_\theta+\partial_y\}$ of $\xi|_{N_m}$ we can represent $J_k,J$ as matrix-valued functions of $(\theta,z)$. The Cauchy-Riemann equations for $\util_k'(s,t)$ and $\util(s,t)$ read
\begin{equation}\label{cauchy_riemann_1}
\begin{aligned}
& \left\{
\begin{aligned}
& \partial_sa_k'-((f_kh)\circ w_k)(\partial_t\theta_k+x_k\partial_ty_k) = 0 \\
& \partial_s\theta_k + ((f_kh)\circ w_k)^{-1}\partial_ta_k' +x_k\partial_sy_k = 0
\end{aligned}
\right. \\
& \left\{
\begin{aligned}
& \partial_sa-(h\circ w)(\partial_t\theta+x\partial_ty) = 0 \\
& \partial_s\theta + (h\circ w)^{-1}\partial_ta +x\partial_sy = 0
\end{aligned}
\right.
\end{aligned}
\end{equation}
and
\begin{equation}\label{cauchy_riemann_2}
\begin{aligned}
& \partial_sz_k + (J_k\circ w_k)\partial_tz_k+S_kz_k = 0 \\
& \partial_sz + (J\circ w)\partial_tz+Sz = 0
\end{aligned}
\end{equation}
where
\begin{equation}\label{formulas_S_k_S}
\begin{aligned}
& S_k(s,t) = [\partial_ta_k' I -\partial_sa_k' (J_k\circ w_k)]D_k\circ w_k \\
& S(s,t) = [\partial_ta I -\partial_sa (J\circ w)]D\circ w.
\end{aligned}
\end{equation}

The functions $\theta(s,t),\theta_k(s,t)$ take values in $\R/\Z$. By Lemma~\ref{lemma_asymptotic_props_limit_curve} the degrees of the maps $\theta(s,\cdot)$, $\theta_k(s,\cdot)$ are equal to $1$ since $\gamma_m$ is simply covered. Since $z_{k,m}$ is a non-degenerate puncture of $\util_k$, we know that $z'_{k,m}=z_{\infty,m}$ is a non-degenerate puncture of $\util_k'$, and Theorem~\ref{thm_asymptotic_formula_deg_case} guarantees that $\lim_{s\to+\infty}\theta_k(s,0)$ exists in $\R/\Z$. Choose unique lifts of $\tilde\theta_k:\R\times\R\to\R$ determined by $\lim_{s\to+\infty} \tilde\theta_k(s,0) \in [0,1)$. Up to a subsequence we may assume without loss of generality that $$ \lim_{k\to\infty} \left( \lim_{s\to+\infty} \tilde\theta_k(s,0) \right) \ \text{exists in $[0,1]$.} $$ Hence there is a unique lift $\tilde\theta:\R\times\R\to\R$ of $\theta(s,t)$ determined by requiring $\tilde\theta_k(s,t) \to \tilde\theta(s,t)$ in $C^\infty_{\rm loc}$. We have $\tilde\theta_k(s,t+1)=\tilde\theta_k(s,t)+1$ and $\tilde\theta(s,t+1)=\tilde\theta(s,t)+1$.

\begin{lemma}\label{lemma_first_estimate}
The following holds:
\begin{equation}\label{first_estimate}
\begin{aligned}
& \lim_{s\to+\infty} \sup_{k,t} \left(|D^\beta[a_k'-T_ms]| + |D^\beta[\tilde\theta_k-t]|\right) = 0 \ \ \text{$\forall \beta$ such that $|\beta|\geq 1$} \\
& \lim_{s\to+\infty} \sup_{k,t} |D^\beta z_k| = 0 \ \ \forall \beta
\end{aligned}
\end{equation}
where $D^\beta = \partial_s^{\beta_1}\partial_t^{\beta_2}$ and $|\beta|=\beta_1+\beta_2$.
\end{lemma}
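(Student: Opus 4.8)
The plan is to deduce the uniform (in $k$) exponential-type decay estimates of~\eqref{first_estimate} from the convergence $\util_k' \to \util$ together with Lemma~\ref{lemma_crucial_control_of_ends}, which says that every ``blow-up at infinity'' of the sequence $\util_k'$ along the $m$-th puncture is a trivial cylinder over $\gamma_m$. The key observation is that the three quantities $D^\beta[a_k'-T_ms]$, $D^\beta[\tilde\theta_k - t]$ (for $|\beta|\ge 1$) and $D^\beta z_k$ (for all $\beta$) all measure how far $\util_k'$ is from the trivial cylinder $(s,t)\mapsto(T_m s + \text{const}, m t+\text{const}, 0)$ in the Martinet-tube coordinates valid on $N_m$ by Lemma~\ref{lemma_uniform_ends_behavior}. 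So it suffices to prove the single statement
\[
\lim_{s\to+\infty}\ \sup_{k\ge k_0}\ \sup_{t\in\R/\Z}\ \operatorname{dist}_{C^\ell}\big(\util_k'|_{\{s\}\times\R/\Z},\ \text{(trivial cylinder over }\gamma_m)\big) = 0
\]
for every $\ell$, where $\operatorname{dist}_{C^\ell}$ is computed after the natural $\R$-translation normalization. Granting this, the individual estimates follow from the explicit form of the trivial cylinder in these coordinates and the already-established $C^\infty_{\rm loc}$ convergence on compact subsets.

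First I would argue by contradiction: if the displayed uniform limit fails for some $\ell$ then there are $\epsilon>0$, subsequences $k_j\to\infty$ and $s_j\to+\infty$, and $t_j\in\R/\Z$ so that the $\ell$-jet of $\util_{k_j}'$ at $(s_j,t_j)$ stays $\epsilon$-away from the $\ell$-jet of any trivial cylinder over $\gamma_m$. Apply Lemma~\ref{lemma_crucial_control_of_ends} to the recentered maps $(s,t)\mapsto(a_{k_j}'(s+s_j,t+t_j)-a_{k_j}'(s_j,t_j),\ u_{k_j}'(s+s_j,t+t_j))$: a subsequence converges in $C^\infty_{\rm loc}$ to a trivial cylinder over $\gamma_m$. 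Evaluating the $\ell$-jet at $(0,0)$ and passing to the limit contradicts the $\epsilon$-separation. This gives the uniform $C^\ell$ proximity claimed above. Converting proximity to the precise estimates is then a bookkeeping step: write $\util_k'$ in the frame $\{\partial_x,-x\partial_\theta+\partial_y\}$, use that $h(\theta,0)=T_m$, $dh(\theta,0)=0$, and $f_k|_L\equiv1$, so that on the trivial cylinder $a_k'-T_m s$ and $\tilde\theta_k - t$ are constant and $z_k\equiv 0$; the $C^\ell$ proximity then forces all the derivatives $D^\beta[a_k'-T_m s]$, $D^\beta[\tilde\theta_k-t]$ ($|\beta|\ge1$) and $D^\beta z_k$ (all $\beta$) to go to $0$ uniformly in $k$ and $t$ as $s\to\infty$. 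I would induct on $|\beta|$, using the Cauchy-Riemann systems~\eqref{cauchy_riemann_1}--\eqref{cauchy_riemann_2} and the formulas~\eqref{formulas_S_k_S} together with elliptic interior estimates on unit squares $[s-1,s+1]\times\R/\Z$ to upgrade $C^0$ smallness of lower-order quantities to $C^\ell$ smallness of higher-order ones, with all constants independent of $k$ because $\alpha_k\to\alpha$, $J_k\to J$ in $C^\infty$ and the relevant geometric data live in a fixed compact neighbourhood $N_m$.

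The main obstacle I anticipate is the uniformity in $k$ of the compactness/elliptic-bootstrapping argument: Lemma~\ref{lemma_crucial_control_of_ends} handles one sequence $s_k$ at a time, but here I need smallness that is uniform over \emph{all} choices of large $s$ and all $k\ge k_0$ simultaneously. The contradiction set-up above is designed precisely to reduce the uniform statement to finitely many applications of Lemma~\ref{lemma_crucial_control_of_ends}, but one must be careful that the recentering used in that lemma is compatible with the $\R$-normalization $a_k'(2)=0$ appearing elsewhere — i.e., one tracks only $a_k'$-differences, never $a_k'$ itself, which is harmless since all estimates in~\eqref{first_estimate} involve either derivatives of $a_k'$ or the difference $a_k'-T_m s$ up to an additive constant absorbed into $a_0$ in the eventual asymptotic formula. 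A secondary point requiring care is that $z_{\infty,m}$ is only known a priori to be a non-degenerate puncture of $\util_k'$ (via Theorem~\ref{thm_asymptotic_formula_deg_case}) so that $\lim_{s\to\infty}\theta_k(s,0)$ exists for each fixed $k$; the content of this lemma is to make the rate of that convergence uniform in $k$, which is exactly what the contradiction argument delivers without invoking any non-degeneracy of the limit orbit $\gamma_m$.
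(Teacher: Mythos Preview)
Your proposal is correct and is exactly what the paper's one-line proof (``Direct consequence of Lemma~\ref{lemma_crucial_control_of_ends}'') means when unpacked: negate the conclusion, extract $(k_j,s_j,t_j)$ with $s_j\to+\infty$ violating the estimate, recenter, and apply Lemma~\ref{lemma_crucial_control_of_ends} to reach a contradiction via $C^\infty_{\rm loc}$-convergence to the trivial cylinder over $\gamma_m$. One small point to tidy: you assert $k_j\to\infty$, but the $\sup_k$ allows $k_j$ to stay bounded; in that case pass to a constant subsequence $k_j\equiv k$ and use that $z'_{k,m}$ is a non-degenerate puncture of $\util_k'$ (so Theorem~\ref{thm_asymptotic_formula_deg_case} already gives the decay for that single $k$), while for $k_j\to\infty$ your invocation of Lemma~\ref{lemma_crucial_control_of_ends} goes through verbatim after relabeling.
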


\begin{proof}
Direct consequence of Lemma~\ref{lemma_crucial_control_of_ends}.
\end{proof}

From now we fix a smooth maps $M_k(\theta,z),M(\theta,z) \in Sp(2)$ defined on a common neighborhood of $\R/\Z\times 0$ satisfying
\begin{equation*}
M_kJ_k = J_0M_k \qquad MJ = J_0M \qquad M_k \to M \ \text{in} \ C^\infty
\end{equation*}
where $J_0 = \begin{pmatrix} 0 & -1 \\ 1 & 0 \end{pmatrix}$. Here we used that $J_k \to J$ in $C^\infty$. Define
\begin{equation*}
\begin{aligned}
& J^\infty(t) = J(t,0) \\
& D^\infty(t) = D_2Y(t,0) \\
& M^\infty(t) = M(t,0) \\
& S^\infty(t) = -T_mJ^\infty(t)D^\infty(t).
\end{aligned}
\end{equation*}

\begin{lemma}{\cite[Lemma~4.12]{elliptic}}\label{lemma_asymptotic_data}
For all pairs of sequences $k_j,s_j \to +\infty$ there exists $j_i \to +\infty$ and a number $c\in[0,1]$ such that
\begin{equation*}
\begin{aligned}
& \lim_{i\to+\infty} \|D^\beta[J_{k_{j_i}}\circ w_{k_{j_i}}-J^\infty(t+c)](s_{j_i},\cdot)\|_{L^\infty(\R/\Z)} = 0 \\
& \lim_{i\to+\infty} \|D^\beta[D_{k_{j_i}}\circ w_{k_{j_i}}-D^\infty(t+c)](s_{j_i},\cdot)\|_{L^\infty(\R/\Z)} = 0 \\
& \lim_{i\to+\infty} \|D^\beta[M_{k_{j_i}}\circ w_{k_{j_i}}-M^\infty(t+c)](s_{j_i},\cdot)\|_{L^\infty(\R/\Z)} = 0 \\
& \lim_{i\to+\infty} \|D^\beta[S_{k_{j_i}}-S^\infty(t+c)](s_{j_i},\cdot)\|_{L^\infty(\R/\Z)} = 0
\end{aligned}
\end{equation*}
for every $D^\beta= \partial_s^{\beta_1}\partial_t^{\beta_2}$.
\end{lemma}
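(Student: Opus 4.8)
\textbf{Proof proposal for Lemma~\ref{lemma_asymptotic_data}.} The plan is to combine the uniform control on the ends provided by Lemma~\ref{lemma_crucial_control_of_ends} with a standard diagonal/Arzel\`a--Ascoli argument, transferring information about the limiting map $u$ back to the sequence $u_k'$. First I would observe that the only data entering $J_k\circ w_k$, $D_k\circ w_k$, $M_k\circ w_k$ and $S_k$ are: the coordinate expressions $z_k(s,t)$ and $\theta_k(s,t)$ (equivalently the loop $t\mapsto w_k(s,t)$), the derivatives $\partial_s a_k',\partial_t a_k'$, and the smooth matrix functions $J_k,D_k,M_k$ on $N_m$, which converge in $C^\infty$ to $J,D,M$. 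So the statement reduces to understanding the behaviour, along sequences $k_j,s_j\to+\infty$, of the rescaled loops $t\mapsto w_{k_j}(s_j,t)$ and the functions $\partial_s a'_{k_j}(s_j,\cdot),\partial_t a'_{k_j}(s_j,\cdot)$.

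The heart of the argument is Lemma~\ref{lemma_crucial_control_of_ends}: applied with $t_k\equiv 0$ it says that for any $s_j\to+\infty$, a subsequence of $(s,t)\mapsto(a'_{k_j}(s+s_j,t)-a'_{k_j}(s_j,0),u'_{k_j}(s+s_j,t))$ converges in $C^\infty_{\mathrm{loc}}$ to a trivial cylinder over $\gamma_m$, i.e.\ to $(s,t)\mapsto(T_m s, x_m(T_m t+t_0))$ for some $t_0$ depending on the subsequence. Passing to a further subsequence we may assume $t_0$ converges, and since the Martinet tube coordinates represent $\gamma_m$ as $\R/\Z\times\{0\}$ with $x_m(T_m t)=(t,0)$, the constant $t_0$ (normalized to $[0,1)$) becomes exactly the shift $c$ in the statement. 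Concretely, this $C^\infty_{\mathrm{loc}}$ convergence to a trivial cylinder gives, evaluated at $s=0$: $w_{k_j}(s_j,\cdot)\to(\,\cdot\,+c,0)$ in $C^\infty(\R/\Z)$, $\partial_s a'_{k_j}(s_j,\cdot)\to T_m$ and $\partial_t a'_{k_j}(s_j,\cdot)\to 0$ in $C^\infty(\R/\Z)$ (the latter because the $\R$-component of the trivial cylinder is $T_m s$, independent of $t$; one also uses the Cauchy--Riemann equations~\eqref{cauchy_riemann_1}, where the $z_k$-terms vanish in the limit, to see $\partial_t a'_{k_j}(s_j,\cdot)$ limits to $(h\circ w)^{-1}\partial_s\theta\cdot(\text{stuff})$ which is zero). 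Feeding these convergences into the defining formulas~\eqref{formulas_S_k_S} for $S_k$ and into the composition $J_k\circ w_k$, $D_k\circ w_k$, $M_k\circ w_k$, and using $J_k\to J$, $D_k\to D$, $M_k\to M$ in $C^\infty$ on the tubular neighbourhood, one reads off precisely the four limits claimed, with the same shift $c$: e.g.\ $J_{k_j}\circ w_{k_j}(s_j,\cdot)\to J(\,\cdot\,+c,0)=J^\infty(\,\cdot\,+c)$, and $S_{k_j}(s_j,\cdot)\to[0\cdot I-T_m J^\infty(\,\cdot\,+c)]D^\infty(\,\cdot\,+c)=S^\infty(\,\cdot\,+c)$. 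The convergence of the $\beta$-derivatives $D^\beta$ follows because all the convergences above are in $C^\infty$ on compact sets, and Lemma~\ref{lemma_first_estimate} already controls derivatives of $z_k$ and of $a'_k-T_m s$, $\tilde\theta_k-t$ uniformly in $k$ for large $s$, so chain-rule expansions of $D^\beta[J_{k_j}\circ w_{k_j}]$ etc.\ only involve quantities that converge.

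The step I expect to be the main obstacle is bookkeeping the passage to subsequences so that a \emph{single} shift $c$ works simultaneously for all four sequences and all multi-indices $\beta$. This is handled by a diagonal argument: first extract a subsequence on which Lemma~\ref{lemma_crucial_control_of_ends} applies (with $t_k\equiv0$, $s_k=s_j$) and on which the shift $t_0$ from the trivial-cylinder limit converges to some $c\in[0,1]$; then note that once this subsequence is fixed, \emph{all} of $J_{k_{j_i}}\circ w_{k_{j_i}}$, $D_{k_{j_i}}\circ w_{k_{j_i}}$, $M_{k_{j_i}}\circ w_{k_{j_i}}$, $S_{k_{j_i}}$ converge along it with the \emph{same} $c$, because they are all continuous functionals of the single convergent object (the rescaled cylinder together with $da'_{k_{j_i}}$). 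A minor additional point is the endpoint ambiguity $c\in\{0,1\}$, where $J^\infty(t+1)=J^\infty(t)$ since $J^\infty$ is $1$-periodic, so the statement is consistent; this is exactly the reason the lemma allows $c\in[0,1]$ rather than $c\in[0,1)$. I would also remark that, as the citation to \cite[Lemma~4.12]{elliptic} indicates, this is essentially a known stability statement for ends of holomorphic cylinders, so beyond invoking Lemma~\ref{lemma_crucial_control_of_ends} and Lemma~\ref{lemma_first_estimate} the argument is routine; the only genuinely new ingredient is that here we work with the varying contact forms $\alpha_k=f_k\alpha$ and varying $J_k$, but since $f_k\to1$ and $J_k\to J$ in $C^\infty$ and $f_k\in\mathcal{F}_L$ ensures $f_k\equiv 1$ along $L$ (so the Martinet tube coordinates for $\gamma_m$ as an $\alpha_k$-orbit are uniformly controlled), this causes no difficulty.
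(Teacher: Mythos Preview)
Your proposal is correct and follows essentially the same approach as the paper. The only presentational difference is that the paper extracts the shift $c$ directly as a subsequential limit of $c_j=\tilde\theta_{k_j}(s_j,0)\in[0,1)$ using the already-chosen lifts $\tilde\theta_k$, and then handles the $D^\beta$ convergence by an explicit Taylor/integral-remainder decomposition $J_{k_j}\circ w_{k_j}-J^\infty(t+c)=\epsilon_{k_j}+\epsilon'_j+\epsilon''_j$ with each error term controlled separately via Lemma~\ref{lemma_first_estimate}; you instead go back to Lemma~\ref{lemma_crucial_control_of_ends} to read off $c$ from the trivial-cylinder limit and treat the derivative convergence by chain rule, which is equivalent but slightly less explicit.
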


\begin{proof}
We only address the first limit. 
The second and third follow analogously, and the fourth follows as a consequence of the the first three and of~\eqref{formulas_S_k_S}.

In this proof we may see functions defined on $\R/\Z\times\C$ as functions defined on $\R\times \C$ which are $1$-periodic in the first coordinate. Set $c_j = \tilde\theta_{k_j}(s_j,0) \in [0,1)$. 
Choose $j_i$ such that $c:=\lim_{i\to+\infty} c_{j_i}$ exists in $[0,1]$. 
Consider the function $$ \Delta_k(s,t) = (\tilde\theta_k(s,t) - t - \tilde\theta_k(s,0),z_k(s,t)). $$ 
By Lemma~\ref{lemma_first_estimate} we get
\begin{equation}\label{error_1}
\lim_{s\to+\infty} \sup_{k,t} |D^\beta\Delta_k(s,t)| = 0 \qquad \forall \beta.
\end{equation}
Write
\begin{equation}
J_k\circ w_k(s,t) = J_k(t+\tilde\theta_k(s,0),0) + \epsilon_k(s,t)
\end{equation}
where $$ \epsilon_k(s,t) = \int_0^1 DJ_k((t+\tilde\theta_k(s,0),0)+\tau\Delta_k(s,t))d\tau \cdot \Delta_k(s,t). $$ 
Using~\eqref{error_1} and the fact that any partial derivative of $J_k$ is uniformly (also in $k$) bounded on a fixed compact neighborhood of $\R/\Z\times 0$, we get
\begin{equation}\label{error_2}
\lim_{s\to+\infty} \sup_{k,t} |D^\beta\epsilon_k(s,t)| = 0.
\end{equation}
Finally we can write
\[
J_{k_j}\circ w_{k_j}(s,t)-J^\infty(t+c) = \epsilon_{k_j}(s,t) + \epsilon'_j(s,t) + \epsilon''_j(t)
\]
where
\[
\begin{aligned}
& \epsilon'_j(s,t) = J_{k_j}(t+\tilde\theta_{k_j}(s,0),0) - J_{k_j}(t+c_j,0) \\
& \epsilon''_j(t) = J_{k_j}(t+c_j,0) - J(t+c,0)
\end{aligned}
\]
Write
\[
\epsilon'_j(s,t) = \int_0^1 DJ_{k_j}(t+\tau(\tilde\theta_{k_j}(s,0)-c_j),0)d\tau \cdot (\tilde\theta_{k_j}(s,0)-c_j,0)
\]
and use Lemma~\ref{lemma_first_estimate} to conclude that
\[
\lim_{j\to+\infty} \sup_{t}|D^\beta\epsilon'_j(s_j,t)| = 0 \qquad \forall \beta.
\]
Obviously
\[
\lim_{i\to+\infty} \sup_t|\partial_t^l\epsilon''_{j_i}(t)| = 0 \qquad \forall l.
\]
since $J_k\to J$ in $C^\infty$ and $c_{j_i}\to c$.
\end{proof}

From now on we denote
\begin{equation*}
\begin{aligned}
& M_k(s,t) = M_k(w_k(s,t)) & M(s,t) = M(w(s,t)) \\
& J_k(s,t) = J_k(w_k(s,t)) & J(s,t) = J(w(s,t)) \\
& D_k(s,t) = D_k(w_k(s,t)) & D(s,t) = D(w(s,t))
\end{aligned}
\end{equation*}
without fear of ambiguity. Consider
\begin{equation}
\begin{aligned}
& \zeta_k(s,t) = M_k(s,t)z_k(s,t), \qquad \Lambda_k(s,t) = (M_kS_k-\partial_sM_k-J_0\partial_tM_k)M_k^{-1} \\
\end{aligned}
\end{equation}
Then $\zeta_k$ satisfies
\begin{equation}
\partial_s\zeta_k+J_0\partial_t\zeta_k+\Lambda_k\zeta_k=0.
\end{equation}
The following statement is a consequence of the previous lemma.

\begin{corollary}\label{coro_asymptotic_data}
If we set
\[
\Lambda^\infty(t) = (M^\infty S^\infty-J_0\partial_tM^\infty)(M^\infty)^{-1}
\]
then for all pairs of sequences $k_j,s_j \to +\infty$ there exists $j_i \to +\infty$ and a number $c\in[0,1)$ such that
\begin{equation*}
\lim_{i\to+\infty} \|D^\beta[\Lambda_{k_{j_i}}-\Lambda^\infty(t+c)](s_{j_i},\cdot)\|_{L^\infty(\R/\Z)} = 0
\end{equation*}
holds for every $\beta$.
\end{corollary}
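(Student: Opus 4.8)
The plan is to read off the statement from Lemma~\ref{lemma_asymptotic_data}, using only that $\Lambda_k$ is assembled from $M_k$, $S_k$ and their first $(s,t)$-derivatives by algebraic operations — products, multiplication by the constant matrix $J_0$, and inversion of $M_k$ — that are continuous with respect to the mode of convergence supplied by that lemma, namely $C^\infty$-convergence along slices $\{s\}\times\R/\Z$ with control uniform in $t$ and in all derivatives.

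First I would fix sequences $k_j,s_j\to+\infty$ and apply Lemma~\ref{lemma_asymptotic_data}, obtaining $j_i\to+\infty$ and $c\in[0,1]$ such that, for every $D^\beta=\partial_s^{\beta_1}\partial_t^{\beta_2}$,
\[
\|D^\beta[M_{k_{j_i}}-M^\infty(t+c)](s_{j_i},\cdot)\|_{L^\infty(\R/\Z)}\to 0
\]
and likewise for $S_k$ (and, although not needed directly, for $J_k$ and $D_k$). Since $M^\infty(t+c)$ does not depend on $s$, the cases $\beta=(\beta_1,0)$ with $\beta_1\geq1$ give $\partial_sM_{k_{j_i}}(s_{j_i},\cdot)\to 0$ in $C^\infty(\R/\Z)$, while the cases $\beta=(0,\beta_2)$ with $\beta_2\geq1$ give $\partial_tM_{k_{j_i}}(s_{j_i},\cdot)\to \partial_tM^\infty(t+c)$ in $C^\infty(\R/\Z)$; this is the precise point where the $\partial_sM_k$ term appearing in the definition of $\Lambda_k$ is forced to disappear in the limit, as it must in order for the answer to be $\Lambda^\infty(t+c)$.

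Next I would deal with $M_k^{-1}$. Since $M_k$ and $M^\infty$ take values in $Sp(2)\subset GL(2,\R)$ and matrix inversion is smooth on $GL(2,\R)$, the convergence $M_{k_{j_i}}(s_{j_i},\cdot)\to M^\infty(t+c)$ together with invertibility of the limit yields $M_{k_{j_i}}^{-1}(s_{j_i},\cdot)\to (M^\infty)^{-1}(t+c)$ in $C^\infty(\R/\Z)$. All the sequences in play are uniformly bounded, together with their derivatives, for $i$ large, so products converge by the Leibniz rule. Substituting into $\Lambda_k=(M_kS_k-\partial_sM_k-J_0\partial_tM_k)M_k^{-1}$ gives
\[
\Lambda_{k_{j_i}}(s_{j_i},\cdot)\to (M^\infty S^\infty-J_0\partial_tM^\infty)(M^\infty)^{-1}(t+c)=\Lambda^\infty(t+c)
\]
in $C^\infty(\R/\Z)$, i.e. $\|D^\beta[\Lambda_{k_{j_i}}-\Lambda^\infty(t+c)](s_{j_i},\cdot)\|_{L^\infty(\R/\Z)}\to0$ for all $\beta$, which is the assertion. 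Finally, if $c=1$ I would replace it by $0$: the functions $J^\infty,D^\infty,M^\infty$ are the restrictions to $\R/\Z\times\{0\}$ of functions defined on a neighbourhood of $\R/\Z\times\{0\}$, hence are $1$-periodic, so $S^\infty=-T_mJ^\infty D^\infty$ and therefore $\Lambda^\infty$ are $1$-periodic and $\Lambda^\infty(t+1)=\Lambda^\infty(t)$. There is no real obstacle here; the only points deserving care are that Lemma~\ref{lemma_asymptotic_data} provides convergence in all derivatives uniformly in $t$ — so that $\partial_sM_k$ and $\partial_tM_k$, not merely $M_k$, are controlled — and that the limiting matrix $M^\infty(t+c)$ is invertible, which is what licenses passing to $M_k^{-1}$.
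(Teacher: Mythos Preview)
Your argument is correct and is precisely the computation the paper has in mind: the corollary is stated in the paper as ``a consequence of the previous lemma'' with no further proof, and your expansion---tracking each factor in $\Lambda_k=(M_kS_k-\partial_sM_k-J_0\partial_tM_k)M_k^{-1}$ through Lemma~\ref{lemma_asymptotic_data}, noting that the $s$-derivatives vanish in the limit, handling $M_k^{-1}$ via smoothness of inversion on $GL(2,\R)$, and adjusting $c=1$ to $c=0$ by periodicity---is exactly the intended verification.
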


We follow~\cite{props3} closely. With $N\in\N$, $l \in \N\cup\{0\}$, $a\in(0,1)$ and $d\in(-\infty,0)$ fixed, a function $F:[s_0,+\infty)\times\R/\Z \to \R^N$ is said to be of class $C^{l,a,d}_0$ if $F$ is of class $C^{l,a}_{\rm loc}$ and
\[
\lim_{R\to+\infty} \|e^{-d s}D^\beta F\|_{C^{0,a}([R,+\infty)\times\R/\Z)} = 0 \qquad \forall \beta \ \text{with} \ |\beta|\leq l.
\]
The space of such functions becomes a Banach space with the norm
\[
\|F\|_{C^{l,a,d}_0} = \|e^{-d s}F\|_{C^{l,a}([s_0,+\infty)\times\R/\Z)}.
\]
It follows from Theorem~\ref{thm_asymptotic_formula_deg_case} and the definition of $\M_{\jtil_k,0,\delta}(\gamma_1,\dots,\gamma_n;\emptyset)$ that for every $k$ the function $z_k(s,t)$ is of class $C^{l,a,\delta}_0$ for every $l\geq0$ and every $a\in(0,1)$. Lemma~\ref{lemma_asymptotic_data} implies that $\zeta_k(s,t)$ is also of class $C^{l,a,\delta}_0$. Moreover,
\begin{equation}
\zeta_k(s,t) \to \zeta(s,t) := M(s,t)z(s,t) \ \text{in} \ C^\infty_{\rm loc}.
\end{equation}
The crucial step is now to apply the following result.

\begin{proposition}
{\rm (\cite[Proposition~4.15]{elliptic})}
\label{prop_unif_asymptotic_analysis}
Let $$ K_n : [0,+\infty)\times \R/\Z \rightarrow \R^{2N\times 2N} \ (n\geq 1) \qquad K^\infty : \R/\Z \rightarrow \R^{2N\times 2N} $$ be smooth maps satisfying:
\begin{enumerate}
\item[i)] $K^\infty(t)$ is symmetric $\forall t$.
\item[ii)] For every pair of sequences $n_l,s_l\to+\infty$ there exist $l_k\to+\infty$ and $c\in[0,1)$ such that
\[
\lim_{k\to\infty} \|D^\beta[K_{n_{l_k}}-K^\infty(t+c)](s_{l_k},\cdot)\|_{L^\infty(\R/\Z)} = 0 \ \ \ \forall \beta.
\]
\end{enumerate}
Consider the unbounded self-adjoint operator $L$ on $L^2(\R/\Z,\R^{2N})$ defined by
\[
Le = -J_0\dot{e} - K^\infty e.
\]
With $l\geq 1$, $a\in(0,1)$ and $d\in(-\infty,0)$, suppose that the $\R^{2N}$-valued smooth maps $X_n(s,t)$ are of class $C^{l,a,d}_0$ and satisfy
\begin{equation}\label{eqn_X_n_prop}
\partial_s X_n + J_0\partial_t X_n + K_n X_n = 0  \quad \forall n.
\end{equation}
If $d$ does not belong to the spectrum of $L$ and the sequence $\{X_n\}$ is $C^\infty_{loc}$-bounded then $\{X_n\}$ has a convergent subsequence in $C^{l,a,d}_0$.
\end{proposition}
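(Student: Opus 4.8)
The plan is to deduce the statement from a \emph{uniform} exponential decay estimate for the tails of the $X_n$, of exactly the kind proved for a single finite-energy curve in \cite[Section~8]{convex} and \cite{props3}; the role of hypothesis~ii) is to make all the constants in that estimate independent of $n$. If infinitely many $X_n$ vanish identically, the corresponding subsequence converges to $0$ in $C^{l,a,d}_0$ and we are done, so we may assume every $X_n$ is nonzero; by Carleman's similarity principle applied to the Cauchy--Riemann type equation \eqref{eqn_X_n_prop}, each such $X_n$ then has only isolated zeros, so $\|X_n(s,\cdot)\|_{L^2(\R/\Z)}>0$ for every $s$. Using the $C^\infty_{\rm loc}$-bound on $\{X_n\}$ and the Arzel\`a--Ascoli theorem I would pass to a subsequence with $X_n\to X$ in $C^\infty_{\rm loc}([0,+\infty)\times\R/\Z)$ for some smooth $X$; the content of the proposition is then to upgrade this to convergence in $C^{l,a,d}_0$.

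Next I would extract from ii) the control on the coefficients that is needed. A short contradiction argument gives, for each $m\geq0$, numbers $R_m\geq0$ and $M_m>0$ with $\sup_n\sup_{s\geq R_m}\|D^\beta K_n(s,\cdot)\|_{L^\infty(\R/\Z)}\leq M_m$ for $|\beta|\leq m$, and, for each $\eta>0$, a number $R_\eta\geq0$ such that for all $n$ and all $s\geq R_\eta$ there is $c=c(n,s)\in[0,1)$ with $\|\tfrac12(K_n+K_n^{T})(s,\cdot)-K^\infty(\,\cdot+c)\|_{L^\infty(\R/\Z)}<\eta$. The key observation is that each frozen operator $L_c e=-J_0\dot e-K^\infty(\,\cdot+c)e$ is conjugate to $L$ by a rotation of the circle, hence $\operatorname{spec}L_c=\operatorname{spec}L$; since $\operatorname{spec}L$ is discrete and $d\notin\operatorname{spec}L$, I fix eigenvalues $\delta_-<d<\delta_+$ of $L$ that are consecutive about $d$ and pick $\epsilon_0\in(0,\min\{d-\delta_-,\ \delta_+-d,\ 1\})$, so that every $L_c$ has spectral gap $(\delta_-,\delta_+)$ about $d$ and $\delta_-+\epsilon_0<d<\delta_+-\epsilon_0$.

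The heart of the argument is the uniform Hofer--Wysocki--Zehnder estimate. Put $\gamma_n(s)=\|X_n(s,\cdot)\|_{L^2(\R/\Z)}^2$; a direct computation from \eqref{eqn_X_n_prop} gives $\gamma_n'(s)=2\langle X_n(s,\cdot),A_n(s)X_n(s,\cdot)\rangle_{L^2}$ with $A_n(s)=-J_0\partial_t-\tfrac12(K_n+K_n^{T})(s,\cdot)$ self-adjoint, so that $\gamma_n'(s)/2\gamma_n(s)$ is the Rayleigh quotient of $A_n(s)$ at $X_n(s,\cdot)$, and for $s\geq R_\eta$ the operator $A_n(s)$ is within $\eta$ of $L_{c(n,s)}$. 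Running the standard second-order monotonicity computation for $\gamma_n'/\gamma_n$ — whose quantitative inputs are the common spectral gap $(\delta_-,\delta_+)\ni d$ of the $L_c$, the bound $M_1$, and $\eta$ small — yields $R_2\geq0$ and $C\geq1$, \emph{both independent of $n$}, such that for each $n$ either
\[
\gamma_n(s)\ \leq\ C\,e^{2(\delta_-+\epsilon_0)(s-R_2)}\,\gamma_n(R_2)\quad\text{for all }s\geq R_2,
\]
or $\gamma_n(s)\geq C^{-1}e^{2(\delta_+-\epsilon_0)(s-R_2)}\gamma_n(R_2)$ for all $s\geq R_2$. Since $X_n\in C^{l,a,d}_0$ forces $\gamma_n(s)=o(e^{2ds})$ and $\delta_+-\epsilon_0>d$, while $\gamma_n(R_2)>0$, the second alternative is impossible, so the first holds for all $n$; as $\gamma_n(R_2)$ is bounded by the $C^\infty_{\rm loc}$-bound, this gives $\gamma_n(s)\leq C'e^{2(\delta_-+\epsilon_0)s}$ uniformly in $n$. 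Feeding this into standard interior elliptic estimates and bootstrapping for \eqref{eqn_X_n_prop} on the fixed cylinders $[s-1,s+1]\times\R/\Z$, using the uniform $C^{l,a}$-bounds on $K_n$ for $s$ large, produces $\|X_n\|_{C^{l,a}([s-\frac12,s+\frac12]\times\R/\Z)}\leq C''e^{(\delta_-+\epsilon_0)s}$, whence, since $\delta_-+\epsilon_0<d$,
\[
\rho(R):=\sup_n\bigl\|e^{-ds}X_n\bigr\|_{C^{l,a}([R,+\infty)\times\R/\Z)}\ \longrightarrow\ 0\qquad\text{as }R\to+\infty,
\]
and in particular $\sup_n\|X_n\|_{C^{l,a,d}_0}<\infty$. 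Passing to the limit along the subsequence gives the same tail bound for $X$, so $X\in C^{l,a,d}_0$; then for $\varepsilon>0$, choosing $R$ with $\rho(R)<\varepsilon$, one has $\|X_n-X\|_{C^{l,a,d}_0}\lesssim\|e^{-ds}(X_n-X)\|_{C^{l,a}([0,R+1]\times\R/\Z)}+2\rho(R)$, whose first term tends to $0$ as $n\to\infty$ because $e^{-ds}$ is bounded on that compact slab and $X_n\to X$ in $C^{l,a}_{\rm loc}$; letting $\varepsilon\to0$ yields $X_n\to X$ in $C^{l,a,d}_0$.

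The main obstacle is precisely making the Hofer--Wysocki--Zehnder monotonicity estimate uniform: the classical version is stated for one curve with a single fixed limiting asymptotic operator, whereas here the self-adjoint operators $A_n(s)$ drift with both $n$ and $s$. One must therefore re-examine that argument and check that its only quantitative inputs are the spectral gap — which is common to all frozen operators $L_c$ because they are mutually conjugate — and the uniform smallness of $A_n(s)-L_{c(n,s)}$ together with uniform $C^1$-bounds on $K_n$ for $s$ large; both are exactly what hypothesis~ii) delivers. The technical details of this uniform asymptotic analysis are carried out in \cite[Proposition~4.15]{elliptic}, refining the analysis of \cite[Section~8]{convex} and \cite{props3}.
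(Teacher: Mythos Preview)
The paper does not supply its own proof of this proposition; it is quoted verbatim from \cite[Proposition~4.15]{elliptic} and used as a black box, with the remark that it ``summarizes some of the analysis which was originally done in~\cite[section~8]{convex}''. Your outline is correct and matches that approach exactly: the uniform Hofer--Wysocki--Zehnder monotonicity/dichotomy argument for $\gamma_n(s)=\|X_n(s,\cdot)\|_{L^2}^2$, with the key observation that all frozen operators $L_c$ are unitarily conjugate to $L$ and hence share its spectral gap about $d$, so hypothesis~ii) makes all constants independent of $n$; elliptic bootstrapping and the uniform tail estimate then upgrade $C^\infty_{\rm loc}$-convergence to $C^{l,a,d}_0$-convergence.
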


Direct calculations show that $\Lambda^\infty(t)$ is symmetric for every $t$, and that the operator $-J_0\partial_t-\Lambda^\infty$ is nothing but a representation of the asymptotic operator at~$\gamma_m$ associated to $(\alpha,J)$. It follows that $\delta_m$ is not in its spectrum. Corollary~\ref{coro_asymptotic_data} allows us to apply the above proposition with $d=\delta_m$ and conclude that $\zeta_k$ has a convergent subsequence in $C^{l,a,\delta_m}_0$. It follows that $\zeta(s,t)$ is of class $C^{l,a,\delta_m}_0$. Hence, also $z(s,t)$ is of class $C^{l,a,\delta_m}$. Since $l\geq1$ is arbitrary, we get
\begin{equation}
\label{strong_decay_z}
\lim_{s\to+\infty} \sup_{t\in\R/\Z} e^{-\delta_ms}|D^\beta z(s,t)| = 0 \qquad \forall\beta.
\end{equation}
The fact that $\zeta_k\to\zeta$ in $C^{l,a,\delta_m}_0$ implies that $z_k\to z(s,t)$ in  $C^{l,a,\delta_m}_0$ by Lemma~\ref{lemma_asymptotic_data}. Using that $l$ is arbitrary we conclude from the definition of the spaces $C^{l,a,\delta_m}_0$ that
\begin{equation}
\label{uniform_control_z_k}
\lim_{s\to+\infty} \sup_{t,k} e^{-\delta_ms}|D^\beta z_k(s,t)| = 0 \qquad \forall\beta.
\end{equation}

From this point it is quite standard to use~\eqref{strong_decay_z} and~\eqref{uniform_control_z_k} together with equations~\eqref{cauchy_riemann_1}-\eqref{cauchy_riemann_2} conclude that $z_{\infty,m}$ is a non-degenerate puncture of $\util$. For the sake of completeness we provide the details, however see~\cite{openbook,elliptic}. By Theorem~\ref{thm_asymptotic_formula_deg_case}, for every~$k$ we find $d_k,\tau_k \in \R$ such that
\begin{equation*}
V_k(s,t) = \begin{pmatrix} a_k(s,t)-T_ms-d_k \\ \tilde\theta_k(s,t)-t-\tau_k \end{pmatrix}
\end{equation*}
satisfies
\[
\lim_{s\to+\infty} \sup_{t} |V_k(s,t)| = 0 \qquad \forall k.
\]
Equations~\eqref{cauchy_riemann_1} tell us that
\begin{equation}\label{CR_type_equation_V_k}
\partial_sV_k(s,t)+\begin{pmatrix} 0 & -T_m \\ T_m^{-1} & 0 \end{pmatrix}\partial_tV_k(s,t) + B_k(s,t)z_k(s,t) = 0
\end{equation}
for some $B_k(s,t)$ satisfying
\[
\limsup_{s\to+\infty} \sup_{t,k} |D^\beta B_k(s,t)|<\infty \qquad \forall\beta.
\]
Together with~\eqref{uniform_control_z_k} we get
\begin{equation}\label{control_error_right}
\lim_{s\to+\infty} \sup_{t,k} e^{-\delta_ms}|D^\beta [B_kz_k](s,t)|=0 \qquad \forall\beta.
\end{equation}

\begin{lemma}
\label{lemma_not_proved_before}
There exists $r>0$ such that 
$$ 
\lim_{s\to+\infty} \sup_{t,k} e^{rs}|D^\beta V_k(s,t)| = 0 \qquad \forall D^\beta = \partial_s^{\beta_1}\partial_t^{\beta_2} \ \text{with} \ |\beta|\geq 1.
$$
\end{lemma}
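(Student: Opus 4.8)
```latex
\begin{proof}[Proof of Lemma~\ref{lemma_not_proved_before}]
The plan is to regard $V_k$ as a solution of the Cauchy-Riemann type equation~\eqref{CR_type_equation_V_k}, whose leading part is the constant-coefficient operator $\partial_s + \left(\begin{smallmatrix} 0 & -T_m \\ T_m^{-1} & 0 \end{smallmatrix}\right)\partial_t$, and to treat the term $B_kz_k$ as an exponentially small inhomogeneity. The decisive point is that this constant-coefficient operator is the asymptotic operator for a trivial cylinder over $\gamma_m$ written in the coordinates $(a,\tilde\theta)$; its associated self-adjoint operator on $L^2(\R/\Z,\R^2)$ is $e\mapsto -\left(\begin{smallmatrix} 0 & -T_m \\ T_m^{-1} & 0 \end{smallmatrix}\right)\dot e$, which is $\ker$-nontrivial (constants) with $0$ as a simple isolated eigenvalue, and the smallest positive eigenvalue $\mu_0>0$ is explicit. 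Since $V_k \to 0$ uniformly in $t$ as $s\to\infty$, the relevant $L^2(\R/\Z)$-profile $t\mapsto V_k(s,t)$ has vanishing projection onto the kernel in the limit, so morally $V_k$ decays like $e^{-\mu_0 s}$; the only subtlety is to make this uniform in $k$ and to control the forcing term.

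The steps I would carry out: First, fix $r>0$ with $0<r<\min\{\mu_0,\,-\delta_m\}$, where $\mu_0$ is the first positive eigenvalue of the above self-adjoint operator and $\delta_m<0$ is the chosen weight. Second, use the standard ODE-in-$s$ estimate for Cauchy-Riemann type equations with constant leading coefficients (as in the proofs of Theorem~\ref{thm_precise_asymptotics}, cf.~\cite{props1,siefring_CPAM}, or the elementary argument in~\cite{openbook}): writing $g_k(s) = \|V_k(s,\cdot)\|_{L^2(\R/\Z)}$, equation~\eqref{CR_type_equation_V_k} together with $V_k\to0$ uniformly gives a differential inequality forcing either exponential growth or $g_k(s)\le C_k e^{-\mu_0 s} + (\text{contribution of } B_kz_k)$; the growth alternative is excluded because $V_k$ is bounded (indeed tends to $0$). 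Third, feed in~\eqref{control_error_right}: since $B_kz_k$ decays like $e^{\delta_m s}$ \emph{uniformly in $k$} and $\delta_m<-r$, Duhamel's formula (variation of constants along the $s$-flow of the linear operator, splitting $L^2(\R/\Z)$ into the $\le 0$ and $>0$ spectral subspaces) yields $\sup_{t,k} e^{rs}|V_k(s,t)|\to 0$ as $s\to\infty$ for the $C^0$-norm. Fourth, bootstrap to derivatives: differentiating~\eqref{CR_type_equation_V_k} and using interior elliptic estimates on cylinders $[s-1,s+1]\times\R/\Z$, together with~\eqref{control_error_right} for all $\beta$ and~\eqref{uniform_control_z_k}, upgrades the $C^0$ decay of $V_k$ to $C^\infty_{\mathrm{loc}}$-decay with the same rate $r$, again uniformly in $k$. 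Restricting to $|\beta|\ge 1$ gives precisely the claimed statement (we only need $V_k$ itself to go to $0$ uniformly, which holds by construction, and the derivative bound is the new content).

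The main obstacle I anticipate is the \emph{uniformity in $k$}: the constant-coefficient normalization of the leading operator is the same for all $k$ (it only involves $T_m$), which is why this is tractable, but one must check that the "excluded exponential growth" alternative is ruled out with constants independent of $k$ — this uses that $\sup_{t} |V_k(s,t)| \to 0$ as $s\to\infty$ uniformly in $k$, a fact that itself needs justification. That uniform smallness is not immediate from $V_k\to 0$ for each fixed $k$; however it follows from Lemma~\ref{lemma_first_estimate} (which gives uniform-in-$k$ control of $a_k'-T_m s$ and $\tilde\theta_k-t$ at the level of \emph{derivatives}, i.e.\ of $DV_k$) combined with the normalizations fixing $d_k,\tau_k$ so that $V_k(s,\cdot)\to 0$, and an elementary integration argument: $V_k(s,t) = -\int_s^\infty \partial_\sigma V_k(\sigma,t)\,d\sigma$ together with the uniform decay of $\partial_\sigma V_k$ from~\eqref{first_estimate}. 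Once this uniform smallness is in hand, the rest is the standard exponential-decay machinery applied with $k$-independent constants, and the remaining work is routine.
\end{proof}
```
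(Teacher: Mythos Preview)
Your overall strategy (spectral splitting of the constant-coefficient asymptotic operator and Duhamel) is a reasonable alternative, but the proposal as written has a real gap, and the paper takes a different and cleaner route that sidesteps it.

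The gap is in your uniformity argument. You write $V_k(s,t) = -\int_s^\infty \partial_\sigma V_k(\sigma,t)\,d\sigma$ and invoke the ``uniform decay of $\partial_\sigma V_k$'' from~\eqref{first_estimate}. But Lemma~\ref{lemma_first_estimate} only gives $\sup_{k,t}|\partial_s V_k(s,t)| = o(1)$ as $s\to\infty$; an $o(1)$ bound is not integrable in $s$, so this integral representation does not yield $\sup_{k,t}|V_k(s,t)|\to0$. Without uniform-in-$k$ smallness of $V_k$ you cannot rule out, uniformly in $k$, the exponentially growing branch in your Duhamel dichotomy. The fix exists (e.g.\ control the mean-zero part $V_k^\perp$ by Poincar\'e applied to the uniformly small $\partial_t V_k$, and handle the mean $V_k^0$ by integrating the exponentially small mean of $B_kz_k$ from $s$ to $\infty$), but it is not the argument you wrote.

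The paper avoids all of this by never working with $V_k$ directly. It fixes any multi-index $\beta_0$, sets $U_k = D^{\beta_0}\partial_t V_k$, and observes two things: $U_k$ has zero $t$-mean (it is a $\partial_t$-derivative), and $\sup_{k,t}|D^\beta U_k|\to0$ for every $\beta$ follows immediately from~\eqref{first_estimate} since $U_k$ is already a derivative of $V_k$. With zero mean the Poincar\'e inequality gives $\|\partial_t U_k\|_{L^2}^2 \ge \|U_k\|_{L^2}^2$, and a direct computation of the second derivative of $g_k(s)=\tfrac12\|U_k(s,\cdot)\|_{L^2}^2$ together with~\eqref{estimates_derivatives_h_k} yields a differential inequality of the form $f_k'' \ge \nu^2 f_k$ (after adding an explicit exponential correction), with $\nu>0$ independent of $k$. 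A short comparison argument then forces $g_k(s)\le Ce^{-\nu s}$ uniformly in $k$. This handles all $D^\beta V_k$ with $\beta_2\ge1$; the remaining pure $\partial_s$-derivatives come for free from~\eqref{CR_type_equation_V_k} and~\eqref{control_error_right}. The point is that the lemma only asks for $|\beta|\ge1$, and passing to $\partial_t V_k$ simultaneously kills the kernel of the asymptotic operator and puts you in the regime where Lemma~\ref{lemma_first_estimate} already supplies uniform smallness --- no spectral projection or Duhamel needed.
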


\begin{proof}
This is a version for sequences of~\cite[Lemma~6.3]{fast}.
Fix any $\beta_0$ and denote $U_k=D^{\beta_0}\partial_tV_k$. Lemma~\ref{lemma_first_estimate} implies that
\[
\lim_{s\to+\infty} \sup_{k,t} |D^\beta U_k(s,t)| = 0 \qquad \forall\beta.
\]
There is no loss of generality to assume $T_m=1$. Differentiating~\eqref{CR_type_equation_V_k} we arrive at
\[
\partial_sU_k+J_0\partial_tU_k(s,t) = h_k(s,t).
\]
Moreover,~\eqref{control_error_right} gives
\begin{equation}\label{estimates_derivatives_h_k}
\lim_{s\to+\infty} \sup_{t,k}e^{-\delta_ms}|D^{\beta'} h_k(s,t)| = 0 \qquad \forall \beta'.
\end{equation}
Let $g_k(s) = \frac{1}{2} \|U_k(s,\cdot)\|^2_{L^2(\R/\Z)}$. Compute $$ g_k'(s) = \left< -J_0\partial_tU_k(s,\cdot)+h_k(s,\cdot),U_k(s,\cdot) \right>_{L^2} $$ and
\[
\begin{aligned}
g_k''(s) &= \left< -J_0\partial_s\partial_tU_k, U_k \right>_{L^2} + \left< \partial_sh_k, U_k \right>_{L^2} + \|-J_0\partial_tU_k+h_k\|_{L^2}^2 \\
&= \left< -J_0\partial_tU_k+h_k, -J_0\partial_tU_k \right>_{L^2} + \left< \partial_sh_k, U_k \right>_{L^2} + \|-J_0\partial_tU_k+h_k\|_{L^2}^2 \\
&= 2\|\partial_tU_k\|^2_{L^2} + 3\left< -J_0\partial_tU_k,h_k \right>_{L^2} + \left< \partial_sh_k, U_k \right>_{L^2} + \|h_k\|_{L^2}^2 \\
&\geq 2\|U_k\|^2_{L^2} - 3\|U_k\|_{L^2}\|h_k\|_{L^2} - \|U_k\|_{L^2}\|\partial_sh_k\|_{L^2} - \|h_k\|_{L^2}^2
\end{aligned}
\]
where in the last inequality we used Poincar\'e inequality ($U_k$ has average zero in $t$) and Cauchy-Schwarz inequality. Fix $0<d<\min\{\frac{1}{2},-\delta_m\}$. By the ``Peter-Paul'' inequality with $\epsilon>0$, and~\eqref{estimates_derivatives_h_k}, we get
\[
\begin{aligned}
& g''_k(s) \geq (2-\epsilon) \|U_k\|^2_{L^2} - C(\epsilon)(\|\partial_sh_k\|_{L^2}^2 + \|h_k\|_{L^2}^2) \geq 2(2-\epsilon) g_k(s) - ce^{-2ds}
\end{aligned}
\]
for some $c>0$ that depends on $\epsilon$ but does not depend on $k$. Choosing $\epsilon$ small we get
\[
g''_k(s) \geq g_k(s) - ce^{-2ds}
\]
uniformly on $k$. Choose $0<\nu<2d$ ($<1$), set $L=c/(4d^2-\nu^2)$ and consider $f_k := g_k+Le^{-2ds}$. Note that $f_k(s)>0$ since $g_k(s)\geq 0$, $\forall s$. The above differential inequality translates to
\[
f''_k \geq \nu^2f_k
\]
where $\nu$ is independent of $k$. Note that $f_k(s) \to 0$ as $s\to+\infty$, $\forall k$.

We claim that $f'_k(s)\leq0$ for all $k$ and $s\gg1$. For if not then we could find $s_*\gg1$ and $k$ such that $f'_k(s_*)>0$. Consider $G_k = f'_k+\nu f_k$, so that $G_k(s_*)>0$. Let $$ \bar s = \sup\{s\geq s_* \mid G_k(y)>0 \ \forall y\in[s_*,s]\} \in (s_*,+\infty]. $$ The differential inequality $G'_k\geq \nu G_k$ implies that $$ \text{$G_k(s) \geq G_k(s_*)e^{\nu(s-s_*)}\geq G_k(s_*)>0$ for every $s\in(s_*,\bar s)$.} $$ In particular $\bar s=+\infty$ and $G_k(s) \to +\infty$ as $s\to+\infty$, forcing $f'_k(s)\to+\infty$ as $s\to+\infty$ since $f_k(s) \to 0$ as $s\to+\infty$. This is an obvious contradiction to $\lim_{s\to+\infty}f_k(s)=0$.

Consider now $H_k = f_k-\nu^{-1} f_k'$. Since $f'_k(s)\leq0$ and $f_k(s)>0$, we conclude that $H_k(s) \geq f_k(s) >0 \ \forall s\geq s_0\gg1$. The differential inequality satisfied by $f_k$ implies that $H'_k\leq-\nu H_k$, from where it follows that $$ g_k(s)\leq f_k(s)\leq H_k(s) \leq H_k(s_0)e^{-\nu(s-s_0)} \qquad \text{on $[s_0,+\infty)$.} $$

Fix any $0<r<\nu$. Since $H_k(s_0)$ has a limit as $k\to+\infty$, and $\beta_0$ was chosen arbitrarily, we get
\[
\lim_{s\to+\infty} e^{rs} \sup_k \int_{\R/\Z} |D^\beta V_k(s,t)|^2dt = 0
\]
for all $\beta=(\beta_1,\beta_2)$ satisfying $\beta_2\geq1$.
The (trivial) Sobolev inqualities for functions on $\R/\Z$ yield
\[
\lim_{s\to+\infty} e^{rs}\sup_{k,t}|D^\beta V_k(s,t)| = 0 \qquad \forall\beta=(\beta_1,\beta_2) \ \text{satisfying} \ \beta_2\geq1
\]
as desired. The lemma now follows from this and~\eqref{CR_type_equation_V_k}.
\end{proof}

Using~\eqref{CR_type_equation_V_k} together with the previous lemma we get
\[
\lim_{s\to+\infty} e^{rs} \sup_{k,t} |\partial_sV_k(s,t)| = 0.
\]
Equivalently
\[
\left\{
\begin{aligned}
& \lim_{s\to+\infty} e^{rs} \sup_{k,t} |\partial_sa'_k(s,t)-T_m| = 0 \\
& \lim_{s\to+\infty} e^{rs} \sup_{k,t} |\partial_s\tilde\theta_k(s,t)| = 0
\end{aligned}
\right.
\]
Passing to the limit as $k\to+\infty$ we achieve
\[
\left\{
\begin{aligned}
& \lim_{s\to+\infty} e^{rs} \sup_{t} |\partial_sa(s,t)-T_m| = 0 \\
& \lim_{s\to+\infty} e^{rs} \sup_{t} |\partial_s\tilde\theta(s,t)| = 0
\end{aligned}
\right.
\]
In view of the exponential factor we conclude that the following integrals converge
\[
\begin{aligned}
d = a(s_0,0)-T_ms_0 + \int_{s_0}^{+\infty} (\partial_sa(s,0)-T_m) ds \\
\tau = \tilde\theta(s_0,0) + \int_{s_0}^{+\infty} \partial_s\tilde\theta(s,0) ds
\end{aligned}
\]
Now the constants $c,\tau$ satisfy
\[
\lim_{s\to+\infty} \sup_t |a(s,t)-T_ms-d| \to 0 \qquad \lim_{s\to+\infty} \sup_t |\tilde\theta(s,t)-t-\tau| \to 0
\]
We are finally done with showing that $z_{\infty,m}$ is a non-degenerate puncture of $\util$. From~\eqref{strong_decay_z} we see that the asymptotic eigenvalue at this puncture, which exists by Theorem~\ref{thm_asymptotic_formula_deg_case}, is $\leq\delta_m$. Since $m\in\{1,\dots,n\}$ is arbitrary, the proof of Lemma~\ref{lemma_non_deg_punctures_missing} is complete.

As explained before, this completes the proof of Proposition~\ref{prop_abstract_compactness}.

\subsubsection{Proof of (iii) $\Rightarrow$ (ii) in Theorem~\ref{main2}}

Consider $g_k:M\to(0,+\infty)$ smooth functions such that
\begin{itemize}
\item $g_k\to 1$ in $C^\infty$
\item $g_k(p)=1$ and $dg_k(p)=0$ for all $p\in L$
\item $g_k\alpha$ is non-degenerate for every $k$
\end{itemize}
Fix $J:\xi\to\xi$ a $d\alpha$-compatible complex structure, define $\jtil_k$ and $\jtil$ by~\eqref{formula_J_tilde} using $(g_k\alpha,J)$ and $(\alpha,J)$, respectively.

Using Proposition~\ref{prop_main_existence_non_deg} we get, for every $k$ large enough, embedded curves in $\M_{\jtil_k,0,\delta}(\gamma_1,\dots,\gamma_n;\emptyset)$ inside $\R\times (M\setminus L)$ representing the class $b \in H_2(M,L)$ of a page of $\Theta$. By Proposition~\ref{prop_abstract_compactness} we get an embedded curve $C_* \in \M_{\jtil,0,\delta}(\gamma_1,\dots,\gamma_n;\emptyset)$ contained in $\R\times (M\setminus L)$ that represents $b$.

Let $X$ denote the connected component of $\M_{\jtil,0,\delta}(\gamma_1,\dots,\gamma_n;\emptyset)/\R$ that contains~$C_*$. Since $\alpha$ is possibly degenerate, we do not have at our disposal the statements from~\cite{siefring} to study geometric properties of curves in $X$. We have to provide explicit arguments to conclude that curves in $X$ are embedded, are contained in $\R\times (M\setminus L)$ and their projections to $M$ do not intersect. We start by noting that curves in $X$ are immersed and transverse to the Reeb vector field; this is the content of Lemma~\ref{lemma_wind_infinity}.

\begin{lemma}
\label{lemma_curves_in_X_are_embedded}
Every curve in $X$ is embedded.
\end{lemma}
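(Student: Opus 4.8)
The statement concerns curves in the connected component $X \subset \M_{\jtil,0,\delta}(\gamma_1,\dots,\gamma_n;\emptyset)/\R$ of the curve $C_*$ produced by Proposition~\ref{prop_abstract_compactness}. By construction $C_*$ is embedded, contained in $\R\times(M\setminus L)$, and is the $C^\infty_{\rm loc}$-limit (after $\R$-translation) of a sequence $C_k \in \M_{\jtil_k,0,\delta}(\gamma_1,\dots,\gamma_n;\emptyset)$ of embedded curves for non-degenerate contact forms $\alpha_k = g_k\alpha$, each satisfying hypothesis (H) of Proposition~\ref{prop_abstract_compactness}. The plan is to show that embeddedness, the property of avoiding $\R\times L$, and condition (H) are all \emph{open and closed} along $X$, so that they propagate from $C_*$ to every curve in $X$.

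\emph{Openness.} First I would set up, exactly as in the sketch of Lemma~\ref{lemma_aut_transv}, a weighted Fredholm theory for an embedded curve $C' \in X$: since $\util'$ representing $C'$ is immersed and transverse to $X_\alpha$ by Lemma~\ref{lemma_wind_infinity}, and since $\wind_\infty(\util',z_l,\tau_\Sigma)=0$ for all $l$ (again Lemma~\ref{lemma_wind_infinity}), the proof that $\wind_\pi=0$ forces $\util'$ to be an embedding by the argument in Proposition~\ref{prop_embeddedness}/Proposition~\ref{prop_Seifert_no_intersections} adapted to the degenerate setting — here one uses that $\util'$ lies in $\R\times(M\setminus L)$ so that positivity of intersections with $\R\times L$ is vacuous, and the winding control gives embeddedness via the adjunction-type count applied to the normal bundle $N_{\util'}$, which $\tau_\Sigma$ trivializes. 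The weighted Fredholm index is again $2-n + \sum_k \mu_{\CZ}^{\tau_\Sigma,\delta_k}(\gamma_k) = 2$, and the kernel-vs-cokernel argument from Lemma~\ref{lemma_aut_transv} — sections of $\ker D_{\util'}$ are nowhere vanishing, rank-two normal bundle, index two — gives automatic transversality. Hence a neighbourhood of $C'$ in $X$ is a smooth $2$-manifold, all nearby curves are embedded (difference of nearby sections solves a Cauchy-Riemann equation and has nonnegative, hence zero, count of zeros), and these nearby curves foliate a neighbourhood in $\R\times M$; in particular they are embedded, disjoint from each other, and remain in $\R\times(M\setminus L)$ and satisfy (H) by continuity of the relevant intersection numbers, which are integer-valued.

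\emph{Closedness.} Suppose $C'_j \in X$ are embedded, disjoint from $\R\times L$, satisfy (H), and converge in the $C^\infty_{\rm loc}$-topology on $X/\R$ to some $C'_\infty \in X$. Then $C'_j$ (after translation) is a sequence as in Proposition~\ref{prop_abstract_compactness} — all hypotheses (H) are in force — so a subsequence converges, in the precise sense of that proposition, to a limit curve that is embedded, lies in $\R\times(M\setminus L)$, and satisfies (H) (Remark after Proposition~\ref{prop_abstract_compactness}); by uniqueness of $C^\infty_{\rm loc}$-limits up to translation this limit is $C'_\infty$. So the subset of $X$ consisting of curves that are embedded, disjoint from $\R\times L$, and satisfy (H) is closed; combined with openness and the fact that $X$ is connected and contains $C_*$, it equals all of $X$. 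In particular every curve in $X$ is embedded, which is the assertion of the lemma (the remaining two properties being recorded for use in the subsequent lemmas on disjointness of projections and on constructing the open book).

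\emph{Main obstacle.} The delicate point is the closedness step: Proposition~\ref{prop_abstract_compactness} is phrased for sequences coming from \emph{approximating non-degenerate} contact forms $\jtil_k$, whereas here the sequence $C'_j$ lives in $\M_{\jtil,0,\delta}$ for the \emph{fixed, possibly degenerate} $\alpha$. One must check that the proof of Proposition~\ref{prop_abstract_compactness} — the bubbling analysis of Lemma~\ref{lemma_bounds_derivatives}, the identification of asymptotic limits in Lemma~\ref{lemma_asymptotic_props_limit_curve}, and the non-degeneracy-of-punctures argument via Proposition~\ref{prop_unif_asymptotic_analysis} — goes through verbatim with the constant sequence of almost complex structures $\jtil_k \equiv \jtil$; this is a purely formal matter since nothing in those arguments used non-degeneracy of $\alpha_k$, only hypothesis (H), the period bounds, and Theorem~\ref{thm_asymptotic_formula_deg_case} which applies to non-degenerate punctures of curves over degenerate forms. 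I would state this reduction explicitly at the start and then the two-part open-closed argument finishes the proof.
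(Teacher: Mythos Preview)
Your open--closed strategy on the connected component $X$ is exactly the paper's, and the openness step via the weighted Fredholm theory of Lemma~\ref{lemma_aut_transv} matches. The difference is in the closedness step: you route through the full machinery of Proposition~\ref{prop_abstract_compactness} (re-run with constant $\jtil_k\equiv\jtil$), whereas the paper gives a two-line direct argument. Namely, any limit $C_\infty\in X$ of embedded curves $C_k\in X$ is immersed by Lemma~\ref{lemma_wind_infinity} and somewhere injective by Lemma~\ref{lemma_somewhere_injective} (neither uses non-degeneracy of $\alpha$); hence any self-intersection of $C_\infty$ is isolated, and positivity and stability of intersections would force self-intersections of $C_k$ for large $k$, a contradiction. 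Your detour is correct---the proof of Proposition~\ref{prop_abstract_compactness} does go through with constant almost complex structure, and the embeddedness of the limit there is obtained via Lemma~\ref{lemma_deg_util_is_an_embedding}, which is precisely the same stability-of-intersections argument---but it re-proves from scratch that the limit lies in the moduli space, something already given by $C_\infty\in X$. You also bundle in disjointness from $\R\times L$ and condition~(H), which the paper establishes separately after this lemma; that is fine but not needed here.
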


\begin{proof}
The Fredholm theory described in Lemma~\ref{lemma_aut_transv} does not use non-degeneracy of the contact form, it only depends on the exponential weights and on the assumption that the reference curve is embedded. 
Note that in~\ref{sssec_moduli_spaces} we defined our moduli spaces to be spaces of curves with non-degenerate behaviour at the punctures; see~Definition~\ref{def_non_deg_punctures}. 
Thus, the neighboring curves in $\M_{\jtil,0,\delta}(\gamma_1,\dots,\gamma_n;\emptyset)$ of a fixed embedded curve $C_0\in X$ foliate a neighborhood of $C_0$ in $\R\times M$, in particular they do not intersect each other. 
This was explained in the sketch of Lemma~\ref{lemma_aut_transv}. 
Moreover, it follows that the set of curves in $X$ that are embedded form an open subset of $X$. 
Now we argue to show that this set is also closed in $X$. 
If not then we would have a sequence of embedded curves $C_k\in X$ converging to a curve $C_\infty\in X$ that is not embedded. Above we saw that $C_\infty$ is immersed. 
Hence $C_\infty$ must have self-intersections, and these must be isolated since curves in $\M_{\jtil,0,\delta}(\gamma_1,\dots,\gamma_n;\emptyset)$ are somewhere injective; Lemma~\ref{lemma_somewhere_injective} does not use a non-degeneracy assumption on the contact form.
Positivity and stability of intersections provide self-intersections of $C_k$ for large $k$, and this is absurd. 
Since $X$ is connected and contains the embedded curve $C_*$, we conclude that all curves in $X$ are embedded.
\end{proof}

Let $C\in X$ be represented by the map $\util=(a,u):\C\setminus \Gamma \to \R\times M$ defined on a punctured plane. 
The map $\util$ is pseudo-holomorphic and has finite-energy. 
The set of punctures of $\util$ is $\Gamma\cup\{\infty\}$. 
We saw above that $\util$ is an embedding. 
Suppose that for some $c\neq 0$ we have
\[
F_c := \{(z,\zeta) \in (\C\setminus\Gamma) \times (\C\setminus\Gamma) \mid \util(z) = \util_c(\zeta)\} \neq\emptyset.
\]
Here we denoted by $\util_c=(a+c,u)$ the $\R$-translation of $\util$ by $c$. Note that $(z,\zeta)\in F_c$ and $c\neq 0$ together imply that $z\neq\zeta$. 
Without loss of generality assume $c>0$. 
We claim that for every $0<\epsilon<c$ there exists a compact set $K\subset (\C\setminus\Gamma) \times (\C\setminus\Gamma)$ containing $\cup_{t\in[\epsilon,c]}F_t$. 
For if not then we find $t_n\in[\epsilon,c]$ and $(z_n,\zeta_n)\in F_{t_n}$ such that $z_n$ or $\zeta_n$ converges to $\Gamma\cup\{\infty\}$. 
It follows that $a(z_n) \to +\infty$ or $a(w_n) \to +\infty$. 
Hence both $a(z_n)$ and $a(w_n)$ converge $+\infty$ since $|a(z_n)-a(w_n)|=t_n$ is bounded. 
In other words, both sequences $z_n$ and $\zeta_n$ converge to $\Gamma\cup\{\infty\}$. Since asymptotic limits are geometrically distinct, both $z_n$ and $\zeta_n$ converge to the same puncture, say to $z_* \in \Gamma\cup\{\infty\}$. 
But, since asymptotic limits are simple orbits, we know that the $M$-component $u$ is injective around $z_*$, in contradiction to $u(z_n)=u(\zeta_n)$ and $z_n\neq\zeta_n$. We have proved existence of the desired compact set $K$. 
By positivity of intersections, the image of $\util$ intersects the image of $\util_\epsilon$ for every $\epsilon>0$. But when $\epsilon$ is small enough the images of the maps $\util_\epsilon$ show up in the local foliations around $C$ given by the Fredholm theory, hence they can not intersect $C$, absurd. 
It follows that $F_c=\emptyset$ for every $c\neq0$. 
There are two important consequences:
\begin{itemize}
\item $C \cap \R\times L = \emptyset$.
\item $u$ is injective.
\end{itemize}
In particular, since we already know that $C$ projects to an immersed surface in $M$, the closure of the projection of $C$ to $M$ is a Seifert surface for $L$ representing the class $b$.

We need to argue that if $C,C'$ are distinct curves in $X$ then their projections to $M$ do not intersect.
Represent $C,C'$ as finite-energy maps
\[
\begin{aligned}
\util=(a,u) : (\C P^1\setminus\Gamma,i) &\to (\R\times M,\jtil) \\
\util'=(a',u') : (\C P^1\setminus\Gamma',i) &\to (\R\times M,\jtil) \\
\end{aligned}
\]
respectively, where $\Gamma$ and $\Gamma'$ are finite sets of positive punctures. 
Let
\[
E_c = \{z \in \C P^1\setminus\Gamma \mid \util(z)=\util'_c(z') \ \text{for some} \ z'\in \C P^1\setminus\Gamma' \}.
\]
and consider the set
\begin{equation}\label{set_A_intersections}
A := \{ c\in \R \mid E_c \neq \emptyset \}.
\end{equation}

\begin{lemma}\label{lemma_int_trans_below}
$A=\emptyset$.
\end{lemma}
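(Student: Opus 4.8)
The plan is to run essentially the same compactness-plus-intersection argument already used to show $F_c = \emptyset$ for the single curve $C$, but now comparing the two distinct curves $C$ and $C'$ in $X$. First I would show that $A$ is open, closed, and nonempty in the parameter range where intersections could a priori occur; since $C, C'$ are both contained in $\R\times(M\setminus L)$ and project to Seifert surfaces for $L$ in class $b$, and since $\mathbf{u}_c' := (a'+c,u')$ has the same asymptotic limits as $C'$, the generalized intersection number $\util * \util'_c$ is well-defined, nonnegative, and independent of $c$ by Proposition~\ref{prop_constancy_int_number} (more precisely, one translates $C'$ up so that both curves can be viewed inside the symplectization and applies the invariance of the generalized intersection number under $\R$-translation, which is a smooth homotopy with fixed asymptotic limits). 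The key numerical input is that this generalized intersection number is zero: this follows from the self-intersection computation in Lemma~\ref{lemma_no_intersection_with_limits} (formula~\eqref{selfintersection_number_special_moduli_spaces}), the fact that both curves lie in the same component $X$ hence are homotopic through curves with the same asymptotics, and the vanishing $\wind_\infty(\util,z_k,\tau_\Sigma) = \wind_\infty(\util',z_k,\tau_\Sigma)=0$ (Lemma~\ref{lemma_wind_infinity}); since all asymptotic limits are prime and geometrically distinct across the two curves when $C\ne C'$, the $\Omega^{\tau_\Sigma}$-correction term in~\eqref{formula_generalized_intersection_number} either vanishes or is controlled in the same way as in the proof of Lemma~\ref{lemma_no_intersection_with_limits}.

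Next I would exploit the automatic transversality and local foliation statement from Lemma~\ref{lemma_aut_transv}: a neighborhood of $C$ in $\R\times M$ is smoothly foliated by curves in $\M_{\jtil,0,\delta}(\gamma_1,\dots,\gamma_n;\emptyset)$, and the same holds for $C'$. Suppose $A\ne\emptyset$, say $c_0\in A$ with $c_0\ne 0$ (the case $c_0=0$, i.e. $u(\C P^1\setminus\Gamma)\cap u'(\C P^1\setminus\Gamma')\ne\emptyset$ with $C\ne C'$, is what we ultimately want to rule out and is handled at the end). Then $\util$ and $\util'_{c_0}$ intersect; by positivity of intersections the intersection is isolated and counts positively. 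I would then establish a properness statement exactly as in the $F_c$ argument: for $0<\epsilon\le |c|\le R$ the set $\bigcup E_c$ stays in a compact subset of $(\C P^1\setminus\Gamma)\times(\C P^1\setminus\Gamma')$ — because an escape to a puncture would force both $\R$-components to $+\infty$ at the same geometrically distinct prime orbit, contradicting local injectivity of the $M$-components near a simply covered asymptotic limit. This propagates the existence of an intersection to all $c$ in an interval down to $c$ arbitrarily close to $0$; but for $|c|$ small and nonzero, $\util'_c$ appears as a leaf of the local foliation around $C'$ (or $\util_{-c}$ as a leaf around $C$), and leaves of a foliation are disjoint — contradiction. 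Hence $A\cap(\R\setminus\{0\})=\emptyset$.

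Finally, to rule out $0\in A$: if $u(z)=u'(z')$ for some $z,z'$, then since the projections of $C$ and $C'$ are either equal or disjoint (this is the genus-zero analogue of Proposition~\ref{prop_Seifert_two_curves}, which for the degenerate case must be re-derived — and here is the main obstacle, see below), and since $C\ne C'$ as unparametrized curves in $\R\times M$, we would need the two projections to coincide as sets in $M$; but two distinct curves in $X$ with coinciding $M$-projections would be related by a nonzero $\R$-translation (their $\R$-components differ by a locally constant, hence constant, function since the domains are connected and the curves are embedded with matching asymptotics), putting us back in the already-excluded case $c\ne 0$. The main obstacle is precisely that in the degenerate setting the Siefring intersection-theory inputs (Theorem~\ref{thm_adjunction_ineq}, \cite[Theorem~2.4]{siefring}) are not directly available, so the disjointness-of-projections statement used here has to be extracted by hand from: (i) the invariance of the generalized intersection number along $X$ together with its computed value $0$; (ii) positivity of intersections for the embedded immersions $u,u'$ transverse to $X_\alpha$ (Lemma~\ref{lemma_wind_infinity}); and (iii) the local foliation property from automatic transversality, exactly as carried out for $C$ versus its own translates. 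I expect this to be a routine but careful adaptation of the preceding arguments rather than a source of genuinely new difficulty.
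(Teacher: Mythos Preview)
Your properness claim is where the argument breaks. In the $F_c$ argument for a single curve, when $(z_n,\zeta_n)$ escapes to a puncture one obtains $u(z_n)=u(\zeta_n)$ with $z_n\neq\zeta_n$ both near the \emph{same} puncture of the \emph{same} map $u$, and this contradicts local injectivity of $u$ near a simply covered asymptotic limit. For two distinct curves the analogous identity is $u(z_k)=u'(z'_k)$ with $z_k$ near a puncture of $u$ and $z'_k$ near a puncture of $u'$, both asymptotic to the same $\gamma_l$; nothing about injectivity of $u$ or of $u'$ individually forbids this. This is precisely the scenario that the paper has to analyze by hand: one passes to asymptotic representatives, writes $U(s,t)=e^{\lambda s}(v(t)+r)$ and $U'(s,t)=e^{\lambda' s}(v'(t)+r')$ via Theorem~\ref{thm_asymptotic_formula_deg_case}, and splits into the subcases $\lambda=\lambda'$ and $\lambda\neq\lambda'$. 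The second subcase is disposed of using that eigensections for distinct eigenvalues with equal winding (here both zero, by Lemma~\ref{lemma_wind_infinity}) are pointwise linearly independent. The first subcase requires Siefring's relative asymptotic formula for $U-U'_c$ and a winding comparison to push intersections strictly below $\inf A$, contradicting the definition of $\inf A$.

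There is a second gap in the endgame. Even if properness held, the local foliation from Lemma~\ref{lemma_aut_transv} only tells you that curves in a neighborhood of $C'$ are mutually disjoint; it says nothing about $C$, which need not lie in that neighborhood. So ``$\util'_c$ is a leaf near $C'$ and leaves are disjoint'' does not prevent $\util$ from intersecting $\util'_c$ for small $c$. The paper avoids this by a different global strategy: rather than driving $c$ toward $0$, it studies $c=\inf A$ and derives the contradictory pair $\inf A>-\infty$ (from boundedness of $a'$ on compact limits, or from the asymptotic formula forcing $v\equiv 0$ if $c=-\infty$) and $\inf A=-\infty$ (from stability of isolated intersections, or in Case~2 from the winding argument producing $\hat c_k\in A$ with $\hat c_k<c$). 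Your intersection-number and foliation inputs are correct in spirit but do not substitute for this asymptotic analysis at the punctures.
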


The proof will be presented below. Lemma~\ref{lemma_int_trans_below} is equivalent to saying that the projections of $C$ and $C'$ to $M$ do not intersect. 
We conclude that the projections of curves in $X$ foliate $M\setminus L$ by surfaces which are transverse to the Reeb flow, forming an planar open book. It remains only to argue that pages are global surfaces of section; this argument was already explained at the end of subsection~\ref{ssec_approximating_sequences}. By transversality of the pages with $X_\alpha$, if the $\omega$-limit set of a point in $M\setminus L$ does not intersect $L$ then the future trajectory of this point will hit every page infinitely often. If the $\omega$-limit set of a point in $M\setminus L$  intersects $L$ then the future trajectory of this point will spend arbitrarily large amounts of time arbitrarily close to $L$, where it can be well controlled by the linearized dynamics along components of $L$. Assumption (iiia) in Theorem~\ref{main2} guarantees that this trajectory hits every page infinitely often. The argument for past times is analogous. We are finally done proving (iii) $\Rightarrow$ (ii) in Theorem~\ref{main2}, up to presenting proof of Lemma~\ref{lemma_int_trans_below}.

\begin{proof}[Proof of Lemma~\ref{lemma_int_trans_below}]
The proof is accomplished by establishing two contradictory claims.

\medskip

\noindent {\it Step 1.} If $A \neq\emptyset$ then $\inf A >-\infty$.

\medskip

\noindent {\it Step 2.} If $A \neq \emptyset$ then $\inf A = -\infty$.

\medskip

Consider a sequence $c_k \in A$ such that $$ c_k\to c = \inf A \in [-\infty,+\infty). $$ We can find sequences $z_k \in \C P^1\setminus\Gamma$, $z'_k \in \C P^1\setminus\Gamma'$ such that $\util(z_k) = \util'_{c_k}(z'_k)$. We claim that either $(z_k,z'_k)$ is compactly contained in $\C P^1\setminus\Gamma \times \C P^1\setminus\Gamma'$ or, up to a subsequence, we can assume that $(z_k,z'_k)$ converges to a point in $\Gamma \times \Gamma'$. In fact, if $z_k$ is compactly contained in $\C P^1\setminus\Gamma$ and $z'_k$ is not compactly contained in $\C P^1\setminus\Gamma'$ then we may assume $u(z_k) = u'(z'_k) \to L$, forcing $\util$ to intersect $\R\times L$, this is absurd. A similar argument handles the case where the roles of $z_k$ and $z'_k$ are interchanged.

Up to choice of a subsequence we can assume $(z_k,z'_k) \to (z_*,z_*') \in \C P^1 \times \C P^1$. By what was proved above either $(z_*,z_*') \in \C P^1\setminus\Gamma \times \C P^1\setminus\Gamma'$ or $(z_*,z_*') \in \Gamma \times \Gamma'$. \\

\noindent \textbf{Case 1.} $(z_*,z_*') \in \C P^1\setminus\Gamma \times \C P^1\setminus\Gamma'$. \\

We prove {\it Step 1} in this case. If $c=-\infty$ then the identity $a'(z'_k)+c_k = a(z_k) \geq \inf a > -\infty$ shows that $a'(z'_k) \to +\infty$, contradicting $a'(z'_k) \to a'(z'_*) \in \R$.

Now we prove {\it Step 2} in this case. If $c>-\infty$ then $\util(z_*) = \util'_c(z'_*)$. The point $(z_*,z'_*)$ is isolated as an intersection point of $\util$ with $\util'_c$; otherwise we would use the similarity principle to conclude that $C=C'$ (these curves are embedded). By stability and positivity of intersections we conclude that $E_y \neq \emptyset$ for some $y<c$, in contradiction to the definition of $c$.

\medskip

\noindent \textbf{Case 2.} $(z_*,z_*') \in \Gamma \times \Gamma'$. \\


At the punctures $z_*,z_*'$ the curves $\util,\util'$ must be asymptotic to the same periodic orbit $\gamma_l=(x_l,T_l)$ since all asymptotic limits of both curves are geometrically distinct. 
For the $M$-components we get $u(z_k)=u'(z'_k)$ since the asymptotic limits are simply covered.

Choose asymptotic representatives $(\phi,U)$ and $(\phi',U')$ of $\util$ and $\util'$ at $z_*$ and $z_*'$, respectively. The existence of asymptotic representatives makes use of Theorem~\ref{thm_asymptotic_formula_deg_case}. This means, see~\cite{siefring_CPAM}, we find $R,R'\gg1$, compact neighborhoods $D_*,D_*'$ of $z_*,z_*'$, smooth maps
\[
\begin{aligned}
U : [R,+\infty)\times\R/\Z &\to \xi \qquad U(s,t) \in \xi|_{x_l(T_lt)} \\
U' : [R',+\infty)\times\R/\Z &\to \xi \qquad U'(s,t) \in \xi|_{x_l(T_lt)}
\end{aligned}
\]
and diffeomorphisms
\[
\begin{aligned}
\phi : [R,+\infty)\times\R/\Z &\to D_*\setminus\{z_*\} \\
\phi' : [R',+\infty)\times\R/\Z &\to D_*'\setminus\{z_*'\}
\end{aligned}
\]
satisfying
\begin{equation*}
\begin{aligned}
\util(\phi(s,t)) &= (T_ls,\exp_{x_l(T_lt)}U(s,t)) \\
\util'(\phi'(s,t)) &= (T_ls,\exp_{x_l(T_lt)}U'(s,t)).
\end{aligned}
\end{equation*}
Here $\exp$ is the exponential map given by an arbitrarily chosen auxiliary Riemannian metric on $M$. Then for every $\tau \in \R$ it follows that $(\phi_\tau',U_\tau')$ is an asymptotic representative of $\util'_\tau$, where
\[
\begin{aligned}
& \phi_\tau'(s,t) = \phi'(s-\tau/T_l,t) \\
& U_\tau'(s,t) = U'(s-\tau/T_l,t).
\end{aligned}
\]
The choice of parametrization $x_l(t)$ is fixed by the convention made in Remark~\ref{rem_marked_point_on_orbits}. The sequences $z_k\to z_*$, $z'_k\to z'_*$ correspond to sequences $(s_k,t_k)$, $(s_k',t_k')$ satisfying $\min\{s_k,s'_k\}\to+\infty$ and
\begin{equation*}
\begin{aligned}
(T_ls_k,\exp_{x_l(T_lt_k)}U(s_k,t_k)) &= \util(\phi(s_k,t_k)) \\
&= \util_{c_k}(\phi'_{c_k}(s'_k,t'_k)) \\
&= (T_ls_k',\exp_{x_l(T_lt_k')}U'(s_k'-c_k/T_l,t_k'))
\end{aligned}
\end{equation*}
For the $\R$-components we get $s_k=s_k'$. Now using that $\exp$ determines a diffeomorphism between a neighborhood of the zero section of $\xi_{\gamma_l}$ onto a neighborhood of $x_l(\R)$, the identity $$ \exp_{x_l(T_lt_k)}U(s_k,t_k)=\exp_{x_l(T_lt_k')}U'(s_k'-c_k/T_l,t_k') $$ forces $t_k=t_k'$ and
\begin{equation}\label{intersection_points_crucial}
U(s_k,t_k) = U'(s_k-c_k/T_l,t_k) = U'_{c_k}(s_k,t_k).
\end{equation}

Let $A$ denote the asymptotic operator on sections of $\xi_{\gamma_l} = x_l(T_l\cdot)^*\xi$ induced by $(\alpha,J)$. Theorem~\ref{thm_asymptotic_formula_deg_case} implies that
\begin{equation}
\label{asymptotic_reps_Us}
\begin{aligned}
& U(s,t) = e^{\lambda s}(v(t)+r(s,t)) \\
& U'(s,t) = e^{\lambda' s}(v'(t)+r'(s,t)) \\
& \lim_{s\to+\infty}\sup_t \ (|r(s,t)|+|r'(s,t)|)=0
\end{aligned}
\end{equation}
where $\lambda,\lambda'<0$ are eigenvalues of $A$ and $v,v'$ are corresponding eigensections. Moreover, both $v$ and $v'$ have winding number equal to zero in a symplectic trivialization aligned to the normal of a page of $\Theta$; this follows from Lemma~\ref{lemma_wind_infinity}. There are two cases to be considered. \\

\noindent {\it Subcase 2.1.} $\lambda=\lambda'$. \\

From~\eqref{intersection_points_crucial} and~\eqref{asymptotic_reps_Us} we obtain
\[
\begin{aligned}
0 &= e^{\lambda s_k}(v(t_k)+r(s_k,t_k))-e^{\lambda(s_k-c_k/T_l)}(v'(t_k)+r'(s_k-c_k/T_l,t_k)) \\
&= e^{\lambda s_k}(v(t_k)-e^{-\lambda c_k/T_l}v'(t_k) + r(s_k,t_k) - e^{-\lambda c_k/T_l} r'(s_k-c_k/T_l,t_k)).
\end{aligned}
\]
Dividing by $e^{\lambda s_k}$ and letting $k\to\infty$ we obtain that $v(t)$ and $e^{-\lambda c/T_l}v'(t)$ coincide for some $t$, where the latter is zero if $c=-\infty$.
Since they solve the same linear ODE we get 
\begin{equation}
\label{critical_evector}
v(t) = \begin{cases} e^{-\lambda c/T_l}v'(t) \ \forall t & \text{if} \ c>-\infty \\ 0 \ \forall t & \text{if} \ c=-\infty \end{cases}
\end{equation} 
We can now prove {\it Step~1} in Subcase 2.1. 
If $c=-\infty$ we get a contradiction from~\eqref{critical_evector} to the fact that $v(t)$ is nowhere vanishing. 

\medskip

We proceed assuming that $c>-\infty$ and work towards the proof of {\it Step 2} in Subcase 2.1. 
Below we will show that for $\hat{c_k} \in A$ holds for any sequence $\hat{c}_k \to c$, $\hat{c}_k < c$, in contradiction to the definition of $c$. 
Using~\eqref{critical_evector} we obtain the following important fact:
\begin{equation}
\label{estimate_subcase_21}
\liminf_{s\to+\infty} \ e^{-\lambda s}|U(s,t)-U'_\tau(s,t)| \ > \ 0 \qquad \forall \tau\neq c.
\end{equation}
To see this we plug the identity $v'(t) = e^{\lambda c/T}v(t)$, which we get from~\eqref{critical_evector} in the case $c>-\infty$, and compute for $\tau\neq c$:
\begin{equation}
\label{asymp_form_rel_notcrit}
\begin{aligned}
& U(s,t)-U'_{\tau}(s,t) \\
&= e^{\lambda s}(v(t)+r(s,t))-e^{\lambda(s-\tau/T_l)}(e^{\lambda c/T_l}v(t)+r'(s-\tau/T_l,t)) \\
&= e^{\lambda s}((1-e^{\lambda(c-\tau)/T_l})v(t) + r(s,t) - e^{-\lambda \tau/T_l} r'(s-\tau/T_l,t)) \, .
\end{aligned}
\end{equation}
The hypothesis $\tau\neq c$ implies that the coefficient $1-e^{\lambda(c-\tau)/T_l}$ in front of $v(t)$ does not vanish, and~\eqref{estimate_subcase_21} follows.

However, for the special value $\tau=c$ we get
\begin{equation*}
\begin{aligned}
& U(s,t)-U'_c(s,t) \\
&= e^{\lambda s}(v(t)+r(s,t)) - e^{\lambda (s-c/T_l)}(e^{\lambda c/T_l}v(t)+r'(s-c/T_l,t)) \\
&= e^{\lambda s}(r(s,t)-e^{-\lambda c/T_l}r'(s-c/T_l,t))
\end{aligned}
\end{equation*}
from where we conclude that
\begin{equation*}
e^{-\lambda s}|U(s,t)-U'_c(s,t)| \to 0 \ \text{uniformly in $t$ as} \ s\to+\infty.
\end{equation*}
Theorem~\ref{thm_asymptotic_formula_deg_case} implies that we can apply~\cite[Theorem~2.2]{siefring_CPAM} to write
\begin{equation}
\label{asympt_formula_rel_difference}
\begin{aligned}
& U(s,t)-U'_c(s,t) = e^{\nu s}(w(t)+\hat r(s,t)) \\
& \lim_{s\to+\infty}\sup_t |\hat r(s,t)|=0
\end{aligned}
\end{equation}
where $\nu$ is an eigenvalue of $A$ satisfying $\nu<\lambda$, and $w(t)$ is a corresponding eigensection.
We claim that $\wind(w) < \wind(v)$ with respect to some (hence any) homotopy class of symplectic trivializations of~$\xi_{\gamma_l}$. 
To see this, fix $s_*$ large enough so that $U-U'_c$ does not vanish on $[s_*,+\infty)\times\R/\Z$ and
\[
\begin{aligned}
\wind(U(s_*,\cdot)-U'_c(s_*,\cdot)) = \wind(w) 
\end{aligned}
\]
This is possible in view of~\eqref{asympt_formula_rel_difference}. 
Since $U'_{c_k} \to U'_c$ in $C^0_{\rm loc}$ we find $k_*$ such that
\begin{equation}
\label{winding_at_s_and_k_large}
k\geq k_* \Rightarrow \left\{ \begin{aligned} & U(s_*,\cdot)-U'_{c_k}(s_*,\cdot) \ \text{ does not vanish, and} \\ & \wind(U(s_*,\cdot)-U'_{c_k}(s_*,\cdot)) = \wind(w) \, .
\end{aligned} \right.
\end{equation}
The algebraic count of zeros of $U(s,t)-U'_{c_k}(s,t)$ on $[s_*,+\infty)\times\R/\Z$, with respect to obvious orientations, is equal to the intersection number of the embedded holomorphic cylinders
\[
\util\circ \phi([s_*,+\infty)\times\R/\Z) \qquad \text{and} \qquad \util'_{c_k}\circ \phi'_{c_k}([s_*,+\infty)\times\R/\Z) \, .
\]
This is an immediate consequence of standard degree theory. 
From the existence of the sequence $(s_k,t_k)$ we conclude that this intersection number is positive for $k$ large.
But~\eqref{asymp_form_rel_notcrit},~\eqref{asympt_formula_rel_difference}, standard degree theory, and the assumption that $c_k \neq c$ together imply that this intersection number is equal to
\[
0 < \wind(v) - \wind(U(s_*,\cdot)-U'_{c_k}(s_*,\cdot)) = \wind(v) -\wind(w)
\]
as claimed.

We are now in a position to finish the proof of {\it Step 2} in Subcase 2.1. 
Consider $\hat c_k \to c$ and satisfying $\hat c_k < c$ for all $k$. 
Arguing as above we get 
$$
\wind(U(s_*,\cdot)-U'_{\hat c_k}(s_*,\cdot)) = \wind(w) \qquad \text{for $k$ large enough.}
$$
As before we use this fact together with the argument principle and~\eqref{asymp_form_rel_notcrit} to conclude that the half-cylinder $\util\circ \phi([s_*,+\infty)\times\R/\Z)$ intersects the half-cylinder $\util'_{\hat c_k}\circ \phi'_{\hat c_k}([s_*,+\infty)\times\R/\Z)$ for $k$ large enough. 
Hence $\hat c_k \in A$, in contradiction to $c=\inf A$.

\medskip

\noindent {\it Subcase 2.2)} $\lambda\neq\lambda'$. 

\medskip

In this case
\begin{equation}
\label{pointwise_LI}
v'(t) \not\in\R v(t) \quad \forall t \, .
\end{equation}
This is a consequence of the following two facts:
\begin{itemize}
\item 
The winding numbers of $v$ and $v'$ in any symplectic frame are equal: both these winding numbers vanish on a trivialization aligned to the normal of a page of $\Theta$ (Lemma~\ref{lemma_wind_infinity}).
\item 
If eigensections associated to distinct eigenvalues have the same winding number (in any frame) then they are pointwise linearly independent.
\end{itemize}
From~\eqref{intersection_points_crucial} and~\eqref{asymptotic_reps_Us} we get
\begin{equation}
\label{basic_identity_to_work}
\begin{aligned}
0 &= e^{\lambda s_k}(v(t_k)+r(s_k,t_k)) - e^{\lambda'(s_k-c_k/T_l)}(v'(t_k)+r'(s_k-c_k/T_l,t_k)) \, .
\end{aligned}
\end{equation}
We claim that $(\lambda-\lambda')s_k+\lambda'c_k/T_l$ is a bounded sequence. 
If not then, up to choice of a subsequence, we may assume that it converges to $+\infty$ or to $-\infty$. 
In the former case we can divide~\eqref{basic_identity_to_work} by $e^{\lambda s_k}$ and get
\begin{equation*}
0 = v(t_k) + r(s_k,t_k) - e^{(\lambda'-\lambda)s_k-\lambda'c_k/T_l}(v'(t_k)+r'(s_k-c_k/T_l,t_k)) \, .
\end{equation*}
Since in present case the coefficient in front of $v'(t_k)+r'$ converges to zero, we conclude that $v(t)$ vanishes somewhere, and this is impossible.
The argument in the case $(\lambda-\lambda')s_k+\lambda'c_k/T_l \to -\infty$ is analogous, this time one divides~\eqref{basic_identity_to_work} by $e^{\lambda' (s_k-c_k/T_l)}$ and arrives at a zero of $v'(t)$. 
It follows from this claim that, up to a subsequence, we may assume that $e^{(\lambda-\lambda')s_k+\lambda'c_k/T_l} \to a$ for some $a\neq0$. 
Hence, up to a further subsequence, we can use~\eqref{basic_identity_to_work} to find $t_*$ such that $av(t_*)=v'(t_*)$. 
This is a contradiction to~\eqref{pointwise_LI}.
\end{proof}

\section{Proof of Theorem~\ref{main1}}
\label{sec_main1_proof}

Theorem~\ref{main2} contains (ii) $\Rightarrow$ (i), and also (i) $\Rightarrow$ (iii) under the stated $C^\infty$-generic assumption. 
Only (iii) $\Rightarrow$ (ii) remains to be proved, and this will be accomplished by reducing to Theorem~\ref{main2} as follows.
Let $\gamma \subset M \setminus L$ be a closed Reeb orbit of a contact form as in Theorem~\ref{main1} satisfying (iii). 
We can consider non-degenerate $C^\infty$-small perturbations of the contact form keeping $L \cup \gamma$ as periodic Reeb orbits. 
By Proposition~\ref{prop_main_existence_non_deg_global} below, $L$ bounds a GSS in class $b$ for the perturbed Reeb flows, hence ${\rm int}(\gamma,b) \neq 0$ and we are now again under the assumptions of Theorem~\ref{main2}.

Consider contact forms $\alpha_+>\alpha_-$ on $M$ defining the same positive contact structure $\xi$, a null-homologous link $L$ transverse to $\xi$, and an oriented and positive Seifert surface $\Sigma$ for $L$ of genus zero. Orient $L$ as the boundary of $\Sigma$. Denote the connected components of $L$ by $\gamma_1,\dots,\gamma_n$. Assume that $n\geq2$. As in subsection~\ref{ssec_existence_compactness_curves}, we consider a conformal $d\alpha_\pm$-symplectic trivialization $\tau_\Sigma$ of $\xi|_L$ which respect to which a non-vanishing vector field in $T_{\gamma_i}\Sigma \cap \xi$ has winding number equal to zero, for every $i$.

We assume the existence of a global conformal $d\alpha_\pm$-symplectic trivialization $\tau_{\rm gl}$ of $\xi$. Using $\tau_{\rm gl}$ we can define $m_i =m(\gamma_i,\Sigma) \in\Z$ as the winding number of a non-vanishing
vector field in $T_{\gamma_i}\Sigma\cap \xi$ computed with respect to $\tau_{\rm gl}$. Note that
\begin{equation}\label{identity_minus_sl}
-{\rm sl}(L,\Sigma) = \sum_i m_i.
\end{equation}
With these numbers we define the interval $I(L,\Sigma,\tau_{\rm gl}) \subset \Z$ as in~\eqref{def_intervalo}. Consider also the interval $$ I_0(L,\Sigma,\tau_{\rm gl}) = [2c_0-2,2(C_0-\ell_0)+1] \subset I(L,\Sigma,\tau_{\rm gl}) $$ where $C_0$ is defined as in~\eqref{important_numbers},
$$
c_0 = \sum_{\{\gamma|m(\gamma,\Sigma)+1<0\}} (m(\gamma,\Sigma)+1) \ \ \mbox{ and } \ \
\ell_0 = \max\{0,\min_\gamma\{m(\gamma,\Sigma)+1\}\}.
$$
Below we will check that $c_0 = 2-C_0$. Hence
$$
I_0(L,\Sigma,\tau_{\rm gl}) \subset I(L,\Sigma,\tau_{\rm gl}), \qquad I_0(L,\Sigma,\tau_{\rm gl}) \neq I(L,\Sigma,\tau_{\rm gl}) \, .
$$

We assume that hypotheses (H1)-(H3) and (H5) described in subsection~\ref{ssec_existence_compactness_curves} are valid. In particular, each $\gamma_i$ is a closed Reeb orbit for both $\alpha_+$ and $\alpha_-$, both $\alpha_\pm$ orient each $\gamma_i$ as the boundary of $\Sigma$, and $X_{\alpha_+}$ is positively transverse to $\Sigma$. We replace (H4) by the following weaker hypothesis.
\begin{itemize}
\item[(H$4'$)] The contact forms $\alpha_+,\alpha_-$ are non-degenerate up to action $$ A = \sum_{k=1}^n \int_{\gamma_k}\alpha_+ $$ and the following hold:
\begin{itemize}
\item[($+$)] If $\gamma =(x,T)\in\mathcal{P}(\alpha_+)$ satisfies $T\leq A$ and $x(\R) \subset M\setminus L$ then it also satisfies ${\rm int}(\gamma,\Sigma) \neq 0$.
\item[($-$)] If $\gamma =(x,T)\in\mathcal{P}(\alpha_-)$ satisfies $T\leq A$, $x(\R) \subset M\setminus L$ and its Conley-Zehnder index satisfies $\mu_{\CZ}^{\tau_{\rm gl}}(\gamma) \in I_0(L,\Sigma,\tau_{\rm gl})$ then ${\rm int}(\gamma,\Sigma) \neq 0$.
\end{itemize}
\end{itemize}

Let $\jtil_\pm \in\J(\alpha_\pm)$ be fixed arbitrarily. As before, by (H3) we can choose numbers $\delta^\pm_1,\dots,\delta^\pm_n<0$ such that
\begin{itemize}
\item $\delta^+_k$ is in the spectral gap between eigenvalues with winding number $0$ and $1$ relative to $\tau_\Sigma$ of the asymptotic operator at $\gamma_k$ given by $(\alpha_+,J_+)$.
\item $\delta^-_k$ is in the spectral gap between eigenvalues with winding number $0$ and $1$ relative to $\tau_\Sigma$ of the asymptotic operator at $\gamma_k$ given by $(\alpha_-,J_-)$.
\end{itemize}
Set $\delta^\pm = (\delta^\pm_1,\dots,\delta^\pm_n;\emptyset)$. The main analytical fact we need to prove is the following statement analogous to Proposition~\ref{prop_main_compactness}.

\begin{proposition}\label{prop_main_compactness_global}
Assume (H1)-(H3), (H$4'$) and (H5). If there exists a curve in the moduli space $\M_{\jtil_+,0,\delta^+}(\gamma_1,\dots,\gamma_n;\emptyset)$ with projection to $M$ in the same class as $\Sigma$ in $H_2(M,L)$, then there exists a curve $C_-$ in $\M_{\jtil_-,0,\delta^-}(\gamma_1,\dots,\gamma_n;\emptyset)$ whose projection to $M$ is in the same class as $\Sigma$ in $H_2(M,L)$. Moreover, if $\mathcal{Y}$ denotes the connected component of $C_-$ in $\M_{\jtil_-,0,\delta^-}(\gamma_1,\dots,\gamma_n;\emptyset)$ then $\mathcal{Y}/\R$ is compact.
\end{proposition}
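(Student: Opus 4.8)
The plan is to run the proof of Proposition~\ref{prop_main_compactness} — that is, the arguments of Lemmas~\ref{lemma_compactness_first}, \ref{lemma_implied_existence_step1} and~\ref{lemma_implied_existence_step2} — essentially verbatim, and to isolate the unique place where the full strength of (H4) is used and replace it by (H$4'$) at the cost of one new a priori estimate. Inspecting those proofs, (H4) enters only through Claims~I--III of Lemma~\ref{lemma_implied_existence_step2} (and the analogous step in Lemma~\ref{lemma_compactness_first}), always in the same manner: if a non-constant component of some level of the SFT-limiting building, whose image is not contained in $\R\times L$, is weakly asymptotic at one of its punctures to a closed Reeb orbit $\eta=(x,T)$ lying in $M\setminus L$, then such a puncture forces a non-trivial asymptotic formula (Theorem~\ref{thm_precise_asymptotics}), one produces for $l$ large an embedded loop $\beta_l\subset S^2\setminus\Gamma$ with ${\rm int}(v_l(\beta_l),\Sigma)\neq 0$, and Proposition~\ref{prop_cobordisms_intersection_crucial} together with Proposition~\ref{prop_Seifert_no_intersections} produces a contradiction. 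Under (H$4'$) this contradiction is still available provided $T\le A$ and $\mu_{\CZ}^{\tau_{\rm gl}}(\eta)\in I_0(L,\Sigma,\tau_{\rm gl})$. The bound $T\le A$ is automatic: every component of the building has energy $\le E(C_+)=A$, and in a symplectization the energy of a finite-energy map equals the sum of the periods of its positive asymptotic limits (see~\ref{sssec_energy_rmk}). Hence the whole proof reduces to the following a priori estimate: \emph{every closed Reeb orbit $\eta\subset M\setminus L$ which occurs as an asymptotic limit at some puncture of some non-constant component — with image not contained in $\R\times L$ — of an SFT-limiting building arising in the proofs of Lemmas~\ref{lemma_compactness_first}, \ref{lemma_implied_existence_step1}, \ref{lemma_implied_existence_step2} satisfies $\mu_{\CZ}^{\tau_{\rm gl}}(\eta)\in I_0(L,\Sigma,\tau_{\rm gl})$.}

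To prove the estimate I would argue as follows. Such an $\eta$ cannot occur at one of the original punctures on $L$, so it appears at an intermediate puncture, simultaneously a positive puncture of a component $\wtil=(b,w)$ of some level and a negative puncture of a component one level up; iterating upward through the matching isometries $\{\Phi_m\}$ one reaches, after finitely many steps, components whose relevant positive punctures are among the original ones, where the asymptotic winding with respect to $\tau_\Sigma$ is non-positive (this is the inequality $\wind_\infty(\cdot,z_k,\tau_\Sigma)\le 0$ already exploited in Lemma~\ref{lemma_no_intersection_with_limits} and Proposition~\ref{prop_cobordisms_intersection_crucial}). Along this chain every non-constant component $\wtil_j$ obeys $\wind_\pi(\wtil_j)\ge 0$, hence $\wind_\infty(\wtil_j)=\wind_\pi(\wtil_j)+\chi(S_j)-\#\Gamma_j\ge \chi(S_j)-\#\Gamma_j$ by Remark~\ref{rmk_wind_pi_wind_infty}; the genus-zero hypothesis on the building, via Lemma~\ref{lemma_constant_components}, forces the underlying combinatorics to be tree-like, so that constant components and nodal pairs contribute nothing and these inequalities telescope. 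Summing along the chain, re-expressing all asymptotic winding numbers and Euler characteristics in the single global trivialization $\tau_{\rm gl}$, using $\mu_{\CZ}^{\tau_{\rm gl}}=\mu_{\CZ}^{\tau_\Sigma}+2\,\wind(\tau_{\rm gl},\tau_\Sigma)$ together with $\wind(\tau_{\rm gl},\tau_\Sigma)|_{\gamma_i}=-\,m(\gamma_i,\Sigma)$, and handling multiply-covered intermediate limits $\gamma_i^{m}$ via $\mu_{\CZ}^{\tau_\Sigma}(\gamma_i^{m})\ge m\,\mu_{\CZ}^{\tau_\Sigma}(\gamma_i)\ge m$ from (H3) and superadditivity of the Conley-Zehnder index, one converts the bound on the asymptotic winding of $\eta$ into a two-sided bound on $\mu_{\CZ}^{\tau_{\rm gl}}(\eta)$ with endpoints exactly $2c_0-2$ and $2(C_0-\ell_0)+1$: $C_0$ collects the positive contributions of the binding orbits along the relevant chain, $c_0$ the negative ones, and $\ell_0$ the smallest admissible single contribution. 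Here one also uses the identity $c_0=2-C_0$, which via~\eqref{identity_minus_sl} amounts to ${\rm sl}(L,\Sigma)=n-2$; this holds for any Seifert surface in the page class $b$, and is forced in the present setting by (H5) (Lemma~\ref{lemma_topological}), exactly as ${\rm sl}(L,{\rm page})=n-2$ was used in Lemma~\ref{lemma_wind_infinity}.

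With the estimate in hand, Lemmas~\ref{lemma_compactness_first}, \ref{lemma_implied_existence_step1} and~\ref{lemma_implied_existence_step2} carry over with (H4) replaced by (H$4'$): at every point where an orbit $\eta\subset M\setminus L$ would appear as an asymptotic limit of a non-trivial component one now invokes the estimate to place $\mu_{\CZ}^{\tau_{\rm gl}}(\eta)\in I_0(L,\Sigma,\tau_{\rm gl})$, together with $T\le A$, so that (H$4'$) applies and yields ${\rm int}(\eta,\Sigma)\neq 0$, contradicting Propositions~\ref{prop_cobordisms_intersection_crucial} and~\ref{prop_Seifert_no_intersections} as before. This produces $C_-\in\M_{\jtil_-,0,\delta^-}(\gamma_1,\dots,\gamma_n;\emptyset)$ inducing the class $[\Sigma]$, and compactness of $\mathcal{Y}/\R$ follows exactly as in part~(b) of Lemma~\ref{lemma_compactness_first}.

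The main obstacle, and the only genuinely new work, is the bookkeeping in the a priori estimate: one must organize the winding/index budget across a multi-level nodal building of arithmetic genus zero while respecting the matching decorations at intermediate punctures, account for multiply-covered intermediate asymptotic limits, track the passage between $\tau_\Sigma$ and $\tau_{\rm gl}$ through the integers $m(\gamma_i,\Sigma)$, and verify that the resulting interval is precisely $I_0(L,\Sigma,\tau_{\rm gl})$ and not something larger.
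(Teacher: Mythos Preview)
Your central claim — that every asymptotic limit $\eta\subset M\setminus L$ appearing in the SFT-building must satisfy $\mu_{\CZ}^{\tau_{\rm gl}}(\eta)\in I_0(L,\Sigma,\tau_{\rm gl})$ — is backwards, and the telescoping argument you sketch cannot establish it. The paper's Claim~I proves the \emph{opposite}: any such $\eta$ has ${\rm int}(\eta,\Sigma)=0$ (otherwise the old contradiction via Propositions~\ref{prop_cobordisms_intersection_crucial} and~\ref{prop_Seifert_no_intersections} fires immediately), and hence by (H$4'$) its index lies \emph{outside} $I_0$, in $I_0^-\cup I_0^+$. Orbits in $M\setminus L$ with index outside $I_0$ are \emph{permitted} to appear as asymptotic limits; the work is to show that their presence forces an absurdity. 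The paper does this by a propagation argument (Claims~V and~VI): a component carrying a positive puncture with index in $I_0^-$, or a negative puncture with index in $I_0^+$, must carry another puncture of the same flavour, and following these from component to component through the genus-zero tree produces an infinite chain (Claim~VII), contradicting finiteness of the building.

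Your telescoping sketch also misses an essential analytic ingredient. At the cobordism level $m=0$ the almost complex structure $\jbar$ is not $\R$-invariant, so $\wind_\pi$ is undefined and the inequality $\wind_\pi\ge 0$ that drives your bookkeeping is unavailable there. The paper replaces it, for that level only, by a Fredholm index inequality for somewhere injective $\jbar$-curves (Theorem~\ref{thm_transversality}), which requires choosing $\jbar$ in a residual set $\mathcal{J}_{\rm reg}\subset\J_{\Omega,L}(\jtil_-,\jtil_+)$. Without this genericity step the case $m=0$ of Claim~VI does not go through, and no amount of winding-number telescoping along symplectization levels will substitute for it. Even on the symplectization levels, your chain from $\eta$ up to the top passes through further unknown orbits in $M\setminus L$, so the sum does not close up to give a two-sided bound on $\mu_{\CZ}^{\tau_{\rm gl}}(\eta)$ alone; this is precisely why the paper argues by propagation and infinite descent rather than by a single index estimate.
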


The main consequence of Proposition~\ref{prop_main_compactness_global} is a version of Proposition~\ref{prop_main_existence_non_deg} in the present set-up.

\begin{proposition}
\label{prop_main_existence_non_deg_global}
Let $\alpha$ be a contact form on $M$, and let $L=\gamma_1\cup\dots\cup\gamma_n$ be a link consisting of periodic Reeb orbits. Assume that $L$ binds a planar open book decomposition $\Theta$ that supports~$\xi=\ker\alpha$, and assume with no loss of generality that $\alpha$ and $\Theta$ induce the same orientation on each $\gamma_i$. Assume the existence of a global symplectic trivialization $\tau_{\rm gl}$ of $(\xi,d\alpha)$ and use it to define $I(L,{\rm page},\tau_{\rm gl}) \subset \Z$ as in~\eqref{def_intervalo}. 
Suppose that
\begin{itemize}
\item[(a)] For every $i$, $\mu_{\CZ}^\Theta(\gamma_i)\geq1$ as a prime periodic Reeb orbit of $\alpha$.
\end{itemize}
holds. 
Then there exists a constant $$ A > \sum_{i=1}^n \int_{\gamma_i} \alpha $$ that depends only on the triple $(\alpha,L,\Theta)$ and has the following significance. Fix any $d\alpha$-compatible complex structure $J:\xi\to\xi$, and for each $i$ choose $\delta_i<0$ in the spectral gap of the asymptotic operator induced by $(\alpha,J)$ on sections of $\xi$ along $\gamma_i$, between eigenvalues of winding number $0$ and $1$ with respect to a Seifert framing induced by pages of $\Theta$. Denote $\delta=(\delta_1,\dots,\delta_n;\emptyset)$. If 
\begin{itemize}
\item[(b)] Every $\gamma'=(x',T') \in \mathcal{P}(\alpha)$ such that $x'(\R) \subset M\setminus L$, $T'\leq A$ and $\mu_{\CZ}^{\tau_{\rm gl}}(\gamma')$ is in $I(L,{\rm page},\tau_{\rm gl})$ has non-zero algebraic intersection number with pages of~$\Theta$.
\end{itemize}
holds, then for any arbitrary sequence~$f_k\in \mathcal{F}_L$ satisfying
\begin{itemize}
\item $f_k\to 1$ in $C^\infty$, $f_k|_L \equiv 1$ for all $k$.
\item $f_k\alpha$ is non-degenerate for all $k$.
\end{itemize}
and $\jtil_k$ defined as in~\eqref{formula_J_tilde} using $f_k\alpha$ and $J$, there exists $k_0$ such that for every $k\geq k_0$ the link $L$ binds a planar open book decomposition whose pages are global surfaces of section for the Reeb flow of $f_k\alpha$, all pages represent the same class in $H_2(M,L)$ as the pages of $\Theta$, and are projections of curves in $\M_{\jtil_k,0,\delta}(\gamma_1,\dots,\gamma_n;\emptyset)$.
\end{proposition}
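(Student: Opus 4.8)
The plan is to run the proof of Proposition~\ref{prop_main_existence_non_deg} essentially verbatim, with hypothesis (H4) everywhere replaced by (H$4'$) and Proposition~\ref{prop_main_compactness} replaced by Proposition~\ref{prop_main_compactness_global}. Assuming $n\geq2$ (for $n=1$ one is in the situation of Theorem~\ref{thm_L_connected}), fix by~\cite{wendl} a non-degenerate contact form $\alpha_+>\alpha$ with $X_{\alpha_+}$ positively tangent to $L$, with $\mu_{\CZ}^{\Theta}(\gamma_i)=1$ as $\alpha_+$-orbits, with every $\alpha_+$-orbit in $M\setminus L$ having positive intersection with the pages, and with $\M_{\jtil_+,0,\delta^+}(\gamma_1,\dots,\gamma_n;\emptyset)$ a non-empty open cylinder whose members project to the pages of an open book by global surfaces of section for $\alpha_+$. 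Set $A=\sum_i\int_{\gamma_i}\alpha_+$, fix $C_+$ in that moduli space, and let $\Sigma$ be the closure of its projection to $M$; then $\Sigma$ is a positive, genus zero Seifert surface for $L$ representing $b$, so $m(\gamma_i,\Sigma)=m(\gamma_i,{\rm page})$ for all $i$, hence $I(L,\Sigma,\tau_{\rm gl})=I(L,{\rm page},\tau_{\rm gl})$ and $I_0(L,\Sigma,\tau_{\rm gl})=I_0(L,{\rm page},\tau_{\rm gl})$, and assumption (b) becomes exactly (H$4'$)($-$) phrased for $\alpha$ with the surface $\Sigma$. Choosing $\delta_i^{\pm}<0$ in the spectral gaps between windings $0$ and $1$ relative to $\tau_\Sigma$, hypotheses (H1), (H2), (H3), (H5) hold for $\alpha_+$ and for $f_k\alpha$ by (a), the properties of $\alpha_+$, and Lemma~\ref{lemma_topological}, and (H$4'$)($+$) holds for $\alpha_+$ as stated.

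The one new point is to verify (H$4'$)($-$) for $f_k\alpha$ in place of $\alpha_-$ when $k$ is large. Suppose not: along a subsequence there are $\tilde\gamma_k=(\tilde x_k,\tilde T_k)\in\mathcal P(f_k\alpha)$ with $\tilde x_k(\R)\subset M\setminus L$, $\tilde T_k\leq A$, $\mu_{\CZ}^{\tau_{\rm gl}}(\tilde\gamma_k)\in I_0(L,\Sigma,\tau_{\rm gl})$ and ${\rm int}(\tilde\gamma_k,\Sigma)=0$. By the period bound pass to a further subsequence with $\tilde\gamma_k\to\tilde\gamma=(\tilde x,\tilde T)\in\mathcal P(\alpha)$ in $C^\infty$, $\tilde T\leq A$; as in Proposition~\ref{prop_main_existence_non_deg}, $\tilde\gamma\subset M\setminus L$ (a $C^\infty$-limit of simple $f_k\alpha$-orbits cannot be a cover of some $\gamma_i$ since $\rho^{\Theta}(\gamma_i)>0$), and ${\rm int}(\tilde\gamma,\Sigma)=0$ by local constancy of the intersection number for loops missing $L$. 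It now suffices to show $\mu_{\CZ}^{\tau_{\rm gl}}(\tilde\gamma)\in I(L,{\rm page},\tau_{\rm gl})$, for then (b) yields ${\rm int}(\tilde\gamma,\Sigma)\neq0$, a contradiction. This is where the precise length of $I_0$ enters. The $0$-weighted Conley--Zehnder index is lower semicontinuous: the non-degenerate indices $\mu_{\CZ}^{\tau_{\rm gl}}(\tilde\gamma_k)$ lie in $[\mu^-(\tilde\gamma),\mu^+(\tilde\gamma)]$ and $\mu_{\CZ}^{\tau_{\rm gl}}(\tilde\gamma)=\mu^-(\tilde\gamma)$, where $\mu^+(\tilde\gamma)-\mu^-(\tilde\gamma)={\rm null}(\tilde\gamma)\in\{0,1,2\}$. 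Hence on one hand $\mu_{\CZ}^{\tau_{\rm gl}}(\tilde\gamma)\leq\mu_{\CZ}^{\tau_{\rm gl}}(\tilde\gamma_k)\leq\max I_0=2C_0-2\ell_0+1\leq 2C_0+1=\max I$, and on the other hand $\mu_{\CZ}^{\tau_{\rm gl}}(\tilde\gamma)\geq\mu_{\CZ}^{\tau_{\rm gl}}(\tilde\gamma_k)-{\rm null}(\tilde\gamma)\geq\min I_0-{\rm null}(\tilde\gamma)$. Using the elementary identity $c_0=2-C_0$ (equivalently ${\rm sl}(L,{\rm page})=n-2$) one gets $\min I_0=2-2C_0=\min I+1$, so the last bound gives $\mu_{\CZ}^{\tau_{\rm gl}}(\tilde\gamma)\geq\min I$ unless ${\rm null}(\tilde\gamma)=2$ and $\mu_{\CZ}^{\tau_{\rm gl}}(\tilde\gamma)=\min I-1=-2C_0$; but when ${\rm null}(\tilde\gamma)=2$ the linearized return map along $\tilde\gamma$ is the identity, so the corresponding path of symplectic matrices is a loop of some integer degree $m$ and $\mu^-(\tilde\gamma)=2m-1$ is odd, whereas $-2C_0$ is even, so this case cannot occur. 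Therefore $\mu_{\CZ}^{\tau_{\rm gl}}(\tilde\gamma)\in[\min I,\max I]=I(L,{\rm page},\tau_{\rm gl})$, as needed.

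With (H1)--(H3), (H$4'$), (H5) in force, Proposition~\ref{prop_main_compactness_global} provides, for every sufficiently large $k$, a curve $C_-\in\M_{\jtil_k,0,\delta}(\gamma_1,\dots,\gamma_n;\emptyset)$ whose projection to $M$ represents $b$, with $\mathcal Y/\R$ compact for $\mathcal Y$ the connected component of $C_-$. From here the argument is word-for-word that of Proposition~\ref{prop_main_existence_non_deg}: by Lemma~\ref{lemma_no_intersection_with_limits} and Proposition~\ref{prop_Seifert_no_intersections} every curve in $\mathcal Y$ projects to an embedded surface in $M\setminus L$ transverse to the Reeb field of $f_k\alpha$ and representing $b$; the automatic transversality of Lemma~\ref{lemma_aut_transv}, disjointness of nearby and of distinct curves from Proposition~\ref{prop_Seifert_two_curves}, and compactness of $\mathcal Y/\R$ glue these projections into a planar open book of $M$ with binding $L$ and page class $b$; and $\mu_{\CZ}^{\Theta}(\gamma_i)\geq1$, i.e.\ $\rho^{\Theta}(\gamma_i)>0$, forces every Reeb trajectory of $f_k\alpha$ to meet every page in forward and backward time (the trajectories whose $\omega$- or $\omega^*$-limit meets $L$ are controlled by the fast rotation of the linearized flow along $L$ relative to $\tau_\Sigma$), so the pages are global surfaces of section.

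\textbf{Main obstacle.} The genuine analytical content is entirely inside Proposition~\ref{prop_main_compactness_global}, which I am allowed to assume; relative to Proposition~\ref{prop_main_existence_non_deg} the only additional work is the index bookkeeping in the verification of (H$4'$)($-$) above. The delicate point there is that passing to a degenerate limiting orbit can lower the $0$-weighted Conley--Zehnder index (by the nullity, hence at most $2$), but never raise it, and that $I_0$ has been defined with its lower endpoint exactly one unit above that of $I$ so that this drop — together with the parity observation handling the nullity-$2$ case — cannot push the limiting index out of $I$. I expect this case check to be the fiddliest piece of the proof, but it is elementary.
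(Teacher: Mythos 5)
Your proposal is correct and follows the same route as the paper: reduce to Proposition~\ref{prop_main_compactness_global} with $\alpha_+$ from~\cite{wendl}, and the only genuinely new step compared with Proposition~\ref{prop_main_existence_non_deg} is verifying (H$4'$)($-$) for $f_k\alpha$ by an index-semicontinuity argument. Your nullity/parity analysis for the borderline case (nullity~$2$ forces an odd index, ruling out a drop to $\min I_0 - 2 = -2C_0$) is exactly the content the paper compresses into the terse remark ``since $x_0$ is even,'' and your explicit spelling-out of it is accurate.
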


The proof of Proposition~\ref{prop_main_existence_non_deg_global} is exactly the same as the proof of Proposition~\ref{prop_main_existence_non_deg} given in subsection~\ref{ssec_approximating_sequences}, with Proposition~\ref{prop_main_compactness} replaced by Proposition~\ref{prop_main_compactness_global}. 
We leave details to the reader. 
The only novelty is to show that (b) in Proposition~\ref{prop_main_existence_non_deg_global} implies the following: If $f_k$ is as in the statement and $k$ is large enough, then every periodic Reeb orbit of $f_k\alpha$ in $M\setminus L$ with action $\leq A$ and Conley-Zehnder index $\mu_{\CZ}^{\tau_{\rm gl}}\in I_0(L,{\rm page},\tau_{\rm gl})$ has non-zero intersection number with $b$. 
This is a simple consequence of the way that Conley-Zehnder indices behave under limits.  
In fact, suppose by contradiction that we can find $k_j\to\infty$ and closed Reeb orbits $\gamma'_j \subset M \setminus L$ of $f_{k_j}\alpha$ with period $\leq A$, $\mu_{\CZ}^{\tau_{\rm gl}}(\gamma'_j) \in I_0(L,{\rm page},\tau_{\rm gl})$ and ${\rm int}(\gamma_j,b)=0$. 
Up to choice of a subsequence we may assume that $\gamma_j' \to \gamma'$ for some closed Reeb $\gamma'$ of $\alpha$ with period $\leq A$. 
If $\gamma'\subset L$ then $\mu_{\CZ}^{\Theta} = 0$ holds for some component of $L$, which is forbidden by our assumptions. 
Hence $\gamma' \subset M \setminus L$ and ${\rm int}(\gamma',b)=0$. 
Note that $I_0(L,{\rm page},\tau_{\rm gl}) = [x_0,y_0] \cap \Z$ where $x_0,y_0$ are even integers, and $I(L,{\rm page},\tau_{\rm gl}) = [x_0-1,y] \cap \Z$ where $y \in \{y_0,y_0+1\}$. 
By lower semi-continuity properties of Conley-Zehnder index, we know that $\mu_{\CZ}^{\tau_{\rm gl}}(\gamma') \in [x_0-2,y_0]$, but since $x_0$ is even we must have $\mu_{\CZ}^{\tau_{\rm gl}}(\gamma') \in [x_0-1,y_0] \cap \Z \subset I(L,{\rm page},\tau_{\rm gl})$. 
The existence of the orbit $\gamma'$ contradicts assumption (b).

\begin{proof}[Proof of Theorem~\ref{main1} assuming Proposition~\ref{prop_main_existence_non_deg_global}]
By Theorem~\ref{main2}, it suffices to show that if all periodic Reeb orbits in $M\setminus L$ satisfying $\mu^{\tau_{\rm gl}}_{\CZ} \in I(L,{\rm page},\tau_{\rm gl})$ have non-zero intersection number with $b$, then all periodic Reeb orbits in $M\setminus L$ have non-zero intersection number with $b$.

Let $\gamma=(x,T) \in \mathcal{P}(\alpha)$ satisfy $x(\R)\subset M\setminus L$ and assume by contradiction that ${\rm int}(\gamma,b)=0$. Choose a sequence $f_k\in\mathcal{F}_L$ that satisfies: (i) $f_k\to 1$ in $C^\infty$ and $f_k|_L \equiv 1$ for all $k$; (ii) $f_k\alpha$ is non-degenerate for all $k$; and (iii) $f_k|_{x(\R)}\equiv1$ and $df_k|_{x(\R)}\equiv0$ for all $k$. In particular, $\gamma \in \mathcal{P}(f_k\alpha)$ for all $k$. We can apply Proposition~\ref{prop_main_existence_non_deg_global} to conclude that $L$ bounds a global surface of section representing the class $b$ when $k$ is large enough. This forces ${\rm int}(\gamma,b)\neq 0$, which is a contradiction.
\end{proof}

The remainder of this section consists of the proof of Proposition~\ref{prop_main_compactness_global}.

\begin{lemma}\label{lemsl}
The following identity holds:
\begin{equation}\label{identitysum}
\sum_{i=1} ^n (m_i+1) = 2.
\end{equation}
\end{lemma}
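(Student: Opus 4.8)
The plan is to compute the self-linking number $\mathrm{sl}(L,\Sigma)$ in two ways and compare. First recall that $\Sigma$ is a page of the planar open book $\Theta$, so it is a genus-zero surface with $n$ boundary components, hence $\chi(\Sigma)=2-n$. The self-linking number of $L$ with respect to the Seifert surface $\Sigma$ is, by definition, minus the count of zeros of a generic nonvanishing section of $\xi|_\Sigma$ tangent to $L$ along $\partial\Sigma$ and pointing outward there; since such a section and a generic section of $T\Sigma$ with the same boundary behavior have the same algebraic count of zeros, this count equals $\chi(\Sigma)=2-n$. Therefore $-\mathrm{sl}(L,\Sigma)=2-n$, i.e. $\mathrm{sl}(L,\Sigma)=n-2$. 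This is exactly the computation already invoked in the proof of Lemma~\ref{lemma_wind_infinity}.

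On the other hand, by the definition of $m_i=m(\gamma_i,\Sigma)$ as the winding number in $\tau_{\rm gl}$ of a nonvanishing section of $T_{\gamma_i}\Sigma\cap\xi$, together with the displayed identity~\eqref{identity_minus_sl}, namely $-\mathrm{sl}(L,\Sigma)=\sum_i m_i$, we obtain $\sum_{i=1}^n m_i = -(n-2) = 2-n$. Adding $n$ to both sides gives
\begin{equation*}
\sum_{i=1}^n (m_i+1) = 2,
\end{equation*}
which is~\eqref{identitysum}. So the whole proof reduces to the two ways of reading $\mathrm{sl}(L,\Sigma)$: the topological one from the page being planar with $n$ boundary circles, and the framing-theoretic one from~\eqref{identity_minus_sl}.

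There is essentially no obstacle here; the only point requiring a word of care is making sure the orientation and co-orientation conventions for the self-linking number match between~\eqref{identity_minus_sl} and the $\chi(\Sigma)$ computation, but this is the same convention already fixed earlier in the section and used implicitly in Lemma~\ref{lemma_wind_infinity}, so I would simply cite that. One can also phrase the argument purely as: $\sum_i(m_i+1)=\sum_i m_i + n = -\mathrm{sl}(L,\Sigma)+n = -(n-2)+n = 2$, invoking $\mathrm{sl}(L,{\rm page})=n-2$ for a planar page. I would write the lemma's proof in two or three lines along exactly these lines.
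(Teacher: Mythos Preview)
Your proof is correct and follows essentially the same route as the paper's: both arguments boil down to identifying the algebraic zero count of a section of $\xi|_\Sigma$ tangent to $\Sigma$ along $L$ with $\sum_i m_i$ on one hand and with $\chi(\Sigma)=2-n$ on the other. The paper makes this explicit by projecting the section of $\xi$ to $T\Sigma$ along $X_{\alpha_+}$ and invoking Poincar\'e--Hopf directly, whereas you route the same computation through the self-linking number and~\eqref{identity_minus_sl}.

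Two minor points worth tightening. First, in the setting of Section~\ref{sec_main1_proof}, $\Sigma$ is only assumed to be a genus zero Seifert surface satisfying (H1)--(H3), (H5), not literally a page of $\Theta$; your Euler characteristic computation is unaffected, but the wording should reflect this. Second, the step ``such a section and a generic section of $T\Sigma$ with the same boundary behavior have the same algebraic count of zeros'' is exactly where hypothesis (H2) enters: transversality of $X_{\alpha_+}$ to $\Sigma\setminus L$ is what allows the projection $\xi|_\Sigma\to T\Sigma$ along the Reeb direction to be an isomorphism away from $L$, so that zeros match. You should cite (H2) explicitly rather than Lemma~\ref{lemma_wind_infinity}, which merely uses the same fact without proving it.
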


\begin{proof}
Let $\mathcal{Z}$ be a section of $\xi|_\Sigma$ such that $\mathcal{Z}$ is non-vanishing and tangent to $\Sigma$ on a neighborhood of $\partial\Sigma = L$ in $\Sigma$. After a $C^\infty$-small perturbation we may assume that $\mathcal{Z}$ has only finitely many zeros, all of which are non-degenerate. By standard degree theory, the algebraic count of zeros of $\mathcal{Z}$ is equal to $\sum_{i=1}^n m_i$.

Now observe that ${\rm sl}(L,\Sigma)$ does not depend on the Seifert surface $\Sigma$ since $\xi$ is trivial. 
It does not depend either on the choice of global trivialization, while each individual term $m(\gamma,\Sigma)$ does. 
Moreover, by (H2) the Reeb vector field of $\alpha_+$ is transverse to $\Sigma \setminus \partial\Sigma$. 
In particular, the projection of $\mathcal{Z}$ to $\Sigma$ along $X_{\alpha_+}$ over points of $\Sigma \setminus \partial\Sigma$ is a vector field $\mathcal{Z}'$ on $\Sigma\setminus\partial\Sigma$ which coincides with $\mathcal{Z}$ near $L=\partial \Sigma$, and whose zeros (together with their signs) coincide with those of $\mathcal{Z}$. 
The Poincar\' e-Hopf theorem asserts that the algebraic count of zeros of $\mathcal{Z}'$ is the Euler characteristic of $\Sigma$, which is equal to $2-n$ since $\Sigma$ is a $n$-holed $2$-sphere. 
We conclude that $\sum_{i=1}^n m_i = 2-n$ as desired.
\end{proof}

\begin{corollary}
\label{coro_ell_0}
We have $0\leq \ell_0 \leq 1$ and $C_0\geq2$.
\end{corollary}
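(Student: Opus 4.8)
The plan is to deduce everything from the single identity $\sum_{i=1}^n (m_i+1)=2$ provided by Lemma~\ref{lemsl}, together with the standing assumption $n\geq 2$. I would set $a_i:=m_i+1\in\Z$, so that $\sum_{i=1}^n a_i=2$, and then split the index set $\{1,\dots,n\}$ into three blocks according to the sign of $a_i$: the block $P=\{i:m_i\geq 0\}=\{i:a_i\geq 1\}$, the block $Z=\{i:m_i=-1\}=\{i:a_i=0\}$, and the block $N=\{i:m_i+1<0\}=\{i:a_i\leq -1\}$, where integrality of the $m_i$ is used to identify strict and non-strict inequalities. By definition $C_0=\sum_{i\in P}a_i$ and $c_0=\sum_{i\in N}a_i$, while $Z$ contributes $0$ to $\sum a_i$; hence
\begin{equation*}
C_0+c_0=\sum_{i=1}^n a_i=2,
\end{equation*}
which in particular records the identity $c_0=2-C_0$ used in the surrounding discussion.

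Granting this, the bound $C_0\geq 2$ is immediate: $c_0$ is either an empty sum (hence $0$) or a sum of strictly negative integers, so $c_0\leq 0$ and therefore $C_0=2-c_0\geq 2$. For the bound $0\leq \ell_0\leq 1$, the lower inequality is built into the definition $\ell_0=\max\{0,\min_i(m_i+1)\}$. For the upper inequality I would distinguish two cases. If $\min_i a_i<0$ then $\ell_0=0\leq 1$. If $\min_i a_i\geq 0$, i.e.\ all $a_i\geq 0$, then $\ell_0=\min_i a_i$ and $n\cdot\min_i a_i\leq\sum_{i=1}^n a_i=2$, so $\min_i a_i\leq 2/n\leq 1$ because $n\geq 2$; thus $\ell_0\leq 1$.

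The only substantive input here is Lemma~\ref{lemsl}, which I am free to assume; once it is available the corollary is a short bookkeeping argument. The points that need mild care are the integrality of the $m_i$ (so that the three blocks $P$, $Z$, $N$ exhaust $\{1,\dots,n\}$ and match the inequalities appearing in the definitions of $C_0$, $c_0$ and $\ell_0$) and the convention that an empty sum equals zero (relevant when $N=\varnothing$). No other difficulty is expected.
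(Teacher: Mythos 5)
Your proof is correct and follows essentially the same route as the paper's: both rely solely on the identity $\sum_i(m_i+1)=2$ from Lemma~\ref{lemsl}, the integrality of the $m_i$, and the standing hypothesis $n\geq 2$. The paper proves $\ell_0\leq 1$ by contraposition (if $\ell_0\geq 2$ then $2=\sum(m_i+1)\geq 2n$, contradicting $n\geq 2$) and gets $C_0\geq 2$ by directly noting that $C_0$ drops only nonpositive terms from the full sum; your case split and the detour through $c_0=2-C_0$ are cosmetically different but the same bookkeeping.
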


\begin{proof}
By definition $\ell_0\geq0$. Using the lemma and the definition of $\ell_0$ we get $$ \ell_0 \geq 2 \ \Rightarrow \ m_i+1\geq 2, \forall i \ \Rightarrow \ 2 = \sum_{i=1}^n (m_i+1) \geq 2n $$ contradicting our standing hypothesis $n\geq 2$. The inequality $C_0 \geq \sum_{i=1}^n (m_i+1) = 2$ follows directly from the definition of $C_0$.
\end{proof}

Our choice of $\delta_i^\pm$ implies that
\begin{equation*}
\begin{aligned}
& \mu_{\CZ}^{\tau_\Sigma,\delta_i^+}(\gamma_i)=1 \ \text{where $\gamma_i$ is seen as a closed Reeb orbit of $\alpha_+$}, \\
& \mu_{\CZ}^{\tau_\Sigma,\delta_i^-}(\gamma_i)=1 \ \text{where $\gamma_i$ is seen as a closed Reeb orbit of $\alpha_-$},
\end{aligned}
\end{equation*}
and
\begin{equation*}
\begin{aligned}
& \mu_{\CZ}^{\tau_{\rm gl},\delta_i^+}(\gamma_i)=2m_i+1 \ \text{where $\gamma_i$ is seen as a closed Reeb orbit of $\alpha_+$}, \\
& \mu_{\CZ}^{\tau_{\rm gl},\delta_i^-}(\gamma_i)=2m_i+1 \ \text{where $\gamma_i$ is seen as a closed Reeb orbit of $\alpha_-$},
\end{aligned}
\end{equation*}
for all $i$. We get
\begin{equation}
\sum_{i=1}^n \left(\mu_{\CZ}^{\tau_{\rm gl},\delta_i^\pm}(\gamma_i)+1\right) = \sum_{i=1}^n \left(2m_i+2\right) = 4,
\end{equation}
in view of Lemma~\ref{lemsl}. Again by Lemma~\ref{lemsl} and the assumption $n\geq 2$, we have
\begin{equation}
\label{desigcC}
c_0+C_0 = 2, \qquad c_0 \leq 0, \qquad 0\leq \ell_0 \leq 1 \qquad \mbox{ and } \qquad C_0 \geq 2.
\end{equation}
Denote by
\begin{equation}\label{decC}
c = 2c_0 -2\leq -2 \qquad {and} \qquad C = 2(C_0-\ell_0)+1 \geq 3
\end{equation}
the end points of the interval $I_0(L,\Sigma,\tau_{\rm gl})$.

Choose $h$ and $\Omega$ as in~\ref{sssec_gen_fin_energy_curves}. Then we can consider the energy~\eqref{energy_non_invariant} of a $\jbar$-holomorphic map whenever $\jbar\in\J_{\Omega,L}(\jtil_-,\jtil_+)$. Let $\util=(a,u):S^2\setminus \Gamma \to \R\times M$ be a finite-energy $\jbar$-holomorphic map with $E(\util)\leq A$, where $\Gamma$ is the set of non-removable punctures. In the following for any such map we split
\begin{equation}\label{splittingGamma}
\Gamma = \Gamma^b \cup \Gamma^+ \cup \Gamma^-
\end{equation}
where $\Gamma^b$ is the set of positive punctures of $\util$ whose asymptotic limits are simple and coincide with one of the components of $L$. The set $\Gamma^+$ consists of the remaining positive punctures of $\util$ and $\Gamma^-$ is the set of negative punctures of $\util$.

\begin{theorem}[Dragnev~\cite{drag}, Hofer-Wysocki-Zehnder~\cite{props3}, Wendl~\cite{wendl_transv}]\label{thm_transversality}
There is a dense subset $\mathcal{J}_{\rm reg} \subset \mathcal{J}_{\Omega,L}(\widetilde J_-, \widetilde J_+)$ such that if $\util=(a,u):S^2 \setminus \Gamma \to \R \times M$ is a somewhere injective $\bar J$-holomorphic curve for some $\bar{J}\in\mathcal{J}_{\rm reg}$, satisfying
\begin{itemize}
\item[(i)] $E(\util) \leq A$,
\item[(ii)] $\Gamma^b \neq \emptyset$, $\Gamma^+=\emptyset$, $\Gamma^- \neq \emptyset$,
\item[(iii)] $u(S^2 \setminus \Gamma)\subset M\setminus L$, and
\item[(iv)] $\wind_\infty(\util,z,\tau_\Sigma)=0$ for all $z\in \Gamma^b$
\end{itemize}
then
\begin{equation}\label{desig_fred}
0\leq -2 +\#\Gamma + \sum_{z\in \Gamma^b}(2m_z+1) - \sum_{z\in\Gamma^-} \mu_{\rm CZ}(P_{z}).
\end{equation}
Here $m_z:=m_i$ whenever $\util$ is asymptotic to $\gamma_i\subset L$ at $z\in \Gamma^b$, and the $\alpha_-$-closed Reeb orbit $P_z$ is the asymptotic limit of $\util$ at $z\in  \Gamma^-$.
\end{theorem}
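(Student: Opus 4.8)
The plan is to read off~\eqref{desig_fred} as the inequality ``virtual dimension $\ge 0$'' for a generically regular somewhere injective curve, after computing the virtual dimension~\eqref{virtual_dimension} in the global trivialization $\tau_{\rm gl}$.

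First I would set up the Fredholm problem adapted to a curve $\util=(a,u):S^2\setminus\Gamma\to\R\times M$ as in (i)--(iv). Hypothesis (i) together with (H$4'$) ensures that the negative asymptotic limits $P_z$, $z\in\Gamma^-$, are non-degenerate closed $\alpha_-$-Reeb orbits, so $0$ is not in the spectrum of their asymptotic operators; the components of $L$ being prime, the limits at the binding punctures are simply covered. At each $z\in\Gamma^b$ pick a weight $\delta_z<0$ in the spectral gap between eigenvalues of winding $0$ and $1$ relative to $\tau_\Sigma$, so that $\mu_{\CZ}^{\tau_\Sigma,\delta_z}(\gamma_z)=1$, and at each $z\in\Gamma^-$ use the weight $0$; call this collection of weights $\delta$. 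By monotonicity of the winding of eigenvalues (subsection~\ref{sssec_orbits}), hypothesis (iv) says precisely that the asymptotic eigenvalue of $\util$ at $z\in\Gamma^b$ has winding $0$ with respect to $\tau_\Sigma$, hence is strictly below $\delta_z$; so $\util$ defines a curve in the weighted moduli space $\M_{\jbar,0,\delta}(\dots)$, cut out by a Cauchy--Riemann operator on exponentially weighted H\"older spaces with normal bundle extending $\xi$ along the ends, exactly as in the sketch of Lemma~\ref{lemma_aut_transv}.

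Next I would invoke the generic transversality theory of Dragnev~\cite{drag}, Hofer--Wysocki--Zehnder~\cite{props3} and Wendl~\cite{wendl_transv}: there is a dense (indeed comeager) subset $\mathcal{J}_{\rm reg}\subset\mathcal{J}_{\Omega,L}(\jtil_-,\jtil_+)$ such that, for every $\jbar\in\mathcal{J}_{\rm reg}$, the above moduli space is a smooth manifold near any somewhere injective $\jbar$-holomorphic curve whose image is disjoint from $\R\times L$, of dimension equal to the virtual dimension~\eqref{virtual_dimension}. The one point needing attention beyond the cited references is that admissible perturbations must remain in $\mathcal{J}_{\Omega,L}$, i.e.\ keep $\R\times L$ pseudo-holomorphic; but by (iii) the curve $\util$, being somewhere injective with image in $M\setminus L$, has an injective point in the open set $\R\times(M\setminus L)$, where $\jbar$ can be perturbed freely, so the Sard--Smale argument applies with perturbations supported away from $\R\times L$. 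Hence for $\jbar\in\mathcal{J}_{\rm reg}$ and any such $\util$ the virtual dimension~\eqref{virtual_dimension} is $\ge 0$.

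Finally I would evaluate~\eqref{virtual_dimension} with $g=0$, $m_+=\#\Gamma^b$ (using $\Gamma^+=\emptyset$ from (ii)), $m_-=\#\Gamma^-$, taking all trivializations to be restrictions of $\tau_{\rm gl}$. Since $\tau_{\rm gl}$ trivializes $\xi$ over all of $M$, the bundle $\util^*T(\R\times M)$ splits globally as $\{\partial_a,X\}\oplus u^*\tau_{\rm gl}$, so every winding term in~\eqref{relative_Chern_number_formula} vanishes and $c_1^{\tau_{\rm gl}}(\util^*T(\R\times M))=0$. The weighted Conley--Zehnder indices at the binding punctures are $\mu_{\CZ}^{\tau_{\rm gl},\delta_z}(\gamma_z)=2m_z+1$, and at the negative punctures $\mu_{\CZ}^{\tau_{\rm gl},0}(P_z)=\mu_{\CZ}(P_z)$. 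Substituting gives
\[
0\ \le\ -(2-\#\Gamma)+0+\sum_{z\in\Gamma^b}(2m_z+1)-\sum_{z\in\Gamma^-}\mu_{\CZ}(P_z),
\]
which is exactly~\eqref{desig_fred}. The main obstacle I anticipate is not any single computation but the bookkeeping to confirm that the cited transversality results apply simultaneously in the presence of (a) the non-$\R$-invariant cobordism almost complex structure, (b) the constraint $\jbar\in\mathcal{J}_{\Omega,L}$, and (c) the exponential weights at the binding punctures, and to verify that the virtual dimension entering these results is the one given by formula~\eqref{virtual_dimension} with the weights above.
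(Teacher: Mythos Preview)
The paper does not supply its own proof of this statement; it is recorded as a consequence of the cited references~\cite{drag,props3,wendl_transv} and then used as a black box in the proof of Proposition~\ref{prop_main_compactness_global}. Your sketch is correct and is exactly the intended derivation: read~\eqref{desig_fred} as nonnegativity of the weighted virtual dimension~\eqref{virtual_dimension}, obtain regularity for somewhere injective curves from the cited transversality results (with perturbations of $\jbar$ supported in $\R\times(M\setminus L)$, which suffices by~(iii) and keeps $\jbar\in\mathcal{J}_{\Omega,L}$), and evaluate~\eqref{virtual_dimension} in the global frame $\tau_{\rm gl}$ where $c_1^{\tau_{\rm gl}}=0$ and the weights $\delta_z=\delta^+_i$ at the binding punctures give $\mu_{\CZ}^{\tau_{\rm gl},\delta_z}(\gamma_z)=2m_z+1$. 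The three points you flag at the end are indeed the only places requiring care, and they are covered by the combination of the cited works together with the observation that only finitely many asymptotic configurations occur under the action bound~(i) and the non-degeneracy in~(H$4'$), so a Baire category argument yields a single dense $\mathcal{J}_{\rm reg}$.
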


From now on we fix $\jbar \in \J_{\rm reg}$ and an ordered set $\Gamma_0\subset S^2$, $\#\Gamma_0=n$. We write $\M_{\jtil_+,0,\delta^+}$, $\M_{\jtil_-,0,\delta^-}$ and $\M_{\jbar,0,\delta^+}$ instead of $\M_{\jtil_+,0,\delta^+}(\gamma_1,\dots,\gamma_n;\emptyset)$ etc for simplicity of notation. For the same reason we write $I_0$ instead of $I_0(L,\Sigma,\tau_{\rm gl})$. Let $C_+ \in \M_{\jtil_+,0,\delta^+}$ be the curve whose existence is assumed in the statement of Proposition~\ref{prop_main_compactness_global}. After translating up in the $\R$-direction, we can also view $C_+$ as an element of $\M_{\jbar,0,\delta^+}$, see Remark~\ref{rmk_special_case_non-cylindrical=cylindrical}. Since (H$4'_+$) is the same as (H$4_+$), we can invoke Lemma~\ref{lemma_implied_existence_step1} to obtain a sequence
\begin{equation}
C_l = [\vtil_l=(b_l,v_l),S^2,j_l,\Gamma_0,\emptyset]
\end{equation}
in the same connected component of $C_+$ satisfying
\begin{equation}
\lim_{l\to\infty} \min b_l = -\infty.
\end{equation}
By SFT compactness, see~\ref{ssec_SFT_comp}, up to choice of a subsequence we may assume that $C_l$ converges to a holomorphic building
\begin{equation*}
{\bf u}= \{ \{\util_m=(a_m,u_m),S_m,j_m,\Gamma^+_m,\Gamma^-_m,D_m)\},\{\Phi_m\} \} \qquad  m\in\{-k_-,\dots,k_+\}
\end{equation*}
of height $k_-|1|k_+$, with $k_->0$. We shall prove that $k_+=0$, $k_-=1$, $\util_0$ coincides with $\R\times L$, and $\util_{-1}$ represents the desired $C_-$. This is accomplished by the sequence of claims listed below. \\

\noindent {\it Claim I. Every asymptotic limit $\gamma$ of ${\bf u}$ contained in $M\setminus L$ satisfies $\mu_{\CZ}^{\tau_{\rm gl}}(\gamma) \in \Z \setminus I_0$ and ${\rm int}(\gamma,\Sigma)=0$. In particular, every asymptotic limit which is a closed Reeb orbit of $\alpha_+$ is contained in $L$.}

\medskip

\noindent {\it Proof of Claim I.} We argue indirectly and assume that there exists a level of ${\bf u}$ containing an asymptotic limit $\gamma=(x,T)$ in $M \setminus L$ such that $\mu_{\rm CZ}^{\tau_{\rm gl}}(\gamma) \in I_0$ or ${\rm int}(\gamma,\Sigma)\neq 0$. The action of every asymptotic limit of ${\bf u}$ is bounded from above by~$A$. This follows from the energy bounds for the sequence $\widetilde v_l$. Using hypothesis (H$4'$) we conclude that ${\rm int}(\gamma,\Sigma) \neq 0$. For every large $l$ we find an embedded loop $\beta_l:\R / \Z \to  S^2 \setminus \Gamma_0$ such that $v_l \circ  \beta_l$ is $C^0$-close to the loop $t \mapsto x(Tt)$.  But Proposition~\ref{prop_cobordisms_intersection_crucial} and positivity of intersections imply that $C_l$ can be homotoped to $C_+=[\util_+=(a_+,u_+),S^2,j,\Gamma,\emptyset]$ in $M\setminus L$. By Proposition~\ref{prop_Seifert_no_intersections}, $u_+$ is a proper embedding of $S^2 \setminus \Gamma$ into $M\setminus L$ inducing the same class in $H_2(M,L)$ as $\Sigma$ we obtain
$$
0={\rm int}(u_+\circ \beta_l,u_+(S^2\setminus\Gamma))={\rm int}(u_+\circ \beta_l,\Sigma)={\rm int}(v_l\circ \beta_l,\Sigma) = {\rm int}(\gamma,\Sigma)\neq0,
$$ which is a contradiction. // 

\medskip

Define $I^-_0:= (-\infty,c-1] \cap \Z$ and $I^+_0:=[C+1,+\infty) \cap \Z$,
so that we have $I_0^- \cup I_0^+ = \Z\setminus I_0$. 

\begin{remark}
In the statement of Claim I the number $\mu_{\CZ}^{\tau_{\rm gl}}(\gamma)$ should be understood as the Conley-Zehnder index taken with respect to the Reeb flow of $\alpha_+$ or of $\alpha_-$, depending on the level and on the sign of the puncture.
\end{remark}

\noindent {\it Claim II. If $-k_- \leq m \leq k_+$ and $Y$ is a connected component of $S_m\setminus \Gamma_m^+ \cup \Gamma_m^-$ such that $\wtil := \widetilde u_m|_{Y}$ is non-constant and its image is not contained in $\R \times L$, then  $\widetilde w$ has no negative puncture whose asymptotic limit is contained in $L$, and the image of~$\wtil$ does not intersect $\R \times L$.}

\medskip

\noindent {\it Proof of Claim II.} We argue indirectly and assume that $\widetilde w = (b,w)$ admits a negative puncture $\bar z$ whose  asymptotic limit is $\gamma_i^k$ for some $i\in \{1,\ldots,n\}$ and some integer $k\geq 1$. By hypotheses, $\wtil$ has a non-trivial asymptotic formula near $\bar z$.  Since $\mu_{\rm CZ}^{\tau_\Sigma}(\gamma_i^k) \geq \mu_{\rm CZ}^{\tau_\Sigma}(\gamma_i)\geq 1$ we obtain that $\wind_\infty (\widetilde w,\bar z,\tau_\Sigma) \geq 1$, see Theorem \ref{thm_precise_asymptotics} for a more precise description of the behavior of $\widetilde w$ near $\bar z$. This implies that we can find an embedded loop $\beta:\R / \Z \to Y$ around $\bar z$ so that ${\rm int}(w \circ \beta, \Sigma) \neq 0$. Hence for all large $l$ we find an embedded loop $\beta_l:\R / \Z \to S^2 \setminus \Gamma$ so that ${\rm int} (v_l \circ  \beta_l,\Sigma) \neq 0$. This leads to a contradiction as in the proof of Claim I.

Any intersection of $\widetilde w$ with $\R \times L$ must be isolated since $\R \times L$ is also holomorphic. By positivity and stability of intersections, such an intersection implies that $C_l$ intersects $\R \times L$ for all large $l$. This is impossible since Proposition~\ref{prop_cobordisms_intersection_crucial} and positivity of intersections together imply that $C_l$ does not intersect $\R\times L$. //

\medskip

\noindent {\it Claim III. $k_+=0$.}

\medskip

\noindent {\it Proof of Claim III.} Assume, by contradiction, that $k_+>0$. Suppose first that there exists a connected component $Y$ of $S_{k_+}\setminus\Gamma^+_{k_+}\cup\Gamma^-_{k_+}$ such that $\util_{k_+}|_Y$ is non-constant and $\util_{k_+}(Y)\not\subset\R\times L$. The asymptotic limit of $\util_{k_+}$ at a positive puncture~$z_*$ of $\util_{k_+}|_Y$ is a prime closed Reeb orbit given by one of the components of $L$. Moreover, $\util_{k_+}|_Y$ has a non-trivial asymptotic formula at~$z_*$. If $\wind_\infty(\util_{k_+},z_*,\tau_\Sigma)\neq0$ then, by Theorem~\ref{thm_precise_asymptotics}, a small embedded loop $\beta:\R / \Z \to Y\setminus\Gamma^+_{k_+}\cup\Gamma^-_{k_+}$ winding once around $z_*$ is mapped by $u_{k_+}$  to a loop satisfying ${\rm int}(u_{k_+} \circ \beta,\Sigma) \neq 0$. We find for large $l$ an embedded loop $\beta_l:\R / \Z \to S^2\setminus\Gamma_0$ such that ${\rm int}(v_l \circ \beta_l,\Sigma)\neq0$. As in the proof of Claim I, this fact and Proposition~\ref{prop_cobordisms_intersection_crucial} lead to ${\rm int}(u_+ \circ \beta_l,u_+(S^2\setminus\Gamma))\neq0$, a contradiction to Proposition~\ref{prop_Seifert_no_intersections}. Thus
\begin{equation}\label{wind_infty_claim_III}
\wind_\infty(\util_{k_+},z_*,\tau_\Sigma)=0
\end{equation}
for every positive puncture $z_*$ of $\util_{k_+}|_Y$. By hypothesis (H$4'$) and Claims I and II, $\util_{k_+}|_Y$ has no negative punctures and $u_{k_+}(Y)\subset M\setminus L$. Hypothesis (H5) and~\eqref{wind_infty_claim_III} together imply that the set of punctures of $\util_{k_+}|_Y$ consists of $\Gamma^+_{k_+}$. It follows that $\util_{k_+}$ is constant on every other connected component of $S_{k_+}\setminus \Gamma_{k_+}^+ \cup \Gamma_{k_+}^-$. In particular, $\util_{k_+}$ has no negative punctures, in contradiction to $k_+>0$. We showed that for every connected component $Y\subset S_{k_+}\setminus \Gamma_{k_+}^+ \cup \Gamma_{k_+}^-$ the map $\util_{k_+}|_Y$ is either constant or its image is contained in $\R\times L$. In the latter case, $\util_{k_+}|_Y$ is an unbranched cover of one of the cylinders $\R\times \gamma_i$ because asymptotic limits at distinct positive punctures of $\util_{k_+}$ are geometrically distinct. By the same token, if there is a component $Y'\subset S_{k_+}\setminus \Gamma^+_{k_+} \cup \Gamma^-_{k_-}$ so that $\util_{k_+}|_{Y'}$ is constant then Lemma~\ref{lemma_constant_components} provides an intersection between two geometrically distinct closed Reeb orbits, and this is absurd. Hence all components of $\util_{k_+}$ are unbranched covers of one of the cylinders in $\R\times L$. In particular, $\util_{k_+}$ consists of only trivial cylinders, with  no nodes or constant components, a contradiction to stability. This shows that $k_+=0$. //

\medskip

\begin{definition}
\label{def_components_above_directly_above}
Fix a level $m$ strictly below the top level and let $\Lambda \subset \Gamma^+_m$. For each $z\in\Lambda$ we consider the connected component $B_z$ of $S_m$ containing $z$.
\begin{itemize}
\item A connected component $B'$ of $S_k$ is said to be {\it directly above $\Lambda$} if $k>m$ and if there exists $z\in\Lambda$ such that points in $B'\setminus \Gamma^+_k\cup\Gamma^-_k\cup D_k$ can be connected to points in $B_z\setminus \Gamma^+_m\cup\Gamma^-_m\cup D_m$ through a path in $S^{{\bf u},r}$ that only goes down negative punctures or passes through nodes, but never goes up a positive puncture.
\item A connected component $B''$ of $S_k$ is said to be {\it above $\Lambda$} if $k>m$ and if points in $B''\setminus \Gamma^+_k\cup\Gamma^-_k\cup D_k$ can be connected to points in $B'\setminus \Gamma^+_j\cup\Gamma^-_j\cup D_j$, for some $B'$ directly above $\Lambda$, through a path in $S^{{\bf u},r}$ that never reaches levels $\leq m$.
\end{itemize}
\end{definition}

\begin{remark}
\label{rmk_reaching_the_top}
In our particular geometric set-up, once a positive puncture $z$ on a level below the top is fixed one can always reach a curve on the top level by only following components directly above~$z$.
\end{remark}

\begin{remark}
The curves given by the components above some $\Lambda$ form a building, called the {\it (sub-)building above $\Lambda$}.
\end{remark}

\noindent {\it Claim IV. The following assertions hold:
\begin{itemize}
\item Let $m\leq -1$, and let $Y$ be a connected component of $S_m\setminus \Gamma_m^+ \cup \Gamma_m^-$ such that $\widetilde u_m|_{Y}$ is non-constant. If the asymptotic limit at some negative puncture of $\widetilde u_m|_Y$ is contained in $L$, then there exists $i$ such that $\util_m|_Y$ is the trivial cylinder over $\gamma_i$, and the entire sub-building above the (unique) positive puncture of $\util_m|_Y$ has no nodes and consists of trivial cylinders over $\gamma_i$.
\item If $\widetilde u_{m}$ has a positive puncture whose asymptotic limit is contained in $L$ then this asymptotic limit is simply covered. Moreover, no two distinct positive punctures of $\util_{m}$ have the same asymptotic limit in $L$.
\end{itemize}}

\medskip

\noindent {\it Proof of Claim IV.}
Let us prove the first claim. We write $\wtil = \util_m|_Y$, and assume that there is a negative puncture of~$\widetilde w$ whose asymptotic limit is contained in $\gamma_i \subset L$, for some $i \in \{1,\dots,n\}$. It follows directly from Claim~II that the image of  $\widetilde w$ is equal to $\R \times \gamma_i$. Denote by $\Lambda$ the set of positive punctures of~$\wtil$. Iterated applications of Claim II imply that the (non-constant) finite-energy maps defined on components directly above $\Lambda$ (see Definition~\ref{def_components_above_directly_above}) have images equal to $\R\times\gamma_i$. In particular they are covers of $\R\times\gamma_i$, possibly branched. Some curve directly above $\Lambda$ lying on the very top level is a trivial cylinder over $\gamma_i$ since all asymptotic orbits at the top punctures of ${\bf u}$ are simply covered and geometrically distinct. Remark~\ref{rmk_reaching_the_top} was used. This forces the sub-building ${\bf u}_{\Lambda}$ above $\Lambda$ to have a very particular form: it has no nodes, and all its levels consist of precisely one curve which is a trivial cylinder over $\gamma_i$. Here the reader should not forget that ${\bf u}$ is a limit of genus zero curves, and that $\R\times L$ is a $\jbar$-holomorphic surface. Lemma~\ref{lemma_constant_components} is used. In particular, $\#\Lambda=1$ and also $\wtil$ is a trivial cylinder over $\gamma_i$.

Now we address the second claim. If $m=0$ then this claim is automatically satisfied, hence we can assume that $m\leq-1$. Consider a positive puncture $z^*$ of $\util_{m}$ whose asymptotic limit is $\gamma_i^{k}$ for some $i\in \{1,\ldots,n\}$ and some $k \geq 1$. By Claim II and the first assertion of Claim IV, $k=1$ and the sub-building above $z^*$ is formed by trivial cylinders over $\gamma_i$.  If $\util_{m}$ is also asymptotic to $\gamma_i$ at a positive puncture $z^{**} \neq z^*$ then, as before, the sub-building above $z^{**}$ is also formed by unbranched covers of $\R \times \gamma_i$. Since $\util_0$ has only one positive puncture asymptotic to $\gamma_i$ we arrive at a contradiction to $z^*\neq z^{**}$. Hence such a puncture $z^{**}$ cannot exist. This finishes the proof of Claim IV. //

\medskip

\noindent {\it Claim V. Let $Y$ be a connected component of $S_m\setminus \Gamma_m^+ \cup \Gamma_m^-$  such that $\util:=\util_m|_Y$ is non-constant. Assume that $\util$ has a positive puncture whose asymptotic limit is contained in $M\setminus L$ and satisfies $\mu_{\CZ}^{\tau_{\rm gl}} \in I_0^-$. Then $m\leq -1$  and at least one of the following alternatives holds:
\begin{itemize}
\item $\util$ contains a negative puncture whose asymptotic limit lies in $M\setminus L$ and has Conley-Zehnder index $\mu_{\CZ}^{\tau_{\rm gl}} \in I_0^-$;
\item $\util$ contains a positive puncture whose asymptotic limit lies in $M\setminus L$ and has Conley-Zehnder index $\mu_{\CZ}^{\tau_{\rm gl}} \in I_0^+$.
\end{itemize}
}

\medskip

\noindent {\it Proof of Claim V.} We can identify $Y = S^2\setminus\Gamma$ where $\Gamma$ is the set of non-removable punctures of $\util$. By Claim III we know that $m\leq 0$.  If $m=0$ then $\util$
is $\bar J$-holomorphic and asymptotic limits at its positive punctures are components of $L$, in conflict with the assumption on $\util$. We conclude that $m\leq -1$ and $\util$ is $\jtil_-$-holomorphic.

Consider the splitting $\Gamma = \Gamma^b \cup \Gamma^+ \cup \Gamma^-$ as in \eqref{splittingGamma}.
By the first assertion of Claim IV and our hypotheses, every asymptotic limit of $\util$ at a negative puncture is contained in $M\setminus L$.
By the second assertion in Claim IV, all asymptotic limits in $\Gamma^+$ are contained in~$M\setminus L$, and all asymptotic limits in $\Gamma^b$ are geometrically distinct.
In particular, by Claim I, every asymptotic limit of $\util$ at a puncture in $\Gamma^+ \cup \Gamma^- = \Gamma \setminus \Gamma^b$ satisfies $\mu_{\CZ}^{\tau_{\rm gl}} \in I_0^- \cup I_0^+$ and its intersection number with $\Sigma$ vanishes.
By assumption, $\Gamma^+\neq\emptyset$.

Assume, by contradiction, that no asymptotic limit at a negative puncture of $\util$ satisfies $\mu_{\CZ}^{\tau_{\rm gl}} \in I_0^-$ and no asymptotic limit at a puncture of $\util$ in $\Gamma^+$ satisfies $\mu_{\CZ}^{\tau_{\rm gl}} \in I_0^+$. It follows that the Conley-Zehnder indices of the asymptotic limits at negative punctures lie in $I_0^+$ and the Conley-Zehnder indices of the asymptotic limits at positive punctures in $\Gamma^+$ lie in $I_0^-$. We claim that these assumptions force
\begin{equation}\label{positive_area_claim_V}
\int_{S^2 \setminus \Gamma} u^* d\alpha_- >0.
\end{equation}
If not then $\util$ is a (possibly branched) cover of $\R\times\beta$ for some simply covered periodic Reeb orbit $\beta$ in $M\setminus L$. We claim that ${\rm int}(\beta,\Sigma)=0$. In fact, at any puncture the asymptotic limit is $\beta^k$ for some $k\geq 1$. Hence $$ 0={\rm int}(\beta^k,\Sigma) = k\ {\rm int}(\beta,\Sigma) \Rightarrow 0 = {\rm int}(\beta,\Sigma) $$ as claimed. By (H$4'$) we must have $\mu_{\CZ}^{\tau_{\rm gl}}(\beta) \in I_0^- \cup I_0^+$. We can estimate
\begin{equation}\label{estimates_CZ_claim_V_useful}
\begin{aligned}
& \mu_{\CZ}^{\tau_{\rm gl}}(\beta) \in I_0^- \Rightarrow \mu_{\CZ}^{\tau_{\rm gl}}(\beta) <0 \Rightarrow \mu_{\CZ}^{\tau_{\rm gl}}(\beta^k) < 0 \ \forall k\geq1 \Rightarrow \mu_{\CZ}^{\tau_{\rm gl}}(\beta^k) \in I_0^- \ \forall k\geq 1, \\
& \mu_{\CZ}^{\tau_{\rm gl}}(\beta) \in I_0^+ \Rightarrow \mu_{\CZ}^{\tau_{\rm gl}}(\beta) > 0 \Rightarrow \mu_{\CZ}^{\tau_{\rm gl}}(\beta^k) > 0 \ \forall k\geq1 \Rightarrow \mu_{\CZ}^{\tau_{\rm gl}}(\beta^k) \in I_0^+ \ \forall k\geq 1.
\end{aligned}
\end{equation}
In particular, all asymptotic limits satisfy either $\mu_{\CZ}^{\tau_{\rm gl}} \in I_0^-$ or $\mu_{\CZ}^{\tau_{\rm gl}} \in I_0^+$, in conflict to the contradiction assumption made on $\util$. This finishes the proof of~\eqref{positive_area_claim_V}.

As consequence of~\eqref{positive_area_claim_V} $\util$ has a non-trivial asymptotic formula at every puncture since $Y=S^2\setminus\Gamma$ is connected. We can then argue that $\wind_\infty(\util,z,\tau_\Sigma)=0$ for every $z\in \Gamma^b$ since, otherwise, for $l$ large we find loops in the image of the $M$-component $v_l$ which intersect $\Sigma$ non-trivially, contradicting a combination of Proposition~\ref{prop_Seifert_no_intersections} and Proposition~\ref{prop_cobordisms_intersection_crucial} as in the proof of Claim~II. We have
$$
\begin{aligned}
\sum_{z\in \Gamma^b} \left(\wind_\infty(\util,z, \tau_{\rm gl})+1\right) &= \sum_{z\in \Gamma^b} (\wind_\infty(\util,z,\tau_\Sigma)+m_z +1) \\
&= \sum_{z\in \Gamma^b} (m_z +1) \leq C_0-\ell_0,
\end{aligned}
$$
where $m_z=m_i$ if $\util$ is asymptotic to $\gamma_i$ at $z\in \Gamma^b$. The last inequality above is explained as follows. 
Since ${\bf u}$ has arithmetic genus zero, $\util$ has at most $n$ positive punctures; this relies on the fact that the domain of $\util$ is connected and that $S^{{\bf u},r}$ is also connected. 
Moreover, the assumptions on $\util$ imply that $\#\Gamma^b \leq n-1$, i.e. $\Gamma^b$ misses at least one binding orbit $\gamma_i$. By Corollary~\ref{coro_ell_0} we have $0 \leq \ell_0\leq 1$, hence by the definition of $C_0$ in \eqref{important_numbers} the sum $\sum_{z\in \Gamma^b} (m_z+1)$ is bounded from above by $C_0-\ell_0$.

Let us denote by $P_z$ the asymptotic limit of $\util$ at the puncture $z\in \Gamma$. Since $\mu_{\rm CZ}^{\tau_{\rm gl}}(P_z) \leq c-1 = 2(c_0-2)+1$ for all $z\in \Gamma^+$, we have
\[
\wind_\infty(\util,z,\tau_{\rm gl}) \leq c_0-2, \ \forall z\in \Gamma^+,
\]
hence
$$
\sum_{z\in \Gamma^+} \wind_\infty(\util,z, \tau_{\rm gl})  \leq (c_0-2)\#\Gamma^+.
$$
Since $\mu_{\rm CZ}^{\tau_{\rm gl}}(P_z) \geq C+1 = 2(C_0-\ell_0+1)$ for all $z\in \Gamma^-$, we have
\[
\wind_\infty(\util,z,\tau_{\rm gl}) \geq C_0-\ell_0 +1, \ \forall z\in \Gamma^-
\]
hence
$$
-\sum_{z\in \Gamma^-}  \wind_\infty(\util,z, \tau_{\rm gl})   \leq -(C_0-\ell_0+1)\#\Gamma^-.
$$
Plugging these inequalities into
\begin{equation}\label{windpiV}
0\leq \wind_\pi(\util) = -2 + \#\Gamma + \sum_{z\in \Gamma^b \cup \Gamma^+}\wind_\infty(\util,z,\tau_{\rm gl}) -  \sum_{z\in \Gamma^-}\wind_\infty(\util,z,\tau_{\rm gl})
\end{equation}
we obtain, in view of Lemma~\ref{lemsl},
$$
\begin{aligned}
0 &\leq C_0-\ell_0 + (c_0-1)\#\Gamma^+ -(C_0-\ell_0)\#\Gamma^- -2 \\
&= (C_0 - \ell_0)(1-\#\Gamma^-) + (c_0-1)\#\Gamma^+ - 2 \\
&\leq (C_0 - \ell_0) + (c_0 - 1) - 2 \\
&\leq C_0+c_0-3 = -1
\end{aligned}
$$
where in the third line we used that $c_0-1<0$, see~\eqref{desigcC}. This contradiction concludes the proof of Claim~V. //

\medskip

\noindent {\it Claim VI. Let $Y$ be a connected component of $S_m\setminus \Gamma_m^+ \cup \Gamma_m^-$  such that $\util:=\util_m|_Y$ is non-constant. Assume that $\util$ has a negative puncture whose asymptotic limit is contained in $M\setminus L$ and has Conley-Zehnder index $\mu_{\CZ}^{\tau_{\rm gl}} \in I^+_0$. Then at least one of the following alternatives holds:
\begin{itemize}
\item $\util$ contains a negative puncture whose asymptotic limit lies in $M\setminus L$ and has Conley-Zehnder index $\mu_{\CZ}^{\tau_{\rm gl}} \in I^-_0$;
\item $\util$ contains a positive puncture whose asymptotic limit lies in $M\setminus L$ and has Conley-Zehnder index $\mu_{\CZ}^{\tau_{\rm gl}} \in I^+_0$.
\end{itemize}
Moreover, if $m=0$ then the first alternative holds.
}

\medskip

\noindent {\it Proof of Claim VI.} We can identify $Y = S^2\setminus \Gamma$, where $\Gamma$ consists of non-removable punctures of $\util$. Write the components as $\util=(a,u)$, and consider the splitting $\Gamma = \Gamma^b \cup \Gamma^+ \cup \Gamma^-$ as in~\eqref{splittingGamma}. Claim III implies $m\leq 0$. The assumptions on $\util$ imply that its image is not contained in $\R\times L$. By Claim~II all asymptotic limits at negative punctures are contained in $M\setminus L$. All asymptotic limits at punctures in $\Gamma^+$ are contained in $M\setminus L$, in fact if $m=0$ this is true because $\Gamma^+$ is empty in this case, and if $m<0$ then this follows from the second assertion of Claim~IV.

Our argument is indirect and we proceed assuming that no asymptotic limit at a negative puncture of $\util$ has Conley-Zehnder index in $I^-_0$, and no asymptotic limit at a positive puncture in $\Gamma^+$ has Conley-Zehnder index in~$I^+_0$. Hence, by Claim I, asymptotic limits at negative punctures satisfy $\mu_{\CZ}^{\tau_{\rm gl}} \in I^+_0$ and have zero intersection number with $\Sigma$, and asymptotic limits at punctures in $\Gamma^+$ satisfy $\mu_{\CZ}^{\tau_{\rm gl}}\in I^-_0$ and have zero intersection number with $\Sigma$. We consider two cases: $m=0$ and $m<0$.

Assume $m=0$, in which case $\util$ is $\jbar$-holomorphic. Every asymptotic limit at a positive puncture of $\util$ is a simply covered orbit in $L$, and such asymptotic limits are mutually distinct. In particular, $\Gamma^+ = \emptyset$ and $\util$ is somewhere injective. By assumption $\Gamma^-\neq\emptyset$. Moreover, $\util$ has a non-trivial asymptotic formula as in Theorem~\ref{thm_precise_asymptotics} at its punctures, and $\wind_\infty(\util,z,\tau_\Sigma)=0$ for all $z\in\Gamma^b$. This last claim is proved by the same argument in the proof of Claim~II. Denote by $P_z$ the asymptotic limit at $z\in \Gamma$. From $\wind_\infty(\util,z,\tau_\Sigma)=0 \ \forall z\in\Gamma^b$  we get $\wind_\infty(\util,z,\tau_{\rm gl})=m_z \ \forall z\in\Gamma^b$ where $m_z=m_i$ if $\util$ is asymptotic to $\gamma_i$ at $z\in \Gamma^b$. We get
$$
\sum_{z\in \Gamma^b} (2m_z +1) =2\sum_{z\in \Gamma^b} (m_z +1) -\#\Gamma^b \leq 2C_0-\#\Gamma^b,
$$
and
$$
-\sum_{z\in \Gamma^-}  \mu_{\rm CZ}(P_z)   \leq -2(C_0-\ell_0+1)\#\Gamma^-.
$$
Using Theorem \ref{thm_transversality} we can estimate
\begin{equation}\label{fred_ind_ineq_claim_VI}
\begin{aligned}
0 &\leq 2C_0-\#\Gamma^b  -2(C_0-\ell_0+1)\#\Gamma^- -2+\#\Gamma^b +\#\Gamma^- \\
& = (2\ell_0-1-2C_0)\#\Gamma^- + 2C_0 - 2. 
\end{aligned}
\end{equation}
By Corollary~\ref{coro_ell_0} we know that $2\ell_0-1-2C_0 \leq -3$. Hence the expression on the right-hand side of~\eqref{fred_ind_ineq_claim_VI} strictly decreases when $\#\Gamma^-$ increases. When $\#\Gamma^-=1$ this expression is strictly negative, no matter the value of $\ell_0 \in \{0,1\}$. This is in contradiction to~\eqref{fred_ind_ineq_claim_VI}.

Now we handle the case $m\leq -1$. In this case $\util$ is $\widetilde J_-$-holomorphic. We split the argument into two subcases: $\Gamma^b\neq\emptyset$ and $\Gamma^b=\emptyset$.

Assume first that $\Gamma^b\neq\emptyset$. At all punctures $\util$ has a non-trivial asymptotic formula as in Theorem~\ref{thm_precise_asymptotics}, $u^*d\alpha_-$ does not vanish identically, and $\wind_\infty(\util,z,\tau_\Sigma)=0$ for all $z\in\Gamma^b$ just as before. Denote by $P_z$ the asymptotic limit of $\util$ at $z\in \Gamma$. We have
$$
\sum_{z\in \Gamma^b} \left(\wind_\infty(\util,z, \tau_{\rm gl})+1\right) = \sum_{z\in \Gamma^b} (m_z +1) \leq C_0,
$$
where $m_z=m_i$ if $\util$ is asymptotic to $\gamma_i$ at $z\in \Gamma^b$. Furthermore, since $$ \mu_{\rm CZ}^{\tau_{\rm gl}}(P_z) \leq c-1 = 2(c_0-2)+1, \ \forall z\in \Gamma^+, $$ we have $\wind_\infty(\util,z,\tau_{\rm gl}) \leq c_0-2, \forall z\in \Gamma^+$. Hence
$$
\sum_{z\in \Gamma^+} \wind_\infty(\util,z, \tau_{\rm gl})  \leq (c_0-2)\#\Gamma^+.
$$
Now since $\mu_{\rm CZ}(P_z) \geq C+1 = 2(C_0-\ell_0+1), \forall z\in \Gamma^-$, we must necessarily have $\wind_\infty(\util,z,\tau_{\rm gl}) \geq C_0-\ell_0 +1, \forall z\in \Gamma^-$. Hence
$$
-\sum_{z\in \Gamma^-}  \wind_\infty(\util,z, \tau_{\rm gl})   \leq -(C_0-\ell_0+1)\#\Gamma^-.
$$
Combining these inequalities with Remark~\ref{rmk_wind_pi_wind_infty}, with $c_0\leq0$ and with $\Gamma^-\geq1$ we obtain
$$
\begin{aligned}
0 &\leq \wind_\pi(\util) = -2 + \#\Gamma + \wind_\infty(\util) \\
& \leq C_0 + (c_0-1)\#\Gamma^+ -(C_0-\ell_0)\#\Gamma^- -2 \\
& \leq \ell_0-2 - (C_0-\ell_0)(\#\Gamma^--1)\\
& \leq -1,
\end{aligned}
$$
which is a contradiction.

Now assume $\Gamma^b=\emptyset$. Then all asymptotic limits of $\util$ are contained in $M\setminus L$. Note that $u^*d\alpha_-$ does not vanish identically. In fact, if it did then $\util$ would be a (possibly branched) cover of $\R\times \beta$ over some prime periodic Reeb orbit $\beta \subset M\setminus L$ satisfying ${\rm int}(\beta^k,\Sigma)=0 \ \forall k\geq0$. Arguing as in Claim~V using (H$4'$), see~\eqref{estimates_CZ_claim_V_useful}, we would conclude that either $\mu_{\CZ}^{\tau_{\rm gl}}(\beta^k) \in I_0^+ \ \forall k\geq1$ or $\mu_{\CZ}^{\tau_{\rm gl}}(\beta^k) \in I_0^- \ \forall k\geq1$, in conflict with the contradiction assumption made on $\util$.

Denote by $P_z$ the asymptotic limit of $\util$ at $z\in \Gamma$. Arguing as in the previous subcase we find $$ \wind_\infty(\util,z,\tau_{\rm gl}) \leq c_0-2, \forall z\in \Gamma^+ \qquad \wind_\infty(\util,z,\tau_{\rm gl}) \geq C_0-\ell_0 +1, \forall z\in \Gamma^-. $$
Hence
$$
\sum_{z\in \Gamma'^+} \wind_\infty(\util,z, \tau_{\rm gl})  \leq (c_0-2)\#\Gamma^+
$$
and
$$
-\sum_{z\in \Gamma'^-}  \wind_\infty(\util,z, \tau_{\rm gl})   \leq -(C_0-\ell_0+1)\#\Gamma^-.
$$
Combining these inequalities with Remark~\ref{rmk_wind_pi_wind_infty} we obtain
$$
0\leq \wind_\pi(\util) \leq (c_0-1)\#\Gamma^+ -(C_0-\ell_0)\#\Gamma^- -2 <0,
$$
which is a contradiction.  The proof of Claim VI is now complete. // 

\medskip

\noindent {\it Claim VII. We have $k_-=-1$ and, moreover, all asymptotic limits of $\util_{-1}$ at its positive punctures are contained in $L$, are simply covered, and mutually distinct.}

\medskip

\noindent {\it Proof of Claim VII.} Assume, by contradiction, that $k_- \leq -2$. We shall prove that this forces $\sqcup_m S_m$ to have an infinite number of connected components.

Let $Y_0$ be a connected component of $S_{k_-} \setminus \Gamma_{k_-} \cup \Gamma_{k_-}$ such that $\wtil_0 := \util_{k_-}|_Y$ is non-constant. We can identify $Y_0 = S^2\setminus \Gamma$ where $\Gamma$ is the set of non-removable punctures. All punctures in $\Gamma$ are positive, and the image $\wtil_0$ is not contained in $\R\times L$. We can pick $Y_0$ in a way that one of the asymptotic limits of $\wtil_0$ lies in $M\setminus L$. Indeed, if not then by Claim~IV every level $\util_{j}$, $k_-+1\leq j \leq -1$ is formed by trivial cylinders in $\R \times L$. 
Together with the assumption $k_-\leq-2$ and the stability condition, this implies that in every level $k_- < m < 0$ strictly between the bottom level and the top level there is a constant component, and possibly many nodes (at least three). Combining with the genus zero assumption, it follows that at the top level there are at least two distinct positive punctures with geometrically equal asymptotic limits, and this is a contradiction to the fact that all asymptotic limits at the positive punctures of the top level are geometrically distinct orbits in~$L$. 
Moreover, $\wtil_0$ does not intersect $\R \times L$ by Claim~II. Claim I implies that asymptotic limits of $\wtil_0$ in $M\setminus L$ must have Conley-Zehnder index $\mu_{\CZ}^{\tau_{\rm gl}}$ in $I_0^- \cup I_0^+$ and its intersection number with $\Sigma$ vanishes.

Claim V provides at least one asymptotic limit at a positive puncture of $\wtil_0$ which lies in $M\setminus L$ and satisfies~$\mu_{\CZ}^{\tau_{\rm gl}} \in I_0^+$. Denote this asymptotic limit by $\gamma$. Now we find a connected component $Y_1$ of $S_{k_-+1}\setminus \Gamma_{k_-+1} \cup \Gamma_{k_-+1}$ such that $\wtil_1 := \util_{k_-+1}|_{Y_1}$ has $\gamma$ as asymptotic limit at a negative puncture. In particular, since $\mu_{\CZ}^{\tau_{\rm gl}}(\gamma)\in I^+_0$, Claim VI applies and at least one of the following holds:
\begin{itemize}
\item[(i)]  $\wtil_1$ has a negative puncture whose asymptotic limit lies in $M\setminus L$ and has Conley-Zehnder index in $I_0^-$;
\item[(ii)] $\wtil_1$ has a positive puncture whose asymptotic limit lies in $M\setminus L$ and has Conley-Zehnder index in $I_0^+$.
\end{itemize}
In case (i) we find a finite-energy map $\wtil_2$ corresponding to a connected component $Y_2$ of $S_{k_-} \setminus \Gamma_{k_-} \cup \Gamma_{k_-}$ which has a positive puncture whose asymptotic limit lies in $M\setminus L$ and its Conley-Zehnder index lies in $I_0^-$. 
In case (ii) we find a finite-energy map $\wtil_2$ corresponding to a connected component $Y_2$ of $S_{k_-+2} \setminus \Gamma_{k_-+2} \cup \Gamma_{k_-+2}$ which has a negative puncture whose asymptotic limit lies in $M\setminus L$ and has Conley-Zehnder index in $I_0^+$.
Back to case (i) we apply Claim V to obtain a positive puncture of $\wtil_2$ whose asymptotic limit lies in $M\setminus L$ and its Conley-Zehnder index lies in $I_0^+$. This gives the next element $\wtil_3$ of the sequence.
Back to case (ii) we apply Claim VI to again obtain two possibly non-excluding possibilities: $\wtil_2$ has a negative puncture whose asymptotic limit lies in $M\setminus L$ and has Conley-Zehnder index in $I_0^-$, or there is a positive puncture of $\wtil_2$ whose asymptotic limit lies in $M\setminus L$ and has Conley-Zehnder index in $I_0^+$. We choose one of the possibilities that apply, in the first case we go down a level and in the second case we go up a level, hence obtaining the next element $\wtil_3$ of the sequence. Our procedure to find the next element of the sequence can be indefinitely continued: from $\wtil_j$ ones changes the level by $+1$ or $-1$ in order to find $\wtil_{j+1}$ for which Claim~V or Claim~VI applies. This allows us to construct a sequence $\wtil_i$. One always goes up a level through a positive puncture whose asymptotic limit is in $M\setminus L$ and satisfies $\mu_{\CZ}^{\tau_{\rm gl}} \in I_0^+$, or goes down a level through a negative puncture whose asymptotic limit is in $M\setminus L$ and satisfies $\mu_{\CZ}^{\tau_{\rm gl}} \in I_0^-$. Since the arithmetic genus of the building {\bf u} is zero, whenever $i\neq j$ the domains of $\wtil_i$ and $\wtil_j$ are distinct connected components of $\sqcup_m S_m\setminus \Gamma_m^+\cup\Gamma_m^-$. Hence $\sqcup_m S_m$ has infinitely many connected components. This is absurd. We have proved that $k_-=-1$.

Now assume, by contradiction, that a $\widetilde J_-$-holomorphic finite-energy map corresponding to the restriction of $\util_{-1}$ to a connected component of $S_{-1}\setminus \Gamma^+_{-1} \cup \Gamma^-_{-1}$  has a positive puncture whose asymptotic limit lies in $M\setminus L$. By Claim~I and Claim~V we can assume that such an asymptotic limit satisfies $\mu_{CZ}^{\tau_{\rm gl}} \in I_0^+$. Using the same procedure as before, one finds that $S_0 \sqcup S_{-1}$ has infinitely many connected components, absurd. Now it is easy to argue, using Claim~II, that asymptotic limits at $\Gamma_{-1}^+$ are geometrically distinct simply covered components of $L$. //

\medskip

\noindent {\it Claim VIII. The top level is precisely $\R\times L$.}

\medskip

\noindent {\it Proof of Claim VIII.} Assume there exists a connected component $Y$ of $S_0 \setminus \Gamma^+_0 \cup \Gamma^-_0$ such that $\util=\util_0|_Y$ is non-constant and its image is not contained in $\R \times L$. By Claim~II the asymptotic limits at the negative punctures of $\util$ lie in $M\setminus L$, in contradiction to Claim VII. In particular, $\util$ has no negative punctures. Arguing as in the proof of Claim III, relying on hypothesis (H5), we conclude that  all positive punctures of $\util_0$ are punctures of $\util$. Moreover, $\util_0$ is constant on every other connected component of $S_0 \setminus \Gamma^+_0 \cup \Gamma^-_0$. In particular, $\util_0$ has no negative punctures, and this is absurd. We conclude that every connected component of $\util_0$ either corresponds to a constant curve or to a curve whose image is contained in $\R \times \gamma_i$ for some $i\in \{1,\ldots, n\}$. In the former case, Lemma~\ref{lemma_constant_components} provides an intersection between two geometrically distinct closed Reeb orbits in $L$, which is again an absurd. Hence the latter case holds and each component of the level $\util_0$ is a trivial cylinder over some $\gamma_i$. Claim VIII is proved. //

\medskip

\noindent {\it  Claim IX. The level $\util_{-1}$ provides an element of  $\M_{\jtil_-,0,\delta^-}(\gamma_1,\dots,\gamma_n;\emptyset)$.}

\medskip

\noindent {\it Proof of Claim IX.} Let $\util:S^2 \setminus \Gamma \to \R \times M$ be a non-constant $\widetilde J_-$-holomorphic map given by the restriction of $\util_{-1}$ to a connected component of $S_{-1}\setminus \Gamma^+_{-1} \cup \Gamma^-_{-1}$; here $\Gamma$ is the set of its (non-removable) punctures. By Claim VII, $\util$ has no negative punctures and it is asymptotic to distinct simply covered components of $L$ at its positive punctures. Moreover, $\util$ does not intersect $\R \times L$ (Claim~II). Hence $\util$ has a non-trivial asymptotic formula near each of its positive punctures. We claim that $\wind_\infty(\util,z, \tau_\Sigma)=0$ for every $z \in \Gamma$. This can be proved by recasting the argument for Claim~III. In fact, if not then we can use Theorem~\ref{thm_precise_asymptotics} to find, for $l$ large enough, a loop $\beta_l$ in the domain of $\vtil_l$ such that ${\rm int}(v_l\circ\beta_l,\Sigma)\neq0$. By Proposition~\ref{prop_cobordisms_intersection_crucial} we can homotope $\vtil_l$ to $\util_+$ (the map representing $C_+$) in the complement of $\R\times L$. We conclude that the algebraic intersection number of the image under $u_+$ of a loop in the domain of $\util_+$ with the image of $u_+$ is zero, in contradiction to Proposition~\ref{prop_Seifert_no_intersections} stating that $u_+$ is an embedding into $M\setminus L$. 
Hypothesis~(H5) implies that $\Gamma = \Gamma_{-1}^+$, i.e. the asymptotic limits of $\util$ are mutually geometrically distinct and consist of all components of $L$. Hence any other component of $\util_0$ is constant. The existence of such a constant component leads to a contradiction given by an application of Lemma \ref{lemma_constant_components}. Hence $\util_0$ has only one connected component represented by $\util$. The curve $\util$ represents the desired element in $\M_{\jtil_-,0,\delta^-}(\gamma_1,\dots,\gamma_n;\emptyset)$: the correct exponential decay to the $\gamma_i$ at the positive punctures follows from $\wind_\infty(\util,z, \tau_\Sigma)=0 \ \forall z\in\Gamma=\Gamma_{-1}^+$. //

\medskip

This finishes the proof of Proposition~\ref{prop_main_compactness_global}, and hence also of Theorem~\ref{main1}.

\appendix

\section{Intersection and linking}

Let $M$ be an oriented $3$-manifold and $\Sigma\subset M$ be an embedded compact oriented surface. Denote $L=\partial\Sigma$, orient $L$ by $\Sigma$. Let $N$ be a small closed tubular neighborhood of $L$ and choose a diffeomorphism $\Psi: N \to L\times \C$ satisfying
\begin{itemize}
\item[(T1)] $\Psi$ is orientation preserving if $N$ is oriented by $M$ and $L\times \C$ is oriented as a product; here $\C$ is given its canonical orientation.
\item[(T2)] $\Psi(N \cap \Sigma) = L \times [0,+\infty)$.
\end{itemize}
Consider a smooth map $\util = (a,u) : \D \to \R\times N$ satisfying $\util(\partial\D) \cap (\R\times L) = \emptyset$. We orient $\D\subset\C$ by $\C$, the real line $\R$ by its canonical orientation and $\R\times M$, $\R\times L$ as products. The following simple statement is heavily used in the compactness analysis from subsection~\ref{ssec_existence_compactness_curves}.

\begin{lemma}\label{lemma_appendix_intersection}
We have $$ {\rm int'}(\util_*[\D],\R\times L) = {\rm int}((u|_{\partial\D})_*[\partial\D],\Sigma) $$ where $[\D]\in H_2(\D,\partial\D)$, $[\partial\D]\in H_1(\partial\D)$ denote the corresponding fundamental classes, the homomorphism ${\rm int'}(\cdot,\R\times L) : H_2(\R\times M,\R\times (M\setminus L)) \to \Z$ counts oriented intersections with $\R\times L$, and ${\rm int}:H_1(M\setminus L) \otimes H_2(M,L) \to \Z$ is the oriented intersection count pairing.
\end{lemma}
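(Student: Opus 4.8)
The statement is a purely topological fact relating two intersection pairings in the tubular neighborhood $N$, and the plan is to reduce everything to the model $L \times \C$ via $\Psi$ and then to a single fiber computation. First I would use the diffeomorphism $\Psi: N \to L \times \C$ to transport the disk map: since $\R \times \Psi$ is an orientation-preserving diffeomorphism $\R \times N \to \R \times L \times \C$ carrying $\R \times L$ to $\R \times L \times \{0\}$ and (by (T2)) $\R \times (N \cap \Sigma)$ to $\R \times L \times [0,+\infty)$, both sides of the claimed identity are unchanged, so without loss of generality we work in $\R \times L \times \C$. Write $\util = (a, u)$ with $u = (\gamma, \zeta) : \D \to L \times \C$, where $\gamma : \D \to L$ and $\zeta : \D \to \C$; the hypothesis $\util(\partial \D) \cap (\R \times L) = \emptyset$ becomes $\zeta(\partial \D) \subset \C \setminus \{0\}$.

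\textbf{Key steps.} The intersection number ${\rm int'}(\util_*[\D], \R \times L)$ only depends on $\zeta$: a point of $\D$ maps into $\R \times L \times \{0\}$ precisely when $\zeta$ vanishes there, and the local intersection sign is the local degree of $\zeta$ at that zero (the $\R$- and $L$-directions being ``transverse'' factors that contribute trivially, which one checks from the product orientation conventions fixed before the statement). Hence ${\rm int'}(\util_*[\D], \R \times L)$ equals the algebraic count of zeros of $\zeta : \D \to \C$, which by elementary degree theory (the argument principle) equals the winding number ${\rm wind}(\zeta|_{\partial \D})$ of the boundary loop $\zeta|_{\partial \D}$ around $0 \in \C$. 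On the other side, ${\rm int}((u|_{\partial \D})_*[\partial \D], \Sigma)$ is the algebraic intersection number in $M$ of the loop $u|_{\partial \D}$, which lies in $N \setminus L \simeq L \times (\C \setminus \{0\})$, with the surface $\Sigma$; since $N \cap \Sigma$ corresponds to $L \times [0,+\infty)$, a half-plane whose boundary is $L \times \{0\}$ and whose co-oriented normal (once one fixes orientations by (T1)) counts exactly how many times the $\C$-coordinate of the loop crosses $[0,+\infty)$ with sign, this pairing again equals ${\rm wind}(\zeta|_{\partial \D})$. Equating the two computations of the same winding number gives the lemma.

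\textbf{Main obstacle.} The only real content is bookkeeping of orientations: one must verify that with the conventions fixed in the excerpt — $\D$ and $\C$ oriented canonically, $\R$ oriented canonically, $\R \times M$ and $\R \times L$ oriented as products, $\Psi$ orientation-preserving as in (T1), and $\Psi(N \cap \Sigma) = L \times [0,+\infty)$ as in (T2) — both local intersection signs genuinely reduce to $+1$ times the local degree of $\zeta$, with no spurious sign. I would handle this by choosing adapted local coordinates near an intersection point (a chart $U \times D^2 \subset L \times \C$ for the first pairing, and near a point of $u^{-1}(\Sigma) \cap \partial\D$ for the second), writing down oriented bases for the relevant tangent spaces and their normal complements, and comparing orientations of $T(\R \times L \times \C)$ built two ways. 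This is routine but is the step where errors creep in; everything else (the reduction via $\Psi$, the argument principle, the identification of $N \setminus L$) is standard. I would present the orientation check as a short explicit lemma or simply cite the product-orientation conventions and the fact that in $\C$ the origin as a transverse intersection of a generic disk has sign equal to the local degree.

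\begin{proof}
Using the diffeomorphism $\R\times\Psi$, which is orientation preserving and carries $\R\times L$ to $\R\times L\times\{0\}$ and $\R\times(N\cap\Sigma)$ to $\R\times L\times[0,+\infty)$, we may assume $N=L\times\C$, $\Sigma\cap N=L\times[0,+\infty)$, and write $\util=(a,u)$ with $u=(\gamma,\zeta):\D\to L\times\C$. The hypothesis $\util(\partial\D)\cap(\R\times L)=\emptyset$ gives $\zeta(\partial\D)\subset\C\setminus\{0\}$. After a small perturbation of $\util$ relative to $\partial\D$ we may assume $\zeta\pitchfork 0$, so $\zeta^{-1}(0)$ is finite and the left-hand side ${\rm int'}(\util_*[\D],\R\times L)$ equals the algebraic count of zeros of $\zeta$; with the product orientation conventions fixed above, the local sign at each zero is the local degree of $\zeta:\D\to\C$ there. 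By the argument principle this count equals the winding number of the loop $\zeta|_{\partial\D}$ about $0\in\C$.

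For the right-hand side, $u|_{\partial\D}$ is a loop in $N\setminus L=L\times(\C\setminus\{0\})$ and ${\rm int}((u|_{\partial\D})_*[\partial\D],\Sigma)$ is its oriented intersection number with $\Sigma$; since $\Sigma\cap N=L\times[0,+\infty)$, this is the algebraic count of times $\zeta|_{\partial\D}$ crosses the ray $[0,+\infty)$, with signs determined by (T1). Again this equals the winding number of $\zeta|_{\partial\D}$ about $0$. The two sides therefore agree.
\end{proof}
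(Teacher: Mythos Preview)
Your proof is correct and follows essentially the same route as the paper's: both use the tubular coordinates $\Psi$ to identify the left-hand side with the algebraic count of zeros of the $\C$-component of $u$ on $\D$, invoke degree theory to rewrite this as the winding number of that component restricted to $\partial\D$, and then identify that winding number with the right-hand intersection pairing via (T2). The paper's version is terser (it does not spell out the perturbation to transversality or the ray-crossing description of the second pairing), but the argument is the same.
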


\begin{proof}
Let $z:\D\to \C$ denote the $\C$-component of $\Psi\circ u$. Note that $z(e^{i2\pi t}) \neq 0$ for all $t$. In view of our choices of orientations, ${\rm int'}(\util_*[\D],\R\times L)$ is equal to the algebraic count of zeros of the map $z:\D\to\C$. By standard degree theory, this is equal to the winding number around the origin of the loop $t\in\R/\Z \mapsto z(e^{i2\pi t})$. This winding number coincides with ${\rm int}((u|_{\partial\D})_*[\partial\D],\Sigma)$.
\end{proof}

\section{Homology of the complement of the binding}\label{app_comp_H_1}

Let $(\Pi,L)$ be an open book decomposition of the closed, connected and oriented $3$-manifold $M$. The standard orientation of $\R/\Z$ and the map $\Pi$ co-orient the pages. This co-orientation and the orientation of $M$ together orient the pages. We orient $L$ as the boundary of a page. Let $\gamma_1,\dots,\gamma_n$ be the oriented components of $L$. Let $\gamma_i'$ be the oriented loop obtained by pushing $\gamma_i$ into $M\setminus L$ in the direction of a page.

\begin{lemma}\label{lemma_topological}
We have $$ H_1(M\setminus L) \simeq \frac{H_1({\rm page})}{{\rm im}({\rm id}-h_*)} \oplus \Z e $$ where $h$ is a monodromy map of $(\Pi,L)$
and $e$ is any loop in $M\setminus L$ such that $\Pi_*e$ is the positive generator of $H_1(\R/\Z)$. In particular, in the planar case we get $H_1(M\setminus L) \simeq A \oplus \Z\left<e\right>$ where $A$ is the abelian group generated by the $\gamma_i'$ constrained by the single relation $\gamma_1'+\dots+\gamma_n'=0$.
\end{lemma}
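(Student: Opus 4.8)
The plan is to use the fact, built into the definition of an open book decomposition, that $\Pi:M\setminus L\to\R/\Z$ is a smooth fibration: this presents $M\setminus L$ as the mapping torus of the monodromy of a page, and $H_1(M\setminus L)$ can then be read off from the Wang exact sequence of this mapping torus.

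First I would fix a page $\bar P$, a compact connected oriented surface with $\partial\bar P=L$, and a monodromy map $h:\bar P\to\bar P$ of $\Theta$. By the normal form of $\Pi$ near $L$ required in the definition of an open book, the first return map can be chosen equal to the identity on a neighborhood of $\partial\bar P$, so I take $h$ to be the identity there. The fibration $\Pi$ identifies $M\setminus L$, up to diffeomorphism, with the mapping torus of $h$; note that the actual fibers $\Pi^{-1}(x)$ are the open pages, which are homotopy equivalent to $\bar P$, so for homological purposes $H_1(\mathrm{fiber})=H_1(\mathrm{page})$. The Wang exact sequence of the mapping torus, together with connectedness of the page (which makes ${\rm id}-h_*$ vanish on $H_0$), then yields a short exact sequence
\[
0\longrightarrow\frac{H_1(\mathrm{page})}{{\rm im}({\rm id}-h_*)}\overset{\iota}{\longrightarrow}H_1(M\setminus L)\overset{\Pi_*}{\longrightarrow}H_1(\R/\Z)\longrightarrow0 .
\]
Since $H_1(\R/\Z)\cong\Z$ is free, this sequence splits, and any loop $e\subset M\setminus L$ with $\Pi_*e$ the positive generator of $H_1(\R/\Z)$ supplies a splitting; this gives the first assertion $H_1(M\setminus L)\simeq\frac{H_1(\mathrm{page})}{{\rm im}({\rm id}-h_*)}\oplus\Z e$.

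For the planar case I would show ${\rm im}({\rm id}-h_*)=0$. A planar page is an $n$-holed $2$-sphere, so $H_1(\mathrm{page})\cong\Z^{n-1}$ is generated by the boundary loops $\delta_1,\dots,\delta_n$ subject to the single relation $\delta_1+\dots+\delta_n=0$ (a sum of boundary components bounds the whole surface). Because $h$ is the identity near $\partial\bar P$, it fixes each $\delta_i$, and since the $\delta_i$ generate $H_1(\mathrm{page})$ we get $h_*={\rm id}$, hence ${\rm im}({\rm id}-h_*)=0$ and $\frac{H_1(\mathrm{page})}{{\rm im}({\rm id}-h_*)}=H_1(\mathrm{page})$. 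Finally, the push-off $\gamma_i'$ of $\gamma_i$ into $M\setminus L$ in the direction of a page is, by construction, freely homotopic inside that page to the boundary loop $\delta_i$ encircling $\gamma_i$; thus $\iota$ carries $\delta_i$ to $[\gamma_i']$ and identifies $H_1(\mathrm{page})$ with the abelian group $A$ generated by $\gamma_1',\dots,\gamma_n'$ with the single relation $\gamma_1'+\dots+\gamma_n'=0$. Combining with the splitting gives $H_1(M\setminus L)\simeq A\oplus\Z\langle e\rangle$.

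The argument is essentially standard homotopy theory, so there is no serious obstacle; the two points deserving attention are (i) that the monodromy may be taken to be the identity near the binding --- immediate from the normal form in the definition of an open book --- and that in the planar case this forces $h_*={\rm id}$ on $H_1(\mathrm{page})$; and (ii) the orientation bookkeeping, so that the $\delta_i$ are matched with the $\gamma_i'$ oriented as page push-offs and the relation $\sum_i\gamma_i'=0$ appears with the correct signs. One should also remark that the $\Z e$ summand, and the splitting, depend on the choice of $e$, but only an isomorphism is claimed.
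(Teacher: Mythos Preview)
Your proof is correct and follows essentially the same approach as the paper: both apply the Wang exact sequence for the fibration $\Pi:M\setminus L\to\R/\Z$, observe that connectedness of the page makes the last map vanish, and then in the planar case use that $h$ is supported away from the boundary while boundary loops generate $H_1(\mathrm{page})$ to conclude $h_*=\mathrm{id}$. Your write-up is somewhat more explicit about the splitting and the identification of the $\delta_i$ with the $\gamma_i'$, but the underlying argument is the same.
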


In the above statement homology groups are taken with $\Z$ coefficients.

\begin{proof}
By Wang's sequence, see~\cite[Lemma~8.4]{milnor}, we have an exact sequence
\[
H_1({\rm page}) \stackrel{h_*-{\rm id}}{\longrightarrow} H_1({\rm page}) \longrightarrow H_1(M\setminus L) \longrightarrow H_0({\rm page}) \stackrel{h_*-{\rm id}}{\longrightarrow} H_0({\rm page}) \, .
\]
It turns out that $h_*-{\rm id}$ vanishes in $H_0({\rm page})$. 
We get an exact sequence
\[
0 \longrightarrow \frac{H_1({\rm page})}{{\rm im}({\rm id}-h_*)}  \longrightarrow H_1(M\setminus L) \longrightarrow H_0({\rm page}) \longrightarrow 0 \, .
\]
The first assertion of the lemma follows.
In the special case where the page is a sphere with holes we have $h_*= {\rm id}$ on $H_1({\rm page})$ because this homology group is generated by boundary components as in the statement and $h$ is supported away from the boundary.
\end{proof}

\end{document}